\theoremstyle{plain}
\newtheorem{thm}{Theorem}[section]
\newtheorem{theorem}[thm]{Theorem}
\newtheorem{introthm}{Theorem}
\newtheorem{introcor}[introthm]{Corollary}
\newtheorem{prop}[thm]{Proposition}
\newtheorem{lem}[thm]{Lemma}
\newtheorem{lemma}[thm]{Lemma}
\newtheorem{cor}[thm]{Corollary}
\theoremstyle{definition}
\newtheorem{defn}[thm]{Definition}
\newtheorem{remark}[thm]{Remark}
\newcommand{\bC}{{\mathbb C}}
\newcommand{\bN}{{\mathbb N}}
\newcommand{\bR}{{\mathbb R}}
\newcommand{\bZ}{{\mathbb Z}}
\newcommand{\cE}{{\mathcal E}}
\newcommand{\cG}{{\mathcal G}}
\newcommand{\cH}{{\mathcal H}}
\newcommand{\cK}{{\mathcal K}}
\newcommand{\cM}{{\mathcal M}}
\newcommand{\cU}{{\mathcal U}}
\newcommand{\cV}{{\mathcal V}}
\newcommand{\cW}{{\mathcal W}}
\newcommand\Aut{\operatorname{Aut}}
\newcommand\Inn{\operatorname{Inn}}
\DeclareMathOperator{\tr}{tr}
\newcommand{\norm}[1]{\left\|#1\right\|}
\newcommand{\set}[1]{\left\{#1\right\}}
\newcommand{\paren}[1]{\left(#1\right)}
\newcommand{\ang}[1]{\left\langle#1\right\rangle}
\newcommand{\ip}[1]{\left\langle#1\right\rangle}
\newcommand{\glink}[1]{S\paren{#1}}
\newcommand{\gstar}[1]{B\paren{#1}}
\newcommand{\gp}{\mathop{\begin{tikzpicture}[baseline]
\node[circle, fill=cyan, draw=black, inner sep=0pt, minimum size=3pt] (mid) at (0, 2.5pt) {};
\foreach \x in {0,60,...,300} {
\node[circle, fill=cyan, draw, inner sep=0pt, minimum size=3pt] (\x) at ($(mid)!6pt!(mid.\x)$) {};
\draw (\x) -- (mid) ;
}
\end{tikzpicture}}}
\newcommand*{\addFileDependency}[1]{
  \typeout{(#1)}
  \@addtofilelist{#1}
  \IfFileExists{#1}{}{\typeout{No file #1.}}
}
\newcommand*{\myexternaldocument}[1]{%
    \externaldocument{#1}%
    \addFileDependency{#1.tex}%
    \addFileDependency{#1.aux}%
}
\title{\Large\textbf{On the structure of graph product von Neumann algebras}}
\author{Ian Charlesworth\footnote{Yr Ysgol Mathemateg, Prifysgol Caerdydd\hfill \url{charlesworthi@cf.ac.uk}}, 
Rolando de Santiago\footnote{Department of Mathematics and Statistics, California State University, Long Beach \hfill\url{rolando.desantiago@csulb.edu}}, Ben Hayes\footnote{Department of Mathematics, University of Virginia\hfill \url{brh5c@virginia.edu}},\\ 
 David Jekel\footnote{Department of Mathematics, York University\hfill \url{david.jekel@gmail.com}}, Srivatsav Kunnawalkam Elayavalli\footnote{Department of Mathematics, University of California, San Diego\hfill \url{skunnawalkamelayaval@ucsd.edu.}}, 
Brent Nelson\footnote{Department of Mathematics, Michigan State University\hfill \url{brent@math.msu.edu}}}
\date{}
\begin{document}

\maketitle
\begin{abstract}
We undertake a comprehensive study of structural properties of graph products of von Neumann algebras equipped with faithful, normal states, as well as properties of the graph products relative to subalgebras coming from induced subgraphs. Among the  technical contributions in this paper include a complete bimodule calculation for subalgebras arising from subgraphs. 
As an application, we obtain a  complete classification of when two subalgebras coming from induced subgraphs can be amenable relative to each other. We also give complete characterizations of when the graph product can be full, diffuse, or a factor. 
Our results are obtained in a broad generality, and we emphasize that they are new even in the tracial setting. They also allow us to deduce new results about when graph products of groups can be amenable relative to each other.
\end{abstract}

\section*{Introduction}

{ 
Graph products of operator algebras have recently emerged as a subject of intense interest, providing an interpolation between the free product and the tensor product.
The term comes from the group setting where they were introduced by Green \cite{Gr90}; in the operator algebra setting they have been reintroduced and studied under various names by M\l{}otkowski \cite{Mlot2004}, by Speicher and Wysocza\'nski \cite{SpWy2016}, and by Caspers and Fima \cite{CaFi17}.
The mixture of classical and free independence provides a powerful framework for proving results in deformation/rigidity theory \cite{borstcasperssolid, chifan2021cartan, Caspers_2020, graphproductsprimeness, CDDGraphI, CDDGraphII}, the theory of operator space approximation properties of operator algebras \cite{AtkinsonHaagGraph, CaFi17}, and free probability \cite{CC2021, AIMSuperTeamI, Mlot2004, SpWy2016}. 
Graph products of groups are also of significant current interest in group theory \cite{Agol, antolinminasyan, haglund-wise, KoberdaMCG, KimKobBraid, KimKoberdaEmbed, MinasyanOsin}.
Several structural properties of graph product von Neumann algebras -- the Haagerup property, exactness, Connes embeddability, the rapid decay property, absence of Cartan subalgebras, strong solidity, modular theory, and proper proximality -- have also been investigated \cite{AtkinsonHaagGraph, bordenave2023norm,casperscep,Caspers_2020, CaFi17, chifan2021cartan, DKE21}.
Altogether, this makes graph products a natural object to study using tools from geometric group theory, approximation properties, deformation/rigidity theory, free probability, and random matrices.

Given a graph $\cG = (\cV, \cE)$ and family of groups or of von Neumann algebras associated to the vertices, their graph product is a group or von Neumann algebra generated by copies of the input objects with the pairwise relations determined by the graph: two objects connected by an edge should be in direct or tensor product position; two objects not connected by an edge should be in free position.
The relations of higher order must be given as well; we defer the precise definition for von Neumann algebras to Section~\ref{sec:graphproduct}, and refer to Green for the precise definition for groups \cite{Gr90}.
}

In this paper, we undertake a systematic study of precisely when certain natural structural properties of graph products of von Neumann algebras hold.
Moreover, for applications to positions of subalgebras  it is natural to consider algebras corresponding to induced subgraphs and ask when these properties hold ``relative" to another. 
We provide a complete classification for relative amenability, fullness, factoriality and diffuseness (we also completely settle  ``relative diffuseness" i.e. lack of intertwining, in the tracial setting).


Given a graph $\cG=(\cV,\cE)$ and $U\subseteq \cV$, the subgraph induced by $U$ is the graph $\cG|_{U}$ whose vertex set is $U$ and whose edge set is $\cE\cap (U\times U)$.
When we denote $(M,\varphi):=\gp_{v \in \cG} (M_v,\varphi_v)$ and when it is not ambiguous, we will let $(M_U,\varphi_U)$ denote the graph product of $\{(M_{v},\varphi_{v}) :v\in U\}$ with respect to the graph $\cG|_{U}$.

We now state our results characterizing relative amenability.
In the greatest generality our result applies to von Neumann algebras equipped with states which are not necessarily tracial (hereafter referred to as \emph{statial} von Neumann algebras).
However, our results specialize slightly to both the setting of tracial von Neumann algebras and of group von Neumann algebras and therefore to groups; in these more restrictive settings our conditions for relative amenability become slightly nicer to state. Although we state many of our results in the statial setting, they are still new even with an added assumption of traciality.

\begin{introthm}\label{thm: complete char rel amen tracial intro}
Let $\cG=(\cV,\cE)$ be a graph, let $\{(M_v,\tau_v)\colon v\in \cV\}$ be a family of tracial von Neumann algebras, and let $(M,\tau) = \gp_{v \in \cG} (M_v,\tau_v)$.  Assume that $M_{v}$ has a trace zero unitary for every $v\in \cV$.  For $V_1,V_2\subseteq \cV$,  $M_{V_1}$ is amenable relative to $M_{V_2}$ in $M$ if and only if the following occur:
\begin{enumerate}[(1)]
    \item $M_v$ is amenable for each $v \in V_1 \setminus V_2$.
    \item For each $v \in V_1 \setminus V_2$ and $w \in V_1$ with $v \neq w$, either $v$ and $w$ are adjacent or both the following occur:
    \begin{enumerate}[(a)]
        \item $\dim(M_{v})=\dim(M_{w})=2$, \label{item: annoying af counterexample intro}
        \item $v$ and $w$ are adjacent to all vertices in $V_1 \setminus \{v,w\}$.
    \end{enumerate}
\end{enumerate}
\end{introthm}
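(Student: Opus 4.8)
The plan is to translate the statement into a question about weak containment of bimodules and then resolve it using the complete bimodule calculation for subgraph subalgebras, which is the technical heart of the paper. Following Ozawa--Popa, $M_{V_1}$ is amenable relative to $M_{V_2}$ in $M$ precisely when the trivial $M_{V_1}$--$M$ bimodule $L^2(M)$ is weakly contained in the $M_{V_1}$--$M$ bimodule $L^2(M) \otimes_{M_{V_2}} L^2(M) \cong L^2\langle M, e_{M_{V_2}}\rangle$. I would feed this second bimodule into the bimodule calculation, which expresses it as a direct sum of standard pieces indexed by reduced words in the graph monoid: each piece is assembled from the centred Hilbert spaces $L^2(M_u)\ominus\bC$ ($u$ ranging over the letters of the word) together with a ``diagonal'' core coming from $M_{V_1\cap V_2}$ and from the $V_1$-vertices adjacent to the relevant part of $V_2$. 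In these terms relative amenability becomes the assertion that the trivial bimodule is absorbed, i.e.\ that no word contributes an essentially coarse $M_{V_1}$--$M_{V_1}$ summand.

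For sufficiency, assume (1) and (2). The standing hypothesis forces any two-dimensional $M_v$ to carry the uniform trace: a trace-zero unitary $(a,b)\in\bC\oplus\bC$ with $|a|=|b|=1$ and $\tau(a,b)=0$ exists only if $\tau=(\tfrac12,\tfrac12)$, so $M_v\cong(\bC^2,\mathrm{unif})$ and $L^2(M_v)\ominus\bC$ is one-dimensional. Hence each non-adjacent pair $\{v,w\}$ allowed by (2) generates $M_v * M_w\cong L(D_\infty)$, which is amenable, and by (2)(b) this copy of $L(D_\infty)$ commutes with $M_{V_1\setminus\{v,w\}}$; likewise, by (1) and (2) each remaining $v\in V_1\setminus V_2$ is amenable and adjacent to all of $V_1$, so $M_v$ splits off as an amenable commuting tensor factor. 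Stripping these factors one at a time and invoking the permanence property that an amenable algebra commuting with $N$ can be adjoined without destroying amenability of $N$ relative to $M_{V_2}$ (average an $M_{V_2}$-witnessing central state over the amenable factor), the problem reduces to $M_{V_1\cap V_2}$, which is amenable relative to $M_{V_2}$ because it is contained in $M_{V_2}$.

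For necessity, assume $M_{V_1}$ is amenable relative to $M_{V_2}$ and use that this passes to subalgebras. For (1), restrict to a vertex $v\in V_1\setminus V_2$: since $v\notin V_2$, the bimodule calculation shows $L^2(M)\otimes_{M_{V_2}}L^2(M)$ restricts to a multiple of the coarse $M_v$-bimodule (adjacency of $v$ to vertices of $V_2$ only inflates the multiplicity space), so weak containment of $L^2(M_v)$ forces $M_v$ amenable. For (2), restrict to $M_{\{v,w\}}=M_v * M_w$ for non-adjacent $v\in V_1\setminus V_2$ and $w\in V_1$. The bimodule calculation identifies the sub-bimodule contributed by this free product and shows that it contains the trivial $M_{\{v,w\}}$-bimodule only when both centred spaces $L^2(M_v)\ominus\bC$ and $L^2(M_w)\ominus\bC$ are one-dimensional -- that is, only when $\dim M_v=\dim M_w=2$ -- giving (a); and that the presence of any further vertex of $V_1$ non-adjacent to $v$ or $w$ enlarges the free configuration and restores coarseness, giving (b). Crucially this analysis never uses $w\notin V_2$: what matters is $v\notin V_2$, so the letter $M_v$ is never merged by the relative tensor over $M_{V_2}$ and the free product is never absorbed.

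The main obstacle is the sharp threshold in (2): showing that two-dimensionality of both vertex algebras is exactly the point at which the free-product sub-bimodule stops being coarse. This is where the full force of the bimodule calculation is indispensable, because the soft transitivity argument (``$P$ amenable relative to an amenable $Q$ forces $P$ amenable'') is unavailable in the decisive case $w\in V_2$ with $M_{V_2}$ non-amenable; coarseness must instead be read off directly from the explicit word decomposition, where it is governed precisely by the Hilbert-space dimensions of the centred pieces. Simultaneously bookkeeping the combinatorial adjacency data and the analytic weak-containment data, and verifying that the $L(D_\infty)$ tensor factors split off cleanly exactly under hypothesis (2)(b), is the delicate remaining work.
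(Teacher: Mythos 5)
Your sufficiency argument and your treatment of condition (1) essentially track the paper's proof: the paper decomposes $M_{V_1}$ as a tensor product over the non-adjacent pairs, the remaining amenable vertices, and $V_1\cap V_2$, then invokes permanence of relative amenability under tensoring and commuting squares (Lemmas~\ref{lem: tensor rel amen} and~\ref{lem: commutating squares rel amen reduction}); your ``strip commuting amenable factors and average'' is a sketch of the same mechanism, and your proof of (1) is exactly Lemma~\ref{lem: relative semidisrcete for subgraph}(i) applied to $V_0=\{v\}$. The genuine gap is in your necessity argument for (2), at precisely the point you flag as the main obstacle: you propose to read the threshold $\dim M_v=\dim M_w=2$ (and the adjacency condition (b)) directly off the word/bimodule decomposition, ``governed precisely by the Hilbert-space dimensions of the centred pieces.'' This cannot work. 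In Theorem~\ref{thm: bimodules} the vertex-algebra dimensions enter \emph{only} through the multiplicities $k_{\cG}(V_1,V_2,U)$; the isomorphism types of the summands $L^2(M_{V_1})\otimes_{M_U}L^2(M_{V_2})$ do not depend on them, and weak containment is insensitive to multiplicities. From the point of view of weak containment, the decomposition for $\bC^2*\bC^2$ looks the same as for, say, $\bC^3 * M_w$, even though the first algebra is amenable and the second is not: the dichotomy is an analytic fact about free products, not a feature of the word combinatorics.

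What is actually needed, and what the paper supplies, is a chain of inputs external to the bimodule calculation. First, to localize the weak containment $L^2(M_{\{v,w\}}) \prec \bigoplus_{U\subseteq\{v,w\}\cap V_2} \bigl(L^2(M_{\{v,w\}})\otimes_{M_U}L^2(M_{\{v,w\}})\bigr)^{\oplus\infty}$ to a \emph{single} summand one needs $M_v*M_w$ to be a factor (Lemma~\ref{lem: inducing to a direct summand}); this is Ueda's theorem \cite{UedaTypeIIIfreeproduct} and is exactly where $\max(\dim M_v,\dim M_w)\geq 3$ enters. Second, one needs the classification of amenable tracial free products \cite{ChingFreeProd, DykemaFreeproductsHyper} to convert ``$M_v*M_w$ amenable'' into ``$\dim M_v=\dim M_w=2$.'' Third, in your decisive case $w\in V_2$, localization may land on the summand $L^2\otimes_{M_w}L^2$, i.e.\ on ``$M_v*M_w$ amenable relative to $M_w$,'' and ruling this out when $\max\dim\geq 3$ requires a genuine theorem: the paper combines non-intertwining ($M_v*M_w\not\prec M_w$, Proposition~\ref{prop: relative diffuseness}) with \cite[Corollary 2.12]{CartanAFP} to upgrade relative amenability to amenability, and then contradicts the free-product classification. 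Similarly for (b), ``enlarges the free configuration and restores coarseness'' is not the right mechanism: one must first prove that the three-vertex algebra $M_{\{v,w,u\}}$ is a factor (Theorem~\ref{thm:factoriality}, which uses the trace-zero unitaries), localize via Lemma~\ref{lem: relative semidisrcete for subgraph}, and then contradict amenability relative to $M_{\{w,u\}}$ via the Kurosh-type unitary argument of Lemma~\ref{lem: free product non-amenability} applied to the \emph{amalgamated} free product $M_{\{v,u\}}*_{M_u}M_{\{w,u\}}$ --- coarseness over $\bC$ is not the relevant notion there. Without these ingredients your necessity argument does not close.
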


\noindent Note that Theorem~\ref{thm: complete char rel amen tracial intro} is new even in the case where $M_{v}=L(\Gamma_{v})$ for a family discrete groups $\{\Gamma_{v}:v\in \cV\}$. As such, we obtain a complete classification of when subgroups corresponding to induced subgraphs can be amenable relative to each other for graph products of groups, which follows immediately from Theorem~\ref{thm: complete char rel amen tracial intro}.

\begin{introcor}
 Let $\mathcal{G}=(\mathcal{V},\mathcal{E})$ be a graph and $\{\Gamma_{v}:v\in \mathcal{V}\}$ be a family groups. Let $\Gamma$ be the graph product of  $\{\Gamma_{v}:v\in \mathcal{V}\}$ with respect to $\mathcal{G}$, and for $U\subseteq \mathcal{V}$ let $\Gamma_{U}$ be the graph product of $\{\Gamma_{v}:v\in U\}$ with respect to $\mathcal{G}|_{U}$. Then for $V_{1},V_{2}\subseteq V$ we have that $\Gamma_{V_{1}}$ is amenable relative to $\Gamma_{V_{2}}$ inside $\Gamma$ if and only if both of the following occur:
\begin{enumerate}[(1)]
\item $\Gamma_{v}$ is amenable for each $v\in V_{1}\setminus V_{2}$ 
\item for each $v\in V_{1}\setminus V_{2}$ and $w\in V_{1}$ with $v\neq w$, either $(v,w)\in \cE$ or both of the following occur:
 \begin{enumerate}[(a)]
        \item $\Gamma_{v}\cong \Gamma_{w}\cong \bZ/2\bZ$
        \item $v$ and $w$ are adjacent to all vertices in $V_1 \setminus \{v,w\}$.
    \end{enumerate}
\end{enumerate}
\end{introcor}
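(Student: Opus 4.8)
The plan is to derive the corollary directly from Theorem~\ref{thm: complete char rel amen tracial intro} by specializing the vertex data to $M_v = L(\Gamma_v)$ equipped with their canonical traces $\tau_v$. The first step is to invoke the standard identification of graph products: with this choice of input, the graph product von Neumann algebra $\gp_{v \in \cG}(L(\Gamma_v),\tau_v)$ is canonically isomorphic, as a tracial von Neumann algebra, to $(L(\Gamma),\tau)$, where $\Gamma$ is the graph product of the $\Gamma_v$ and $\tau$ is the canonical trace (see \cite{CaFi17,Gr90}). Under this isomorphism the canonical group generators match, so the subalgebra $M_U$ associated to the induced subgraph $\cG|_U$ is carried onto $L(\Gamma_U)$ for every $U \subseteq \cV$; in particular $M_{V_i} = L(\Gamma_{V_i})$ for $i = 1,2$. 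Interpreting the statement ``$\Gamma_{V_1}$ is amenable relative to $\Gamma_{V_2}$ inside $\Gamma$'' as the relative amenability of $L(\Gamma_{V_1})$ with respect to $L(\Gamma_{V_2})$ inside $L(\Gamma)$, the corollary becomes exactly Theorem~\ref{thm: complete char rel amen tracial intro} read through this dictionary.

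Next I would check that the hypotheses and each numbered condition transfer correctly. The standing hypothesis of Theorem~\ref{thm: complete char rel amen tracial intro} is that every $M_v$ admits a trace-zero unitary; for $M_v = L(\Gamma_v)$, any nontrivial group element $g \in \Gamma_v$ yields a unitary $u_g$ with $\tau_v(u_g) = 0$, so this holds as soon as every $\Gamma_v$ is nontrivial. We may assume this without loss of generality, since a vertex carrying the trivial group can be deleted from $\cG$ without altering $\Gamma$, the subgroups $\Gamma_{V_i}$, or the graph product. Condition (1) then translates verbatim: $L(\Gamma_v)$ is amenable (equivalently injective) if and only if $\Gamma_v$ is amenable, by the classical characterization of amenability of group von Neumann algebras.

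For condition (2), the adjacency requirement and sub-condition (b) are purely graph-theoretic and are unchanged by the translation, so the only remaining point is sub-condition (a). Since $\dim L(\Gamma_v) = |\Gamma_v|$ when $\Gamma_v$ is finite and $L(\Gamma_v)$ is infinite-dimensional otherwise, the equality $\dim(M_v) = \dim(M_w) = 2$ holds if and only if $|\Gamma_v| = |\Gamma_w| = 2$, i.e.\ $\Gamma_v \cong \Gamma_w \cong \bZ/2\bZ$. Assembling these translations yields the stated equivalence. The one genuinely nontrivial input is the graph-product identification $\gp_{v \in \cG}(L(\Gamma_v),\tau_v) \cong (L(\Gamma),\tau)$ together with its compatibility with the induced-subgraph subalgebras and with the relevant notion of relative amenability; once that dictionary is in place, the deduction from Theorem~\ref{thm: complete char rel amen tracial intro} is routine.
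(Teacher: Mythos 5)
Your overall route is exactly the paper's: the corollary is deduced by specializing Theorem~\ref{thm: complete char rel amen tracial intro} to $M_v = L(\Gamma_v)$ with the canonical traces, using the standard identification of $\gp_{v\in\cG}(L(\Gamma_v),\tau_v)$ with $(L(\Gamma),\tau)$ compatibly with the induced-subgraph subalgebras, and then translating each condition: nontrivial group elements give trace-zero unitaries, $L(\Gamma_v)$ is amenable iff $\Gamma_v$ is, and $\dim L(\Gamma_v) = 2$ iff $\Gamma_v \cong \bZ/2\bZ$. All of these translations are correct, and reading ``$\Gamma_{V_1}$ amenable relative to $\Gamma_{V_2}$ inside $\Gamma$'' through the group von Neumann algebras is also how the paper reads the statement, so there is no mismatch there.

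There is, however, one step that fails: the claim that one may assume each $\Gamma_v$ nontrivial ``without loss of generality'' by deleting trivial-group vertices. Deleting such a vertex indeed leaves $\Gamma$ and the subgroups $\Gamma_{V_i}$ unchanged, so the left-hand side of the equivalence is unaffected; but conditions (1) and (2) are assertions about the \emph{graph}, and they do change. Concretely, take $\cV = \{v,w\}$ with $\cE = \varnothing$, $\Gamma_v = \{e\}$, $\Gamma_w = \bZ$, $V_1 = \{v,w\}$, $V_2 = \varnothing$. Then $\Gamma_{V_1} = \Gamma \cong \bZ$ is amenable, hence amenable relative to $\Gamma_{V_2}$ inside $\Gamma$, so the left-hand side holds; but condition (2) fails for the pair $(v,w)$, since $v \not\sim w$ and $\Gamma_v \not\cong \bZ/2\bZ$. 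So once trivial vertex groups are permitted, the stated equivalence is false, and no deletion argument can repair that: your proof of the reduced statement is correct, but it does not yield the statement for the original graph. The correct move---which is what the paper implicitly does---is to record that the corollary inherits from Theorem~\ref{thm: complete char rel amen tracial intro} the standing hypothesis that each $L(\Gamma_v)$ contains a trace-zero unitary, i.e., that each $\Gamma_v$ is nontrivial, as an explicit hypothesis rather than claiming it holds without loss of generality.
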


\noindent We also have a complete result in the statial setting that generalizes Theorem \ref{thm: complete char rel amen tracial intro}. This also yields a complete characterization of when the graph product von Neumann algebra is amenable (see Proposition~\ref{prop: amenability characterization}).

\begin{introthm}\label{thm: complete char rel amen stacial intro}
Let $\cG=(\cV,\cE)$ be a graph, let $\{(M_v,\varphi_v)\colon v\in \cV\}$ be a family of statial von Neumann algebras, and let $(M,\varphi) = \gp_{v \in \cG} (M_v,\varphi_v)$.  Assume $M_v^{\varphi_{v}}$ has a state zero unitary for every $v\in \cV$.  For $V_1,V_2\subseteq\cV$, $M_{V_1}$ is amenable relative to $M_{V_2}$ inside $M$ if and only if both of the following occur:
\begin{enumerate}[(1)]
    \item $M_v$ is amenable for each $v \in V_1 \setminus V_2$. \label{item: amenable vertex wise intro}
    \item \label{item: rel amen free product intro} For each $v \in V_1 \setminus V_2$ and $w \in V_1$ with $v \neq w$, either $v$ and $w$ are adjacent or both of the following occur:
    \begin{enumerate}[(a)]
        \item $M_{\{v,w\}} = M_v * M_w$ is either amenable if $w \not \in V_2$ or is amenable relative to $M_w$ if $w \in V_2$.\label{item: case of two vertices relative amen intro}
        \item $v$ and $w$ are adjacent to all vertices in $V_1 \setminus \{v,w\}$.\label{item: relative amen adjacency constraint intro}
    \end{enumerate}
\end{enumerate}
\end{introthm}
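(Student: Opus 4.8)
The plan is to reduce everything to Popa's bimodule criterion and then feed in the paper's explicit description of graph-product bimodules. Recall that $M_{V_1}$ is amenable relative to $M_{V_2}$ exactly when the trivial $M_{V_1}$-$M_{V_1}$ bimodule $L^2(M_{V_1})$ is weakly contained in $L^2\langle M,e_{M_{V_2}}\rangle\cong L^2(M)\otimes_{M_{V_2}}L^2(M)$. The engine is the complete bimodule calculation for subgraph subalgebras: writing vectors of $L^2(M)$ in reduced words, it presents $L^2(M)$ as an $M_{V_1}$-$M_{V_2}$ bimodule as a direct sum of explicit pieces, each either absorbed into $L^2(M_{V_1})$ (when the boundary syllables can be pushed into the two subalgebras) or coarse in precisely the syllables lying over $V_1\setminus V_2$. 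Throughout I use the standard permanence properties: relative amenability passes to subalgebras of the first algebra and is monotone in the second; it obeys the transitivity rule that $P$ amenable relative to an amenable $Q$ forces $P$ amenable; it is insensitive to enlarging the ambient along state-preserving conditional expectations (which all graph-product inclusions carry); and it is stable under the von Neumann tensor products that, in the graph-product formalism, realize graph joins.

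For the necessity of (1), restrict to $M_v$ with $v\in V_1\setminus V_2$. Since $v\notin V_2$, no $v$-syllable can be absorbed on the right, so the bimodule calculation presents the relevant $M_v$-$M_v$ bimodule as quasi-contained in a coarse one; weak containment of the trivial bimodule then forces $M_v$ amenable. For (2), fix $v\in V_1\setminus V_2$ non-adjacent to $w\in V_1$ and let $L_v\subseteq V_1$ denote the non-neighbours of $v$. As $v$ is non-adjacent to all of $L_v$, one has $M_{\{v\}\cup L_v}=M_v * M_{L_v}$, which is amenable relative to $M_{V_2}$; the support-reduction built into the bimodule calculation (only syllables over $V_2\cap(\{v\}\cup L_v)=L_v\cap V_2$ can help, the rest being coarse) together with monotonicity upgrades this to $M_v * M_{L_v}$ being amenable relative to $M_{L_v}$. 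Reading this through the reduced-word bimodule shows the free direction of $M_v$ over $M_{L_v}$ is too coarse to allow the trivial bimodule unless $L_v$ comes from a single vertex, forcing $|L_v|\le 1$; this is the $v$-half of (b). Restricting once more to $M_{\{v,w\}}$ and support-reducing gives (a), and the $w$-half of (b) follows for $w\in V_1\setminus V_2$ by the same argument applied to $w$.

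For sufficiency, assume (1) and (2) and first read off the shape of $\cG|_{V_1}$. Condition (2)(b) forces every $v\in V_1\setminus V_2$ to have at most one non-neighbour in $V_1$ and forbids two controlled vertices from sharing a non-neighbour, so the non-adjacencies meeting $V_1\setminus V_2$ form a partial matching $\{v_i,w_i\}$ (with $v_i\in V_1\setminus V_2$) whose pairs are joined to all remaining vertices. Hence $\cG|_{V_1}$ is the graph join of the co-edges $\{v_i,w_i\}$ with the induced subgraph on the rest $C$, so that
\[ M_{V_1}\;\cong\;\Big(\overline{\otimes}_i\, M_{\{v_i,w_i\}}\Big)\,\overline{\otimes}\, M_C,\qquad M_{\{v_i,w_i\}}=M_{v_i}*M_{w_i}. \]
Inside $C$ every controlled vertex is adjacent to all of $C$, so $M_C=\big(\overline{\otimes}_{u\in C\setminus V_2}M_u\big)\,\overline{\otimes}\,M_{C\cap V_2}$ with each $M_u$ amenable by (1); thus $M_C$ is amenable relative to $M_{C\cap V_2}$. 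Each matched factor $M_{\{v_i,w_i\}}$ is amenable, or amenable relative to $M_{w_i}$, by (2)(a). Since the targets assemble (the relevant $w_i$ and $C\cap V_2$ are mutually adjacent) into an induced-subgraph algebra $M_S\subseteq M_{V_2}$, tensor permanence and ambient-insensitivity give $M_{V_1}$ amenable relative to $M_S$, and monotonicity upgrades this to amenability relative to $M_{V_2}$.

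The main obstacle is the necessity of the $w$-half of (b) when $w\in V_2$. Here $w$ is not controlled, so one cannot route the argument through amenability of $M_w$, and transitivity gives nothing when $M_w$ is non-amenable; a further non-neighbour $u$ of $w$ then produces the configuration $M_{\{v,u,w\}}=(M_v\overline{\otimes}M_u)*M_w$ with target $M_u * M_w$, in which $M_v$ is free from $M_w$ but commutes with $M_u$. One must show directly from the reduced-word bimodule that this free direction is coarse and is not absorbed by the free factor $M_u$ inside the target, so the trivial bimodule drops out of the weak closure. This ambient-sensitive estimate — distinguishing the ``good'' case where $w$ is adjacent to $u$ (so $M_{\{v,u,w\}}$ splits as $M_u\overline{\otimes}(M_v * M_w)$ and relative amenability is governed purely by (a)) from the ``bad'' case above — is exactly where the full strength of the bimodule calculation, rather than soft permanence, is indispensable, and it is also what isolates the abstract two-vertex condition (a) (which collapses to $\dim=2$ in the tracial specialization).
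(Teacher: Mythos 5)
Your sufficiency argument and your treatment of condition (1) track the paper's own proof closely (graph-join decomposition of $\cG|_{V_1}$, tensor permanence, the Monod--Popa commuting-square reduction, and the bimodule computation applied to a single vertex $v \in V_1\setminus V_2$), but the necessity of condition (2) is where your proposal has a genuine gap --- one you yourself flag as ``the main obstacle'' and then describe rather than resolve. Saying that ``one must show directly from the reduced-word bimodule that this free direction is coarse and is not absorbed by the free factor'' is a statement of the required claim, not a proof of it, and the heuristic behind it is misleading: the bimodule $L^2\ominus$-piece in question is the \emph{relative} coarse bimodule $L^2(M_{V_0})\otimes_{M_U}L^2(M_{V_0})$, and the trivial bimodule \emph{can} be weakly contained in such a relative coarse bimodule --- that is precisely what relative amenability means --- so no coarseness estimate by itself rules anything out. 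The paper closes this gap with three ingredients your proposal never supplies: (i) factoriality of $M_{\{v,u,w\}}$ when the induced three-vertex subgraph is join-irreducible (Theorem \ref{thm:factoriality}, which is where the state-zero-unitary hypothesis enters); (ii) Lemma \ref{lem: inducing to a direct summand}, which for factors localizes weak containment in the \emph{finite direct sum} produced by the fusion rules to a single summand $\mathscr{H}_U$, yielding relative amenability of $M_{\{v,u,w\}}$ to a single $M_U$ with $U\subseteq\{w,u\}$; and (iii) the paradoxical-decomposition Lemma \ref{lem: free product non-amenability}, applied to $M_{\{v,u,w\}}\cong M_{\{v,u\}}*_{M_u}M_{\{w,u\}}$ with the unitaries $u_0$, $u_1:=xu_0x^*$ ($x$ a state zero unitary in $M_u^{\varphi_u}$), $u_2$, which shows no conditional expectation $\ip{M_{V_0},e_{M_{\{w,u\}}}}\to M_{V_0}$ of any kind exists and so delivers the contradiction. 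It is this paradoxicality argument, not a weak-containment dichotomy, that kills the bad configuration, and it works uniformly whether or not $w\in V_2$.

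Two further points. First, your ``$v$-half of (b)'' argument ($|L_v|\le 1$ because the free direction over $M_{L_v}$ is ``too coarse'') fails for the same reason and also needs the paradoxicality lemma; similarly your one-line derivation of (a) skips the case analysis the paper must do: when $\dim(M_v)=\dim(M_w)=2$ one invokes Dykema (the free product is amenable), and when $\max(\dim M_v,\dim M_w)\ge 3$ one needs Ueda's factoriality of $M_v*M_w$ before the direct-summand lemma can be applied at all --- without factoriality the direct sum over $U\subseteq\{v,w\}\cap V_2$ cannot be localized. Second, your opening assertion that relative amenability holds ``exactly when'' the trivial bimodule is weakly contained in $L^2(M)\otimes_{M_{V_2}}L^2(M)$ is stronger than what is known: the conditional expectation produced from weak containment is not assumed normal on $M$, and the paper is explicit that these notions differ a priori; only the forward implication is used in the necessity direction, and sufficiency is proved by direct construction, so the architecture must respect this asymmetry.
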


\noindent Our assumption that the centralizer subalgebra $M_v^{\varphi_v}$ admits a  {state zero unitary} is mild (see Appendix~\ref{sec: state zero} for a characterization in terms of minimal  central  projections). 
Indeed, in the tracial setting this holds for any nontrivial group von Neumann algebra, any diffuse algebra, and any finite factor but $\bC$.
Moreover, this assumption in the non-tracial setting already appears in foundational works in the non-tracial setting \cite{Bar95, DimaFAW}.
While this was removed in the free product setting through the work of Ueda \cite{UedaTypeIIIfreeproduct}, doing so in our setting is likely to involve significant effort which we leave for future investigation.

A significant tool for our study of relative amenability is to apply work of \cite{BMOCoAmenable} (building off of prior work of \cite{ADAmenableCorr,HaagerupForms}), which states that there is a (not assumed to be normal) conditional expectation $\ip{M,e_{Q}}\to N$ when $N,Q\leq M$ are with expectation if and only if
\[_{N}L^{2}(M)_{N}\prec _{B}L^{2}(\ip{M,e_{Q}})_{N}.\]
Strictly speaking, the existence of such a conditional expectation is different from $N$ being amenable relative to $Q$ inside of $M$, but this turns out to be not a problem.
Since we may view $L^{2}(\ip{M,e_{Q}})$ as a relative tensor product, it thus makes sense to address relative amenability via understanding the bimodule structure of $_{M_{V_{1}}}L^{2}(M)_{M_{V_{2}}}$ for $V_{1},V_{2}$ subsets of the vertices, as well as the fusion rules for such bimodules.
We obtain a complete description of such bimodules and their fusion rules in terms of the combinatorial structure of the graph, and the dimensions of the algebras attached to the vertices.

\begin{introthm} \label{thm: bimodules intro}
Let $\cG=(\cV,\cE)$ be a graph, let $\{(M_v,\varphi_v)\colon v\in \cV\}$ be a family of statial von Neumann algebras, and let $(M,\varphi) = \gp_{v \in \cG} (M_v,\varphi_v)$. For $V_1, V_2 \subseteq \cV$ one has
    \begin{equation} \label{eq: bimodule multiplicity intro}
        _{M_{V_1}} L^2(M,\varphi)_{M_{V_2}} \cong \bigoplus_{U \subseteq V_1 \cap V_2} (_{M_{V_1}} L^2(M_{V_1},\varphi_{V_1}) \underset{M_U}{\otimes} L^2(M_{V_2},\varphi_{V_2})_{M_{V_2}})^{\oplus k_{\cG}(V_1,V_2,U)}
    \end{equation}
where $k_{\cG}(V_1,V_2,U)$ is explicitly determined in terms of the graph structure and dimension of the vertex algebras (see Theorem \ref{thm: bimodules} for the precise description). Moreover, we have the following fusion rules.
For $V_1, V_2 \subseteq \cV$ and $U \subseteq V_1 \cap V_2$, let
\[
    \mathscr{H}_U(V_1,V_2) = _{M_{V_1}} L^2(M_{V_1},\varphi_{V_1}) \underset{M_{U}}{\otimes} L^2(M_{V_2},\varphi_{V_2})_{M_{V_2}}
\]
Then for $U_1 \subseteq V_1 \cap V_2$ and $U_2 \subseteq V_2 \cap V_3$,
\[
\mathscr{H}_{U_1}(V_1,V_2) \otimes_{M_{V_2}} \mathscr{H}_{U_2}(V_2,V_3)
\cong
\bigoplus_{W \subseteq U_1 \cap U_2} \mathscr{H}_W(V_1,V_3)^{\oplus k_{\cG_2}(U_1,U_2,W)},
\]
where $\cG_2$ is the subgraph of $\cG$ induced by $V_2$.
\end{introthm}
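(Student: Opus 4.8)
The plan is to prove the bimodule decomposition \eqref{eq: bimodule multiplicity intro} first and then deduce the fusion rules as a formal consequence. The starting point is the orthogonal decomposition of $L^2(M,\varphi)$ into \emph{reduced word spaces} recorded in Section~\ref{sec:graphproduct}: writing $\mathring{L}^2(M_v) = L^2(M_v,\varphi_v) \ominus \bC\Omega_v$, one has $L^2(M,\varphi) = \bC\Omega \oplus \bigoplus_{w} \mathcal{H}_w$, where $w = (v_1,\dots,v_n)$ runs over reduced words in the graph monoid of $\cG$ and $\mathcal{H}_w = \mathring{L}^2(M_{v_1}) \otimes \cdots \otimes \mathring{L}^2(M_{v_n})$. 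The left action of $M_{V_1}$ and the right action of $M_{V_2}$ are implemented by left and right multiplication, which act on reduced words by the usual creation/reduction rules. I would first establish the analogous reduced-word description of the model bimodule $\mathscr{H}_U(V_1,V_2) = L^2(M_{V_1}) \otimes_{M_U} L^2(M_{V_2})$: using that $M_U$ sits in both $M_{V_1}$ and $M_{V_2}$ with expectation, $L^2(M_{V_1})$ decomposes as a right $M_U$-module over reduced words $a$ in $V_1$ that do not end in $U$, and collapsing $L^2(M_U) \otimes_{M_U} L^2(M_{V_2}) \cong L^2(M_{V_2})$ exhibits $\mathscr{H}_U(V_1,V_2)$ as an orthogonal sum $\bigoplus_{a,c}\mathcal{H}_a \otimes \mathcal{H}_c$ indexed by pairs $(a,c)$ with $a$ a reduced word in $V_1$ not ending in $U$ and $c$ an arbitrary reduced word in $V_2$.

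With both sides described in terms of reduced words, the core step is a combinatorial bijection. Every reduced word $w$ of $\cG$ admits a canonical factorization (up to the commutation relations) $w \sim a\, m\, c$, where $a$ is the maximal left segment whose letters lie in $V_1$, $c$ is the maximal right segment whose letters lie in $V_2$, and $m$ is the remaining \emph{core}; the linking set $U \subseteq V_1 \cap V_2$ is then determined by $m$ (it records the $V_1\cap V_2$-labels that the relative tensor product must identify along the interface). Fixing the core $m$ and letting $a,c$ vary, the span of the corresponding $\mathcal{H}_w$ is $M_{V_1}$-$M_{V_2}$-invariant and, by the description in the previous paragraph, is isomorphic as a bimodule to one copy of $\mathscr{H}_U(V_1,V_2)$. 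Summing over cores, $k_{\cG}(V_1,V_2,U)$ is exactly the number of admissible cores producing the linking set $U$, whose explicit combinatorial and dimensional form is recorded in Theorem~\ref{thm: bimodules}. Checking that this identification is simultaneously left-$M_{V_1}$-equivariant, right-$M_{V_2}$-equivariant, and isometric requires, in the statial setting, tracking the modular data and verifying that the inner products coming from the relative tensor product (via the conditional expectations $M_{V_i} \to M_U$) match those coming from $\varphi$ on the word spaces.

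The fusion rules then follow formally. Using associativity of Connes fusion and the identity $L^2(N) \otimes_N L^2(N) \cong L^2(N)$,
\[
\mathscr{H}_{U_1}(V_1,V_2) \otimes_{M_{V_2}} \mathscr{H}_{U_2}(V_2,V_3) \cong L^2(M_{V_1}) \underset{M_{U_1}}{\otimes} L^2(M_{V_2}) \underset{M_{U_2}}{\otimes} L^2(M_{V_3}),
\]
so the only remaining task is to decompose $L^2(M_{V_2})$ as an $M_{U_1}$-$M_{U_2}$-bimodule, where $U_1,U_2 \subseteq V_2$. Applying the already-proven decomposition \eqref{eq: bimodule multiplicity intro} to the induced subgraph $\cG_2 = \cG|_{V_2}$ and the pair $U_1,U_2$ gives
\[
{}_{M_{U_1}} L^2(M_{V_2})_{M_{U_2}} \cong \bigoplus_{W \subseteq U_1 \cap U_2} \left( L^2(M_{U_1}) \underset{M_W}{\otimes} L^2(M_{U_2}) \right)^{\oplus k_{\cG_2}(U_1,U_2,W)}.
\]
Substituting this in, distributing $\otimes$ over $\oplus$, and collapsing $L^2(M_{V_1}) \otimes_{M_{U_1}} L^2(M_{U_1}) \cong L^2(M_{V_1})$ and $L^2(M_{U_2}) \otimes_{M_{U_2}} L^2(M_{V_3}) \cong L^2(M_{V_3})$ as right and left $M_W$-modules (valid since $W \subseteq U_1 \subseteq V_1$ and $W \subseteq U_2 \subseteq V_3$), each summand becomes $L^2(M_{V_1}) \otimes_{M_W} L^2(M_{V_3}) = \mathscr{H}_W(V_1,V_3)$, yielding the stated fusion rule.

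The main obstacle is the combinatorial heart of the decomposition: pinning down the canonical factorization $w \sim a\,m\,c$ and the correct definition of the linking set $U$ so that the interface letters are precisely those identified by $M_U$, and then proving the resulting map is a bimodule \emph{isometry} rather than merely a Hilbert-space isomorphism. In the non-tracial case this is delicate because the right $M_U$-module structure of $L^2(M_{V_1})$ and the inner product on the relative tensor product both involve the modular operator, so the matching of inner products on word spaces must be carried out with care; the enumeration yielding the exact value of $k_{\cG}(V_1,V_2,U)$ is then a finite, if intricate, counting problem.
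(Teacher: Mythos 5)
Your strategy coincides with the paper's: your ``cores'' are exactly the words the paper calls $\cG$-reduced relative to $(V_1,V_2)$, your factorization $w \sim a\,m\,c$ is the content of Lemma~\ref{lem: word relative decomposition}, the invariant subspaces you attach to cores are the cyclic bimodules $H_\xi$ of Lemma~\ref{lem: bimodule basis lemma}, and your derivation of the fusion rules (associativity of Connes fusion, collapsing the identity bimodules, then applying the decomposition inside the induced subgraph $\cG_2$ to the pair $(U_1,U_2)$) is word-for-word the proof of Proposition~\ref{prop: fusion}. Within this shared outline, however, one of your claims is false as stated: fixing a core \emph{word} $m = w_1\cdots w_\ell$ and letting $a,c$ vary, the span of the corresponding word spaces is \emph{not} a single copy of $\mathscr{H}_U(V_1,V_2)$. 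It contains all of $\mathcal{H}_m = \mathring{L}^2(M_{w_1})\otimes\cdots\otimes\mathring{L}^2(M_{w_\ell})$, and each elementary-tensor unit vector in $\mathcal{H}_m$ generates its own copy, so the span is $\mathscr{H}_U(V_1,V_2)^{\oplus \dim \mathcal{H}_m}$. Consequently $k_{\cG}(V_1,V_2,U)$ is not ``the number of admissible cores'' but the weighted count
\[
k_{\cG}(V_1,V_2,U) = \sum_{w_1\cdots w_\ell \in \cW_{\cG}(V_1,V_2,U)} \prod_{j=1}^\ell \bigl(\dim L^2(M_{w_j},\varphi_{w_j})-1\bigr),
\]
which is what Theorem~\ref{thm: bimodules} records; the repair is the paper's device of fixing an orthonormal basis $\mathcal{B}_v$ of $L^2(M_v,\varphi_v)\ominus \bC\hat{1}$ for each vertex and indexing the cyclic bimodules by basis-labeled cores.

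The more serious issue is that the step you defer as ``the main obstacle'' is the actual mathematical content of the theorem, and your proposal supplies no mechanism for it. The paper's mechanism is the conditional expectation formula of Lemma~\ref{lem: Pimnser Popa computation}: if $x$ and $\tilde{x}$ are products of centered elements over words $\cG$-reduced relative to $(V_1,V_2)$, then $E_{M_{V_2}}(\tilde{x}^*yx) = \varphi(\tilde{x}^*x)\,E_{M_U}(y)$ for all $y \in M_{V_1}$, where $U$ is precisely the set of vertices of $V_1\cap V_2$ adjacent to \emph{every} letter of the core. This single identity pins down the linking set (answering your open question of how $U$ should be defined), proves orthogonality of the subbimodules attached to inequivalent cores, and, evaluated against $\varphi$-analytic $b \in M_{V_2}$ via $\sigma_{-i/2}$, yields the matching of inner products
\[
\ip{a\cdot \xi \cdot b, \xi}_\varphi = \ip{a\cdot (1\otimes_{M_U}1)\cdot b, 1\otimes_{M_U}1}
\]
that constitutes the bimodule isometry in the statial setting. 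A further approximation step (Lemma~\ref{lem: extension of words to vectors}) is also needed because, absent separability of preduals, the basis vectors of $L^2(M_v,\varphi_v)\ominus\bC\hat{1}$ need not come from elements of $M_v$, so the expectation formula must be extended from products of algebra elements to products of $L^2$-vectors. Without these ingredients, or equivalents, the identification at the heart of your argument cannot be verified; they are the missing key lemmas, not routine checks.
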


\noindent Theorem~\ref{thm: bimodules intro} is proved in two parts in the body of the paper, in Theorem~\ref{thm: bimodules} and Proposition~\ref{prop: fusion}.
The utility of such a precise computation can be seen from Theorem~\ref{thm: weakly coarse}, which provides a very easy to check characterization of when certain bimodules are weakly coarse.

We also give a complete characterization of fullness, factoriality, and diffuseness.

\begin{introthm}\label{thm: char diffuse factor full intro}
Let $\cG=(\cV,\cE)$ be a graph, let $\{(M_v,\varphi_v)\colon v\in \cV\}$ be a family of statial von Neumann algebras, and let $(M,\varphi) = \gp_{v \in \cG} (M_v,\varphi_v)$.  Assume $M_v^{\varphi_{v}}$ has a state zero unitary for every $v\in \cV$.
\begin{enumerate}[(1)]
    \item $M$ is diffuse if and only if either: (a) some $M_v$ is diffuse; or (b) $\cG$ is not a complete graph.
    \label{item: diffuse characterization intro}
    
    \item $M$ is a factor if and only if both: (a) whenever a vertex $v$ is adjacent to all other vertices of $\cG$, then $M_v$ is a factor; and (b) if $v$ and $w$ are not adjacent to each other but are adjacent to all other vertices of $\cG$, then $\max(\dim{M_{v}}, \dim{M_{w}})\geq 3$.  
    \label{item: factor charcaterization intro}
    
    \item   $M$ is full if and only if both: (a) whenever a vertex $v$ is adjacent to all other vertices of $\cG$, then $M_v$ is full; and (b) if $v$ and $w$ are not adjacent to each other but are adjacent to all other vertices of $\cG$, then $\max(\dim{M_v}, \dim{M_w}) \geq 3$. \label{item: full characterization intro}
\end{enumerate}
\end{introthm}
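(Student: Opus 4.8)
The plan is to run a single structural reduction that drives all three parts, and then isolate the two degenerate configurations flagged by (a) and (b). Let $\cV_c \subseteq \cV$ be the set of \emph{dominant} vertices, i.e. those adjacent to every other vertex; any two dominant vertices are adjacent, so $\cG|_{\cV_c}$ is complete and $\cV_c$ sits in tensor position with the rest, giving
\[
(M,\varphi) \cong (M_{\cV_c},\varphi_{\cV_c}) \,\overline{\otimes}\, (M_{\cV'},\varphi_{\cV'}), \qquad \cV' := \cV \setminus \cV_c,
\]
with $M_{\cV_c} \cong \overline{\bigotimes}_{v \in \cV_c} M_v$, and one checks $\cG|_{\cV'}$ has no dominant vertex. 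I would first record three elementary tensor facts: $A \,\overline{\otimes}\, B$ is diffuse iff $A$ or $B$ is; it is a factor iff both are; and it is a full factor iff both are. This separates the contribution of $\cV_c$ (where $\overline{\bigotimes}_v M_v$ is diffuse/factorial/full iff some/every $M_v$ is) from that of the centreless graph $\cG|_{\cV'}$.

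For diffuseness: if $\cG$ is complete then $\cV'=\emptyset$ and $M=\overline{\bigotimes}_v M_v$, which carries the minimal projection $\bigotimes_v p_v$ whenever no $M_v$ is diffuse, giving the contrapositive of the ``only if'' direction (the ``if'' direction here is the tensor fact). If $\cG$ is not complete, I would pick non-adjacent $v,w$ and state-zero unitaries $u_v\in M_v^{\varphi_v}$, $u_w\in M_w^{\varphi_w}$; these are $\varphi$-free, so freeness forces $\varphi((u_vu_w)^n)=0$ for all $n\neq 0$, whence $u_vu_w$ is a Haar unitary generating a diffuse abelian $A\subseteq M^{\varphi}$. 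As $A$ lies in the centralizer there is a $\varphi$-preserving normal conditional expectation $M\to A$, and I would invoke the lemma that a von Neumann algebra admitting a diffuse subalgebra with faithful normal expectation is itself diffuse. The same lemma, applied to $M_v\subseteq M$ when some $M_v$ is diffuse, finishes the ``if'' direction.

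For factoriality and fullness I would peel off the degenerate pieces as further tensor factors. Each $v\in\cV_c$ contributes a tensor factor $M_v$, which must be a factor (resp.\ full) by the tensor facts; this is condition (a). Next, a pair $v,w$ that is non-adjacent but adjacent to every vertex of $\cV\setminus\{v,w\}$ is again in tensor position with everything else, so it contributes a free factor $M_{\{v,w\}}=M_v*M_w$. Here the state-zero-unitary hypothesis does real work: a state-zero unitary in a two-dimensional algebra forces its two weights to be equal, so a $2$-dimensional $M_v$ must be $\bC^2$ with its trace; hence if $\dim M_v=\dim M_w=2$ then $M_v*M_w\cong\bC^2*\bC^2\cong L(D_\infty)$, which is neither a factor nor full, while if $\max(\dim M_v,\dim M_w)\geq 3$ the free product is a full factor by the theory of free products of algebras with states. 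This is exactly condition (b). After removing all such vertices and pairs we reach a graph with no dominant vertex and no such non-adjacent co-dominant pair, and the crux is the \emph{core claim}: over such a graph the graph product is always a full factor.

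I expect the core claim to be the main obstacle, and I would attack it through the bimodule description in Theorem~\ref{thm: bimodules intro}. For factoriality one shows the only $M_{\mathrm{core}}$-central vectors in $L^2(M_{\mathrm{core}})$ are scalars: expanding a putative central element in reduced words, the absence of dominant vertices and co-dominant pairs guarantees that every nontrivial word is genuinely moved by left or right multiplication by some $M_v$, forcing all higher components to vanish. For fullness the same combinatorial input should yield a spectral gap, the coarse/mixing nature of the off-diagonal bimodules (again read off from Theorem~\ref{thm: bimodules intro} and its fusion rules) ruling out nontrivial central sequences, so that $M_{\mathrm{core}}$ has trivial asymptotic centralizer and $\Inn(M_{\mathrm{core}})$ is closed. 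Reassembling the tensor decomposition then shows $M$ is a factor (resp.\ full) precisely when (a) and (b) hold, completing all three parts.
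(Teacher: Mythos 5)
Your structural reduction is sound and essentially parallels the paper's: peeling off dominant vertices and non-adjacent co-dominant pairs is exactly the paper's identification of the size-one and size-two components of the graph join decomposition (Proposition~\ref{prop: graph join decomp to tensor decomp} and Lemma~\ref{lem: join component characterization}), your treatment of the two-vertex pieces via Dykema and Ueda matches Theorem~\ref{thm:factoriality}, and your Haar-unitary argument for diffuseness is the paper's. The problem is that everything then rests on your ``core claim'' --- that over a graph with no dominant vertex and no non-adjacent co-dominant pair the graph product is a full factor --- and this, which is the entire technical content of the hard case, you do not prove. Worse, the route you propose cannot work in the stated generality. The off-diagonal bimodules furnished by Theorem~\ref{thm: bimodules intro} have the form $L^2(M_{V_1})\otimes_{M_U} L^2(M_{V_2})$, and these are weakly coarse precisely when the relevant algebras $M_U$ are amenable: this is exactly Theorem~\ref{thm: weakly coarse}, which characterizes weak coarseness of $L^2(M)\ominus L^2(M_{V_0})$ as an $M_{V_0}$-$M_{V_0}$-bimodule by amenability of $M_{\gstar{v}\cap V_0}$ for $v\notin V_0$. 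If every vertex algebra is, say, $L(\bF_2)$, none of these bimodules is weakly coarse, so there is no ``coarse/mixing nature'' to exploit and no spectral-gap argument of the kind you sketch can be run; indeed, the remark following Theorem~\ref{thm: weakly coarse} notes that the bimodule approach recovers the fullness characterization only for graph products of \emph{amenable} von Neumann algebras. (Your factoriality sketch has a further, smaller defect in the statial setting: for non-tracial $\varphi$, an element of the center does not give a bimodule-central vector in $L^2(M,\varphi)$, since the right action is twisted by $\sigma^\varphi_{-i/2}$.)

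What the paper actually does for the core case is genuinely different. For a join-irreducible graph on at least three vertices it works in the Ocneanu ultrapower: by Lemma~\ref{lem:graphdecomposition}, either $\cG$ is disconnected or every vertex $v_0$ admits $v_1,v_2$ with $v_1\sim v_0$, $v_2\not\sim v_0$, $v_1\not\sim v_2$. Writing $M = M_{\gstar{v_0}} *_{M_{\glink{v_0}}} M_{\cV\setminus\{v_0\}}$ and feeding unitaries built from state zero unitaries at $v_0$, $v_1$, $v_2$ into the amalgamated-free-product central sequence Lemma~\ref{lem:freeproductcenter} gives $M'\cap M^\omega\subseteq (M_{\glink{v_0}})^\omega$; intersecting over all $v_0$ via the commuting-square Lemma~\ref{lem: commuting squares and intersections in ups}, together with $\bigcap_{v_0\in\cV}\glink{v_0}=\varnothing$, yields $M'\cap M^\omega=\bC$, hence fullness, and factoriality comes for free since $Z(M)\subseteq M'\cap M^\omega$. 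This ultrapower/central-sequence argument --- not bimodule coarseness --- is the missing ingredient needed to establish your core claim.
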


For tracial algebras, we also provide a complete characterization of relative diffuseness (or lack of intertwining) of $M_{V_{1}}$ relative to $M_{V_{2}}$ analogous to Theorem~\ref{thm: char diffuse factor full intro}.(\ref{item: diffuse characterization intro}). We refer the reader to Proposition \ref{prop: relative diffuseness} for the relevant statement, which amounts to in (a) requiring that a diffuse algebra be attached to a vertex in $V_{1}\setminus V_{2}$, and replacing the ``non-completeness" in (b) with the lack of edge between a vertex in $V_{1}\setminus V_{2}$ with a vertex in $V_{1}\cap V_{2}$.

\subsection*{Acknowledgements}
We thank the American Institute of Mathematics SQuaRES program for hosting us in Aprils 2023 and 2024 to collaborate on this project. BH was supported by NSF CAREER grant DMS-2144739.
DJ was supported by an NSF postdoctoral fellowship DMS-2002826, the Fields Institute, and acknowledges the support of the Natural Sciences and Engineering Research Council of Canada (NSERC).
SKE was supported by NSF grant DMS-2350049. BN was supported by NSF grant DMS-2247047.

\tableofcontents

\section{Preliminaries}\label{sec: preliminaries}

\subsection{$\cG$-independence and graph products}\label{sec:graphproduct}
Throughout, a \emph{graph} is a pair $(\cV, \cE)$ where $\cV$ is a finite set of \emph{vertices} and $\cE \subseteq \cV \times \cV$ is a set of \emph{edges} such that $(u, v) \in \cE$ if and only if $(v, u) \in \cE$; we also insist that $(u, u) \notin \cE$ for all $u \in \cV$.
In other words, our graphs are finite and simple (undirected, and without self-loops).
We write $v\sim w$ (respectively, $v\not\sim w$) whenever $(v,w)\in \cE$ (respectively, $(v,w)\not\in \cE$); we make the dependence on the graph implicit.
For a given $v\in \cV$, we denote the sphere centered at $v$ by $\glink{v}=\{w\in \cV: w\sim v \},$ and the ball centered at $v$ by $\gstar{v}:=\glink{v}\cup \{v\}$.

A word $v_1 \cdots v_n$ in the alphabet $\cV$ is said to be \emph{$\cG$-reduced} if whenever $i < k$ with $v_i = v_k$, there is some $i < j < k$ so that $(v_i, v_j) \notin \cE$.
(By repeatedly applying this condition, we could further assume that $v_i \neq v_j$.)

Suppose that $\cG = (\cV, \cE)$ is a graph and $(M, \varphi)$ is a statial von Neumann algebra.
    For each $v \in \cV$, let $1 \in M_v \subseteq M$ be a unital $*$-subalgebra.
    Then the family $\{M_v:v \in \cV\}$ is said to be \emph{$\cG$-independent} if whenever $v_1 \cdots v_n$ is a $\cG$-reduced word and $x_1, \ldots, x_n \in M$ with $x_i \in \ker(\varphi) \cap M_{v_i}$, we have 
        \[
            \varphi(x_1\cdots x_n) = 0.
        \]
On the other hand, given a graph $\cG = (\cV, \cE)$ and a family of statial von Neumann algebras $\{(M_v, \varphi_v):v \in \cV\}$, there is up to isomorphism a unique statial von Neumann algebra $(M, \varphi)$ and state-preserving inclusions $M_v \hookrightarrow M$ so that the images of the $M_v$ are $\cG$-independent and generate $M$.
We refer to this algebra $(M, \varphi)$ as the \emph{graph product} of the family $\{(M_v, \varphi_v): v\in \cV\}$ and write \[(M, \varphi) = \gp_{v \in \cG} (M_v, \varphi_v).\]

The existence and uniqueness of the graph product was shown by M\l{}otkowski and also by Caspers and Fima; moreover, if each $\varphi_v$ is tracial then so is the state on the graph product \cite{Mlot2004, CaFi17}.

\subsection{Structural properties of von Neumann algebras}

We recall the definitions of the structural properties appearing in the theorems in the introduction of the paper.

A von Neumann algebra $M$ is said to be \emph{full} if whenever a bounded net $(x_i)_{i\in I}\subset M$ satisfies $\| \varphi([x_i,\, \cdot\ ])\|\to 0$ for all $\varphi\in M_*$ then there exists a net of scalars $(\lambda_i)_{i\in I}\subset \bC$ such that $(x_i - \lambda_i) \to 0$ strongly. This notion was introduced for von Neumann algebras with separable preduals by Connes in \cite{ConnesFull}, where he showed it was equivalent to $\Inn(M)$ being closed in $\Aut(M)$ under the point norm topology \cite[Theorem 3.5]{ConnesFull}. \cite{HMVUltra,AndoHaagerup} considered this notion in the more general $\sigma$-finite case, where they showed it was equivalent to $M'\cap M^\omega=\bC$ \cite[Proposition 4.35, Theorem 5.2]{AndoHaagerup}, \cite[Corollary 3.7]{HMVUltra}. Here $M^\omega$ denotes the Ocneanu ultrapower (see Appendix~\ref{sec: Ocneaun ultrapowers}), and in this paper we will always verify fullness by proving $M'\cap M^\omega=\bC$. We note that the proof of this implication can be found in \cite[Proposition 4.35]{AndoHaagerup}, and in fact it is an exercise from \cite[Proposition 2.8]{ConnesFull}.

Let  $A,B\leq M$ be inclusions of von Neumann algebras with conditional expectations $E_A, E_B$. Let $\ip{M,e_B}$ denote the basic construction associated to the inclusion $(B\subset M, E_B)$. We say that $A$ is \emph{amenable relative to $B$ inside $M$} if there exists a conditional expectation $\Phi\colon \ip{M,e_B} \to A$ such that $\Phi|_M$ is normal \cite{PopaCorr} (see also \cite{PopaQR} and \cite[Definition 4]{PoMonodRelativeAmen}).

\section{ Diffuseness, factoriality, and fullness}\label{sec: diff fact full}

In this section, we classify when  a graph product $\mathrm{W}^*$-algebra has various properties (diffuseness, amenability, factoriality, fullness) based on the input algebras $M_v$ (see \cite[Corollary 2.29]{CaFi17} for a partial result in this direction).

We will use the graph join operation to produce a tensor product decomposition for the graph product, thereby reducing the study of various properties of the graph product over $\mathcal{G}$ to the properties of the subgraphs $\mathcal{G}_1$, \dots, $\mathcal{G}_n$. Given graphs $\mathcal{G}_j = (\mathcal{V}_j,\mathcal{E}_j)$ for $j = 1$, \dots, $n$, the \emph{graph join} $\mathcal{G}_1 + \mathcal{G}_2 + \dots + \mathcal{G}_n$ is the graph obtained from the disjoint union of $\mathcal{G}_1$, \dots, $\mathcal{G}_n$ by adding edges from every vertex of $\mathcal{G}_i$ to every vertex of $\mathcal{G}_j$ for $i \neq j$.  We say that $\mathcal{G}$ is \textbf{join-irreducible} if it is nonempty and cannot be decomposed as a graph join of two nonempty graphs.  By \cite[Theorem 1]{JoinDecomposition}, every graph $\mathcal{G}$ has a unique (up to permutation) decomposition as $\mathcal{G}_1 + \dots + \mathcal{G}_n$, where $\mathcal{G}_1$, \dots, $\mathcal{G}_n$ are join-irreducible (here we allow a single vertex to be considered as a join-irreducible graph). The next proposition follows immediately from the definition of the graph product for statial von Neumann algebras.

\begin{prop}\label{prop: graph join decomp to tensor decomp}
Let $\cG=(\cV,\cE)$ be a graph and let $\{(M_v,\varphi_v):v \in \cV\}$ be a family of stacial von Neumann algebras. If $\mathcal{G} = \mathcal{G}_1 + \dots + \mathcal{G}_n$ for graphs $\cG_j=(\cV_j,\cE_j)$, $j=1,\ldots, n$, then
\[
\gp_{v \in \mathcal{G}} (M_v,\varphi_v) = \underset{1\leq j\leq n}{\overline{\bigotimes}} \ \gp_{v \in \mathcal{G}_j} (M_v,\varphi_v) .
\]
\end{prop}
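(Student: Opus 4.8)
The plan is to apply the uniqueness half of the characterization of graph products recalled in Section~\ref{sec:graphproduct}. Writing $N_j := \gp_{v \in \mathcal{G}_j}(M_v,\varphi_v)$ with its state $\psi_j$, and $N := \overline{\bigotimes}_{1 \le j \le n} N_j$ with the product state $\psi := \overline{\bigotimes}_{1\le j \le n}\psi_j$, it suffices to produce state-preserving inclusions $M_v \hookrightarrow N$ whose images are $\mathcal{G}$-independent and generate $N$; uniqueness then furnishes a state-preserving isomorphism $N \cong \gp_{v\in\mathcal{G}}(M_v,\varphi_v)$, which is precisely the asserted tensor decomposition.

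First I would set up the inclusions and dispose of the easy axioms. For $v \in \mathcal{V}_j$ the graph product $N_j$ already carries a state-preserving inclusion $M_v \hookrightarrow N_j$, and composing with the canonical embedding of $N_j$ as the $j$-th tensor leg of $N$ gives $M_v \hookrightarrow N$. Since $\psi$ restricts to $\psi_j$ on the image of $N_j$ and $\psi_j|_{M_v} = \varphi_v$, each such inclusion is state-preserving. Generation is equally immediate: the algebras $\{M_v : v \in \mathcal{V}_j\}$ generate $N_j$, and the legs $N_j$ generate $N$, so all the $M_v$ together generate $N$.

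The real content is verifying $\mathcal{G}$-independence. Let $w_1 \cdots w_m$ be a nonempty $\mathcal{G}$-reduced word with $x_i \in \ker(\varphi_{w_i}) \cap M_{w_i}$, and write $j(i)$ for the index with $w_i \in \mathcal{V}_{j(i)}$. The crucial structural point is that two vertices in different pieces of a graph join are adjacent, hence sit in distinct (therefore commuting) tensor legs of $N$. Sorting the factors $x_i$ by $j(i)$, preserving their relative order within each piece, therefore leaves the product unchanged and expresses it as an elementary tensor $\bigotimes_j y_j$, where $y_j$ is the ordered product of those $x_i$ with $j(i)=j$. Consequently $\psi(x_1\cdots x_m) = \prod_j \psi_j(y_j)$.

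It then remains to show that each per-piece subword is $\mathcal{G}_j$-reduced, for then $\mathcal{G}_j$-independence of $\{M_v : v\in\mathcal{V}_j\}$ inside $N_j$ forces $\psi_j(y_j)=0$ whenever that subword is nonempty; since the original word is nonempty at least one piece contributes such a factor, so the whole product vanishes. This subword-reduction step is the combinatorial heart and the main obstacle: if two occurrences $w_{i_s}=w_{i_t}$ (with $i_s<i_t$) survive into the subword of piece $j$, then $\mathcal{G}$-reducedness of the full word supplies an intermediate letter $w_l$, $i_s<l<i_t$, with $w_l \not\sim w_{i_s}$; because vertices in distinct pieces are always adjacent, $w_l$ must itself lie in $\mathcal{V}_j$, so it appears in the subword strictly between the two occurrences and witnesses the reducedness condition there. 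I expect no difficulty beyond this lemma and the bookkeeping needed to justify regrouping across commuting tensor legs; once they are in hand, uniqueness of the graph product completes the argument.
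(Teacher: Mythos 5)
Your proposal is correct, and it is exactly the argument the paper leaves implicit: the paper offers no proof at all, stating only that the proposition ``follows immediately from the definition of the graph product,'' and the intended verification is precisely what you carry out --- embed each $M_v$ into the tensor product of the $\gp_{v\in\cG_j}(M_v,\varphi_v)$, check state-preservation and generation, verify $\cG$-independence by sorting a reduced word into commuting per-piece blocks and observing that each block's subword stays reduced (the witness letter must lie in the same join component, since cross-component vertices are adjacent), and conclude by uniqueness of the graph product. Your write-up fills in the combinatorial step carefully and matches the paper's (implicit) route.
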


Since it is known that diffuseness, factoriality, and fullness of a tensor product can be characterized in terms of the corresponding properties for the tensor factors (see the proof of Theorem~\ref{thm: char diffuse factor full intro} in Section~\ref{sec: proof of char diffuse factor full intro} below), the above proposition allows us to reduce our analysis to the join-irreducible case.
The general outline of the argument is as follows.  By the foregoing argument, we reduce to the case when $\mathcal{G}$ is join-irreducible, then further divide into cases based on whether the number of vertices of $\mathcal{G}$ is $1$, $2$, or greater than $2$, and decide diffuseness, amenability, factoriality, or fullness in each case.
Of course, if $\mathcal{G}$ consists of a single vertex $v$, then this is simply the diffuseness, amenability, factoriality, or fullness of the input algebra $M_v$, and so we will only address the cases of $|\cV|=2$ and $|\cV|\geq 3$ below.
If $\mathcal{G}$ has two vertices, then these two vertices must not be connected by an edge, because otherwise $\mathcal{G}$ would decompose as the graph join of the two vertices.  Hence, $\gp_{v \in \mathcal{G}} (M_v,\varphi_v)$ is the free product $(M_1,\varphi_1) * (M_2,\varphi_2)$ of the two input algebras.  Now, if we assume that $M_1^{\varphi_1}$ and $M_2^{\varphi_2}$ each contain state zero unitaries $u_1$ and $u_2$, then by free independence the product $u_1 u_2$ will be a Haar unitary in $(M_1,\varphi_1) * (M_2,\varphi_2)$, and hence $(M_1,\varphi_1) * (M_2,\varphi_2)$ is diffuse.
If $M_1 \cong M_2 \cong \bC \oplus \bC$ with equal weight on each of the two summands, then $M_1 * M_2$ is amenable and not a factor, and in all other cases (under the assumption that $M_1^{\varphi_1}$ and $M_2^{\varphi_2}$ admit state zero unitaries), it is a full factor by results of Ueda \cite{UedaTypeIIIfreeproduct}.
The remaining case is then when $\mathcal{G}$ has at least three vertices, which we will handle separately as a general argument.  

Before proceeding in this way, we first observe a combinatorial condition that follows from a lack of graph join decomposition.

\begin{lemma} \label{lem:graphdecomposition}
Let $\mathcal{G}=(\mathcal{V},\mathcal{E})$ be a join-irreducible graph. Then either $\mathcal{G}$ is disconnected or for every vertex $v_0 \in \mathcal{V}$, there exist $v_1, v_2 \in \mathcal{V} \setminus \{v_0\}$ such that
    \[
        v_0 \sim v_1,\quad v_0 \not \sim v_2,\quad v_1 \not \sim v_2.
    \]
\end{lemma}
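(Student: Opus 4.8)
The plan is to prove, for $\cG$ connected and join-irreducible, that the required pair $v_1,v_2$ exists for every $v_0$; if $\cG$ is disconnected we are already in the first alternative. The whole argument is a direct unwinding of the definitions of the graph join and of join-irreducibility, organized around the neighbourhood of $v_0$.

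First I would fix a vertex $v_0\in\cV$ and split the other vertices by adjacency to $v_0$: let $N=\cV\setminus\gstar{v_0}$ be the set of non-neighbours of $v_0$ (so $v_0\notin N$), giving the partition $\cV=\glink{v_0}\sqcup(\{v_0\}\cup N)$, where $\{v_0\}\cup N=\cV\setminus\glink{v_0}$. Since $\cG$ is connected and (we may assume) has at least two vertices, $v_0$ is not isolated, so $\glink{v_0}\neq\emptyset$; and $\cV\setminus\glink{v_0}\neq\emptyset$ since it contains $v_0$.

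The key step is to observe that failure of the conclusion at $v_0$ is exactly a complete bipartite condition between these two classes. Indeed, suppose toward a contradiction that there is no pair $v_1\in\glink{v_0}$, $v_2\in N$ with $v_1\not\sim v_2$. Then every vertex of $\glink{v_0}$ is adjacent to every vertex of $N$ (by this assumption) and to $v_0$ itself (by definition of $\glink{v_0}$); hence every vertex of $\glink{v_0}$ is adjacent to every vertex of $\cV\setminus\glink{v_0}=\{v_0\}\cup N$. As both classes are nonempty, this exhibits $\cG$ as the graph join $\cG|_{\glink{v_0}}+\cG|_{\cV\setminus\glink{v_0}}$, contradicting join-irreducibility. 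Hence such $v_1,v_2$ exist, and they are as required: $v_1\in\glink{v_0}$ gives $v_0\sim v_1$, $v_2\in N$ gives $v_0\not\sim v_2$, and $v_1\not\sim v_2$ holds by choice; moreover $v_1\neq v_0$ (no self-loops) and $v_2\neq v_0$ (as $v_2\not\sim v_0$), so both lie in $\cV\setminus\{v_0\}$.

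Because the lemma is elementary, I do not anticipate a real obstacle; the only points needing care are the bookkeeping and the degenerate cases. The one conceptual step is recognizing that ``no obstructing pair'' forces the complete bipartite structure defining a join, which is precisely where join-irreducibility bites. As for degeneracies: if $v_0$ were isolated then $\cG$ would be disconnected, which is why the connectedness hypothesis legitimately supplies $\glink{v_0}\neq\emptyset$; and the one-vertex graph, though join-irreducible and connected, satisfies the conclusion only vacuously, so it should be (and in the surrounding case analysis is) treated separately under $|\cV|=1$.
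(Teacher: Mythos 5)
Your proof is correct and takes essentially the same route as the paper's own argument (which is phrased as a contrapositive): partition $\cV$ into $\glink{v_0}$ and its complement $\{v_0\}\cup N$, observe that failure of the conclusion at $v_0$ forces every vertex of $\glink{v_0}$ to be adjacent to every vertex of the complement, and conclude that $\cG$ would be a graph join, contradicting join-irreducibility. If anything, your write-up is slightly more careful than the paper's: you explicitly justify $\glink{v_0}\neq\emptyset$ from connectedness and flag the degenerate one-vertex case, points the paper's proof passes over silently.
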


\begin{proof}
We proceed by contrapositive.  Suppose that $\mathcal{G}$ is connected and that there exists a vertex $v_0$ such that for all $v_1, v_2 \in \mathcal{V} \setminus \{v_0\}$, if $v_1 \sim v_0$ and $v_2 \not \sim v_0$, then $v_1 \sim v_2$.  Fix such a $v_0$.  Let $S = \gstar{v_0}$.  We claim that every vertex in $S$ is adjacent to every vertex in $S^c$.  Let $v \in S$ and $w \in S^c$.  If $w = v_0$, then $w \sim v$ by definition of $\gstar{v_0}$.  If $w \neq v_0$, then because $w \not \sim v_0$ and $v \sim v_0$, we have $v \sim w$.  Since every vertex in $S$ is connected to every vertex in $S^c$, we can decompose $\mathcal{G}$ as the graph join of the two induced subgraphs with vertex sets $S$ and $S^c$.
\end{proof}

\begin{remark}
The converse of this lemma does not hold.  In fact, suppose that we take graphs $\mathcal{G}_1$ and $\mathcal{G}_2$ which both satisfy that for every $ v_0 \in \mathcal{V}$, there exists $v_1, v_2 \in \mathcal{V} \setminus \{v_0\}$ such that $v_0 \sim v_1, v_0 \not \sim v_2, v_1 \not \sim v_2$.  Then $\mathcal{G}_1 + \mathcal{G}_2$ also satisfies this condition.  More generally, if $\mathcal{V}$ is expressed as a union of subsets $V_j$, and the subgraphs induced by $V_j$ have this property, then the whole graph has this property.
\end{remark}

The following is a special case of Theorem~\ref{thm: char diffuse factor full intro} for join-irreducible graphs, which will be used in the general proof in conjunction with strategy outlined after Proposition~\ref{prop: graph join decomp to tensor decomp}. The proof makes use of Ocneanu ultrapowers and some related lemmas which are detailed in Appendix~\ref{sec: Ocneaun ultrapowers}. It also uses the fact that subalgebras $M_U$ corresponding to induced subgraphs admit unique state preserving, faithful, normal, conditional expectations $E_{M_U}\colon M\to M_U$ (see \cite[Remark 2.14]{CaFi17}). The uniqueness implies, in particular, that $M_{V_1\cap V_2}, M_{V_1}, M_{V_2}, M$ forms a commuting square for any subsets $V_1,V_2\subset \cV$.

\begin{thm} \label{thm:factoriality}
Let $\mathcal{G}=(\mathcal{V},\mathcal{E})$ be a join-irreducible graph. Let $\{(M_v,\varphi_v):v \in \cV\}$ be a family of stacial von Neumann algebras and let $(M,\varphi)=\gp_{v\in \mathcal{G}}(M_{v},\varphi_{v})$. 
Assume $M_v^{\varphi_{v}}$ has a state zero unitary for every $v\in \cV$.
\begin{itemize}
 \item If $|\mathcal{V}|=2$ with $\mathcal{V}=\{v,w\}$ and $\dim(M_{v})=\dim(M_{w})=2$, then $M$ is diffuse but not a factor.
\item If $|\mathcal{V}|=2$,  and $\max(\dim(M_{v}),\dim(M_{w}))\geq 3$, then $M$ is a diffuse full factor.
 \item If $|\mathcal{V}|\geq 3$, then $M$ is a diffuse full factor. 
\end{itemize}
\end{thm}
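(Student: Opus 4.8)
The plan is to handle the three regimes separately, leaning on Ueda's free-product theory for $|\cV|=2$ and for the disconnected subcase of $|\cV|\ge 3$, and reserving a direct central-sequence computation for the connected subcase of $|\cV|\ge 3$, which is where the new content lies.

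For $|\cV|=2$, join-irreducibility forces $v\not\sim w$, so $M=(M_v,\varphi_v)*(M_w,\varphi_w)$. Since $M_v^{\varphi_v}$ and $M_w^{\varphi_w}$ carry state zero unitaries $u_v,u_w$, the product $u_vu_w$ is a Haar unitary and $M$ is diffuse. If $\dim M_v=\dim M_w=2$ then both are $\cong\bC\oplus\bC$ and the existence of a state zero unitary forces equal weights, whence $M\cong L(\bZ/2*\bZ/2)$ is amenable and not a factor; if $\max(\dim M_v,\dim M_w)\ge 3$, Ueda's classification \cite{UedaTypeIIIfreeproduct} gives that $M$ is a full factor, necessarily of type II or III and hence diffuse. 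This settles the first two bullets. For $|\cV|\ge3$ I would first invoke Lemma~\ref{lem:graphdecomposition}. If $\cG$ is disconnected, choose a connected component $\cV_1$ and write $M=M_{\cV_1}*M_{\cV\setminus\cV_1}$; each free factor contains a state zero unitary in its centralizer coming from any of its vertices (the modular group of $\varphi$ restricts coordinatewise), and since a graph product on at least two vertices has dimension $\ge 4$, at least one factor has dimension $\ge 3$, so Ueda again produces a diffuse full factor.

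The principal case is $\cG$ connected, where Lemma~\ref{lem:graphdecomposition} supplies, for every vertex $v_0$, a triple $v_1\sim v_0$, $v_0\not\sim v_2$, $v_1\not\sim v_2$. Here I would prove the single statement $M'\cap M^\omega=\bC$ in the Ocneanu ultrapower, which yields both factoriality (as $Z(M)=M'\cap M\subseteq M'\cap M^\omega$) and fullness at once. Given $x=(x_n)^\omega\in M'\cap M^\omega$, I may subtract $\varphi^\omega(x)$ and aim to show $x=0$. The mechanism is a spectral gap estimate: using the triples above I would produce a finite family of state zero unitaries $u_1,\dots,u_m\in\bigcup_{v}M_v^{\varphi_v}$ and a constant $C$ with
\[
\|y-\varphi(y)\|_{\varphi}^{\#}\le C\sum_{j=1}^{m}\|[u_j,y]\|_{\varphi}^{\#}\qquad(y\in M).
\]
Applying this to $y=x_n$ and using $[x,u_j]=0$ makes the right-hand side vanish along $\omega$, forcing $x=0$.

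Establishing this inequality is the step I expect to be the main obstacle. I would work in the reduced-word decomposition of $L^2(M,\varphi)$, in which $L^2(M)\ominus\bC$ is the orthogonal sum of the spaces of vectors $\xi_1\cdots\xi_k$ with $v_1\cdots v_k$ a $\cG$-reduced word and $\xi_i\in L^2(M_{v_i})\ominus\bC$. Conjugation by a state zero unitary $u_v$ adjoins the letter $v$ on each side and reduces; when $v$ is free from the outer letters of a word---precisely the configuration guaranteed by the triple $v_0\sim v_1$, $v_0\not\sim v_2$, $v_1\not\sim v_2$---this conjugation sends the corresponding vector into an orthogonal region, so that $\|[u_v,\cdot]\|_{\varphi}^{\#}$ controls the norm of that component. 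The combinatorial crux is to verify that, as $v$ ranges over the vertices and the triples of Lemma~\ref{lem:graphdecomposition} are exploited, every nonempty reduced word is detected by at least one such conjugation with a uniform constant; this is exactly where connectedness enters, and it requires careful bookkeeping of the graph-product relations together with the Ocneanu-ultrapower lemmas of the appendix. Finally, diffuseness in the connected case follows once factoriality is in hand: fixing a triple as above and putting $N=M_{\{v_0,v_1,v_2\}}\cong(M_{v_0}\mathbin{\bar\otimes}M_{v_1})*M_{v_2}$, the first free factor has dimension $\ge 4$, so $N$ is a full, hence non-injective, factor by Ueda. Since $N$ is the range of a faithful normal $\varphi$-preserving conditional expectation \cite[Remark 2.14]{CaFi17}, and injectivity is inherited by such ranges, $M$ is non-injective; being a factor it is therefore not of type I, hence of type II or III, and in particular diffuse.
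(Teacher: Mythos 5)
Your first two bullets and the disconnected subcase of the third are in good shape. For $|\cV|=2$ you give exactly the paper's argument (Ueda \cite{UedaTypeIIIfreeproduct} when $\max(\dim M_v,\dim M_w)\geq 3$, and the balanced $\bC^2 * \bC^2$ analysis otherwise). For $|\cV|\geq 3$ disconnected you take a genuinely different route: the paper chooses trace zero unitaries at a vertex $v_0$ and at two vertices $v_1,v_2$ outside its component and applies Lemma~\ref{lem:freeproductcenter} with $B=\bC$ to get $M'\cap M^\omega\subseteq\bC$ directly, whereas you apply Ueda to the splitting $M=M_{\cV_1}*M_{\cV\setminus\cV_1}$, noting that each free factor inherits a state zero unitary in its centralizer and that one factor has dimension at least $4$. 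That is legitimate and somewhat shorter, and it rests on exactly the same reading of \cite{UedaTypeIIIfreeproduct} that the paper itself uses in the two-vertex case.

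The connected subcase, however, is where the actual content of the theorem lies, and there your proposal has a genuine gap: the inequality
\[
\|y-\varphi(y)\|_{\varphi}^{\#}\le C\sum_{j=1}^{m}\|[u_j,y]\|_{\varphi}^{\#}\qquad(y\in M)
\]
is never established. You flag the ``combinatorial crux'' --- that every nonempty reduced word is detected by some conjugation, with a uniform constant --- and leave it as bookkeeping, but this is not bookkeeping. Conjugation by $u_v$ acts trivially on every reduced word supported in $\glink{v}$, so no single unitary sees all words, and words supported in intersections of links must be handled by combining several vertices; producing one finite family with a uniform linear estimate on all of $L^2(M,\varphi)\ominus\bC\hat 1$ is precisely the hard part. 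Note also that such a linear (or even fixed-family $\epsilon$--$\delta$) estimate does not follow formally from fullness: the ultraproduct characterization of $M'\cap M^\omega=\bC$ only produces, for each $\epsilon$, a finite family and a $\delta$ depending on $\epsilon$, so you cannot borrow the conclusion to justify the estimate; it would have to be built by hand, Murray--von Neumann style. The paper's proof is organized specifically to avoid any uniform estimate: for each vertex $v_0$ it writes $M=M_{\gstar{v_0}}*_{M_{\glink{v_0}}}M_{\cV\setminus\{v_0\}}$, applies Lemma~\ref{lem:freeproductcenter} with $B=M_{\glink{v_0}}$ to the unitaries $u_0$, $u_2$, $u_1^*u_2u_1$ (checking the four required expectations onto $M_{\glink{v_0}}$ vanish), and concludes only the weak localization $M'\cap M^\omega\subseteq (M_{\glink{v_0}})^\omega$; it then intersects over all $v_0$ using the commuting-square lemma (Lemma~\ref{lem: commuting squares and intersections in ups}) and the fact that $\bigcap_{v_0}\glink{v_0}=\varnothing$. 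If you want to keep your global-inequality strategy, you must prove the estimate in full; otherwise the per-vertex localization is the workable version of your idea.

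Separately, your diffuseness argument in the connected case contains a false step: ``full, hence non-injective'' is wrong as stated (matrix algebras and $B(H)$ are full and injective), and the corrected implication (diffuse and full $\Rightarrow$ non-injective) leans on Connes' classification of injective factors, hence on separable preduals, which this paper deliberately avoids. The detour is also unnecessary: since $v_0\not\sim v_2$, the subalgebra $M_{\{v_0,v_2\}}=M_{v_0}*M_{v_2}$ is diffuse by your own Haar-unitary observation, and $M$ admits a faithful normal $\varphi$-preserving conditional expectation onto it, so $M$ is diffuse --- this is the one-line argument the paper gives.
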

\begin{proof}

First suppose $\mathcal{V}=\{v_{1},v_{2}\}$ and recall that join-irreducibility of $\cG$ implies $(M,\varphi)=(M_{v_{1}},\varphi_{v_1})*(M_{v_{2}},\varphi_{v_2})$. If one of $M_{v_{1}}$ or $M_{v_{2}}$ has dimension at least $3$,  then $M$ is diffuse and a full factor by \cite[Theorem 4.1 and Remark 4.2]{UedaTypeIIIfreeproduct}. If $\dim(M_{v_{1}})=\dim(M_{v_{2}})=2$ so that $M_{v_i}\cong \bC\oplus \bC$ for $i=1,2$, then $\varphi_{v_1}, \varphi_{v_2}$ are necessarily tracial and our assumption on the existence of trace zero unitaries forces these traces to put equal weight on each factor of $\bC$. Hence $M$ is diffuse but is not a factor by \cite[Theorem 1.1]{DykemaFreeproductsHyper}. 

We now assume that $|\mathcal{V}|\geq 3$. Note that if a von Neumann algebra $P$ has a normal conditional expectation onto a diffuse subalgebra, then $P$ is diffuse (this follows from restricting such a conditional expectation to the maximal purely atomic direct summand of $P$ and applying \cite[Theorem IV.2.2.3]{BlackadarOA}). Since we already have normal conditional expectations onto subalgebras corresponding to induced subgraphs, it follows from the above paragraph that $M$ is diffuse in this case. So we only focus on proving $M$ is a full factor.
By Lemma \ref{lem:graphdecomposition}, it suffices to prove the theorem under the weaker condition that either $\mathcal{G}$ is disconnected or for every $v_0 \in \mathcal{V}$, there exist $v_1$, $v_2 \in \mathcal{V} \setminus \{v_0\}$ such that $v_0 \sim v_1$, $v_0 \not \sim v_2$, $v_1 \not \sim v_2$. 

Suppose $\mathcal{G}$ is disconnected and $|\mathcal{V}| \geq 3$. Then there exists a vertex $v_0$ and two other vertices $v_1$ and $v_2$ that are not in the same connected component as $v_0$.  Let $V_0\subset \cV$ be the vertices in the connected component of $\mathcal{G}$ containing $v_0$.  Then
\[
    M= M_{V_0} * M_{\cV \setminus V_0}.
\]
Let $u_0$, $u_1$, and $u_2$ be trace zero unitaries in $M_{v_0}$, $M_{v_1}$, and $M_{v_2}$ respectively.  We have $\varphi(u_1^*u_2)= \varphi(u_1^*)\varphi(u_2)=0$ in both cases $v_1\sim v_2$ and $v_1\not\sim v_2$.
Thus, the unitaries satisfy the hypotheses of Lemma \ref{lem:freeproductcenter} with $B = \bC$.  It follows that for every cofinal ultrafilter $\omega$ on a directed set, we have $M' \cap M^\omega \subseteq \bC^\omega = \bC$, so that $M$ is full.

Now consider the case where for every $v_0 \in \mathcal{V}$, there exist $v_1$, $v_2 \in \mathcal{V} \setminus \{v_0\}$ such that $v_0 \sim v_1$, $v_0 \not \sim v_2$, $v_1 \not \sim v_2$. (In this case automatically $|\mathcal{V}| \geq 3$.) Fix a cofinal ultrafilter $\omega$ on a direct set, and  a vertex $v_0$. Note that by \cite[Theorem 2.26]{CaFi17}
\[
M = M_{\gstar{v_0}} *_{M_{\glink{v_0}}} M_{\cV \setminus \{v_0\}}.
\]
Let $v_1$ and $v_2$ be vertices with $v_1 \sim v_0$, $v_2 \not \sim v_0$, $v_1 \not \sim v_2$.  Let $u_0$, $u_1$, and $u_2$ be a trace zero unitaries from $M_{v_0}^{\varphi_{v_{0}}}$, $M_{v_1}^{\varphi_{v_{1}}}$, and $M_{v_2}^{\varphi_{v_{2}}}$ respectively.  We want to apply Lemma \ref{lem:freeproductcenter} to the unitaries $u_0$, $u_2$, and $u_1^* u_2 u_1$.  Note that the words $v_0$, $v_2$, $v_1v_2v_1$, and $v_1v_2v_1v_2$ are reduced and each have some element not in $\glink{v_0}$; therefore, 
by the alternating expectation condition defining free independence with amalgamation
\[
    E_{M_{\glink{v_0}}}[u_0] = E_{M_{\glink{v_0}}}[u_2] = E_{M_{\glink{v_0}}}[u_1u_2u_1^*] = E_{M_{\glink{v_0}}}[(u_1^*u_2u_1)^*u_2] = 0.
\]
Moreover, $u_{1}u_{2}^{*}u_{1}$ is in the centralizer of $M_{\cV\setminus\{v_{0}\}}$. 
Therefore, by Lemma \ref{lem:freeproductcenter},
\[
M' \cap M^\omega \subseteq (M_{\glink{v_0}})^\omega.
\]
Now the vertex $v_0$ was arbitrary, and therefore, by Lemma \ref{lem: commuting squares and intersections in ups}
\[
M' \cap M^\omega \subseteq \bigcap_{v_0 \in V} M_{\glink{v_0}}^\omega = \left( \bigcap_{v_0 \in V} M_{\glink{v_0}} \right)^\omega.
\]
By \cite[Proposition 2.25]{CaFi17},
    \[
        \bigcap_{v_0 \in V} M_{\glink{v_0}} = M_{\bigcap_{v_0 \in V} \glink{v_0}}.
    \]
Because $v_0 \not \in \glink{v_0}$ by definition, we have $\bigcap_{v_0 \in V} \glink{v_0} = \varnothing$.  Hence, $M' \cap M^\omega \subseteq \bC$, so that $M$ is full. \qedhere 
\end{proof}

\begin{remark}
In particular, suppose that the graph $\mathcal{G}$ has diameter at least $3$, meaning that there exists two vertices $v$ and $w$ with distance at least $3$ in the graph.  Then $\mathcal{G}$ is join-irreducible because in a graph join any two vertices have distance at most $2$.  Therefore, the theorem implies that $\gp_{v \in \mathcal{G}} (M_v,\varphi_v)$ is a full factor provided that each $M_v^{\varphi_v}$ contains a state zero unitary.
\end{remark}

Consider a non-join-irreducible graph $\cG$ and suppose $\cG=\cG_1+\cdots +\cG_n$ is its graph join decomposition for graphs $\cG_j=(\cV_j,\cE_j)$. Since diffuseness, factoriality, and fullness are all automatic for graph products over $\cG_j$ when $|\cV_j|\geq 3$, to understand these properties for graph products over $\cG$ it is not necessary to compute its entire graph-join decomposition.  We merely need to be able to locate the $\mathcal{G}_j$'s that have $1$ or $2$ vertices.  For this purpose, we record the following observation:

\begin{lem} \label{lem: join component characterization}
Let $v$ be a vertex of a graph $\mathcal{G}=(\mathcal{V},\mathcal{E})$.  Then $v$ comprises one of the components in the graph join decomposition of $\mathcal{G}$ if and only if $v$ is adjacent to all the other vertices of $\mathcal{G}$.

Similarly, let $v$ and $w$ be distinct vertices of $\mathcal{G}$.  Then $\{v,w\}$ comprises one of the components in the graph join decomposition of $\mathcal{G}$ if and only if $v$ and $w$ are not adjacent to each other but are adjacent to all the other vertices in $\mathcal{G}$.
\end{lem}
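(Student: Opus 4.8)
The plan is to derive both equivalences directly from the uniqueness of the join-irreducible decomposition \cite[Theorem 1]{JoinDecomposition}, exploiting two elementary facts about graph joins: in a join $\mathcal{G}_1 + \dots + \mathcal{G}_n$ every vertex of $\mathcal{G}_i$ is adjacent to every vertex of $\mathcal{G}_j$ for $i \neq j$, and a two-vertex graph is join-irreducible precisely when its two vertices are \emph{not} adjacent (the only way to split a two-element vertex set is into two singletons, and the join of two singletons carries an edge).

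For the first statement I would treat each direction separately. For the forward direction, suppose $\{v\}$ is one of the components $\mathcal{G}_j$ in the canonical decomposition; then the join property immediately forces $v$ to be adjacent to every vertex lying in any other component, that is, to all other vertices of $\mathcal{G}$. For the converse, suppose $v$ is adjacent to all other vertices, and write $\mathcal{G}' = \mathcal{G}|_{\mathcal{V} \setminus \{v\}}$. The adjacency hypothesis says exactly that $\mathcal{G} = \{v\} + \mathcal{G}'$ is a graph join. Decomposing $\mathcal{G}'$ into its join-irreducible components and appending the (join-irreducible) single vertex $\{v\}$ yields a decomposition of $\mathcal{G}$ into join-irreducibles; by uniqueness this coincides with the canonical one, so $\{v\}$ appears as a component.

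For the second statement I would run the same scheme with the induced subgraph $\mathcal{H} = \mathcal{G}|_{\{v,w\}}$ in place of the single vertex. In the forward direction, if $\{v,w\}$ is a component $\mathcal{G}_j$, then the join property forces both $v$ and $w$ to be adjacent to all vertices outside $\{v,w\}$, while join-irreducibility of $\mathcal{G}_j$ forces $v \not\sim w$ (otherwise $\mathcal{G}_j = \{v\} + \{w\}$ would be join-reducible). In the converse direction, if $v \not\sim w$ but both are adjacent to every other vertex, then $\mathcal{G} = \mathcal{H} + \mathcal{G}|_{\mathcal{V} \setminus \{v,w\}}$ is a graph join with $\mathcal{H}$ the edgeless two-vertex graph; since $\mathcal{H}$ is then join-irreducible, appending a join-irreducible decomposition of the complementary induced subgraph and invoking uniqueness again shows that $\{v,w\}$ is a component.

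I do not anticipate a serious obstacle. The only points requiring care are verifying that $\mathcal{G} = \{v\} + \mathcal{G}'$ (respectively $\mathcal{G} = \mathcal{H} + \mathcal{G}|_{\mathcal{V}\setminus\{v,w\}}$) is genuinely a join—a direct unwinding of the adjacency hypotheses against the definition—and the bookkeeping needed to invoke uniqueness of the decomposition. The one mild subtlety is making sure the correct two-vertex dichotomy (edge versus no edge) is applied consistently in both directions of the second statement.
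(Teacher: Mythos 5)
Your proof is correct, and it uses exactly the framework the paper sets up: the paper states this lemma without proof, recording it as an observation immediately after citing the uniqueness of the join-irreducible decomposition \cite[Theorem 1]{JoinDecomposition}, and your argument (join property for adjacency across components, the edge/no-edge dichotomy for two-vertex graphs, and uniqueness to identify your constructed decomposition with the canonical one) is precisely the intended justification. No gaps: the vacuous cases ($\mathcal{V}=\{v\}$ or $\mathcal{V}=\{v,w\}$) are handled automatically by your scheme, and the associativity of the join needed to append the decomposition of the complementary subgraph is immediate from the definition.
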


We remark that detecting components in the graph join decomposition of $\mathcal{G}$ with one or two vertices is algorithmically much simpler than finding the full graph join decomposition (it can be done in polynomial time in the number of vertices).

\subsection{Proof of Theorem~\ref{thm: char diffuse factor full intro}}\label{sec: proof of char diffuse factor full intro}

Let $\cG=\mathcal{G}_1 + \dots + \mathcal{G}_n$ be the graph join decomposition for graphs $\cG_j=(\cV_j, \cE_j)$, $j=1,\ldots,n$. Denote $(N_j,\psi_j):=\gp_{v\in \cG_j} (M_v,\varphi_v)$ for each $j=1,\ldots, n$, so that
    \[
        (M,\varphi) \cong (N_1,\psi_1)\bar\otimes \cdots \bar\otimes (N_n,\psi_n)
    \]
by Proposition~\ref{prop: graph join decomp to tensor decomp}.\\

\noindent(\ref{item: diffuse characterization intro}): $M$ is diffuse if and only if $N_j$ is diffuse for some $j$. If $\mathcal{G}_j$ has at least two vertices, then 
$N_j$ is diffuse by Theorem \ref{thm:factoriality}. Thus, the only way $M$ can fail to be diffuse is if all the $\mathcal{G}_j$'s are singletons (that is, $\mathcal{G}$ is a complete graph), and none of the $M_v$'s are diffuse.\\

\noindent(\ref{item: factor charcaterization intro}) $M$ is a factor if and only if $N_j$ is a factor for each $j=1,\ldots, n$. If $\cG_j$ has at least three vertices, then $N_j$ is automatically a factor by Theorem~\ref{thm:factoriality}. So for $M$ to be a factor it is necessary and sufficient that $N_j$ is a factor whenever $|\cV_j|\leq 2$. For $\cV_j=\{v\}$, this reduces to $M_v$ being a factor, and the characterization of singleton components in Lemma~\ref{lem: join component characterization} this yields condition (a). For $\cV_j=\{v,w\}$, $N_j$ is a factor if and only if $\max(\dim(M_{v}),\dim(M_{w}))\geq 3$ by Theorem~\ref{thm:factoriality}, and the characterization of two-element components in Lemma~\ref{lem: join component characterization} this yields condition (b).\\

\noindent(\ref{item: full characterization intro}) $M$ is full if and only if $N_j$ is full for each $j=1,\ldots, n$ by  \cite[Corollary 2.3]{Connes}, \cite[Corollary B]{HMVUltra}. Noting that the characterization of fullness coincides with that of factoriality for join-irreducible graphs in Theorem~\ref{thm:factoriality}, the same argument used in the previous part completes the proof.

\begin{remark}
Observe that under our standard assumption that $M_v^{\varphi_v}$ admits a state zero unitary, the graph product over $\cG$ gives a non-full factor if and only if there exists $v\in \cV$ adjacent to every other vertex with $M_v$ a non-full factor. Indeed, using the notation of the above proof, $M$ is a non-full factor if and only if each $N_j$ is a factor and at least one, say $N_{j_0}$, is non-full. According to Theorem~\ref{thm:factoriality}, this is only possible if $\cV_{j_0}$ consists of a single vertex and the algebra over that vertex is a non-full factor.
\end{remark}

\section{Relatively reduced words and conditional expectations}\label{sec: reduced words}

M\l otkkowsi \cite{Mlot2004} and Caspers--Fima \cite{CaFi17} used reduced words to give a description of the standard form of a graph product is analogous to a Fock space. From their description, one can build an orthonormal basis for $L^{2}$ of the graph product using an orthonormal basis of the vertex algebras. In Section~\ref{sec: fusion hah!}, we will have to describe the standard form of the graph product as a bimodule over two subalgebras coming from subgraphs. In order to investigate relative amenability in Section~\ref{sec: rel amen}, we will also have to describe the fusion rules. In this bimodule situation it is natural to look for (an analogue of) a \emph{Pimnser--Popa basis} instead of an \emph{orthonormal basis}. As we will show in Section \ref{sec: fusion hah!}, this can be done by modifying the consideration of \emph{reduced words} to be reduced ``relative" to a pair of subgraphs. This is similar to considering double-cosets relative to a pair of subgroups coming from subgraphs in a graph product of groups.  We  define this notion of relatively reduced words in this section. In order to later show they give something akin to a Pimsner--Popa basis and compute the fusion rules, we will also need to compute some conditional expectations coming from relatively reduced words, which we also do in this section. These formulas for conditional expectation will also be used to investigate \emph{relatively diffuseness} (i.e. lack of intertwining) in Section \ref{sec: diffuse}.

\subsection{\texorpdfstring{$\mathcal{G}$}{G}-reduced words}

\begin{defn}

We define the following kinds of operations on words in the alphabet $\cV$:
\begin{itemize}
    \item An \textbf{admissible swap} switches two consecutive letters $w_i$ and $w_{i+1}$ that are adjacent vertices in $\cG$.
    \item A \textbf{splitting} replaces one occurrence of a letter $w_i$ by two copies of $w_i$.  (For example, $1231$ could be transformed to $12231$ by splitting the second letter.)
    \item A \textbf{merge} replaces two consecutive occurrences of the same letter by one occurrence of the letter.
\end{itemize}
Two words are said to be \textbf{equivalent} if one can be transformed into the other by a sequence of these three types of operations. We denote this by $w \approx \hat{w}$. 
\end{defn}

It is easy to see that this is indeed an equivalence relation.  It is reflexive and transitive by construction.  It is symmetric because a swap operation is reversed by another swap, and the splitting and a merge operations are inverse to each other.  Moreover, every word is equivalent to some reduced word through a sequence of admissible swaps and merges (see \cite[Lemma 1.3(1)]{CaFi17}).

In the sequel, we will use the following characterization of when two reduced words are equivalent.

\begin{prop} \label{prop: reduced word equivalence}
Let $\cG = (\cV,\cE)$ be a graph.  Let $w = w_1 \dots w_m$ and $\hat{w} = \hat{w}_1 \dots \hat{w}_n$ be two words in the alphabet $\cV$. 
 Let $w = w_1 \dots w_m$ and $\hat{w} = \hat{w}_1 \dots \hat{w}_n$ be two $\cG$-reduced words.  Then the following are equivalent:
 \begin{enumerate}[(i)]
     \item $w$ and $\hat{w}$ are equivalent;\label{part:equivalent}
     
     \item $w$ can be transformed into $\hat{w}$ by a sequence of admissible swaps;\label{part:swaps}
     
     \item $m = n$ and there is a permutation $\sigma: [m] \to [m]$ such that
     \begin{itemize}
         \item $\hat{w}_{\sigma(i)} = w_i$;
         \item if $i < j$ and $w_i$ is not adjacent to $w_j$, then $\sigma(i) < \sigma(j)$.
     \end{itemize}
     \label{part:permutation}
 \end{enumerate}
\end{prop}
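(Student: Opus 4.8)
The plan is to establish the three equivalences through a small set of implications: $(\ref{part:swaps})\Rightarrow(\ref{part:equivalent})$ is immediate, since an admissible swap is one of the three operations generating $\approx$; the combinatorial equivalence $(\ref{part:swaps})\Leftrightarrow(\ref{part:permutation})$ can be proved directly; and the real content is concentrated in $(\ref{part:equivalent})\Rightarrow(\ref{part:swaps})$, namely that two equivalent $\cG$-reduced words already differ by admissible swaps alone. Together these yield the full equivalence of $(\ref{part:equivalent})$, $(\ref{part:swaps})$, and $(\ref{part:permutation})$.

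For $(\ref{part:swaps})\Rightarrow(\ref{part:permutation})$ I would track the letters of $w$ as labelled tokens through the sequence of admissible swaps carrying $w$ to $\hat w$. Swaps preserve length, so $m=n$, and the net motion of the tokens defines a permutation $\sigma$ with $\hat{w}_{\sigma(i)}=w_i$. A single admissible swap exchanges only two graph-adjacent consecutive tokens and alters the relative order of that one pair alone; hence if $i<j$ with $w_i\not\sim w_j$, the tokens $i$ and $j$ are never exchanged with one another, their relative order is preserved, and $\sigma(i)<\sigma(j)$. For the converse $(\ref{part:permutation})\Rightarrow(\ref{part:swaps})$ I would argue by induction on the number of inversions of $\sigma$, in the style of bubble sort. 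The key point is that any pair of letters out of its target order must be graph-adjacent: a non-adjacent pair $w_i\not\sim w_j$ keeps the relative order prescribed by the hypothesis on $\sigma$, so it is never inverted, and admissible swaps (which change only an adjacent pair's order) can never create such an inversion. Consequently, whenever the current word is not yet $\hat w$ there are two consecutive letters in the wrong target order, this pair is graph-adjacent, and swapping it is admissible and strictly decreases the inversion count; iterating reaches $\hat{w}$.

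The main obstacle is $(\ref{part:equivalent})\Rightarrow(\ref{part:swaps})$, where the connecting chain of operations may pass through non-reduced words and use splittings and merges. My approach is to produce a complete invariant of $\approx$ that is visibly insensitive to splittings and merges. For each unordered pair $\{a,b\}$ of non-adjacent vertices (allowing $a=b$, which merely records whether $a$ occurs) I would define $P_{a,b}(u)$ to be the subword of $u$ on the letters $a$ and $b$, with every maximal run of a single letter collapsed to one copy; since $a\not\sim b$ this is an alternating word in $a$ and $b$. A short case check shows $P_{a,b}$ is unchanged by each generating operation: an admissible swap of two graph-adjacent letters can never transpose two letters of $\{a,b\}$ (they would have to be adjacent, contradicting $a\not\sim b$), and a splitting or merge of a letter of $\{a,b\}$ only changes a run length, which the collapsing erases. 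Thus the family $\{P_{a,b}\}$ is an invariant of the $\approx$-class.

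It then remains to prove the completeness of this invariant on reduced words: if $w$ and $\hat w$ are $\cG$-reduced and $P_{a,b}(w)=P_{a,b}(\hat w)$ for all non-adjacent $\{a,b\}$, then $w$ and $\hat w$ differ by admissible swaps. I would prove this by induction on $|w|$, peeling off the first letter $v=w_1$. Because $v$ occurs first in $w$, the projection $P_{u,v}(w)$ begins with $v$ for every $u\not\sim v$; matching projections then forces that no letter non-adjacent to $v$ precedes the first occurrence of $v$ in $\hat w$, so that occurrence can be brought to the front of $\hat w$ by admissible swaps. Cancelling the leading $v$ from both words, and checking that the projections of the truncations still agree (the only subtlety being the bookkeeping of runs of $v$ created or destroyed by the cancellation, which the collapsing again controls), lets the induction proceed. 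This completeness lemma is where essentially all the work lies; it is the analogue, for our idempotent and partially commutative setting, of the classical projection lemma for trace monoids, and I expect the run-collapsing bookkeeping in the inductive step to be the most delicate point. An equivalent but more conceptual route would replace $\{P_{a,b}\}$ by the labelled dependence poset (heap) of a word and show it is a complete $\approx$-invariant on reduced words, but the technical burden is the same.
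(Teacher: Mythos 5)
Your treatment of (ii) $\Rightarrow$ (i) and of the equivalence (ii) $\Leftrightarrow$ (iii) is correct (the bubble-sort induction on inversions is a legitimate variant of the paper's induction on length in Lemma~\ref{lem: permutation implies equivalence}). The genuine gap is in (i) $\Rightarrow$ (ii): the completeness lemma on which your whole argument rests is false. Take $\cV = \{a,c,d\}$ with the single edge $c \sim d$ (so $a \not\sim c$ and $a \not\sim d$), and consider $w = acada$ and $\hat{w} = acda$. Both are $\cG$-reduced ($c$ separates the first pair of $a$'s in $w$, and $d$ the second). Every one of your projections agrees: the pair $\{c,d\}$ is excluded since $c \sim d$; $P_{a,c}(w)$ is the collapse of $acaa$, namely $aca$, which equals $P_{a,c}(\hat{w})$; $P_{a,d}(w)$ is the collapse of $aada$, namely $ada$, which equals $P_{a,d}(\hat{w})$; and the singleton projections only record that $a$, $c$, $d$ occur in both words. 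Yet $w \not\approx \hat{w}$: equivalent reduced words have the same length (this follows from Lemma~\ref{lem:monotone mathcing is a bijection in reduced case}, whose proof is independent of this discussion -- a monotone matching between them would have to send the three mutually non-adjacent $a$'s of $w$ to three distinct $a$'s of $\hat{w}$, which has only two), and $5 \neq 4$. So equality of all projections does not imply equivalence for reduced words, and no bookkeeping in your peeling induction can repair this, because the statement the induction is meant to prove is itself false. (Concretely, the failure you flagged as "delicate" is fatal: after peeling the common leading $a$, the truncations $cada$ and $cda$ satisfy $P_{a,d}(cada)=ada \neq da = P_{a,d}(cda)$.)

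The root cause is exactly the run-collapsing you were forced to introduce for invariance under splits and merges: in $P_{a,c}$ it merges the second and third occurrences of $a$ in $acada$, even though in the word itself they are separated by $d \not\sim a$ and hence can never actually be made consecutive by the allowed operations. The projections are too coarse to remember this. For the same reason your closing remark is not accurate: the labelled dependence poset (heap) is a \emph{strictly} finer invariant than the family $\{P_{a,b}\}$ -- it does distinguish $acada$ from $acda$ -- and making it work is essentially what the paper does. The paper's $\cG$-monotone matchings (Definition~\ref{def: monotone matching}) are order-preserving correspondences that track \emph{all} letters simultaneously; they are shown to arise from each generating operation, to compose (Lemma~\ref{lem: monotone matching}), and finally to be bijections when both words are reduced (Lemma~\ref{lem:monotone mathcing is a bijection in reduced case}). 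If you want to salvage your outline, replace the pairwise projections by that poset-valued invariant; the projections alone cannot carry the proof.
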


This proposition is a strengthening of \cite[Lemma 1.3]{CaFi17}.  For instance, \cite[Lemma 1.3]{CaFi17} showed that if $w$ and $\hat{w}$ are equivalent, then $m = n$ and there is some permutation matching the letters of $w$ and $\hat{w}$, but did not characterize the exact properties this permutation should have in order to get the reverse implication.  Moreover, they expressed condition (\ref{part:swaps}) as ``Type II equivalence'' and stopped short of showing it is the same as equivalence in the case of reduced words.

For the proof, (\ref{part:swaps}) $\implies$ (\ref{part:equivalent}) is immediate and (\ref{part:permutation}) $\implies$ (\ref{part:swaps}) follows by induction.  The implication (\ref{part:equivalent}) $\implies$ (\ref{part:permutation}) or (\ref{part:swaps}) is nontrivial since it involves reasoning about non-reduced words in intermediate stages of the sequence of transformations.  We first take care of (\ref{part:permutation}) $\implies$ (\ref{part:swaps}).

\begin{lem} \label{lem: permutation implies equivalence}
Let $\cG = (\cV,\cE)$ be a graph.  Let $w = w_1 \dots w_m$ and $\hat{w} = \hat{w}_1 \dots \hat{w}_n$ be two words in the alphabet $V$, and suppose $\sigma: [m] \to [m]$ is a permutation with $\hat{w}_{\sigma(i)} = w_i$ such that if $i < j$ and $w_i$ is not adjacent to $w_j$, then $\sigma(i) < \sigma(j)$.   Then $w$ and $\hat{w}$ are equivalent by swaps.
\end{lem}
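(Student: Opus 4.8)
The plan is to induct on the common length $m$ of the two words. Note first that since $\sigma$ is a permutation of $[m]$ with $\hat{w}_{\sigma(i)} = w_i$, we automatically have $n = m$, so this is well posed. When $m \leq 1$ there is nothing to prove: $\sigma$ must be the identity and $w = \hat{w}$.

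For the inductive step I would single out the letter of $w$ destined for the first slot of $\hat{w}$, namely $w_{i_0}$ where $i_0 := \sigma^{-1}(1)$, and show it can be slid to the front of $w$ using only \emph{admissible} swaps. The crucial point is an adjacency observation: for every $j < i_0$ the letters $w_j$ and $w_{i_0}$ must be adjacent. Indeed, applying the hypothesis to the pair $(j,i_0)$, were $w_j \not\sim w_{i_0}$ we would get $\sigma(j) < \sigma(i_0) = 1$, which is impossible; hence $w_j \sim w_{i_0}$. Consequently each of the $i_0 - 1$ consecutive transpositions that move $w_{i_0}$ leftward past $w_{i_0-1}, \dots, w_1$ exchanges a pair of adjacent vertices and is admissible. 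After performing them, $w$ has been transformed by swaps into $w_{i_0} w_1 \cdots w_{i_0-1} w_{i_0+1} \cdots w_m$, whose leading letter is $w_{i_0} = \hat{w}_1$.

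Next I would strip off this common leading letter and apply the inductive hypothesis to the length-$(m-1)$ words obtained by deleting position $i_0$ from $w$ (leaving the surviving letters in their original relative order) and position $1$ from $\hat{w}$. Deleting one letter and reindexing preserves the relative order of all surviving letters, so the induced permutation $\sigma'$ — essentially $\sigma$ with the argument $i_0$ and the value $1$ removed, shifted down — still satisfies the monotonicity condition: if $p < q$ are surviving positions with $w_p \not\sim w_q$ then $\sigma(p) < \sigma(q)$, whence $\sigma'(p) < \sigma'(q)$. The inductive hypothesis then produces admissible swaps carrying the tail of $w$ onto the tail of $\hat{w}$; since these swaps never involve the leading letter, prepending it shows $w$ is equivalent by swaps to $\hat{w}$, closing the induction.

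The only genuinely substantive step is the adjacency observation in the second paragraph — that every letter to the left of $w_{i_0}$ is adjacent to it — which is precisely what licenses transporting $w_{i_0}$ to the front by admissible swaps. I expect no difficulty in the descent to $\sigma'$, since deleting a single letter cannot create new out-of-order non-adjacent pairs among those that remain; that part is routine bookkeeping.
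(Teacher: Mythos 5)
Your proof is correct and takes essentially the same approach as the paper: the paper also inducts on $m$ and uses the monotonicity hypothesis to show the distinguished letter is adjacent to everything it must pass, sliding it to the front by admissible swaps and then stripping it off — the only (cosmetic) difference being that the paper moves $w_1 = \hat{w}_{\sigma(1)}$ to the front of $\hat{w}$, whereas you move $w_{\sigma^{-1}(1)} = \hat{w}_1$ to the front of $w$. Incidentally, your write-up is cleaner on the indices (the paper's proof defines $i = \sigma^{-1}(1)$ but then treats it as $\sigma(1)$), and your explicit check that the induced permutation $\sigma'$ still satisfies the monotonicity condition is the same routine verification the paper passes over quickly.
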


\begin{proof}
We proceed by induction on $m$.
If $\sigma(1)=1$, then $\sigma$ restricts to a permutation of $\{2,\cdots,m-1\}$ and we can apply our inductive hyphothesis. 
Otherwise, $i=\sigma^{-1}(1)>1$, and $\hat{w}_{i}$ must be adjacent to $\hat{w}_{1},\cdots,\hat{w}_{i-1}$.  
Therefore, by successive swaps, we may move $w_{1}=\hat{w}_{i}$ to the left past $\hat{w}_1$, \dots, $\hat{w}_{i-1}$.  Then note that $\sigma$ restricts to a permutation of $m-1$ elements satisfying the original hypotheses for the words $w' = w_2 \dots w_m$ to $\hat{w}' = \hat{w}_1 \dots \hat{w}_{\sigma(1)-1} \hat{w}_{\sigma(1)+1} \dots \hat{w}_m$.  By the inductive hypothesis, $w'$ and $\hat{w}'$ are equivalent by a sequence of swaps, and hence $w$ and $\hat{w}$ are equivalent by a sequence of swaps as desired.
\end{proof}

For (\ref{part:equivalent}) $\implies$ (\ref{part:permutation}), we have to produce a permutation out of the sequence of operations.  It is easy to see that an admissible swap corresponds to a transposition permutation satisfying the monotonicity condition in (\ref{part:permutation}).  However, if we perform a split or a merge operation, then naturally two indices are mapped to one or vice versa, so in that setting, we need to replace the permutation (i.e.\ bijective function) by a relation from $[m]$ to $[n]$.

Recall that a \emph{relation} $R: A \to B$ between two sets $A$ and $B$ is a subset of $R \subseteq A \times B$. Given relations $R: A \to B$ and $S: B \to C$, the composition $S \circ R$ is defined by
    \[
        S \circ R = \{(a,c) \in A \times C: \text{ there exists } b \in B \text{ with } (a,b) \in R \text{ and } (b,c) \in S\}.
    \]
Note that this definition extends the composition of functions. 

\begin{defn} \label{def: monotone matching}
Let $\mathcal{G} = (\mathcal{V},\mathcal{E})$ be a graph.  Let $w = w_1 \dots w_m$ and $\hat{w} = \hat{w}_1 \dots \hat{w}_n$ be two words in the alphabet $V$.  A \textbf{$\mathcal{G}$-monotone matching} from $w$ to $\hat{w}$ is a relation $R: [m] \to [n]$ satisfying the following conditions:
\begin{enumerate}[(1)]
    \item For every $i \in [m]$, there is some $j \in [n]$ with $(i,j) \in R$.
    \item For every $j \in [n]$, there is some $i \in [m]$ with $(i,j) \in R$.
    \item If $(i,j) \in R$, then $w_i = \hat{w}_j$.
    \item If $(i,j) \in R$ and $(i',j') \in R$ and $w_i$ is not adjacent to $w_{i'}$ in $\cG$, then $i \leq i'$ iff $j \leq j'$.
\end{enumerate}
\end{defn}

Note in the case that the relation $R$ is a bijective function, then (1) and (2) of Definition \ref{def: monotone matching} hold, while (3) and (4) reduce to the conditions on the permutation $\sigma$ in Proposition~\ref{prop: reduced word equivalence}.(\ref{part:permutation}).

\begin{lem} \label{lem: monotone matching}
Let $\mathcal{G} = (\mathcal{V},\mathcal{E})$ be a graph.  Let $w = w_1 \dots w_m$ and $\hat{w} = \hat{w}_1 \dots \hat{w}_n$ be two words in the alphabet $V$.  If $w$ and $\hat{w}$ are equivalent, then there exists a $\cG$-monotone matching from $w$ to $\hat{w}$. 
\end{lem}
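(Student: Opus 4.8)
The plan is to induct on the number $\ell$ of elementary operations (admissible swaps, splittings, merges) in a sequence carrying $w = u_0$ to $\hat{w} = u_\ell$, maintaining at each stage a $\cG$-monotone matching $R_t \colon w \to u_t$ out of the \emph{fixed} word $w$. The base case $\ell = 0$ is the identity relation on $[m]$, which is trivially a $\cG$-monotone matching. For the inductive step I am handed $R_t \colon w \to u_t$ together with one operation $u_t \to u_{t+1}$, and I must manufacture $R_{t+1} \colon w \to u_{t+1}$. Swaps and merges will be handled by post-composing $R_t$ with a suitable relation on the target side; splittings require a more careful construction, and this is where the real content lies.

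For an admissible swap exchanging positions $k$ and $k+1$ of $u_t$, I set $R_{t+1} = \tau \circ R_t$ for $\tau$ the transposition of $k$ and $k+1$. Because the exchanged letters are adjacent in $\cG$ while $R_t$ preserves letters, every pair of positions of $w$ bearing non-adjacent letters is sent by $R_t$ to a pair $\{j,j'\} \neq \{k,k+1\}$, on which $\tau$ is order-preserving; hence condition (4) survives. For a merge of positions $k,k+1$ of $u_t$, I set $R_{t+1} = Q \circ R_t$ for $Q$ the non-decreasing surjection that collapses $k+1$ onto $k$ and shifts the tail down by one. Post-composing a matching that has no inversions with a non-decreasing map cannot create one, and letters and totality are visibly preserved.

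The crux is the splitting of a position $k$ of $u_t$, of letter $a$, into two consecutive copies $k,k+1$. Naively composing $R_t$ with the one-to-two split relation would route every preimage of $k$ to both new positions, producing strict inversions among the several positions of $w$ that carry the letter $a$ (recall $a \not\sim a$, so such positions are constrained by condition (4)). Instead I let $I = \{i : (i,k) \in R_t\}$ --- nonempty by surjectivity --- and set $i_{\max} = \max I$; then I keep each pair of $R_t$ below $k$, shift each pair above $k$ up by one, route every $i \in I$ to the new position $k$, and route \emph{only} $i_{\max}$ additionally to $k+1$. The point is that the unique edge into the genuinely new slot $k+1$ issues from the largest preimage of $k$, so it can never play the role of the smaller index in an inversion against an edge into $k$; formalizing this against the non-decreasing collapse $u_{t+1} \to u_t$ shows condition (4) is preserved, while totality, surjectivity, and letter-matching are immediate.

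Composing the steps gives $R_\ell \colon w \to \hat{w}$, the desired $\cG$-monotone matching. I expect the splitting case to be the sole obstacle: swaps and merges reduce to post-composition with order-compatible maps, but a splitting genuinely can introduce inversions (as in the equivalence $aa \approx a \approx aa$, where blind composition yields the all-pairs relation), and the whole difficulty is choosing how to distribute the preimages of the split vertex --- sending the extra copy through the maximal preimage --- so that no inversion among repeated letters is created.
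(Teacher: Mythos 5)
Your proof is correct, and it takes a genuinely different route from the paper's. The paper proves Lemma \ref{lem: monotone matching} in two steps: (a) each elementary operation (admissible swap, split, merge) admits an explicit $\cG$-monotone matching, and (b) the composition of two $\cG$-monotone matchings is again a $\cG$-monotone matching; the matching from $w$ to $\hat{w}$ is then the composite of the elementary matchings along the sequence of operations. You instead keep the source word $w$ fixed and rebuild the matching after each operation, and, crucially, at a split you refuse to compose with the one-to-two split relation: you route the new slot $k+1$ only through the maximal preimage $i_{\max}$ of the split position. (Implicitly you read condition (4) of Definition \ref{def: monotone matching} as forbidding strict inversions, i.e.\ $i<i'$ implies $j\le j'$ for non-adjacent letters; that is the only viable reading, since the literal ``iff'' would force every monotone matching to be a bijection, and then $a \approx aa$ would falsify the lemma.)

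The difference matters: your warning that blind composition fails is accurate, and it applies to the paper's own argument. Under the no-inversion reading, the paper's step (a) is fine but step (b) is false: in your example $aa \approx a \approx aa$, composing the paper's merge matching $\{(1,1),(2,1)\}$ with its split matching $\{(1,1),(1,2)\}$ yields the all-pairs relation on $[2]\times[2]$, which contains the strict inversion $(1,2),(2,1)$ between two occurrences of the non-self-adjacent letter $a$. Under the literal ``iff'' reading, step (b) is sound but step (a) fails, since the paper's merge and split relations themselves violate the ``iff''. So on either reading one half of the paper's proof breaks, precisely at the point you flagged, while your maximal-preimage construction closes the gap. Indeed your sketch formalizes correctly: writing $\pi\colon [n+1]\to[n]$ for the collapse map, every pair of your $R_{t+1}$ pushes down to a pair of $R_t$, so the inductive hypothesis controls everything except a pair of the form $(i_{\max},k+1)$, $(i',k)$ over the split letter, and maximality of $i_{\max}$ (which forces $i' \le i_{\max}$) prevents that pair from being an inversion. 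Your swap and merge steps coincide in substance with the paper's; the added value of your argument is entirely in the split case, and the same composition issue you avoid also affects the paper's closing remark extending Proposition \ref{prop: reduced word equivalence} to non-reduced words.
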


\begin{proof}
It suffices to show that (a) that each of the operations leads to a $\mathcal{G}$-monotone matching and (b) that a $\mathcal{G}$-monotone matching from $w$ and $\hat{w}$ and a $\mathcal{G}$-monotone matching from $\hat{w}$ to $\tilde{w}$ compose to form a $\mathcal{G}$-monotone matching from $w$ to $\tilde{w}$. For (a):
\begin{itemize}
    \item If $\hat{w}$ is obtained from $w$ by swapping $i$ and $i+1$, where $w_i$ and $w_{i+1}$ are adjacent, then a $\mathcal{G}$-monotone matching $R: [m] \to [m]$ is given by the relation $R = \{(j,j): j \neq i, i+1\} \cup \{(i,i+1),(i+1,i)\}$.
    \item If $\hat{w}$ is obtained from $w$ by merging $i$ and $i+1$, where $w_i = w_{i+1}$, then the $\cG$-monotone matching $R: [m] \to [m-1]$ is given by the relation $R = \{(1,1), \dots, (i,i)\} \cup \{ (i+1,i), \dots, (m,m-1)\}$.
    \item If $\hat{w}$ is obtained from $w$ by splitting the index $i$ into $i$ and $i+1$, then the $\cG$-monotone matching is given by $R = \{(1,1), \dots,(i,i)\} \cup \{(i,i+1),\dots,(m,m+1)\}$.
\end{itemize}
For (b), suppose $\tilde{w} = \tilde{w}_1, \dots, \tilde{w}_o$ is another word, suppose $R$ is a $\mathcal{G}$-monotone matching from $w$ to $\hat{w}$, and $S$ is a $\mathcal{G}$-monotone matching from $\hat{w}$ to $\tilde{w}$.  One can check that is $S \circ R$ is a $\mathcal{G}$-monotone matching from $w$ to $\tilde{w}$ by verifying each condition directly:
\begin{enumerate}[(1)]
    \item Given $i \in [m]$, there exists some $j \in [n]$ with $(i,j) \in R$, and then there exists some $k \in [o]$ with $(j,k) \in S$, and hence $(i,k) \in S \circ R$.
    \item The second condition is checked in the symmetrical way.
    \item If $(i,k) \in S \circ R$, then there exists some $j \in [n]$ with $(i,j) \in R$ and $(j,k) \in S$.  Hence, $w_i = \hat{w}_j = \tilde{w}_k$.
    \item Let $(i,k), (i',k') \in S \circ R$.  Suppose $w_i$ and $w_{i'}$ are not adjacent.  Pick $j$ and $j' \in [n]$ with $(i,j), (i',j') \in R$ and $(j,k), (j',k') \in S$. Note $\hat{w}_j = \hat{w}_{j'}$ by condition (3) for $R$.  Hence, $i < i'$ iff $j \leq j'$ iff $k \leq k'$ by condition (4) applied to $R$ and $S$.\qedhere
\end{enumerate}
\end{proof}

\begin{lem}\label{lem:monotone mathcing is a bijection in reduced case}
Let $\mathcal{G} = (\mathcal{V},\mathcal{E})$ be a graph.  Let $w = w_1 \dots w_m$ and $\hat{w} = \hat{w}_1 \dots \hat{w}_n$ be two reduced words in the alphabet $\mathcal{V}$.  If $w$ and $\hat{w}$ are equivalent, then $m = n$ and there is a permutation $\sigma: [m] \to [m]$ with $\hat{w}_{\sigma(i)} = w_i$ such that if $i < i'$ and $w_i$ is not adjacent to $w_{i'}$, then $\sigma(i) < \sigma(i')$.
\end{lem}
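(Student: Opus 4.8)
The plan is to deduce this from the existence of a $\cG$-monotone matching, which Lemma~\ref{lem: monotone matching} already furnishes: since $w$ and $\hat w$ are equivalent, there is a relation $R\colon [m]\to[n]$ satisfying conditions (1)--(4) of Definition~\ref{def: monotone matching}. The entire content of the present lemma is then to show that, \emph{because both words are reduced}, this relation is forced to be the graph of a bijection, after which the desired permutation $\sigma$ and its monotonicity property fall out of conditions (3) and (4) directly. So the real task is to rule out the possibility that $R$ is ``multi-valued,'' and this is exactly where reducedness enters.

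First I would show that $R$ is single-valued, i.e.\ that each $i\in[m]$ has a unique partner. Suppose instead that $(i,j),(i,j')\in R$ with $j<j'$. Then by (3) we have $\hat w_j=w_i=\hat w_{j'}$, so reducedness of $\hat w$ provides an index $l$ with $j<l<j'$ and $\hat w_l\not\sim\hat w_j$. Using (2) I pull this back: choose $i'$ with $(i',l)\in R$, so that $w_{i'}=\hat w_l$, and crucially $w_i=\hat w_j\not\sim\hat w_l=w_{i'}$, which makes condition (4) applicable to the pair $i,i'$. Applying (4) to $(i,j)$ and $(i',l)$ gives $i\le i'\iff j\le l$, and since $j<l$ this yields $i\le i'$; applying (4) to $(i,j')$ and $(i',l)$ gives $i\le i'\iff j'\le l$, and since $l<j'$ this yields $i>i'$, a contradiction. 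The symmetric argument with the roles of $w$ and $\hat w$ exchanged (now invoking reducedness of $w$) shows $R$ is injective, and surjectivity is just condition (2).

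Hence $R$ is the graph of a bijection $\sigma\colon[m]\to[n]$, forcing $m=n$, and (3) gives $\hat w_{\sigma(i)}=w_i$. For the monotonicity, suppose $i<i'$ with $w_i\not\sim w_{i'}$; condition (4) applied to $(i,\sigma(i))$ and $(i',\sigma(i'))$ yields $\sigma(i)\le\sigma(i')$, and injectivity upgrades this to $\sigma(i)<\sigma(i')$, as required. This is precisely the property demanded of $\sigma$, and it also matches the conditions appearing in Proposition~\ref{prop: reduced word equivalence}.(\ref{part:permutation}).

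The main obstacle, such as it is, lies in the single-valuedness step: one must be sure that the non-adjacency hypothesis needed to invoke condition (4) is genuinely available, and this is exactly what reducedness supplies, since the intermediate letter $\hat w_l$ guaranteed by the reduced condition is non-adjacent to $\hat w_j=w_i$. One should also check the bookkeeping around the degenerate possibility $i=i'$, but this causes no trouble: the conclusion ``$i\le i'$ and $i>i'$'' is contradictory whether or not $i=i'$. Apart from these points the proof is a direct, if slightly fiddly, double application of the monotonicity axiom, and no further idea beyond Lemma~\ref{lem: monotone matching} is needed.
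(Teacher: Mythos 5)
Your proof is correct and follows essentially the same route as the paper's: invoke Lemma~\ref{lem: monotone matching} to get a $\cG$-monotone matching, use reducedness plus two applications of condition (4) to show the relation is single-valued (and symmetrically injective), then read off the permutation and its monotonicity. The only cosmetic difference is that you derive the contradiction directly from the incompatible inequalities $i\le i'$ and $i>i'$, whereas the paper concludes $k=i$ and contradicts $w_i\neq w_k$ using the ``not equal'' part of reducedness; both are fine.
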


\begin{proof}
By the previous lemma, there exists a $\mathcal{G}$-monotone matching $R$ from $w$ to $\hat{w}$.  We claim that $R$ defines a bijection.

For each $i \in [m]$, we know that there exists some $j \in [n]$ with $(i,j) \in R$.  We claim that this is $j$ is unique.  Suppose that $(i,j) \in R$ and $(i,j') \in R$ with $j < j'$.  Since $\hat{w}$ is reduced, there exists some $\ell$ strictly between $j$ and $j'$ such that $\hat{w}_\ell$ is not equal or adjacent to $\hat{w}_j$.  Moreover, there exists some $k \in [m]$ with $(k,\ell) \in R$.  Then condition (4) of $\mathcal{G}$-monotonicity tells us that $j \leq \ell \leq j'$ implies that $i \leq k \leq i$, hence $k = i$.  However, this contradicts that $w_i=\hat{w}_{j} \neq \hat{w}_{\ell}=w_k$.

A symmetrical argument shows that for every $j \in [n]$, there is a unique $i \in [m]$ with $(i,j) \in R$.  Thus, $R$ defines a bijection as desired, so that $m = n$ and $R$ has the form $R = \{(i,\sigma(i): i \in [m] \}$ for some permutation $\sigma$.  By Definition \ref{def: monotone matching}, we see that if $i < i'$ and $w_i$ is not adjacent to $w_{i'}$, then $\sigma(i) < \sigma(i')$.
\end{proof}

This lemma completes the proof of (\ref{part:equivalent}) $\implies$ (\ref{part:permutation}) in Proposition \ref{prop: reduced word equivalence}. 
\begin{remark}
If $w$ and $\hat{w}$ are equivalent $\mathcal{G}$-reduced words, note that the permutation $\sigma$ is uniquely determined by the property that for each $v \in V$, $\sigma$ maps $\{i: w_i = v\}$ onto $\{j: \hat{w}_j = v\}$ monotonically.  In particular, the permutation in Proposition \ref{prop: reduced word equivalence}.(\ref{part:permutation}) is unique.
\end{remark}

\begin{remark}
The method of proof more generally shows that arbitrary words $w$ and $\hat{w}$ are equivalent if and only if there exists a $\mathcal{G}$-monotone matching from $w$ to $\hat{w}$.  Indeed, Lemma \ref{lem: monotone matching} shows that equivalence of $w$ and $\hat{w}$ implies the existence of a $\mathcal{G}$-monotone matching.  On the other hand, suppose there is a $\mathcal{G}$-monotone matching from $w$ to $\hat{w}$.  Note $w$ and $\hat{w}$ are equivalent to some reduced words $w'$ and $\hat{w}'$, and hence there are $\mathcal{G}$-monotone matchings from $w'$ to $w$, from $w$ to $\hat{w}$, and from $\hat{w}$ to $\hat{w}'$.  The composition yields a $\cG$-monotone matching from $w'$ to $\hat{w}'$, so by Proposition \ref{prop: reduced word equivalence}, $w'$ and $\hat{w}'$ are equivalent by swaps, hence also $w$ and $\hat{w}$ are equivalent.
\end{remark}

\subsection{Relatively \texorpdfstring{$\mathcal{G}$}{G}-reduced words}

In order to compute conditional expectations and study relative properties of subalgebras, we use a relative notion of reduced word.

\begin{defn}
Let $\mathcal{G}=(\mathcal{V},\mathcal{E})$ be a graph and $V_1$, $V_2 \subseteq \mathcal{V}$.  Let $w$ be a word in the alphabet $\mathcal{V}$.
\begin{enumerate}[(1)]
    \item $w$ is \textbf{$\mathcal{G}$-reduced relative to $V_1$ on the left} if $v_1w$ is $\mathcal{G}$-reduced for every letter $v_1 \in V_1$.
    
    \item $w$ is \textbf{$\mathcal{G}$-reduced relative to $V_2$ on the right} if $wv_2$ is $\cG$-reduced for every $v_2 \in V_2$.
    
    \item $w$ is \textbf{$\mathcal{G}$-reduced relative to $(V_1,V_2)$} if both (1) and (2) hold.
\end{enumerate}
\end{defn}

\begin{remark}
In the case of $\varnothing\subset \cV$, we take $w$ being $\cG$-reduced relative to $\varnothing$ on the left or right to just mean that $w$ is $\cG$-reduced.  Consequently, $w$ is $\mathcal{G}$-reduced relative to $V_1$ on the left if and only if $w$ is $\mathcal{G}$-reduced relative to $(V_1,\varnothing)$.  Similarly, $w$ is $\mathcal{G}$-reduced relative to $V_2$ on the right if and only if $w$ is $\mathcal{G}$-reduced relative to $(\varnothing,V_2)$.
We also note that all relatively $\cG$-reduced words are, in particular, $\cG$-reduced words.
\end{remark}

The next three lemmas show existence and uniqueness of a certain factorization of reduced words based on the vertex sets $V_1$ and $V_2$. This will be useful in Section \ref{sec: fusion hah!} when we compute the fusion rules for bimodules arising from subgraphs. 

\begin{lem} \label{lem: join with relatively reduced}
Let $\mathcal{G}=(\cV,\cE)$ be a graph and $V_1$, $V_2 \subseteq \cV$.  Suppose that $w = w^{(1)} \cdot w^{(2)} \cdot w^{(3)}$ where
\begin{enumerate}[(1)]
    \item $w^{(1)}$ is a $\mathcal{G}$-reduced word in  the alphabet $V_1$.
    \item $w^{(2)}$ is $\mathcal{G}$-reduced relative to $(V_1,V_2)$.
    \item $w^{(3)}$ is a word in the alphabet $V_2$ that is $\mathcal{G}$-reduced relative to $(U,\varnothing)$, where $U$ is the set of vertices in $V_1 \cap V_2$ that are adjacent to \emph{all} the letters in $w^{(2)}$.
\end{enumerate}
Then $w$ is $\mathcal{G}$-reduced.
\end{lem}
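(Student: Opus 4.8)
The plan is to verify the defining condition of $\cG$-reducedness directly. Recall that $w$ is $\cG$-reduced exactly when, for every pair of positions $p < q$ carrying the same letter $v$, there is an intermediate position $r$ with $p < r < q$ whose letter is not adjacent to $v$. So I fix such a pair $p < q$ with common letter $v$ and produce the intermediate non-adjacent position. Writing $w = w^{(1)} \cdot w^{(2)} \cdot w^{(3)}$, I split into six cases according to which block contains $p$ and which contains $q$; since $p < q$, the block of $p$ must precede or equal the block of $q$, so the possibilities are $(1,1),(1,2),(1,3),(2,2),(2,3),(3,3)$.

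When $p$ and $q$ lie in the \emph{same} block the conclusion is immediate from reducedness of that block: $w^{(1)}$ is $\cG$-reduced by hypothesis (1), $w^{(2)}$ is $\cG$-reduced because every relatively reduced word is in particular $\cG$-reduced, and $w^{(3)}$ is $\cG$-reduced by hypothesis (3). In each case the intermediate position supplied inside the block is also intermediate in $w$. The two ``adjacent-block'' cases are handled by prepending or appending a single letter. If $p \in w^{(1)}$ and $q \in w^{(2)}$, then $v \in V_1$, so $v\, w^{(2)}$ is $\cG$-reduced by left-relative reducedness of $w^{(2)}$; comparing the prepended $v$ with the occurrence of $v$ at $q$ yields an intermediate non-adjacent letter inside $w^{(2)}$, which lies strictly between $p$ and $q$ in $w$ since $w^{(1)}$ entirely precedes $w^{(2)}$. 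Symmetrically, if $p \in w^{(2)}$ and $q \in w^{(3)}$ then $v \in V_2$, and appending $v$ on the right (using right-relative reducedness of $w^{(2)}$) produces an intermediate non-adjacent letter inside $w^{(2)}$ lying between $p$ and $q$.

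The crux is the remaining case $p \in w^{(1)}$, $q \in w^{(3)}$, where necessarily $v \in V_1 \cap V_2$; here the role of the set $U$ becomes essential, and I split on whether $v \in U$. If $v \notin U$, then by the definition of $U$ the vertex $v$ fails to be adjacent to some letter of $w^{(2)}$ (in particular $w^{(2)}$ is nonempty, since an empty $w^{(2)}$ would place every element of $V_1 \cap V_2$ in $U$ vacuously); and because all of $w^{(2)}$ lies strictly between $p$ and $q$, that non-adjacent letter furnishes the required intermediate position. If instead $v \in U$, then $v$ is adjacent to \emph{every} letter of $w^{(2)}$, so $w^{(2)}$ is useless here; instead I use that $w^{(3)}$ is $\cG$-reduced relative to $U$ on the left, whence $v\, w^{(3)}$ is $\cG$-reduced, and comparing the prepended $v$ with the occurrence at $q$ gives an intermediate non-adjacent letter inside $w^{(3)}$, which again lies strictly between $p$ and $q$ in $w$.

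This final case is where the precise definition of $U$ is exactly what is required, and is the main obstacle: the set $U$ is engineered so that for $v \in V_1 \cap V_2$ either $w^{(2)}$ already supplies a separating letter (when $v \notin U$) or the left-relative reducedness of $w^{(3)}$ against $U$ takes over (when $v \in U$). All the remaining cases are routine applications of the relative reducedness hypotheses via single-letter prepend/append arguments.
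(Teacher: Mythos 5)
Your proof is correct and takes essentially the same route as the paper's own proof: the same case division according to which blocks contain the two occurrences, the same single-letter prepend/append arguments via the relative reducedness hypotheses for the adjacent-block cases, and the same split on $v \in U$ versus $v \notin U$ (using a non-adjacent letter of $w^{(2)}$ in the first subcase and left-relative reducedness of $w^{(3)}$ in the second) for the $w^{(1)}$-to-$w^{(3)}$ case.
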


\begin{proof}
Denote $w=w_1\cdots w_n$ and suppose that $i < j$ with $w_i = w_j$.  We must find some $i<k<j$ such that $w_k$ is not adjacent to $w_i = w_j$.  We proceed in cases:
\begin{enumerate}[(A)]
    \item If $w_i$ and $w_j$ are both from $w^{(1)}$, the claim follows because $w^{(1)}$ is reduced.  Similarly for $w^{(2)}$ and $w^{(3)}$.
    
    \item Suppose that $w_i$ comes from $w^{(1)}$ and $w_j$ comes from $w^{(2)}$.  Because $w_i \in V_1$ and $w^{(2)}$ is $\cG$-reduced relative to $(V_1,V_2)$, the word $w_i \cdot w^{(2)}$ is $\mathcal{G}$-reduced, and hence there exists some index $k < j$, within $w^{(2)}$, such that $w_k$ is not equal or adjacent to $w_i = w_j$.
    
    \item Suppose that $w_i$ comes from $w^{(2)}$ and $w_j$ comes from $w^{(3)}$.  Using that $w_j \in V_2$ and thus $w^{(2)} \cdot w_j$ is $\mathcal{G}$-reduced, we can argue analogously to the previous case.
    
    \item Finally, suppose that $w_i$ is from $w^{(1)}$ and $w_j$ is from $w^{(3)}$.  Note that $w_i = w_j$ must be in $V_1 \cap V_2$.  Then there are two subcases: (a) $w_i\not\in U$; and (b) $w_i\in U$. For (a), the definition of $U$ implies there exists some index $k$ from $w^{(2)}$ such that $w_k$ is not adjacent to $w_i$.  Since $k$ is from $w^{(2)}$, we have $i < k < j$, so we are done.  For (b), because $w^{(3)}$ is $\mathcal{G}$-reduced relative to $(U,\varnothing)$, we know $w_i \cdot w^{(3)}$ is $\mathcal{G}$-reduced, and so there is some index $k < j$ from $w^{(3)}$ such that $w_k$ is not adjacent to $w_i$.\qedhere
\end{enumerate}
\end{proof}

\begin{lem} \label{lem: word relative decomposition}
Let $\mathcal{G}=(\mathcal{V},\mathcal{E})$ be a graph and $V_1$, $V_2 \subseteq \mathcal{V}$.  Every word $w$ is equivalent to a word of the form $w^{(1)} \cdot w^{(2)} \cdot w^{(3)}$ satisfying the conditions in Lemma \ref{lem: join with relatively reduced}.
\end{lem}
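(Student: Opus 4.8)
The plan is to reduce at once to the case that $w$ is $\mathcal{G}$-reduced, since every word is $\approx$-equivalent to a reduced one through admissible swaps and merges; a factorization of the reduced representative then transfers back to $w$ by transitivity. I would build the factorization by two greedy passes, a \emph{left pass} extracting $w^{(1)}$ and a \emph{right pass} extracting $w^{(3)}$, each carried out purely by admissible swaps. The one supporting fact I would record explicitly beforehand is that an admissible swap applied to a $\mathcal{G}$-reduced word again yields a $\mathcal{G}$-reduced word (and, more generally, preserves being reduced relative to $V_1$ on the left): starting from the reduced $w$ and only ever swapping graph-adjacent consecutive letters, every intermediate word stays reduced, so any prefix or suffix produced is automatically reduced.

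\emph{Left pass.} Beginning with $r = w$ and an empty block $w^{(1)}$, I would repeatedly test whether $r$ is $\mathcal{G}$-reduced relative to $V_1$ on the left. If it is not, there is a letter $v_1 \in V_1$ whose first occurrence in $r$ is preceded only by letters adjacent to $v_1$; I pull that occurrence to the front of $r$ by successive admissible swaps (each legal since every letter it passes is adjacent to it) and append it to $w^{(1)}$. Each step removes one letter from $r$, so the process terminates with $r$ reduced relative to $V_1$ on the left. Since only swaps were used, $w^{(1)} \cdot r$ is reduced; hence its prefix $w^{(1)}$ is a reduced word in the alphabet $V_1$, giving condition (1) of Lemma~\ref{lem: join with relatively reduced}, and $r$ is reduced.

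\emph{Right pass.} I would run the mirror-image procedure on $r$, pulling letters of $V_2$ to the back, producing $r \approx w^{(2)} \cdot w^{(3)}$ by admissible swaps, with $w^{(3)}$ a word in $V_2$ and $w^{(2)}$ reduced relative to $V_2$ on the right. Because $r$ was reduced relative to $V_1$ on the left and swaps preserve this (prepending any $v_1 \in V_1$ and applying the same swaps keeps the word reduced), $w^{(2)} \cdot w^{(3)}$ is also reduced relative to $V_1$ on the left. Any failure of this property for the prefix $w^{(2)}$ alone would already be a failure for $w^{(2)} w^{(3)}$, so $w^{(2)}$ is reduced relative to $V_1$ on the left too, and therefore reduced relative to $(V_1,V_2)$, which is condition (2).

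The remaining and least routine point is condition (3): that $w^{(3)}$ is reduced relative to $(U,\varnothing)$, where $U \subseteq V_1 \cap V_2$ consists of the vertices adjacent to every letter of $w^{(2)}$. As a suffix of the reduced word $w^{(2)} w^{(3)}$, the word $w^{(3)}$ is reduced, so I only need $u \cdot w^{(3)}$ reduced for each $u \in U$. First, $u$ cannot occur in $w^{(2)}$, since it is adjacent to every letter there and no vertex is adjacent to itself. If $u \cdot w^{(3)}$ failed to be reduced, the first occurrence of $u$ in $w^{(3)}$ would be preceded only by letters adjacent to $u$; together with $u$ being adjacent to all of $w^{(2)}$, this makes every letter of $w^{(2)} w^{(3)}$ before that occurrence adjacent to $u$, so $u \cdot w^{(2)} w^{(3)}$ would not be reduced, contradicting that $w^{(2)} w^{(3)}$ is reduced relative to $V_1$ on the left (as $u \in V_1$). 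Hence $w \approx w^{(1)} w^{(2)} w^{(3)}$ is the desired factorization. I expect this last step — reconciling condition (3) with the left-reducedness that must survive both passes — to be the main obstacle, while the bookkeeping that swaps preserve reducedness and the two greedy passes terminate is routine.
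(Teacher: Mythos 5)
Your proof is correct and is essentially the paper's argument in constructive form: where the paper chooses, among all reduced words equivalent to $w$, one maximizing the length of the initial $V_1$-block and then minimizing the position of the last letter outside $V_2$, your two greedy swap-passes produce exactly such a word, with termination of each pass playing the role of the paper's extremality, and your verification of condition (3) — deriving a contradiction with left-reducedness of $w^{(2)}\cdot w^{(3)}$ relative to $V_1 \supseteq U$ — is the same argument as the paper's Claim 3. The supporting fact you isolate (admissible swaps preserve $\mathcal{G}$-reducedness, hence also reducedness relative to $V_1$ on the left) is true, routine, and is used implicitly in the paper's proof as well.
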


\begin{proof}
Since every word is equivalent to a $\cG$-reduced word, we may assume without loss of generality that $w$ is $\cG$-reduced.
Let $a(w) = \min\{j: w_j \not \in V_1\}$ and let $b(w) = \max\{j: w_j \not \in V_2\}$.  Let $C$ be the set of $\cG$-reduced words equivalent to $w$.  Let $w'$ be an element in $C$ that maximizes $a(w')$.  Then $w''$ be an element in $C$ that minimizes $b(w'')$ subject to the constraint that $a(w'') = a(w')$.  Write $a = a(w') = a(w'')$ and $b = b(w'')$.  Write $w'' = w^{(1)} \cdot w^{(2)} \cdot w^{(3)}$ where
\begin{align*}
    w^{(1)} &= w_1'' \dots w_{a-1}'' \\
    w^{(2)} &= w_a'' \dots w_b'' \\
    w^{(3)} &= w_{b-1}'' \dots w_\ell'',
\end{align*}
where $\ell$ is the length of $w''$. Note that $w^{(1)}$ is a word in the alphabet in $V_1$ and $w^{(3)}$ is word in the alphabet $V_2$. Moreover, $w^{(1)}$, $w^{(2)}$, and $w^{(3)}$ are all $\mathcal{G}$-reduced since they are subwords of the $\mathcal{G}$-reduced word $w''$.  We will complete the proof via a series of claims.\\

\noindent\textbf{Claim 1:} $w^{(2)} \cdot w^{(3)}$ is $\mathcal{G}$-reduced relative to $(V_1,\varnothing)$.

\noindent Fix $v \in V_1$ and let $i < j$ be two indices in $v \cdot w^{(2)} \cdot w^{(3)}$ labeled with the same vertex. We must show there is some index in between labeled by a non-adjacent vertex. If the two indices $i$ and $j$ are both from $w^{(2)} \cdot w^{(3)}$, then it suffices to note that is $w^{(2)} \cdot w^{(3)}$ is $\mathcal{G}$-reduced since it is a subword of $w''$, which is reduced because it is equivalent by admissible swaps to $w$. Otherwise, $i$ corresponds to the first letter $v$ in $v \cdot w^{(2)} \cdot w^{(3)}$.  Suppose for contradiction that there does not exist some index $k$ between $i$ and $j$ such that $w_k''$ is not adjacent to $w_j''$.  Then all the letters between $v$ and $w_j''$ in $v \cdot w^{(2)} \cdot w^{(3)}$ are adjacent to $v$, and hence $w_j''$ can be moved past them to the left by repeated swaps, so that it comes to the left side of $w^{(2)} \cdot w^{(3)}$.  Thus, by grouping the letter $w_j''$ with $w^{(1)}$ instead of $w^{(2)} \cdot w^{(3)}$, we obtain a contradiction to the assumption that $a(w'')$ is maximal.\\

\noindent\textbf{Claim 2:} $w^{(2)}$ is $\mathcal{G}$-reduced relative to $(\varnothing,V_2)$.

\noindent Fix $v \in V_2$ and let $i<j$ be two indices in $w^{(2)}\cdot v$ labeled by the same vertex.
 Note that $w^{(2)}$ is $\mathcal{G}$-reduced, so if $i$ and $j$ are both from $w^{(2)}$ then we are done. Otherwise, $j$ corresponds to the last letter $v$ in $w^{(2)}$.  If $w_k''$ is adjacent to $w_i''$ for all $k>i$ among the indices of $w^{(2)}$, then arguing as in Claim 1 we would contradict minimality of $b(w'')$.\\

Observe that the combination of Claims 1 and 2 give that $w^{(2)}$ is $\cG$-reduced relative to $(V_1,V_2)$. In the final claim, let $U$ be the set of vertices in $V_1\cap V_2$ that are adjacent to all the letters in $w^{(2)}$.\\

 \noindent\textbf{Claim 3:} $w^{(3)}$ is $\mathcal{G}$-reduced relative to $(U,\varnothing)$.

 \noindent Fix $v \in U$ and let $i<j$ be two indices in $v\cdot w^{(3)}$. labeled by the same vertex.  Since $w^{(3)}$ is $\mathcal{G}$-reduced, if $i$ and $j$ are both from $w^{(3)}$ then we are done. So suppose $i$ corresponds to $v$ and that there is some index $j$ in $w^{(3)}$ with $w_j'' = v$. Since $w^{(2)}\cdot w^{(3)}$ is $\mathcal{G}$-reduced relative to $(V_1,\varnothing)$ by Claim 1 and $U \subseteq V_1$, there must be some index $k < j$ in $w^{(2)} \cdot w^{(3)}$ with $w_k''$ not adjacent to $v = w_j''$.  By definition of $U$, $v$ is adjacent to all the letters in $w^{(2)}$.  Hence, the index $k$ must have come from $w^{(3)}$.  Thus, $w_k$ occurs as a letter in $v \cdot w^{(3)}$ between $v$ and $w_j''$, and $w_k''$ is not adjacent to $v$.
\end{proof}

\begin{lem} \label{lem: word relative decomposition 2}
Let $\mathcal{G}=(\mathcal{V},\mathcal{E})$ be a graph and $V_1$, $V_2 \subseteq \mathcal{V}$.  Let $w = w^{(1)} \cdot w^{(2)} \cdot w^{(3)}$ and $\hat{w} = \hat{w}^{(1)} \cdot \hat{w}^{(2)} \cdot \hat{w}^{(3)}$ satisfy the conditions in Lemma \ref{lem: join with relatively reduced} (here in the third condition for $w$ and $\hat{w}$, we use respectively $U$ and $\hat{U}$, where $U$ and $\hat{U}$ are the sets of vertices in $V_1 \cap V_2$ that are adjacent to all letters in $w^{(2)}$ and $\hat{w}^{(2)}$ respectively).  If $w \approx \hat{w}$, then $w^{(1)} \approx \hat{w}^{(1)}$, $w^{(2)} \approx \hat{w}^{(2)}$, and $w^{(3)} \approx \hat{w}^{(3)}$.
\end{lem}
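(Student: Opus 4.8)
The plan is to exploit the explicit permutation attached to a pair of equivalent \emph{reduced} words by Proposition~\ref{prop: reduced word equivalence}, and to show that this permutation must carry each of the three blocks of $w$ onto the corresponding block of $\hat w$. First I would record that, by Lemma~\ref{lem: join with relatively reduced}, both $w$ and $\hat w$ are $\mathcal{G}$-reduced, so Proposition~\ref{prop: reduced word equivalence} produces a unique permutation $\sigma$ with $\hat w_{\sigma(i)} = w_i$ obeying the monotonicity condition that $i<j$ and $w_i \not\sim w_j$ imply $\sigma(i) < \sigma(j)$. The whole argument then reduces to showing that $\sigma$ maps the index set of $w^{(1)}$ onto that of $\hat w^{(1)}$ and the index set of $w^{(3)}$ onto that of $\hat w^{(3)}$: once this is known, $\sigma$ automatically carries the middle block onto the middle block, and restricting $\sigma$ to each of the three blocks gives a permutation satisfying the hypotheses of Proposition~\ref{prop: reduced word equivalence}(iii), yielding the three desired equivalences.

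The key technical step, which I would isolate as a separate ``head lemma,'' is the following. Suppose $x \approx \hat x$ are $\mathcal{G}$-reduced, $x = a\cdot s$ and $\hat x = a' \cdot s'$ with $a,a'$ words in the alphabet $V_1$ and with $s,s'$ both $\mathcal{G}$-reduced relative to $V_1$ on the left; then the permutation $\sigma$ of $x,\hat x$ carries the initial block of length $|a|$ onto the initial block of length $|a'|$, and in particular $a \approx a'$. The proof is a minimal-image descent. Writing $p=|a|$ and $\hat p = |a'|$, suppose some $i \le p$ has $\sigma(i) > \hat p$ and choose such $i$ with $\sigma(i)$ minimal. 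Put $v = x_i \in V_1$. Since then $v$ occurs in $s'$ and $s'$ is $\mathcal{G}$-reduced relative to $V_1$ on the left, the word $v\cdot s'$ is reduced, so there is an index $\ell$ with $\hat p < \ell < \sigma(i)$ and $\hat x_\ell \not\sim v$. Setting $k = \sigma^{-1}(\ell)$ we get $x_k = \hat x_\ell \not\sim v = x_i$, and monotonicity forces $k < i \le p$ (otherwise $i<k$ would give $\sigma(i) < \sigma(k) = \ell$, contradicting $\ell < \sigma(i)$); but then $k \le p$ and $\sigma(k) = \ell > \hat p$ with $\sigma(k) < \sigma(i)$, contradicting minimality. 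Hence $\sigma(\{1,\dots,p\}) \subseteq \{1,\dots,\hat p\}$, and the symmetric argument applied to $\sigma^{-1}$ (using that $s$ is reduced relative to $V_1$ on the left) gives the reverse inclusion, so $p = \hat p$ and the blocks match; Proposition~\ref{prop: reduced word equivalence} then gives $a \approx a'$. I expect this descent to be the main obstacle, precisely because the middle blocks $w^{(2)},\hat w^{(2)}$ may themselves contain letters of $V_1$, so one cannot separate the blocks by their vertex alphabets alone; it is exactly the relative reducedness of the tails that prevents a head letter from being matched into the tail.

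With the head lemma in hand the rest is assembly. Applying Lemma~\ref{lem: join with relatively reduced} with the single letter $v_1$ in place of the first block shows that $w^{(2)}\cdot w^{(3)}$ and $\hat w^{(2)}\cdot \hat w^{(3)}$ are $\mathcal{G}$-reduced relative to $V_1$ on the left. The head lemma, applied with alphabet $V_1$, then gives $w^{(1)} \approx \hat w^{(1)}$ and shows $\sigma$ carries the $w^{(1)}$-block onto the $\hat w^{(1)}$-block, so that restricting $\sigma$ to the complementary indices witnesses $w^{(2)} w^{(3)} \approx \hat w^{(2)} \hat w^{(3)}$. For the right-hand block I would pass to reversed words: reversal preserves being $\mathcal{G}$-reduced and the relation $\approx$, turns ``reduced relative to $V_2$ on the right'' into ``reduced relative to $V_2$ on the left,'' and sends a suffix to a prefix. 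Since $w^{(2)}$ is $\mathcal{G}$-reduced relative to $(V_1,V_2)$, in particular $w^{(2)}v_2$ is reduced for every $v_2\in V_2$, so the reverse of $w^{(2)}$ is reduced relative to $V_2$ on the left. Applying the head lemma with alphabet $V_2$ to the reversed tails $\mathrm{rev}(w^{(3)})\cdot \mathrm{rev}(w^{(2)})$ and $\mathrm{rev}(\hat w^{(3)})\cdot \mathrm{rev}(\hat w^{(2)})$ yields $w^{(3)} \approx \hat w^{(3)}$ together with the block preservation that leaves $w^{(2)} \approx \hat w^{(2)}$. I emphasize that one must peel the head off first and then work \emph{inside} the tail: the prefix of the tail is just $w^{(2)}$, which is right-reduced relative to $V_2$, whereas $w^{(1)}w^{(2)}$ need not be, so this order is what accommodates the inherent asymmetry of the factorization caused by the set $U$.
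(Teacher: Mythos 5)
Your proof is correct, and at its core it runs on the same engine as the paper's: both extract the permutation $\sigma$ from Proposition~\ref{prop: reduced word equivalence} and show it preserves the three blocks using only the monotonicity condition together with relative reducedness. The difference is organizational. The paper proves block-preservation by two separate inductions on the indices of $w$ itself: a left-to-right induction showing $\sigma$ maps the indices of $w^{(2)}\cdot w^{(3)}$ into those of $\hat{w}^{(2)}\cdot\hat{w}^{(3)}$ (with a case split on whether $w_i\in V_1$, the case $w_i\notin V_1$ being settled by the alphabet constraint on $\hat{w}^{(1)}$), and then a right-to-left induction separating $w^{(2)}$ from $w^{(3)}$ (case split on whether $w_i\in V_2$). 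You instead isolate a single reusable two-block head lemma, proved by minimal-counterexample descent on the head indices --- which needs no alphabet case split, since head letters lie in the head alphabet by hypothesis, and the contradiction is drawn from left-reducedness of the \emph{target} tail $s'$ rather than of the source tail --- and you then recover the paper's second induction for free by word reversal, which exchanges right-relative and left-relative reducedness. The two mechanisms are mirror images of one another (your descent via pullbacks along $\sigma^{-1}$ corresponds to the paper's forward induction plus its appeal to symmetry), so neither is more general; what yours buys is modularity and the elimination of a second bespoke induction, at the small cost of verifying that reversal preserves reducedness and the relation $\approx$ (immediate, since the defining operations are reversal-symmetric). Your closing remark about the order of operations is also exactly right, and is implicitly why the paper likewise peels off the $V_1$-head before separating $w^{(2)}$ from $w^{(3)}$: the prefix $w^{(1)}w^{(2)}$ need not be right-reduced relative to $V_2$, so a symmetric one-shot argument on the full words is unavailable.
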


\begin{proof}
First observe that  $w^{(2)} \cdot w^{(3)}$ and $\hat{w}^{(2)} \cdot \hat{w}^{(3)}$ are both $\mathcal{G}$-reduced relative to $(V_1,\varnothing)$, by applying Lemma~\ref{lem: join with relatively reduced} to $v\cdot w^{(2)} \cdot w^{(3)}$ and  $\hat{w}^{(2)} \cdot \hat{w}^{(3)}$ to $v\in V_1$.

Now, since $w \approx \hat{w}$, Proposition \ref{prop: reduced word equivalence} shows that there is a permutation $\sigma$ with $w_i = \hat{w}_{\sigma(i)}$, such that if $i < j$ and $w_i$ is not adjacent to $w_j$, then $\sigma(i) < \sigma(j)$. We claim that $\sigma$ maps the indices of $w^{(2)} \cdot w^{(3)}$ into the letters of $\hat{w}^{(2)} \cdot \hat{w}^{(3)}$.  We proceed by induction on the indices of $w^{(2)} \cdot w^{(3)}$, from left to right.  Let $i$ be one of these indices and suppose the claim is known for all indices to its left in $w^{(2)} \cdot w^{(3)}$.  There are now two cases:
\begin{itemize}
    \item Suppose $w_i \not \in V_1$.  Then $\hat{w}_{\sigma(i)} = w_i$ is not in $V_1$ and hence $\sigma(i)$ cannot be one of the indices in $\hat{w}^{(1)}$, so it must be one of the indices in $\hat{w}^{(2)} \cdot \hat{w}^{(3)}$.
    \item Suppose that $w_i \in V_1$.  Then because $w^{(2)} \cdot w^{(3)}$ is $\mathcal{G}$-reduced relative to $(V_1,\varnothing)$, we know $w_i \cdot w^{(2)} \cdot w^{(3)}$ is $\mathcal{G}$-reduced, so there must exist some index $j < i$ in $w^{(2)} \cdot w^{(3)}$ such that $w_j$ is not adjacent to $w_i$ in $\mathcal{G}$.  By induction hypothesis, $\sigma(j)$ is one of the indices in $\hat{w}^{(2)} \cdot \hat{w}^{(3)}$.  By Lemma~\ref{lem: monotone matching}, we must have $\sigma(i) > \sigma(j)$ and hence $\sigma(i)$ is one of the indices in $\hat{w}^{(2)} \cdot \hat{w}^{(3)}$, as desired.
\end{itemize}
By symmetrical reasoning, $\sigma^{-1}$ must map the indices of $\hat{w}^{(2)} \cdot \hat{w}^{(3)}$ into the indices of $w^{(2)} \cdot w^{(3)}$.  Therefore, $\sigma$ restricts to $\cG$-monotone matchings from $w^{(1)}$ to $\hat{w}^{(1)}$ and from $w^{(2)} \cdot w^{(3)}$ to $\hat{w}^{(2)} \cdot \hat{w}^{(3)}$. That is, $w^{(1)} \approx \hat{w}^{(1)}$ and $w^{(2)}\cdot w^{(3)} \approx \hat{w}^{(2)} \cdot \hat{w}^{(3)}$.

Finally, we argue $\sigma$ as above maps the indices of $w^{(2)}$ into the indices of $\hat{w}^{(2)}$. We again proceed by induction on the indices of $w^{(2)}$, this time from right to left.  Let $i$ be an index in $w^{(2)}$ and the claim is already known for all indices $j$ to its right.  We again have two cases:
\begin{itemize}
    \item If $w_i \not \in V_2$, then $\sigma(i)$ must be an index in $\hat{w}^{(2)}$.
    \item If $w_i \in V_2$, then since $w^{(2)}$ is $\mathcal{G}$-reduced relative to $(V_1,V_2)$, then there is some index $j > i$ in $w^{(2)}$ such that $w_j$ is not adjacent to $w_i$.  Then $\sigma(i) < \sigma(j)$, which is by induction hypothesis an index in $\hat{w}^{(2)}$.  Thus, $\sigma(i)$ is an index in $\hat{w}^{(2)}$.
\end{itemize}
Symmetrically, $\sigma^{-1}$ maps the indices of $\hat{w}^{(2)}$ into the indices of $w^{(2)}$. Thus, as above, we have $w^{(2)}\approx \hat{w}^{(2)}$ and $w^{(3)} \approx \hat{w}^{(3)}$.
\end{proof}

\subsection{Computation of conditional expectation}\label{sec: cond expec formulas}

We recall the following facts which follow from the Fock space description of $L^{2}$ of the graph product in \cite[Section 2.1]{CaFi17}.

\begin{lem}[Remark 2.7 of \cite{CaFi17}] \label{lem: reduced words span}
Let $\mathcal{G}=(\mathcal{V},\mathcal{E})$ be a graph,  let $\{(M_v,\varphi_v):v \in \cV\}$ be a family of stacial von Neumann algebras, and $(M,\varphi) = \gp_{v \in \mathcal{G}} (M_v,\varphi_v)$.  The $*$-subalgebra generated by $(M_v)_{v \in \mathcal{V}}$ is spanned by $1$ and elements of the form $x_1 \dots x_m$ where $x_j \in M_{w_j}$ with $\varphi(x_j)=0$ for some $\cG$-reduced word $w = w_1 \dots w_m$.
\end{lem}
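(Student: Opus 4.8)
The plan is to argue purely algebraically, via a reduction that mirrors the standard word-reduction algorithm for free products. Write $\mathcal{A}$ for the $*$-subalgebra generated by the $M_v$. Since each $M_v$ is itself a unital $*$-subalgebra, $\mathcal{A}$ is the linear span of all finite products $y_1 \cdots y_n$ with $y_i \in M_{v_i}$ for some vertices $v_i$ (together with $1$, from the empty product); indeed the span of such products is closed under multiplication and adjoint and contains every $M_v$, so it must equal $\mathcal{A}$.

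First I would center each factor. Writing $y_i = \varphi(y_i)1 + y_i^{\circ}$ with $y_i^{\circ} := y_i - \varphi(y_i)1 \in \ker(\varphi) \cap M_{v_i}$ and expanding $y_1 \cdots y_n$ multilinearly, every summand is a scalar multiple of the product of its surviving centered factors $y_j^{\circ}$ (in their original order), with the factors equal to $\varphi(y_i)1$ deleted. Hence $\mathcal{A}$ is spanned by $1$ together with products $x_1 \cdots x_m$ where $x_j \in \ker(\varphi) \cap M_{w_j}$ for some (not necessarily $\cG$-reduced) word $w = w_1 \cdots w_m$. It therefore suffices to show that each such centered product is a linear combination of $1$ and products of the same form indexed by $\cG$-reduced words.

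I would prove this by induction on $m$, the number of centered factors. If $m \leq 1$ the word $w$ is automatically $\cG$-reduced and there is nothing to do; likewise if $w$ is already $\cG$-reduced. Otherwise, by definition there are indices $i < k$ with $w_i = w_k$ and $w_i \sim w_j$ for every $i < j < k$. Using that adjacent vertex algebras commute in $M$ --- the ``tensor position'' placing $M_{w_i}$ and $M_{w_j}$ side by side --- the factor $x_i$ commutes with each of $x_{i+1}, \dots, x_{k-1}$, so I may rewrite
\[
x_1 \cdots x_m = x_1 \cdots x_{i-1}\, x_{i+1} \cdots x_{k-1}\, (x_i x_k)\, x_{k+1} \cdots x_m .
\]
Now $x_i x_k \in M_{w_i}$, and splitting it as $x_i x_k = \varphi(x_i x_k)1 + (x_i x_k)^{\circ}$ with $(x_i x_k)^{\circ} \in \ker(\varphi) \cap M_{w_i}$ expresses the left-hand side as a scalar multiple of a centered product with $m-2$ factors plus a centered product with $m-1$ factors. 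Both have strictly fewer than $m$ centered factors, so the induction hypothesis applies and completes the argument.

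The only non-formal input is the commutation relation $[M_v, M_w] = 0$ for $v \sim w$, which is precisely the direct/tensor-product position built into the graph product construction of M\l{}otkowski and Caspers--Fima; this is the step I would expect to have to cite or justify carefully, since the bare $\cG$-independence moment condition does not make commutation visible on its own. Everything else is bookkeeping: the multilinear expansion in the centering step and the strictly decreasing count of centered factors in the inductive step guarantee termination, exactly paralleling the reduction to $\cG$-reduced words underlying \cite[Lemma 1.3(1)]{CaFi17} and the familiar free-product case.
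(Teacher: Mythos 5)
Your proof is correct, but note that the paper itself does not prove this lemma at all: it is quoted verbatim as Remark 2.7 of \cite{CaFi17}, where it follows from the Fock space construction of the graph product together with the word-reduction procedure of \cite[Lemma 1.3(1)]{CaFi17}. Your induction is precisely the algebraic counterpart of that procedure, and of the operations the paper later formalizes in Section~\ref{sec: reduced words}: your commutation step implements an admissible swap, and your re-centering of $x_i x_k \in M_{w_i}$ implements a merge followed by splitting off the scalar part, with the strictly decreasing count of centered factors guaranteeing termination. What your write-up buys is a self-contained argument at the level of the $*$-algebra, with no appeal to the Fock space model. Your caveat about commutation is also well taken and genuinely necessary: the definition of $\cG$-independence recorded in Section~\ref{sec:graphproduct} is only the moment-vanishing condition on $\cG$-reduced words, and that condition alone does not yield $[M_v,M_w]=0$ for $v \sim w$. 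Indeed, for a single edge the only nonempty $\cG$-reduced words are $v$, $w$, $vw$, $wv$, and the free product $(M_v,\varphi_v)*(M_w,\varphi_w)$ satisfies all of the resulting moment conditions while the lemma fails for it (a product $x y x'$ with $x,x' \in M_v$, $y \in M_w$ all centered lies outside the span of reduced-word products, which is not even an algebra there). So commutation of adjacent vertex algebras must be imported from the construction of \cite{Mlot2004, CaFi17} --- the ``tensor position'' of adjacent vertices --- exactly as you indicate; with that citation in place, your argument is complete.
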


\begin{lem}[Comments following Remark 2.11 of \cite{CaFi17}] \label{lem: reduced words orthogonal}
Let $\mathcal{G}=(\mathcal{V},\mathcal{E})$ be a graph and $(M,\varphi) = \gp_{v \in \mathcal{G}} (M_v,\varphi_v)$.  Let $w = w_1 \dots w_m$ and $\tilde{w} = \tilde{w}_1 \dots \tilde{w}_n$ be $\mathcal{G}$-reduced words.  Let $x_j \in M_{w_j} \cap \ker(\varphi)$ and $\tilde{x}_j \in M_{\tilde{w}_j} \cap \ker(\varphi)$.
\begin{enumerate}[(i)]
    \item If $w$ and $\tilde{w}$ are not equivalent, then $\varphi((x_1 \dots x_m)^* (\tilde{x}_1 \dots \tilde{x}_n)) = 0$.
    
    \item If $w$ and $\tilde{w}$ are equivalent, then
        \[
            \varphi((x_1 \dots x_m)^*(\tilde{x}_1 \dots \tilde{x}_m)) = \varphi(x_1^{*} \tilde{x}_{\sigma(1)}) \dots \varphi(x_m^{*} \tilde{x}_{\sigma(m)}^*),
        \]
    where the permutation $\sigma: [m] \to [m]$ is the $\mathcal{G}$-monotone matching from $w$ to $\tilde{w}$ guaranteed by Lemma~\ref{lem:monotone mathcing is a bijection in reduced case}, which also gives $m=n$.
    
\end{enumerate}
\end{lem}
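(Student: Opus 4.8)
The plan is to reduce both claims to an inner-product computation in the GNS space and then read the answer off the reduced-word (Fock space) realization of $L^2(M,\varphi)$ from \cite{CaFi17}. Writing $\Omega$ for the cyclic vector and using the convention $\langle a\Omega,b\Omega\rangle=\varphi(b^*a)$, the quantity to compute is $\varphi((x_1\cdots x_m)^*(\tilde x_1\cdots\tilde x_n))=\langle \tilde x_1\cdots\tilde x_n\,\Omega,\; x_1\cdots x_m\,\Omega\rangle$. Since $\varphi(x_i)=0$ forces $\varphi(x_i^*)=\overline{\varphi(x_i)}=0$, each $x_i\Omega$ lies in $\mathring H_{w_i}:=L^2(M_{w_i},\varphi_{w_i})\ominus\bC\Omega_{w_i}$, and likewise for the $\tilde x_j\Omega$; this is also the content of Lemma~\ref{lem: reduced words span}.

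First I would recall the decomposition $L^2(M,\varphi)\cong \bC\Omega\oplus\bigoplus_{[u]}H_{[u]}$, where the sum runs over equivalence classes of nonempty $\cG$-reduced words and, for a chosen representative $u=u_1\cdots u_k$, one sets $H_{[u]}=\mathring H_{u_1}\otimes\cdots\otimes\mathring H_{u_k}$. Under this identification $x_1\cdots x_m\,\Omega$ corresponds to the elementary tensor $(x_1\Omega)\otimes\cdots\otimes(x_m\Omega)$ in the summand indexed by $[w]$. Part (i) is then immediate: if $w\not\approx\tilde w$ the two vectors lie in orthogonal summands, so their inner product vanishes. For part (ii), when $w\approx\tilde w$ Proposition~\ref{prop: reduced word equivalence} and Lemma~\ref{lem:monotone mathcing is a bijection in reduced case} supply the permutation $\sigma$ with $\tilde w_{\sigma(i)}=w_i$ (in particular $m=n$); this $\sigma$ is exactly the reindexing identifying the tensor legs of $H_{[\tilde w]}$ with those of $H_{[w]}$, so computing the inner product of the two elementary tensors factor by factor gives $\prod_{i=1}^m\langle \tilde x_{\sigma(i)}\Omega, x_i\Omega\rangle=\prod_{i=1}^m\varphi(x_i^*\tilde x_{\sigma(i)})$, which is the asserted formula.

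The main obstacle is that the component space $H_{[u]}$ must be checked to be well defined on equivalence classes: passing between two reduced representatives reorders the tensor factors, and one must know this reordering acts by the canonical flip on the corresponding commuting legs (adjacent letters give commuting vertex algebras) and hence does not alter the scalar read off in (ii). This is precisely what the uniqueness of $\sigma$ recorded in the remark after Lemma~\ref{lem:monotone mathcing is a bijection in reduced case} guarantees, which is why I single that remark out as the load-bearing combinatorial input.

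If instead one wants a self-contained argument from $\cG$-independence alone, the alternative is induction on $m+n$: apply $\cG$-independence to conclude the inner product vanishes whenever the concatenation $w_m\cdots w_1\tilde w_1\cdots\tilde w_n$ is already $\cG$-reduced (this disposes of most of (i)), and otherwise locate a reducible collision, bring the two matching letters adjacent by admissible swaps, and split their product inside the vertex algebra into its $\varphi$-part (yielding a factor $\varphi(x_i^*\tilde x_j)$ together with a strictly shorter word) and its $\ker\varphi$-part (again a shorter word, handled by induction). Here the delicate step is the bookkeeping of which cancellations can occur: since $w$ and its reverse are reduced and $\tilde w$ is reduced, every collision must pair a letter of $w$ with a letter of $\tilde w$, and one must verify these pairings are forced to follow the pattern dictated by $\sigma$, which is exactly where Proposition~\ref{prop: reduced word equivalence} and Lemma~\ref{lem: monotone matching} re-enter.
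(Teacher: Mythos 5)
Your first argument is correct and is essentially the paper's own route: the paper does not prove this lemma but recalls it as a direct consequence of the Caspers--Fima Fock space realization of $L^2(M,\varphi)$, which is exactly the decomposition $\bC\Omega\oplus\bigoplus_{[u]}H_{[u]}$ you use, with (i) coming from orthogonality of distinct summands and (ii) from the leg-by-leg pairing under the unique permutation $\sigma$. Note in passing that your product formula $\prod_i \varphi(x_i^*\tilde x_{\sigma(i)})$ is the right one; the stray adjoint on $\tilde x_{\sigma(m)}$ in the paper's display is a typo.
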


\begin{lem}[Remark 2.14 of \cite{CaFi17}] \label{lem: cond exp reduced words CF}
Let $\mathcal{G}=(\mathcal{V},\mathcal{E})$ be a graph,  let $\{(M_v,\varphi_v):v \in \cV\}$ be a family of stacial von Neumann algebras, let $(M,\varphi) = \gp_{v \in \mathcal{G}} (M_v,\varphi_v)$, and let $V_0 \subseteq \mathcal{V}$.  For a $\cG$-reduced word $w = w_1 \dots w_m$, if $x_j \in M_{w_j}$ for each $j = 1$, \dots, $m$ then $E_{M_{V_0}}[x_1 \dots x_m] = 0$ unless $w_1,\ldots, w_n\in V_0$.
\end{lem}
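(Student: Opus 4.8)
The plan is to prove this by passing to the GNS Hilbert space $L^2(M,\varphi)$, identifying $E_{M_{V_0}}$ with an orthogonal projection, and then invoking the orthogonality of inequivalent reduced words. Throughout I take $\varphi(x_j)=0$, which is the form in which the lemma is applied (the elements produced by Lemma~\ref{lem: reduced words span} are centered); some such assumption is needed, since for a non-centered middle letter the expectation need not vanish.

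First I would record the $L^2$-picture. Write $\Omega$ for the cyclic vector and $\hat a = a\Omega$, so that $\langle \hat a,\hat b\rangle = \varphi(b^*a)$. Since $E_{M_{V_0}}$ is a faithful, normal, $\varphi$-preserving conditional expectation (these exist by \cite[Remark 2.14]{CaFi17}, as noted before Theorem~\ref{thm:factoriality}), a one-line computation shows that its $L^2$-implementation is the orthogonal projection onto $\overline{M_{V_0}\Omega}=L^2(M_{V_0},\varphi_{V_0})$: for $b\in M_{V_0}$,
\[
\langle \hat a - \widehat{E_{M_{V_0}}[a]},\, \hat b\rangle = \varphi(b^*a) - \varphi\big(E_{M_{V_0}}[b^*a]\big) = 0,
\]
using $b^*\in M_{V_0}$ and $\varphi\circ E_{M_{V_0}}=\varphi$. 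As $\varphi|_{M_{V_0}}$ is faithful, $E_{M_{V_0}}[a]=0$ if and only if $\hat a \perp L^2(M_{V_0})$. It therefore suffices to show $\widehat{x_1\cdots x_m}\perp L^2(M_{V_0})$ whenever some $w_j\notin V_0$.

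Next I would describe $L^2(M_{V_0})$ and test orthogonality against a spanning set. Viewing $(M_{V_0},\varphi_{V_0})$ as the graph product over $\cG|_{V_0}$ and applying Lemma~\ref{lem: reduced words span}, the space $L^2(M_{V_0})$ is the closed span of $\Omega$ together with the vectors $\widehat{y_1\cdots y_k}$ attached to $\cG|_{V_0}$-reduced words $u=u_1\cdots u_k$ with centered $y_i\in M_{u_i}\cap\ker\varphi$; here a $\cG|_{V_0}$-reduced word is exactly a $\cG$-reduced word all of whose letters lie in $V_0$. Pairing $\widehat{x_1\cdots x_m}$ with $\Omega$ gives $\varphi(x_1\cdots x_m)=0$ by $\cG$-independence, since $w$ is reduced and the $x_j$ are centered. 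Pairing with $\widehat{y_1\cdots y_k}$ gives $\varphi\big((y_1\cdots y_k)^*(x_1\cdots x_m)\big)$, which by Lemma~\ref{lem: reduced words orthogonal}(i) vanishes unless $w$ and $u$ are equivalent. But by Proposition~\ref{prop: reduced word equivalence} equivalent reduced words carry a letter-preserving bijection between their indices, hence have the same set of distinct letters; since $u$ has all letters in $V_0$ while $w$ has a letter outside $V_0$, they cannot be equivalent, so the pairing is $0$. Thus $\widehat{x_1\cdots x_m}\perp L^2(M_{V_0})$ and $E_{M_{V_0}}[x_1\cdots x_m]=0$.

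The only genuinely delicate point is the first step: identifying the (non-tracial) $\varphi$-preserving conditional expectation with the $L^2$-projection, where one uses $\varphi$-preservation and the faithfulness of $\varphi|_{M_{V_0}}$ in place of traciality. The combinatorial core -- that equivalence of reduced words preserves the letter set -- is immediate from Proposition~\ref{prop: reduced word equivalence}, and the vanishing of the two pairings is exactly Lemma~\ref{lem: reduced words orthogonal} together with $\cG$-independence.
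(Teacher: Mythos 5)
Your proof is correct, but note that the paper does not actually prove this lemma: it is quoted from \cite[Remark 2.14]{CaFi17}, where it is obtained from the Fock-space model of $L^2(M,\varphi)$. Your argument is therefore a genuinely different, self-contained derivation from the other facts the paper recalls in the same section: you identify $E_{M_{V_0}}$ on the $L^2$-level with the Jones projection onto $\overline{M_{V_0}\Omega}$ (using $\varphi$-preservation and faithfulness of $\varphi|_{M_{V_0}}$, which correctly replaces traciality in the statial setting), you realize $L^2(M_{V_0},\varphi_{V_0})$ as the closed span of $\Omega$ and centered products over $\cG|_{V_0}$-reduced words by applying Lemma~\ref{lem: reduced words span} to the subgraph (a $\cG|_{V_0}$-reduced word being exactly a $\cG$-reduced word with all letters in $V_0$), and you then kill both pairings by $\cG$-independence and Lemma~\ref{lem: reduced words orthogonal}(i), using Proposition~\ref{prop: reduced word equivalence} to see that equivalent reduced words have the same letter set, so a word with a letter outside $V_0$ cannot be equivalent to one with all letters in $V_0$. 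What your route buys is that the conditional-expectation statement is reduced to the already-quoted orthogonality and spanning lemmas without invoking the Fock-space machinery of \cite{CaFi17}; what the original route buys is that one model proves all three recalled lemmas simultaneously. Finally, your caveat about centering is correct and worth recording: as literally stated (arbitrary $x_j \in M_{w_j}$) the lemma is false---take $m=2$, $w_1 \in V_0$, $w_2 \notin V_0$, $x_1 \in M_{w_1}$ nonzero, and $x_2 = 1 \in M_{w_2}$, so that $E_{M_{V_0}}[x_1x_2] = x_1 \neq 0$ while $w_2 \notin V_0$---so the hypothesis $x_j \in M_{w_j} \cap \ker\varphi$ must be added; it is built into \cite{CaFi17}'s notion of a reduced operator and holds in every application in the paper, e.g.\ in the proof of Lemma~\ref{lem: Pimnser Popa computation}.
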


Our goal is to prove a conditional analog of Lemma \ref{lem: reduced words orthogonal}.

\begin{lem} \label{lem: Pimnser Popa computation}
Let $\mathcal{G}=(\mathcal{V},\mathcal{E})$ be a graph, let $\{(M_v,\varphi_v):v \in \cV\}$ be a family of stacial von Neumann algebras, and $V_1, V_2 \subseteq \mathcal{V}$. Let $w = w_1 \dots w_m$ and $\tilde{w} = \tilde{w}_1 \dots \tilde{w}_n$ be $\mathcal{G}$-reduced words relative to $(V_1,V_2)$.  Let $x_j \in M_{w_j} \cap \ker(\varphi)$ and $\tilde{x}_j \in M_{\tilde{w}_j} \cap \ker(\varphi)$, and write
\[
x = x_1 \dots x_m, \qquad \tilde{x} = \tilde{x}_1 \dots \tilde{x}_n.
\]
(In the case that $w$ or $\tilde{w}$ are empty, that is, $m = 0$ or $n = 0$, we take by convention $x = 1$ or $\tilde{x} =1$ respectively.)  Let $U$ be the set of vertices in $V_1 \cap V_2$ that are adjacent to \emph{all} letters of $w$ (note: if $w$ is the empty word, then $U=V_{1}\cap V_{2}$, by convention).  Then
    \begin{equation} \label{eq: conditional expectation formula}
        E_{M_{V_{2}}}(\tilde{x}^{*}yx)=\varphi(\tilde{x}^{*}x)E_{M_{U}}(y), 
        \qquad \forall y\in M_{V_{1}}.
    \end{equation}
In particular, $E_{M_{V_{2}}}(\tilde{x}^{*}yx)=0$ for all $y\in M_{V_1}$ if $w$ and $\tilde{w}$ are not equivalent.
\end{lem}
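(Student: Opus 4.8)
The plan is to verify the operator identity~\eqref{eq: conditional expectation formula} by testing it in $L^2(M,\varphi)$, using that the state-preserving expectation $E_{M_{V_2}}$ is the orthogonal projection onto $L^2(M_{V_2},\varphi_{V_2})$. First I would reduce to the case that $y$ is a reduced word: both sides of~\eqref{eq: conditional expectation formula} are normal linear maps in $y$ (the left side is $E_{M_{V_2}}$ pre- and post-composed with fixed multiplications, the right side is $\varphi(\tilde x^*x)E_{M_U}$), so by Lemma~\ref{lem: reduced words span} it suffices to treat $y=1$ and $y=y_1\cdots y_k$ with $y_i\in M_{u_i}\cap\ker\varphi$ for a $\cG$-reduced word $u=u_1\cdots u_k$ in the alphabet $V_1$. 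For such $y$, Lemma~\ref{lem: cond exp reduced words CF} already evaluates the right-hand side: $E_{M_U}(y)=y$ if every $u_i\in U$ (so $y\in M_U$) and $E_{M_U}(y)=0$ otherwise; thus the target is $\varphi(\tilde x^*x)\,y$ when $y\in M_U$ and $0$ when $y$ has a letter outside $U$.

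To compute the left side I would show, for every reduced word $b=b_1\cdots b_\ell$ with $b_i\in M_{s_i}\cap\ker\varphi$ in the alphabet $V_2$ (and for $b=1$), that $\ip{\tilde x^* y x,\,b}_\varphi=\varphi(\tilde x^*x)\ip{E_{M_U}(y),\,b}_\varphi$, since these $b$ together with $1$ span a dense subspace of $L^2(M_{V_2})$. Rewriting the left side as $\ip{yx,\tilde x b}_\varphi=\varphi\big((\tilde x b)^*(yx)\big)$, the key structural inputs are that $yx$ and $\tilde x b$ are genuine reduced-word vectors. Indeed $u\cdot w$ is $\cG$-reduced by Lemma~\ref{lem: join with relatively reduced} (with empty third block), and $\tilde w\cdot s$ is $\cG$-reduced because $\tilde w$ is reduced relative to $V_2$ on the right and $s$ is a word in $V_2$ (any violation would let a single letter of $s$ merge into $\tilde w$, contradicting that $\tilde w v_2$ is reduced for $v_2\in V_2$). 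Hence Lemma~\ref{lem: reduced words orthogonal} applies: the inner product vanishes unless $u\cdot w\approx\tilde w\cdot s$, and otherwise equals the product of pairwise states read off from the $\cG$-monotone matching.

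The heart of the argument is then to analyze $u\cdot w\approx \tilde w\cdot s$ using the relative factorization lemmas. The decomposition $u\cdot w=(u)\cdot(w)\cdot(\varnothing)$ satisfies the hypotheses of Lemma~\ref{lem: join with relatively reduced}, so comparing it through Lemma~\ref{lem: word relative decomposition 2} with the canonical decomposition $\hat w^{(1)}\cdot\hat w^{(2)}\cdot\hat w^{(3)}$ of $\tilde w\cdot s$ from Lemma~\ref{lem: word relative decomposition} forces $u\approx \hat w^{(1)}$, $w\approx\hat w^{(2)}$, and $\hat w^{(3)}\approx\varnothing$. Since $\tilde w_1\notin V_1$ and no $V_1$-letter of $\tilde w$ can be brought to the front, the initial block $\hat w^{(1)}$ consists only of letters of $s$ that commute past all of $\tilde w$, i.e.\ letters of $V_1\cap V_2$ adjacent to every letter of $\tilde w$. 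When $w\approx\tilde w$ this set is exactly $U$ and $\hat w^{(2)}\approx\tilde w$, so the matching splits: the $x$-letters pair only with the $\tilde x$-letters, producing the scalar $\varphi(\tilde x^*x)$ via Lemma~\ref{lem: reduced words orthogonal}, while the $y$-letters pair only with the (necessarily $U$-valued) $b$-letters, producing $\ip{E_{M_U}(y),b}_\varphi$; this is the claimed factorization, and it simultaneously shows both inner products vanish when $y$ or $b$ has a letter outside $U$. The base case $y=1$ is clean on its own: if $s\neq\varnothing$ the last letter of $\tilde w\cdot s$ lies in $V_2$ and would have to match a ``finalizable'' $V_2$-letter of $w$, which is impossible since $w$ is reduced relative to $V_2$ on the right, so $w\not\approx\tilde w\cdot s$ and $E_{M_{V_2}}(\tilde x^*x)=\varphi(\tilde x^*x)\,1$.

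I expect the main obstacle to be precisely the combinatorial matching when $w\not\approx\tilde w$: one must rule out that ``extra'' letters of $s$ are absorbed into $\hat w^{(2)}\approx w$ in a way making $u\cdot w\approx\tilde w\cdot s$ with nonzero value, since there the target is $0$ as $\varphi(\tilde x^*x)=0$. The relative-reducedness of $w$ is what saves this — an extra $V_2$-letter of $w$ is forced (by $w\cdot v_2$ being reduced) to be non-adjacent to some preceding letter, and this rigid ordering clashes with the order imposed by placing $s$ after $\tilde w$. Organizing this cleanly through Lemma~\ref{lem: word relative decomposition 2} and the uniqueness of the monotone matching in Proposition~\ref{prop: reduced word equivalence}, so as to conclude $u,s\subseteq U$ and $w\approx\tilde w$ and then read off the product of states, is the technical crux; once it is in place, the ``in particular'' claim for inequivalent $w,\tilde w$ follows immediately from $\varphi(\tilde x^*x)=0$.
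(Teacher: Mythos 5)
Your proposal is sound and reaches the result, but its core mechanism differs from the paper's, and the difference is instructive. Both arguments test the identity in $L^2$ against products of centered elements along reduced words in $V_2$, and both run on Lemmas~\ref{lem: reduced words span}, \ref{lem: reduced words orthogonal}, \ref{lem: cond exp reduced words CF} and the relative decomposition lemmas. The paper, however, keeps the test element grouped with $yx$ (it computes $\varphi(\tilde{x}^*yxz)$ for $z \in M_{V_2}$) and \emph{preprocesses} $z$: using Lemma~\ref{lem: word relative decomposition} it writes $z = z^{(1)}z^{(2)}$ with $z^{(1)} \in M_U$ and $z^{(2)}$ along a word reduced relative to $(U,\varnothing)$, then commutes $z^{(1)}$ past $x$ (letters of $U$ are adjacent to all letters of $w$) and absorbs it into $y$. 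After that, the word for $yxz$ is exactly a three-block word $b \cdot w \cdot a$ as in Lemma~\ref{lem: join with relatively reduced}, so Lemma~\ref{lem: word relative decomposition 2} compared against $\varnothing \cdot \tilde{w} \cdot \varnothing$ forces $a = b = \varnothing$ in the equivalence case, and in every other case one of the two sides vanishes for trivial reasons; the paper never evaluates a nontrivial matched product. Your route instead moves the test element to the $\tilde{x}$ side and compares $u \cdot w$ with $\tilde{w} \cdot s$, which costs two things the preprocessing avoids: (a) you must verify $\tilde{w} \cdot s$ is reduced --- your direct argument is correct, and note this is \emph{not} an instance of Lemma~\ref{lem: join with relatively reduced}, since $s$ need not be reduced relative to $(U,\varnothing)$; and (b) you must prove the splitting of the monotone matching, i.e.\ that $u \cdot w \approx \tilde{w} \cdot s$ forces $w$-letters to pair only with $\tilde{w}$-letters and $u$-letters only with $s$-letters, the latter lying in $U$ by monotonicity (a $u$-letter precedes every $w$-letter while its image follows every $\tilde{w}$-letter, so it must be adjacent to every letter of $w$). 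That splitting is the crux you flag, and your sketch of it is right with one directional slip: right-relative reducedness of $w$ gives, for a $V_2$-letter $w_i$, a non-adjacent letter \emph{after} position $i$, not ``preceding'' --- and it is this forward direction that drives the induction showing no $w$-letter can match into $s$. What your route buys in exchange is a stronger conclusion: the explicit factorization $\varphi((\tilde{x}b)^*(yx)) = \varphi(\tilde{x}^*x)\varphi(b^*y)$ in the equivalence case, which simultaneously handles the ``in particular'' clause and in effect re-proves the bimodule orthogonality later packaged in Lemma~\ref{lem: bimodule basis lemma}, at the price of combinatorics the paper's commutation trick sidesteps entirely.
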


\begin{proof}
It suffices to show that for all $y\in M_{V_1}$ and $z \in M_{V_2}$, we have
\begin{equation}\label{eqn: inner prod form for con exp}
\varphi(\tilde{x}^{*}yxz) = \varphi(\tilde{x}^{*}x) \varphi(E_{M_{U}}(y)z).
\end{equation}
By Lemma \ref{lem: reduced words span}, it further suffices to prove the claim when
\begin{equation} \label{eq: def of z}
z = z_1 \dots z_\ell, \quad z_j \in M_{a_j} \cap \ker(\varphi),
\end{equation}
where $a = a_1 \dots a_\ell$ is a $\mathcal{G}$-reduced word in the alphabet $V_2$.  Additionally, by Proposition~\ref{prop: reduced word equivalence}.\ref{part:swaps} and Lemma~\ref{lem: word relative decomposition}, we can assume without loss of generality that $a = a^{(1)}\cdot a^{(2)}$ where $a^{(1)}$ is a $\mathcal{G}$-reduced word in $U$ and $a^{(2)}$ is a $\mathcal{G}$-reduced relative to $(U,\varnothing)$.  This results in a corresponding factorization $z = z^{(1)} z^{(2)}$ with $z^{(1)} \in M_U$.  Then
\[
\varphi(\tilde{x}^{*}yxz^{(1)}z^{(2)}) = \varphi(\tilde{x}^*yz^{(1)}xz^{(2)}) \qquad  
\]
Thus, it suffices to prove the claim with $y$ replaced by $yz^{(1)}$ and $z$ replaced by $z^{(2)}$.

In other words, we can assume without loss of generality that $z$ is given by \eqref{eq: def of z} where $a$ is $\mathcal{G}$-reduced relative to $(U,\varnothing)$.  Furthermore, again by Lemma \ref{lem: reduced words span}, it suffices to consider the case where
\[
y = y_1 \dots y_k, \qquad y_j \in M_{b_j} \cap \ker(\varphi),
\]
where $b = b_1 \dots b_k$ is a $\mathcal{G}$-reduced word in $V_1$.  By Lemma \ref{lem: join with relatively reduced}, $b \cdot w \cdot a$ is $\mathcal{G}$-reduced.  Moreover, by Lemma \ref{lem: word relative decomposition 2}, the only way for $\tilde{w}$ and $b \cdot w \cdot a$ to be equivalent is if $\tilde{w} \approx w$ and $\varnothing \approx b$ and $\varnothing \approx a$ (hence $a$ and $b$ are empty).  Similarly, the only way for $\tilde{w}$ and $b \cdot w$ to be equivalent is if $w \approx \tilde{w}$ and $b = \varnothing$.  Thus, the claim can be checked in several cases:
\begin{itemize}
    \item In the case $a = b = \varnothing$, so then $y = z = 1$, we have
    \[
    \varphi(\tilde{x}^*yxz) = \varphi(\tilde{x}^*x) = \varphi(\tilde{x}^*x) \varphi(E_{M_{U}}[1]1) = \varphi(\tilde{x}^*x) \varphi(E_{M_{U}}[y]z).
    \]
    \item In case $a = \varnothing$ and $b \neq \varnothing$, then since $b \cdot w$ is not equivalent to $\tilde{w}$, we get $\varphi(\tilde{x}^*yx) = 0$ by Lemma \ref{lem: reduced words orthogonal},  hence the left-hand side of \eqref{eq: conditional expectation formula} is zero.  Meanwhile, $\varphi(y) = 0$ by definition of the graph product, so the right-hand side of \eqref{eq: conditional expectation formula} is $\varphi(E_{M_{U}}[y]) = \varphi(y) = 0$.
    \item In case $a \neq \varnothing$, then again $b \cdot w \cdot a$ is not equivalent to $\tilde{w}$, and hence the left-hand side of (\ref{eqn: inner prod form for con exp}) is zero, by Lemma \ref{lem: reduced words orthogonal}.  Meanwhile, since the word $a$ is $\mathcal{G}$-reduced relative to $(U,\varnothing)$, the element $z$ is orthogonal $M_U$ by Lemma \ref{lem: cond exp reduced words CF}, hence $\varphi(E_{M_{U}}[y]z) = 0$, so the right-hand side of (\ref{eqn: inner prod form for con exp}) is zero. \qedhere
\end{itemize}
\end{proof}

\section{Non-intertwining}\label{sec: diffuse}

Let $(M,\tau)$ be a tracial von Neumann algebra and $B,N\leq M$. We say that \textbf{$B$ intertwines into $N$ inside of $M$} if there exist nonzero projections $p_0 \in B$, $q_0 \in N$, and a normal unital $*$-homomorphism $\theta: p_0Pp_0 \to q_0Qq_0$, together with a nonzero partial isometry $v\in q_0Mp_0$ such that $v^{*}v=p_{0},$ $vv^{*}=q_{0}$ and $\theta(x)v = vx$ for all $x \in p_0Pp_0$. In this case one writes $N \preceq_M B$.

\begin{thm}[{\cite[Section 2]{PopaStrongRigidity}}] \label{T:Popa IBBT}
Let $(M, \tau)$ be a tracial von Neumann algebra, $p, q \in \mathcal{P}(M)$ projections, and $B\leq pMp$, $N\leq qMq$. Then the following are equivalent:
\begin{enumerate}[(i)]
\item $N\not\preceq_{M}B$;

\item there is a net $(u_n)_{n \in I}$ in $\mathcal{U}(N)$ with $\|E_B(xu_ny)\|_2 \to 0 \text{ for all } x, y \in M$;\label{I:total mixing over Q interwine}

\item for any subgroup $G\leq \mathcal{U}(N)$ with $N = W^{*}(G)$ there is a net $(u_n)_{n \in I}$ in $G$ satisfying $\|E_B(xu_ny)\|_2 \to 0 \text{ for all } x, y \in M$; \label{I:mixing over Q}

\item any $ P$-$Q$-sub-bimodule $K$ of $pL^2(M)q $ satisfies $\operatorname{dim}(K_Q)=+\infty $.\label{I:FD bimodule}
\end{enumerate}
\end{thm}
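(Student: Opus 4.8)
The plan is to prove the cyclic equivalence by establishing the equivalence of the four \emph{negations}, with the Jones basic construction $\langle M, e_B\rangle$ for the inclusion $(B\subseteq pMp, E_B)$ --- equipped with its canonical semifinite trace $\mathrm{Tr}$ determined by $\mathrm{Tr}(x e_B y)=\tau(xy)$ --- as the central tool. The organizing principle is the standard triangle of correspondences: nonzero finite-trace projections $f\in N'\cap\langle M,e_B\rangle$; nonzero $N$-$B$ sub-bimodules $fL^2(M)$ of $L^2(M)$ that are finitely generated over the relevant subalgebra (the negation of (iv)); and the intertwining data of $N\preceq_M B$ (the negation of (i)). I would first dispatch the two easy implications. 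The implication (\ref{I:mixing over Q})$\Rightarrow$(\ref{I:total mixing over Q interwine}) is immediate on taking $G=\mathcal{U}(N)$. For the contrapositive of (\ref{I:total mixing over Q interwine})$\Rightarrow$(i), observe that if $N\preceq_M B$ then the intertwining relation $\theta(x)v=vx$ gives $vxv^*=\theta(x)\in B$ for every unitary $x\in p_0Np_0$, whence $\|E_B(vxv^*)\|_2=\tau(q_0)^{1/2}>0$ is bounded away from $0$; this precludes any net with $\|E_B(a u_n b)\|_2\to 0$ for all $a,b$.

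The heart is the converse direction: from the failure of (\ref{I:total mixing over Q interwine}) (or (\ref{I:mixing over Q})) I would produce the intertwiner. First convert the absence of a net, by directing over finite subsets and tolerances, into a quantitative bound: there exist $\delta>0$ and finitely many pairs $a_j,z_j\in M$ with $\sum_j\|E_B(a_j u z_j)\|_2^2\ge\delta$ for all $u$ in the relevant unitary family. Writing $S_j=a_j^*e_Ba_j$ and $R_j=z_je_Bz_j^*$, both positive elements of $\langle M,e_B\rangle\cap L^2(\mathrm{Tr})$, one has $\|E_B(a_j u z_j)\|_2^2=\mathrm{Tr}\big((u^*S_ju)R_j\big)$. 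I then form the weakly compact convex set $\mathcal{C}=\overline{\mathrm{conv}}^{\,w}\{(u^*S_1u,\ldots,u^*S_ku):u\}\subseteq\bigoplus_{j=1}^k L^2(\langle M,e_B\rangle,\mathrm{Tr})$, which is invariant under the diagonal conjugation $\Phi_u$ (an $L^2(\mathrm{Tr})$-isometry permuting the generators). Letting $(T_1^0,\ldots,T_k^0)\in\mathcal{C}$ be the unique element of minimal $\mathrm{Tr}$-$2$-norm, invariance forces $\Phi_u(T_j^0)=T_j^0$, so each $T_j^0\in N'\cap\langle M,e_B\rangle$ (in the (iii) variant one uses $G''=N$, hence $G'=N'$). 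Since the functional $(X_j)\mapsto\sum_j\mathrm{Tr}(X_jR_j)$ is weakly continuous and $\ge\delta$ on the generating tuples, it is $\ge\delta$ on $\mathcal{C}$, so some $T_{j_0}^0\ne 0$; it is positive with finite trace, and a spectral projection $f=1_{[s,\infty)}(T_{j_0}^0)$ for small $s>0$ is a nonzero finite-trace projection in $N'\cap\langle M,e_B\rangle$ --- precisely the failure of (\ref{I:FD bimodule}).

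It remains to convert $f$ into concrete intertwining data, closing to the failure of (i). The sub-bimodule $K=fL^2(M)$ is finitely generated projective as a one-sided module, so $K\cong q_0(L^2(B)^{\oplus n})$ for a projection $q_0\in M_n(B)$, and the normal module action supplies a normal unital $*$-homomorphism $\theta\colon N\to q_0M_n(B)q_0$; unwinding the embedding $K\subseteq L^2(M)$ yields a nonzero partial isometry intertwining $\theta$ with the inclusion, and cutting by a suitable projection to pass from the amplification $M_n(B)$ back to a corner of $B$ produces the data $(p_0,q_0,\theta,v)$ witnessing $N\preceq_M B$. Assembling: failure of (\ref{I:total mixing over Q interwine}) $\Rightarrow$ failure of (\ref{I:FD bimodule}) $\Rightarrow$ failure of (i), while failure of (i) $\Rightarrow$ failure of (\ref{I:total mixing over Q interwine}) is the easy direction, giving (i)$\Leftrightarrow$(\ref{I:total mixing over Q interwine})$\Leftrightarrow$(\ref{I:FD bimodule}); running the same averaging over a generating subgroup $G$ gives failure of (\ref{I:mixing over Q}) $\Rightarrow$ failure of (i), which together with the trivial (\ref{I:mixing over Q})$\Rightarrow$(\ref{I:total mixing over Q interwine}) closes (\ref{I:total mixing over Q interwine})$\Leftrightarrow$(\ref{I:mixing over Q}).

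The main obstacle is the averaging step together with the reconstruction: one must verify genuine $\mathrm{Tr}$-square-integrability of the $S_j$, argue that the \emph{unique} minimal-norm element is actually fixed by $N$ (not merely that $\mathcal{C}$ is invariant), and then recover $(\theta,v)$ from the finitely generated module $K$. A point requiring care is to work with the two-sided condition throughout --- detecting non-triviality of the averaged element by pairing against the fixed operators $R_j=z_je_Bz_j^*$ rather than attempting a one-sided simplification, which is not available in general. A final bookkeeping nuisance is carrying the corner projections $p$ and $q$ (so that the basic construction is taken for $B\subseteq pMp$ and module dimensions are computed in the cut-down), which I would manage by reducing to the unital case wherever possible.
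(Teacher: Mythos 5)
Two preliminary remarks. First, the paper does not prove this statement at all: it is quoted (with some typographical slips --- the $P$, $Q$ in the definition and in item (iv) should read $N$, $B$) from Popa's work, so the only meaningful comparison is with the standard proof. Second, your overall architecture is exactly Popa's original argument, and its core is sound as you present it: the basic construction $\langle M, e_B\rangle$ with its semifinite trace, the identity $\|E_B(a_j u z_j)\|_2^2 = \Tr\bigl((u^* S_j u) R_j\bigr)$ with $S_j = a_j^* e_B a_j$ and $R_j = z_j e_B z_j^*$, the unique minimal-norm element of the weakly closed convex hull of the conjugation orbit, its $N$-centrality via uniqueness, non-vanishing via the weakly continuous pairing against the $R_j$, the finite-trace spectral projection, and the subgroup variant (iii) via $G''=N$, $G'=N'$.

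There is, however, a genuine gap, and it sits in the step you label easy: $\neg$(i) $\Rightarrow$ $\neg$(ii). The computation $vxv^* = \theta(x) \in B$ is wrong as stated ($vxv^* = \theta(x)vv^*$, and $vv^*$ need not lie in $B$), but the substantive problem is a quantifier mismatch: your lower bound holds only for unitaries $x$ of the \emph{corner} $p_0 N p_0$, whereas (ii) concerns nets of unitaries of all of $N$. For $u \in \mathcal{U}(N)$ one only gets $E_B(vuv^*) = \theta(p_0 u p_0)E_B(vv^*)$, and $p_0 u p_0$ is a contraction which can be nearly zero (a unitary of $N$ can move $p_0$ almost off itself), so no uniform lower bound follows. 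This cannot be waved away: in your scheme every equivalence --- (i)$\Leftrightarrow$(ii), (i)$\Leftrightarrow$(iv), (ii)$\Leftrightarrow$(iii) --- routes through this one implication, since nothing else in your argument starts from $\neg$(i). The standard repair is the corner-to-global upgrade you never carry out: choose partial isometries $w_1,\dots,w_n \in N$ with $w_i^* w_i \le p_0$ and $\sum_i w_i w_i^*$ equal to a central cut $z$ of the central support of $p_0$ (the cut taken along a spectral projection of the central-valued trace of $p_0$, so that finitely many $w_i$ suffice); this extends $\theta$ to a normal unital $*$-homomorphism of $Nz$ into a corner of $M_n(B)$ intertwined with a row partial isometry $\tilde v = [v_1,\dots,v_n]$, after which $\sum_{i,j}\|E_B(v_i^* u v_j)\|_2^2 = \|E_{M_n(B)}(\tilde v^* \tilde v)\|_2^2 > 0$ uniformly over $u \in \mathcal{U}(N)$, precluding the net. (Equivalently: produce from the corner data a nonzero finite-trace projection $f \in N' \cap \langle M, e_B\rangle$ and use $\Tr(f) = \Tr(u f u^* f)$ together with an $L^2(\Tr)$-approximation $f \approx \sum_j x_j e_B y_j$.) A second, smaller issue of the same nature: in your reconstruction $\neg$(iv) $\Rightarrow$ $\neg$(i), finite $\dim_B(K)$ does not imply $K$ is finitely generated over $B$ (multiplicities can be unbounded when $B$ has diffuse center), so one must first cut $K$ by a suitable projection before writing $K \cong q_0(L^2(B)^{\oplus n})$; this is routine, but it is precisely the kind of cutting argument whose absence breaks the easy direction.
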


In this section we completely characterize when two subalgebras corresponding to induced subgraphs do not intertwine into each other (partial results were previously obtained in \cite[Lemma 2.27]{CaFi17}).
We will  say ``$N$ is diffuse relative to $B$ in $M$" to mean $N\not\preceq_{M} B$. This is motivated by the case $B=\bC$, since $N\not\preceq_{M} \bC$ means precisely that $N$ is diffuse.  
This will also provides intuition for our main result in this section, since in our setting $N$ being diffuse relative to $B$ in $M$ will be equivalent to a combination of conditions which either require that a vertex algebra is diffuse or a lack of edges between subgraphs (i.e. some ``free independence outside of the subgraph") in a manner analogous to Theorem~\ref{thm: char diffuse factor full intro}.(\ref{item: diffuse characterization intro}).

\begin{prop} \label{prop: relative diffuseness}
    Suppose that $\mathcal{G} = (\mathcal{V}, \mathcal{E})$ is a graph, and for each $v \in V$ let $(M_v, \tau_v)$ be a tracial von Neumann algebra such that $M_v$ contains a trace zero unitary.  Let $(M,\tau) = \gp_{v \in V}(M_v, \tau_v)$. For $V_1, V_2 \subseteq \mathcal{V}$, the following are equivalent:
    \begin{enumerate}[(i)]
    \item $M_{V_1}$ is diffuse relative to $M_{V_2}$ in $M$; \label{item: diffuse rel char}
    
    \item $M_{V_1}$ is diffuse relative to $M_{V_1\cap V_2}$ in $M$; \label{item: diffuse rel pass to subalg}
    
    \item at least one of the following holds: \label{item: diffuse rel sub alg char}
        \begin{enumerate}[(a)]
        \item there are $v \in V_1 \setminus V_2$ and $v' \in V_1 \cap V_2$ with $v\not\sim v' $; or \label{item: diffuse rel intersection edge}
        
        \item $M_{V_1 \setminus V_2}$ is diffuse; \label{item: diffuse rel comp diffuse}
        \end{enumerate}
        
    \item at least one of the following holds: \label{item: diffuse rel local char}
        \begin{enumerate}[(a)]
        \item there are $v \in V_1 \setminus V_2$ and $v' \in V_1$ with $v \neq v'$ and $v\not\sim v'$; or \label{item: not adj vertices give diffuseness}
        
        \item there is a $v \in V_1 \setminus V_2$ for which $M_{v}$ is diffuse. \label{item: diffuse rel implied by diffuse subalg}
        \end{enumerate}
    \end{enumerate}
\end{prop}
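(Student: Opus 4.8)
The plan is to prove $(\ref{item: diffuse rel char})\Leftrightarrow(\ref{item: diffuse rel pass to subalg})$, then the purely combinatorial equivalence $(\ref{item: diffuse rel sub alg char})\Leftrightarrow(\ref{item: diffuse rel local char})$, and finally to close the loop with $(\ref{item: diffuse rel char})\Rightarrow(\ref{item: diffuse rel local char})$ and $(\ref{item: diffuse rel local char})\Rightarrow(\ref{item: diffuse rel pass to subalg})$, using Theorem~\ref{T:Popa IBBT} to move freely between the intertwining, mixing, and bimodule formulations. Writing $V_0=V_1\cap V_2$, I would dispatch $(\ref{item: diffuse rel sub alg char})\Leftrightarrow(\ref{item: diffuse rel local char})$ first, directly from the diffuseness criterion of Theorem~\ref{thm: char diffuse factor full intro}.(\ref{item: diffuse characterization intro}) applied to the induced subgraph on $V_1\setminus V_2$: that result says $M_{V_1\setminus V_2}$ is diffuse if and only if some $M_v$ with $v\in V_1\setminus V_2$ is diffuse, or $\mathcal{G}|_{V_1\setminus V_2}$ fails to be complete. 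Hence $(\ref{item: diffuse rel comp diffuse})$ holds exactly when either some $M_v$ ($v\in V_1\setminus V_2$) is diffuse, which is $(\ref{item: diffuse rel implied by diffuse subalg})$, or there are two non-adjacent vertices of $V_1\setminus V_2$, a case of $(\ref{item: not adj vertices give diffuseness})$; meanwhile $(\ref{item: diffuse rel intersection edge})$ is precisely the remaining instance of $(\ref{item: not adj vertices give diffuseness})$ in which the non-adjacent partner lies in $V_0$. Reading these correspondences in both directions yields the equivalence.

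The implication $(\ref{item: diffuse rel char})\Rightarrow(\ref{item: diffuse rel pass to subalg})$ is formal, since $M_{V_0}\subseteq M_{V_2}$ and intertwining into a smaller subalgebra is the stronger condition, so $M_{V_1}\not\preceq_M M_{V_2}$ forces $M_{V_1}\not\preceq_M M_{V_0}$. The content is in $(\ref{item: diffuse rel pass to subalg})\Rightarrow(\ref{item: diffuse rel char})$, which I would deduce from the conditional-expectation computation of Lemma~\ref{lem: Pimnser Popa computation}. Given a net $(u_n)\subseteq\mathcal{U}(M_{V_1})$ witnessing $M_{V_1}\not\preceq_M M_{V_0}$ via Theorem~\ref{T:Popa IBBT}.(\ref{I:total mixing over Q interwine}), that is $\|E_{M_{V_0}}(a u_n b)\|_2\to 0$ for all $a,b\in M$, I want the same estimate with $E_{M_{V_2}}$. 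Reducing $a,b$ to the spanning reduced words of Lemma~\ref{lem: reduced words span}, peeling off their outer $M_{V_2}$-parts using $M_{V_2}$-bimodularity of $E_{M_{V_2}}$, and decomposing the remainder by Lemma~\ref{lem: word relative decomposition} into the shape required by Lemma~\ref{lem: Pimnser Popa computation}, that lemma rewrites $E_{M_{V_2}}(a u_n b)$ (up to the peeled factors and a scalar) as $E_{M_U}$ applied to an element of $M_{V_1}$, for some $U\subseteq V_0$. Since $M_U\subseteq M_{V_0}$ and conditional expectations are $\|\cdot\|_2$-contractive, each such term is dominated by $\|E_{M_{V_0}}(\cdots)\|_2\to 0$, giving $(\ref{item: diffuse rel char})$.

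For $(\ref{item: diffuse rel char})\Rightarrow(\ref{item: diffuse rel local char})$ I would argue the contrapositive. If $(\ref{item: diffuse rel local char})$ fails, then every $v\in V_1\setminus V_2$ is adjacent to all other vertices of $V_1$ and no $M_v$ ($v\in V_1\setminus V_2$) is diffuse. The adjacency condition forces the graph join $\mathcal{G}|_{V_1}=\mathcal{G}|_{V_1\setminus V_2}+\mathcal{G}|_{V_0}$, so $M_{V_1}\cong M_{V_1\setminus V_2}\bar\otimes M_{V_0}$ by Proposition~\ref{prop: graph join decomp to tensor decomp}; moreover $\mathcal{G}|_{V_1\setminus V_2}$ is complete with no diffuse vertex algebra, whence $M_{V_1\setminus V_2}$ is non-diffuse by Theorem~\ref{thm: char diffuse factor full intro}.(\ref{item: diffuse characterization intro}). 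A non-diffuse tracial algebra has a minimal projection $p$, and since the algebra is finite the corresponding central summand is a finite matrix algebra, so $H_0:=\overline{M_{V_1\setminus V_2}\,p}^{L^2}$ is finite-dimensional. Then $H_0\otimes L^2(M_{V_0})$ is a nonzero $M_{V_1}$-$M_{V_0}$-subbimodule of $L^2(M)$ with finite right $M_{V_0}$-dimension, so Theorem~\ref{T:Popa IBBT}.(\ref{I:FD bimodule}) gives $M_{V_1}\preceq_M M_{V_0}\subseteq M_{V_2}$, i.e.\ the negation of $(\ref{item: diffuse rel char})$.

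Finally, for $(\ref{item: diffuse rel local char})\Rightarrow(\ref{item: diffuse rel pass to subalg})$ I would build an explicit mixing net in $\mathcal{U}(M_{V_1})$. In case $(\ref{item: diffuse rel implied by diffuse subalg})$, where some $M_v$ with $v\in V_1\setminus V_2$ is diffuse, take a Haar unitary $h\in M_v$; in case $(\ref{item: not adj vertices give diffuseness})$, where there are $v\in V_1\setminus V_2$ and $v'\in V_1$ with $v\neq v'$ and $v\not\sim v'$, take trace-zero unitaries $u\in M_v$ and $u'\in M_{v'}$ (which exist by hypothesis) and set $h=uu'$, a Haar unitary because $v\not\sim v'$ makes $M_v$ and $M_{v'}$ free. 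In both cases $h^n\to 0$ weakly and every reduced word appearing in $h^n$ carries the letter $v\notin V_0$, so I would verify $\|E_{M_{V_0}}(a h^n b)\|_2\to 0$ for all $a,b\in M$ and conclude $(\ref{item: diffuse rel pass to subalg})$ by Theorem~\ref{T:Popa IBBT}.(\ref{I:total mixing over Q interwine}). \emph{This mixing estimate is the main obstacle:} after reducing to reduced-word $a,b$ via Lemma~\ref{lem: reduced words span}, one must show through the orthogonality relations of Lemma~\ref{lem: reduced words orthogonal} (and the formula of Lemma~\ref{lem: Pimnser Popa computation}) that once the bounded words $a,b$ are exhausted, the surviving central powers of $h$ carry letters outside $V_0$ whose scalar inner products $\varphi(\cdots)$ vanish as $n\to\infty$. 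The trace-zero-unitary hypothesis enters precisely here, both to manufacture $h$ and to guarantee these inner products die.
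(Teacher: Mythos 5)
Your proposal is correct and logically complete: the four implications you establish --- (\ref{item: diffuse rel char})$\Leftrightarrow$(\ref{item: diffuse rel pass to subalg}), (\ref{item: diffuse rel sub alg char})$\Leftrightarrow$(\ref{item: diffuse rel local char}), $\neg$(\ref{item: diffuse rel local char})$\Rightarrow\neg$(\ref{item: diffuse rel char}), and (\ref{item: diffuse rel local char})$\Rightarrow$(\ref{item: diffuse rel pass to subalg}) --- do close the loop, and each step goes through. Your organization, however, genuinely differs from the paper's, which runs the single cycle (\ref{item: diffuse rel char})$\Rightarrow$(\ref{item: diffuse rel pass to subalg})$\Rightarrow$(\ref{item: diffuse rel sub alg char})$\Rightarrow$(\ref{item: diffuse rel local char})$\Rightarrow$(\ref{item: diffuse rel char}). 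Three differences are worth recording. First, you obtain the combinatorial equivalence (\ref{item: diffuse rel sub alg char})$\Leftrightarrow$(\ref{item: diffuse rel local char}) by citing Theorem~\ref{thm: char diffuse factor full intro}.(\ref{item: diffuse characterization intro}) on the induced subgraph of $V_1\setminus V_2$ (legitimate: that theorem is proved earlier, its standing state-zero-unitary hypothesis is exactly the present trace-zero hypothesis, and there is no circularity), whereas the paper proves the two negative implications by hand --- a matrix-unit computation over a central summand $zM_{V_1\setminus V_2}\cong M_d(\bC)$ for (\ref{item: diffuse rel pass to subalg})$\Rightarrow$(\ref{item: diffuse rel sub alg char}), and tensoring minimal projections for (\ref{item: diffuse rel sub alg char})$\Rightarrow$(\ref{item: diffuse rel local char}); your shortcut is cleaner but imports the Section~\ref{sec: diff fact full} machinery where the paper stays elementary. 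Second, for the negative analytic direction you invoke the finite-dimensional right-module criterion Theorem~\ref{T:Popa IBBT}.(\ref{I:FD bimodule}) applied to $H_0\otimes L^2(M_{V_1\cap V_2})$, while the paper exhibits intertwining directly through the matrix-unit identity and criterion (\ref{I:total mixing over Q interwine}); both are sound (the central summand of a minimal projection in a finite algebra is a matrix algebra, the same fact the paper uses in Corollary~\ref{cor:complete characterization trace zero}). Third, the analytic heart is the same as the paper's --- the identical two mixing nets (a Haar unitary when some $M_v$, $v\in V_1\setminus V_2$, is diffuse; $h=uu'$ with free trace-zero unitaries otherwise, Haar by freeness), analyzed via Lemmas~\ref{lem: word relative decomposition} and~\ref{lem: Pimnser Popa computation} --- except that you aim the nets at $E_{M_{V_1\cap V_2}}$, proving (\ref{item: diffuse rel pass to subalg}), whereas the paper aims them at $E_{M_{V_2}}$, proving (\ref{item: diffuse rel char}) outright. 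This forces your one genuinely new ingredient: a direct proof of (\ref{item: diffuse rel pass to subalg})$\Rightarrow$(\ref{item: diffuse rel char}), upgrading a mixing net against $M_{V_1\cap V_2}$ to one against $M_{V_2}$ by decomposing $a,b$ with Lemma~\ref{lem: word relative decomposition}, reducing $E_{M_{V_2}}(au_nb)$ via Lemma~\ref{lem: Pimnser Popa computation} to $E_{M_U}$ of an element of $M_{V_1}$ with $U\subseteq V_1\cap V_2$, and contracting through $E_{M_U}=E_{M_U}\circ E_{M_{V_1\cap V_2}}$ (note your implicit use of the fact that taking adjoints turns a word reduced relative to $(V_2,V_1)$ into one reduced relative to $(V_1,V_2)$, as the lemma requires --- this is fine since reducedness is reversal-invariant). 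The paper only gets (\ref{item: diffuse rel pass to subalg})$\Rightarrow$(\ref{item: diffuse rel char}) by going around the cycle, so your argument additionally isolates the statement that non-intertwining into $M_{V_1\cap V_2}$ and into $M_{V_2}$ are directly equivalent, at the cost of one extra application of the conditional-expectation machinery.
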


\begin{proof}

(\ref{item: diffuse rel char}) $\implies$ (\ref{item: diffuse rel pass to subalg}):
Using the characterization from Theorem~\ref{T:Popa IBBT}.(\ref{I:total mixing over Q interwine}), this follows from the identity $E_A=E_{A}\circ E_{B}$ for von Neumann subalgebras $A\subset B\subset M$ and the fact that the trace-preserving conditional expectation is contractive with respect to the $L^2$ norm.\\

\noindent(\ref{item: diffuse rel pass to subalg}) $\implies$ (\ref{item: diffuse rel sub alg char}):
We proceed by contrapositive and assume (\ref{item: diffuse rel intersection edge}) and (\ref{item: diffuse rel comp diffuse}) are false. It follows that
    \[
        M_{V_1} = M_{V_1\setminus V_2} \bar\otimes M_{V_1\cap V_2},
    \]
and $M_{V_1\setminus V_2}$ is not diffuse. Let $z\in M_{V_1\setminus V_2}$ be a central projection such that $zM_{V_1\setminus V_2}\cong M_d(\bC)$ for some $d\in \bN$. Suppose
    \[
        u=(u_{i,j})_{i,j=1}^d \in M_d( M_{V_1\cap V_2}) \cong (z\otimes 1)M_{V_1}
    \]
is a unitary. Observe that
    \begin{align*}
        1 = \frac{1}{d}\sum_{i,j=1}^d \|u_{i,j}\|_2^2 = \frac{1}{d} \sum_{i,j=1}^d \| E_{M_{V_1\cap V_2}}( e_{ii}u e_{ji})\|_2^2.
    \end{align*}
Hence $(z\otimes 1)M_{V_1}(z\otimes 1)\preceq_{(z\otimes 1)M(z\otimes 1)} M_{V_1\cap V_2}$, and consequently $M_{V_1} \preceq_M M_{V_1\cap V_2}$.\\    

\noindent(\ref{item: diffuse rel sub alg char}) $\implies$ (\ref{item: diffuse rel local char}): We again proceed  by contrapositive and assume (\ref{item: not adj vertices give diffuseness}) and (\ref{item: diffuse rel implied by diffuse subalg}) are false.  Then every $v \in V_1 \setminus V_2$ must be adjacent to every $v' \in V_1$, so in particular (\ref{item: diffuse rel intersection edge}) fails.  Moreover, any two vertices in $V_1 \setminus V_2$ are adjacent, that is, $V_1 \setminus V_2$ is a complete graph.  Since (\ref{item: diffuse rel implied by diffuse subalg}) fails, we know that for every $v \in V_1 \setminus V_2$ there is a minimal projection $p_v$ in $M_v$.  In particular, $p = \bigotimes_{v \in V_1 \setminus V_2} p_v$ is a minimal projection in $M_{V_1 \setminus V_2}$, and thus $M_{V_1 \setminus V_2}$ is not diffuse and hence (\ref{item: diffuse rel comp diffuse}) fails.\\

\noindent(\ref{item: not adj vertices give diffuseness}) $\implies$ (\ref{item: diffuse rel char}) Let $v, v'$ be as in (\ref{item: not adj vertices give diffuseness}).  Let $u_0$ be a trace zero unitary in $M_{v'}$ and let $u_1$ be a trace zero unitary in $M_v$.  We claim that for $x, y \in M$, we have
\begin{equation} \label{eq: diffuseness claim 1}
\lim_{k \to \infty} \norm{E_{M_{V_2}}[x(u_0u_1)^k y]}_2 = 0.
\end{equation}
It suffices to show this for a set of $x$ and $y$ that have dense linear span.  Hence, by Lemma \ref{lem: reduced words span}, we may assume that $x = x_1 \dots x_m$ with $x_j \in M_{v_j}$ for a $\cG$-reduced word $w_1 \dots w_m$ and with $\varphi(x_j) = 0$ (in the case that $x = 1$, we take $w$ to be the empty word).  Similarly, assume that $y = y_1 \dots y_n$ with $\varphi(y_j) = 0$ and $y_j \in M_{\hat{w}_j}$ with $\hat{w}$ a reduced word.

By Lemma \ref{lem: word relative decomposition}, $w$ is equivalent to $w^{(1)} \cdot w^{(2)} \cdot w^{(3)}$ where $w^{(1)}$ is a $\mathcal{G}$-reduced word in $V_2$, $w^{(3)}$ is $\mathcal{G}$-reduced word in $\{v,v'\}$, and $w^{(2)}$ is a $\mathcal{G}$-reduced word relative to $(V_2,\{v,v'\})$.  By swapping the $x_j$'s according to the swaps to transform $w$ into $w^{(1)} \cdot w^{(2)} \cdot w^{(3)}$, we then obtain a factorization $x = x^{(1)} x^{(2)} x^{(3)}$ where $x^{(j)}$ is a product of centered elements indexed by the word $w^{(j)}$. Similarly, $\hat{w}$ is equivalent to $\hat{w}^{(1)} \cdot \hat{w}^{(2)} \cdot \hat{w}^{(3)}$ where $\hat{w}^{(1)}$ is a reduced word in $\{v,v'\}$, $\hat{w}^{(3)}$ is a reduced word in $V_2$, and $\hat{w}^{(2)}$ is $\mathcal{G}$-reduced relative to $(\{v,v'\},V_2)$.  Write $y = y^{(1)} y^{(2)} y^{(3)}$ in an analogous way.

Since $x^{(1)}$ and $y^{(3)}$ are in $M_{V_2}$, we have
\[
E_{M_{V_2}}[x^{(1)}x^{(2)} x^{(3)} (u_0u_1)^k y^{(1)} y^{(2)} y^{(3)}] = x^{(1)} E_{M_{V_2}}[x^{(2)} x^{(3)} (u_0u_1)^k y^{(1)} y^{(2)}] y^{(3)}.
\]
Next, by Lemma \ref{lem: Pimnser Popa computation}, since $w^{(2)}$ is $(V_2,\{v,v'\})$-reduced and $\hat{w}^{(2)}$ is $(\{v,v'\},V_2)$-reduced, this equals
\[
x^{(1)} E_{M_{V_2}}[x^{(2)} x^{(3)} (u_0u_1)^k y^{(1)} y^{(2)}] y^{(3)} =  \varphi(x^{(2)} y^{(2)}) x^{(1)} E_{M_U}[x^{(3)} (u_0u_1)^k y^{(1)}] y^{(3)},
\]
where $U$ is the set of vertices in $V_2 \cap \{v,v'\}$ that are adjacent to all the letters in $w^{(2)}$.  Hence, in order to prove \eqref{eq: diffuseness claim 1} and hence finish (\ref{item: not adj vertices give diffuseness}) $\implies$ (\ref{item: diffuse rel char}), it suffices to show that
\[
\lim_{k \to \infty} \norm{E_{M_U}[x^{(3)} (u_0u_1)^k y^{(1)}]}_2 = 0.
\]
Since $v'$ is not in $V_2$, then $U$ must equal $\varnothing$ or $\{v\}$.  Hence, since $\varphi(x^{(3)} (u_0u_1)^k y^{(1)}) = \varphi \circ E_{M_v}[x^{(3)} (u_0u_1)^k y^{(1)}]$, it suffices to show that
\[
\lim_{k \to \infty} \norm{E_{M_v}[x^{(3)} (u_0u_1)^k y^{(1)}]}_2 = 0.
\]
However, for such $x^{(3)}$ and $y^{(1)}$ the above sequence is zero for sufficiently large $k$ by free independence (see also \cite[the proof of Proposition 3.16]{GKEPTConj})).\\

\noindent(\ref{item: diffuse rel implied by diffuse subalg}) $\implies$ (\ref{item: diffuse rel char})
Suppose that $M_{v}$ is diffuse for some $v \in V_1 \setminus V_2$, and thus exists a Haar unitary $u \in M_{v}$ (i.e., a unitary so that $\tau(u^k) = 0$ for any $k \in \bZ \setminus \set{0}$).  We claim that for $x, y \in M$, we have
\begin{equation} \label{eq: diffuseness claim 2}
\lim_{k \to \infty} \norm{E_{M_{V_2}}[xu^ky]}_2 = 0.
\end{equation}
As in the previous case, it suffices to consider $x$ and $y$ which are products of centered elements according to words $w$ and $\hat{w}$ respectively.  And again, we take a decomposition $w \approx w^{(1)} \cdot w^{(2)} \cdot w^{(3)}$ as in Lemma \ref{lem: word relative decomposition} with respect to $(V_2,\{v\})$ and a decomposition $\hat{w} \approx \hat{w}^{(1)} \cdot \hat{w}^{(2)} \cdot \hat{w}^{(3)}$ with respect to $(\{v\},V_2)$.  Let $x = x^{(1)} x^{(2)} x^{(3)}$ and $y = y^{(1)} y^{(2)} y^{(3)}$ be the resulting factorizations of $x$ and $y$.  Then
\[
E_{M_{V_2}}[x^{(1)}x^{(2)} x^{(3)} u^k y^{(1)} y^{(2)} y^{(3)}] = x^{(1)} E_{M_{V_2}}[x^{(2)} x^{(3)} u^k y^{(1)} y^{(2)}] y^{(3)} = x^{(1)} \varphi(x^{(2)} y^{(2)}) \varphi(x^{(3)} u^k y^{(1)}) y^{(3)},
\]
where the second equality follows from Lemma \ref{lem: Pimnser Popa computation}. Here the set $U$ is empty since $\{v\} \cap V_2 = \varnothing$.  Because $u$ is a Haar unitary, we have $u^{k}\to 0$ weakly as $k\to\infty$ and thus $\varphi(x^{(3)} u^k y^{(1)}) \to 0$. This completes the proof of \eqref{eq: diffuseness claim 2} and hence the proposition.
\end{proof}

\section{Bimodules from subgraphs and their fusion rules}\label{sec: fusion hah!}

Let $U \subseteq  W$.  We want to understand the basic construction of $M_U$ inside $M_W$. Hence, we want to understand $L^2(M_U,\varphi_U)$ as an $M_W$-$M_W$ bimodule.  More generally, for $V_1, V_2 \subseteq {W}$, we want to understand $M_W$ as an $M_{V_1}$-$M_{V_2}$-bimodule. We first recall a few facts about standard forms and Connes fusion of bimodules. 

Given a statial von Neumann algebra $(M,\varphi)$, recall that $L^2(M,\varphi)$ is an $M$-$M$-bimodule with actions
    \[
        x\cdot \xi \cdot y= x(J_\varphi y^* J_\varphi) \xi,
    \]
where $J_\varphi$ is the modular conjugation operator. We let $M\ni x\mapsto \hat{x}\in L^2(M,\varphi)$ denote the embedding determined by $\langle \hat{x},\hat{y}\rangle_\varphi = \varphi(y^*x)$. We will say $x \in M$ is \textbf{$\varphi$-analytic} if the modular automorphism group $\bR\ni t\mapsto \sigma_{t}^{\varphi}(x)$ has an extension to an entire function (such elements are dense by \cite[Lemma VIII.2.3]{TakesakiII}). In this case, for $z\in \bC$ we write $\sigma_z(x)$ for the image of $z$ under this (necessarily unique) entire extension. It follows that $\hat{y}\cdot x = ( y \sigma_{-i/2}(x))^{\widehat{}}$ whenever $x$ is $\varphi$-analytic and $y\in M$.

We will also need to consider the Connes fusion of bimodules over $\sigma$-finite von Neumann algebras. We refer the reader to \cite[Section 2]{OOTHaag} for general details, but for our purposes it suffices to consider the following special case. Let $(M,\varphi)$ and $(N,\psi)$ be statial von Neumann algebras, and let $B\subset M$ be a von Neumann subalgebra admitting a $\varphi$-preserving conditional expectation $E_B\colon M\to B$. If $\cH$ is a $B$-$N$-bimodule, then the $M$-$N$-bimodule
    \[
        L^2(M,\varphi)\underset{B}{\otimes} \cH
    \]
is formed by separation and completion of the algebraic tensor product $\widehat{M}\odot \cH$ with respect to
    \[
        \langle \hat{x}\otimes \xi, \hat{y}\otimes \eta\rangle := \langle E_B(y^*x)\cdot \xi, \eta\rangle.
    \]
We will denote the equivalence class of $\hat{x}\otimes \xi$ by $\hat{x}\otimes_B \xi$. We also note that 
    \[
        L^2(M,\varphi)\otimes_B L^2(B,\varphi|_B) \cong L^2(B,\varphi|_B)\otimes_B L^2(M,\varphi)\cong L^2(M,\varphi).
    \]
That is, $L^2(B,\varphi|_B)$ is an identity element with respect to the operation $\otimes_B$.

Let us now return to the context of graph products over $\cG=(\cV,\cE)$. For $V_1,V_2\subset \cV$, we will build a basis over $M_{V_{1}}$-$M_{V_{2}}$ by using orthonormal bases for $L^{2}(M_{v},\varphi_v)\ominus \bC\hat{1}$. Since we are not assuming that our von Neumann algebras have separable predual, we will not a priori be able to build an orthonormal basis for $L^{2}(M_{v},\varphi_v)\ominus \bC \hat{1}$ using elements of $M_{v}.$ For this reason, we will need to extend some of the results of Section~\ref{sec: cond expec formulas} to vectors in $L^{2}(M_{v},\varphi_v)\ominus \bC \hat{1}$. 

\begin{lem}\label{lem: extension of words to vectors}
Let $\cG=(\cV,\cE)$ be a graph, let $\{(M_v,\varphi_v)\colon v\in \cV\}$ be a family of statial von Neumann algebras, and let $(M,\varphi) = \gp_{v \in \cG} (M_v,\varphi_v)$.
\begin{enumerate}[(i)]
    \item Let $w=w_{1}\cdots w_{\ell}$ be a $\cG$-reduced word. Then there is a unique continuous multilinear map
        \[
            m\colon \prod_{i=1}^{\ell}(L^{2}(M_{v},\varphi_v)\ominus \bC\hat{1})\to L^{2}(M,\varphi)\ominus \bC\hat{1},
        \]
   such that $m(x_{1},\cdots,x_{\ell})=(x_{1}\cdots x_{\ell})^{\widehat{}}$ when $x_{i}\in M_{w_{i}}\cap \ker(\varphi_{w_{i}})$. Moreover,
    \[
        \|m(\xi_{1},\cdots,\xi_{\ell})\|_{\varphi}=\prod_{i=1}^{\ell}\|\xi_{j}\|_{\varphi},\qquad\qquad  \xi=(\xi_{1},\cdots,\xi_{\ell})\in \prod_{j=1}^{\ell}(L^{2}(M_{w_{j}},\varphi_{w_j})\ominus \bC \hat{1}).
    \]
   We denote $m(\xi_{1},\cdots,\xi_{\ell})=\xi_{1}\cdots\xi_{\ell}.$ \label{item:multilinear extension}
   \item  Let $w = w_1 \dots w_m$ and $\tilde{w} = \tilde{w}_1 \dots \tilde{w}_n$ be $\mathcal{G}$-reduced words. Set $\xi=\xi_{1}\cdots\xi_{m}$ and $\tilde{\xi}=\tilde{\xi}_{1}\cdots \tilde{\xi}_{n}$, where $\xi_j \in L^{2}(M_{w_j},\varphi_{w_{j}}) \ominus \bC\hat{1}$, $j=1,\ldots, m$, and $\tilde{\xi}_j \in  \ker L^{2}(M_{\tilde{w_j}},\varphi_{\tilde{w_{j}}}) \ominus \bC\hat{1}$, $j=1,\ldots,n$. If $w,\tilde{w}$ are not equivalent, then $\xi,\tilde{\xi}$ are orthogonal. If $w$ and $\tilde{w}$ are equivalent, then
        \[
            \langle \xi,\tilde{\xi}\rangle_\varphi=\prod_{j=1}^{m} \langle \xi_{j},\tilde{\xi}_{\sigma(j)}\rangle_\varphi
        \]
    where the permutation $\sigma: [m] \to [m]$ is the $\mathcal{G}$-monotone matching from $w$ to $\tilde{w}$ guaranteed by Lemma~\ref{lem:monotone mathcing is a bijection in reduced case}.
\label{item:inner products product formual equiv words}
\end{enumerate}

\end{lem}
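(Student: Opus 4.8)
The plan is to deduce both parts from Lemma~\ref{lem: reduced words orthogonal} by a density argument, packaged cleanly through the Hilbert space tensor product $\cH_w := \bigotimes_{i=1}^{\ell}\bigl(L^2(M_{w_i},\varphi_{w_i})\ominus\bC\hat 1\bigr)$. The first preliminary step is to record that $(M_v\cap\ker\varphi_v)^{\wedge}$ is dense in $L^2(M_v,\varphi_v)\ominus\bC\hat 1$: indeed $\widehat{M_v}$ is dense in $L^2(M_v,\varphi_v)$, and the map $x\mapsto (x-\varphi_v(x))^{\wedge}$ is the orthogonal projection off $\bC\hat 1$ applied to this dense set, so its image is dense and consists of centered vectors. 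Since algebraic spans of elementary tensors with entries from dense subspaces are dense in a Hilbert tensor product, it follows that finite linear combinations of $\hat x_1\otimes\cdots\otimes\hat x_\ell$ with $x_i\in M_{w_i}\cap\ker\varphi$ are dense in $\cH_w$.

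For part (\ref{item:multilinear extension}), I would define a linear map $T_w$ on this dense subspace by $T_w(\hat x_1\otimes\cdots\otimes\hat x_\ell)=(x_1\cdots x_\ell)^{\wedge}$. Applying Lemma~\ref{lem: reduced words orthogonal}(ii) with $\tilde w=w$, for which the $\cG$-monotone matching is the identity, gives $\langle (x_1\cdots x_\ell)^{\wedge},(y_1\cdots y_\ell)^{\wedge}\rangle_\varphi=\prod_i \varphi(y_i^*x_i)=\prod_i\langle \hat x_i,\hat y_i\rangle_\varphi$, so $T_w$ preserves inner products of elementary tensors. Hence $T_w$ extends to an isometry $\cH_w\to L^2(M,\varphi)\ominus\bC\hat 1$, where the range lands in the orthocomplement of $\hat 1$ because a nonempty reduced word of centered elements has $\varphi$-value $0$. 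Setting $m(\xi_1,\dots,\xi_\ell):=T_w(\xi_1\otimes\cdots\otimes\xi_\ell)$ then produces a continuous multilinear map satisfying $\|m(\xi_1,\dots,\xi_\ell)\|_\varphi=\|\xi_1\otimes\cdots\otimes\xi_\ell\|_{\cH_w}=\prod_i\|\xi_i\|_\varphi$, since $T_w$ is isometric and the canonical map $\prod_i H_i\to\bigotimes_i H_i$ multiplies norms. Uniqueness is immediate, as two continuous multilinear maps agreeing on the dense product $\prod_i (M_{w_i}\cap\ker\varphi)^{\wedge}$ must coincide.

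For part (\ref{item:inner products product formual equiv words}), I would note that $\xi$ and $\tilde\xi$ lie in the ranges of the isometries $T_w$ and $T_{\tilde w}$. If $w\not\approx\tilde w$, then Lemma~\ref{lem: reduced words orthogonal}(i) shows these ranges are orthogonal (it suffices to check orthogonality of elementary tensors, which span densely), so $\langle\xi,\tilde\xi\rangle_\varphi=0$. If $w\approx\tilde w$, then both sides of the asserted identity $\langle\xi,\tilde\xi\rangle_\varphi=\prod_j\langle\xi_j,\tilde\xi_{\sigma(j)}\rangle_\varphi$ are continuous in each of the $2m$ arguments, so it suffices to verify it on the dense vectors $\xi_j=\hat x_j$ and $\tilde\xi_j=\hat{\tilde x}_j$, where it reduces to Lemma~\ref{lem: reduced words orthogonal}(ii). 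The one genuinely delicate point is the permutation bookkeeping here: the matching in Lemma~\ref{lem: reduced words orthogonal} runs from $w$ to $\tilde w$, while the convention $\langle\hat x,\hat y\rangle_\varphi=\varphi(y^*x)$ places the conjugated factor on the left, so one must track that the matching from $\tilde w$ to $w$ is $\sigma^{-1}$ and reindex the product accordingly. No analytic obstacle arises from the possibly non-separable predual, precisely because we never select an orthonormal basis — the full Hilbert tensor product absorbs it.
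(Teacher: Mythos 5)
Your proposal is correct and follows essentially the same route as the paper: the paper also constructs a unique isometry $V\colon \bigotimes_{j}(L^{2}(M_{w_{j}},\varphi_{w_{j}})\ominus \bC\hat{1})\to L^{2}(M,\varphi)\ominus \bC\hat{1}$ (your $T_w$) from Lemma~\ref{lem: reduced words orthogonal} together with density of $(M_{w_j}\cap\ker\varphi_{w_j})^{\widehat{}}$, and then proves part (\ref{item:inner products product formual equiv words}) by verifying the identity on products of algebra elements and extending by the norm equality. Your added care with the $\sigma$ versus $\sigma^{-1}$ bookkeeping under the convention $\langle\hat x,\hat y\rangle_\varphi=\varphi(y^*x)$ is a detail the paper leaves implicit, but it does not change the argument.
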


\begin{proof}
(\ref{item:multilinear extension}): The uniqueness of $m$ follows from the density of $M_{w}\cap \ker(\varphi_{w})$ in $L^{2}(M_{w},\varphi_{w})\ominus \bC \hat{1}$. By Lemma~\ref{lem: reduced words orthogonal}, as well as density of $M_{w}\cap \ker(\varphi_{w})$ in $L^{2}(M_{w},\varphi_{w})\ominus \bC \hat{1}$, it follows that there is a unique isometry
    \[
        V\colon \bigotimes_{j=1}^{\ell}(L^{2}(M_{w_{j}},\varphi_{w_{j}})\ominus \bC\hat{1})\to L^{2}(M,\varphi)\ominus \bC\hat{1} 
    \]
such that $V(\hat{x}_{1}\otimes \cdots \otimes \hat{x}_{\ell})=(x_{1}\cdots x_{\ell})^{\widehat{}}.$
Setting $m(\xi_{1},\cdots,\xi_{\ell})=V(\xi_{1}\otimes \cdots \xi_{\ell})$ completes the proof.\\

\noindent(\ref{item:inner products product formual equiv words}): Observe that if
    \[
        \xi\in m\left(\prod_{j=1}^m (M_{w_j}\cap \ker(\varphi_{w_j}))\right) \qquad \text{ and }\qquad \tilde{\xi} \in m\left( \prod_{j=1}^n (M_{\tilde{w}_j}\cap \ker(\varphi_{\tilde{w}_j}))\right)
    \]
then the claim follows from Lemma \ref{lem: reduced words orthogonal}. The norm equality in (\ref{item:multilinear extension}) implies these sets are dense in 
    \[
        m\left(\prod_{j=1}^{n}(L^{2}(M_{w_{j}},\varphi_{w_{j}})\ominus \bC\hat{1})\right) \qquad \text{ and } \qquad  m\left(\prod_{i=1}^{n}(L^{2}(M_{\tilde{w}_{i}},\varphi_{\tilde{w}_{i}})\ominus \bC\hat{1})\right),
    \]
respectively, which completes the proof.
\end{proof}

\begin{remark}
Using Haagerup's theory of noncommutative $L^p$-spaces (see \cite{Haa79}), one can also make sense of $m(\xi_1,\ldots, \xi_n)= \xi_1\cdots \xi_n$ as a product of operators affiliated with the continuous core of $M$. The fact that such a produce remains in $L^2(M,\varphi)$ is a consequence of their relations via $\varphi$, which is determined by graph product structure of $M$.
\end{remark}

We will first analyze cyclic submodules generated by products over relatively $\cG$-reduced words.

\begin{lem}\label{lem: bimodule basis lemma}
Let $\cG=(\cV,\cE)$ be a graph, let $\{(M_v,\varphi_v)\colon v\in \cV\}$ be a family of statial von Neumann algebras, let $(M,\varphi) = \gp_{v \in \cG} (M_v,\varphi_v)$, and let $V_1, V_2 \subseteq \cV$.
For $w=w_1\ldots w_n$ a $\cG$-reduced word relative to $(V_1,V_2)$, let
    \[
        \xi=\xi_1\cdots \xi_n
    \]
where $\xi_j \in L^{2}(M_{w_j},\varphi_{w_{j}}) \ominus \bC\hat{1}$ with $\|\xi_j\|_\varphi=1$ for $j=1,\ldots, n$.
(In the case that $w$ is empty, we take by convention $\xi=\hat{1}$.) Let $H_\xi$ be the $M_{V_1}$-$M_{V_2}$-subbimodule of $L^2(M,\varphi)$ generated by $\xi$ and denote
    \[
        U := \{ v \in V_1 \cap V_2: v \sim w_j\ j=1,\ldots, \ell\}.
    \]
\begin{enumerate}[(i)]
\item There is a unique $M_{V_{1}}$-$M_{V_{2}}$ bimodular unitary $H_{\xi}\to L^2(M_{V_1},\varphi_{V_1}) \otimes_{M_U} L^2(M_{V_2},\varphi_{V_2})$ which sends $\xi$ to $1\otimes_{M_{U}}1$. \label{item: iso of bimodule to fusion}

\item If $\tilde{w}=\tilde{w}_1\cdots \tilde{w}_m$ is another $\cG$-reduced word relative to $(V_1,V_2)$ and $\tilde{\xi}=\tilde{\xi}_1\cdots \tilde{\xi}_m$ is a corresponding vector, then $H_\xi\perp H_{\tilde{\xi}}$ unless $w$ and $\tilde{w}$ are equivalent and $\langle \xi,\tilde{\xi}\rangle_\varphi\neq 0$.
\label{item: different bimod fusion ortho}
\end{enumerate}
\end{lem}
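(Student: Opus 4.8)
The plan is to reduce both parts to a single inner-product computation between vectors of the form $y\cdot\xi\cdot z$ and $y'\cdot\tilde\xi\cdot z'$ with $y,y'\in M_{V_1}$ and $z,z'\in M_{V_2}$, since such vectors are total in $H_\xi$ and $H_{\tilde\xi}$ respectively. First I would record the two actions on $L^2(M,\varphi)$: the left $M_{V_1}$-action is left multiplication, while for $\varphi$-analytic $z$ the right action is $\hat\eta\cdot z=(\eta\,\sigma_{-i/2}(z))^{\widehat{}}$. Since $M_{V_2}$ carries a $\varphi$-preserving normal conditional expectation, Takesaki's theorem gives that $M_{V_2}$ is globally $\sigma^\varphi$-invariant and that $\sigma^\varphi|_{M_{V_2}}=\sigma^{\varphi_{V_2}}$; thus $\sigma_{-i/2}(z)\in M_{V_2}$ and the right action agrees with the one used to form $L^2(M_{V_2},\varphi_{V_2})$ in the Connes fusion.

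The core computation I would carry out first for $\xi=\hat x$ and $\tilde\xi=\hat{\tilde x}$, where $x,\tilde x$ are genuine products of centered vertex elements over $w,\tilde w$. Using the two actions,
\[
\langle y_1\cdot\hat x\cdot z_1,\ y_2\cdot\hat{\tilde x}\cdot z_2\rangle_\varphi = \varphi\big(\sigma_{-i/2}(z_2)^*\,\tilde x^* y_2^* y_1 x\,\sigma_{-i/2}(z_1)\big),
\]
and since $\sigma_{-i/2}(z_j)\in M_{V_2}$ while $E_{M_{V_2}}$ is $\varphi$-preserving and $M_{V_2}$-bimodular, the right-hand side equals $\varphi\big(\sigma_{-i/2}(z_2)^*\,E_{M_{V_2}}(\tilde x^* y_2^* y_1 x)\,\sigma_{-i/2}(z_1)\big)$. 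Now Lemma~\ref{lem: Pimnser Popa computation} rewrites $E_{M_{V_2}}(\tilde x^* y_2^* y_1 x)=\varphi(\tilde x^* x)\,E_{M_U}(y_2^* y_1)$; here I would note that equivalent reduced words share the same letters (Proposition~\ref{prop: reduced word equivalence}), so the set $U$ attached to $\tilde w$ coincides with the one attached to $w$ whenever $w\approx\tilde w$, and otherwise $\varphi(\tilde x^* x)=0$. Recognizing $\varphi(\tilde x^* x)=\langle\hat x,\hat{\tilde x}\rangle_\varphi$, this yields
\[
\langle y_1\cdot\hat x\cdot z_1,\ y_2\cdot\hat{\tilde x}\cdot z_2\rangle_\varphi = \langle \hat x,\hat{\tilde x}\rangle_\varphi\cdot\varphi\big(\sigma_{-i/2}(z_2)^*\,E_{M_U}(y_2^* y_1)\,\sigma_{-i/2}(z_1)\big).
\]
Both sides are bounded, (conjugate-)multilinear forms in the vectors $\xi_j,\tilde\xi_k$, so by the density of centered vertex elements together with Lemma~\ref{lem: extension of words to vectors} this identity persists for arbitrary $\xi,\tilde\xi$.

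For (ii), this is already the statement: by Lemma~\ref{lem: extension of words to vectors}(ii) the factor $\langle\xi,\tilde\xi\rangle_\varphi$ vanishes whenever $w\not\approx\tilde w$, and it vanishes by hypothesis whenever $w\approx\tilde w$ but $\langle\xi,\tilde\xi\rangle_\varphi=0$; in either case every cross inner product is zero and $H_\xi\perp H_{\tilde\xi}$, which is the contrapositive of the claim. For (i), I would take $\tilde\xi=\xi$ (so $\langle\xi,\xi\rangle_\varphi=1$) and observe that the right-hand side above is exactly the Connes-fusion inner product $\langle \hat y_1\otimes_{M_U}\widehat{\sigma_{-i/2}(z_1)},\ \hat y_2\otimes_{M_U}\widehat{\sigma_{-i/2}(z_2)}\rangle$. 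Hence $y\cdot\xi\cdot z\mapsto \hat y\otimes_{M_U}\widehat{\sigma_{-i/2}(z)}$ extends to an isometry; it is manifestly $M_{V_1}$-$M_{V_2}$-bimodular and sends $\xi$ to $\hat 1\otimes_{M_U}\hat 1$. Since $\hat 1\otimes_{M_U}\hat 1$ is cyclic (the vectors $\hat y\otimes_{M_U}\widehat{\sigma_{-i/2}(z)}$, $z$ analytic, are total), the isometry is surjective, giving the desired unitary; uniqueness follows because $\{y\cdot\xi\cdot z\}$ is total in $H_\xi$.

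I expect the main obstacle to be the careful bookkeeping of the modular operator $\sigma_{-i/2}$ so that the inner product computed inside $H_\xi\subseteq L^2(M,\varphi)$ and the inner product computed in the relative tensor product land on literally the same scalar; once the $M_{V_2}$-invariance of $\sigma^\varphi$ is in hand this is mechanical, and the remaining work -- the density/continuity argument upgrading Lemma~\ref{lem: Pimnser Popa computation} from centered vertex words to arbitrary $L^2$-vectors -- is routine given Lemma~\ref{lem: extension of words to vectors}.
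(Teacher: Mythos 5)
Your proposal is correct and follows essentially the same route as the paper: reduce to the case where the $\xi_j$ are genuine centered vertex elements via the continuity/density argument of Lemma~\ref{lem: extension of words to vectors}, then evaluate inner products of vectors $y\cdot\xi\cdot z$ using the conditional expectation formula of Lemma~\ref{lem: Pimnser Popa computation} together with the $\sigma^\varphi$-invariance of $M_{V_2}$. The only difference is presentational: you carry out the full two-sided inner-product computation and build the unitary explicitly (checking isometry, bimodularity, surjectivity, and uniqueness), whereas the paper reduces both parts to the diagonal identities $\langle a\cdot\xi\cdot b,\xi\rangle_\varphi=\langle a\cdot(1\otimes_{M_U}1)\cdot b,1\otimes_{M_U}1\rangle$ and $\langle a\cdot\xi\cdot b,\tilde{\xi}\rangle_\varphi=\langle\xi,\tilde{\xi}\rangle_\varphi\,\varphi(E_{M_U}(a)\sigma_{-i/2}(b))$ and leaves the standard cyclic-vector argument implicit.
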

\begin{proof}
(\ref{item: iso of bimodule to fusion})
It suffices to show that 
\begin{equation}\label{eqn: GNS inner product equality}
    \ip{a\cdot (1\otimes_{M_{U}}1)\cdot b, 1\otimes_{M_{U}}1}=\ip{a\cdot \xi_{1}\cdots \xi_{n} \cdot b,\xi_{1}\cdots\xi_{n}}_\varphi,
\end{equation}
for all $a\in M_{V_{1}}$ and all $\varphi$-analytic $b\in M_{V_{2}}$. By Lemma \ref{lem: extension of words to vectors} for fixed $a,b$ the right-hand side is a continuous function of $(\xi_{1},\cdots,\xi_{n})\in \prod_{j=1}^{n}(L^{2}(M_{w_{j}},\varphi_{w_{j}})\ominus \bC \hat{1})$. Thus, by density of $M_{w}\cap \ker(\varphi_{w})$ in $L^{2}(M_{w},\varphi_w)\ominus \bC\hat{1}$, we may reduce to the case where $\xi_{j}=x_{j}$ where $x_{j}\in M_{w_{j}}\cap \ker(\varphi_{w_{j}})$ and $\varphi(x_{j}^{*}x_{j})=1$. In this case, set $x=x_{1}\cdots x_{j}$ so that $\xi=\hat{x}$.

The left-hand side of (\ref{eqn: GNS inner product equality})  is:
\[\ip{\widehat{a}\otimes_{M_{U}}(\sigma_{-i/2}(b))^{\widehat{}}, 1\otimes_{M_{U}}1}=\varphi(E_{M_{U}}(a)\sigma_{-i/2}(b)),\]
and the right-hand side of (\ref{eqn: GNS inner product equality}) is: 
\[\ip{(ax\sigma_{-i/2}(b))^{\widehat{}},\widehat{x}}_\varphi=\varphi(x^{*}ax\sigma_{-i/2}(b))=\varphi(E_{M_{V_{2}}}(x^{*}ax)\sigma_{-i/2}(b)).\]
Thus Lemma \ref{lem: Pimnser Popa computation} implies (\ref{eqn: GNS inner product equality}).\\

\noindent(\ref{item: different bimod fusion ortho}): 
It is enough to show that for all  $a\in M_{V_{1}}$ and $\varphi$-analytic $b\in M_{V_{2}}$ that 
\begin{equation}\label{eqn: GNS inner product equality 2}
   \langle a\cdot \xi \cdot b,\tilde{\xi}\rangle_\varphi=0,
\end{equation}
if $w,\tilde{w}$ are not equivalent, and that 
    \[
        \langle a\cdot \xi \cdot b,\tilde{\xi}\rangle_\varphi=\langle \xi,\tilde{\xi} \rangle_\varphi \varphi(E_{M_{U}}(a)\sigma_{-i/2}(b))
    \]
if $w,\tilde{w}$ are equivalent. 
As in (\ref{item: iso of bimodule to fusion}) we may reduce to the case that $\xi=\widehat{x},$ $\tilde{\xi}=\widehat{\tilde{x}}$, where $x=x_{1}\cdots, x_{n}$, $\tilde{x}=\tilde{x}_{1}\cdots \tilde{x}_{m}$, and $x_{j}\in M_{w_{j}}\cap \ker(\varphi_{w_{j}})$ and $\tilde{x}_{i}\in M_{\tilde{w}_{i}}\cap \ker(\varphi_{\tilde{w}_{i}})$. 
We then have
    \[
        \langle a\cdot \widehat{x} \cdot b,\widehat{\tilde{x}} \rangle_\varphi =\varphi((\tilde{x})^{*}ax\sigma_{-i/2}(b))=\varphi(E_{M_{V_{2}}}((\tilde{x})^{*}ax)\sigma_{-i/2}(b)),
    \]
so that our desired conclusion follows from Lemma \ref{lem: Pimnser Popa computation}.
\end{proof}

Our main result in this section provides a classification of $L^{2}(M)$ as a bimodule over two subalgebras coming from induced subgraphs. This also yields the first part of Theorem~\ref{thm: bimodules intro}.

\begin{thm} \label{thm: bimodules}
Let $\cG=(\cV,\cE)$ be a graph, let $\{(M_v,\varphi_v)\colon v\in \cV\}$ be a family of statial von Neumann algebras, let $(M,\varphi) = \gp_{v \in \cG} (M_v,\varphi_v)$, and let $V_1, V_2 \subseteq \cV$.
For each $U\subset V_1\cap V_2$, denote by $\cW_{\cG}(V_1,V_2,U)$ the set of $\cG$-reduced words relative to $(V_1,V_2)$ of the form $w_1\cdots w_{\ell}$ satisfying $U=\{v\in V_1\cap V_2\colon v\sim w_j\ j=1,\cdots,\ell\}$.
Set
    \[
        k_{\cG}(V_1,V_2,U) := \sum_{w_1\cdots w_\ell \in \cW_{\cG}(V_1, V_2, U)} \prod_{j=1}^\ell (\dim(L^{2}(M_{w_j},\varphi_{w_{j}}))- 1).
    \]
Then one has
    \begin{equation} \label{eq: bimodule multiplicity}
        _{M_{V_1}} L^2(M,\varphi)_{M_{V_2}} \cong \bigoplus_{U \subseteq V_1 \cap V_2} (_{M_{V_1}} L^2(M_{V_1},\varphi_{V_1}) \underset{M_U}{\otimes} L^2(M_{V_2},\varphi_{V_2})_{M_{V_2}})^{\oplus k_{\cG}(V_1,V_2,U)}.
    \end{equation}
\end{thm}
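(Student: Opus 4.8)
The plan is to realize $_{M_{V_1}} L^2(M,\varphi)_{M_{V_2}}$ as an orthogonal direct sum of cyclic subbimodules of the form $H_\xi$ studied in Lemma~\ref{lem: bimodule basis lemma}, indexed by products $\xi$ over relatively $\cG$-reduced words whose factors range over orthonormal bases of the spaces $L^2(M_v,\varphi_v)\ominus \bC\hat 1$. Since Lemma~\ref{lem: bimodule basis lemma}.(\ref{item: iso of bimodule to fusion}) identifies each such $H_\xi$ with $L^2(M_{V_1},\varphi_{V_1})\otimes_{M_U}L^2(M_{V_2},\varphi_{V_2})$ for $U=\{v\in V_1\cap V_2: v\sim w_j,\ j=1,\dots,\ell\}$, the argument reduces to two things: showing the chosen family $\{H_\xi\}$ is \emph{mutually orthogonal} and \emph{total} in $L^2(M,\varphi)$, and then counting, for each fixed $U$, how many copies of the fusion bimodule arise.

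First I would fix, for each vertex $v$, an orthonormal basis of the Hilbert space $L^2(M_v,\varphi_v)\ominus \bC\hat1$ (these exist even without separable predual, which is exactly why Lemma~\ref{lem: extension of words to vectors} was proved for vectors rather than for elements of $M_v$). Fixing one representative per equivalence class of relatively $\cG$-reduced words $w=w_1\cdots w_\ell$ relative to $(V_1,V_2)$, and assigning to each position a basis vector $\xi_j\in L^2(M_{w_j},\varphi_{w_j})\ominus\bC\hat1$, I form $\xi=\xi_1\cdots\xi_\ell$ via the multilinear extension of Lemma~\ref{lem: extension of words to vectors}.(\ref{item:multilinear extension}). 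Mutual orthogonality of the resulting $H_\xi$ is then immediate from Lemma~\ref{lem: bimodule basis lemma}.(\ref{item: different bimod fusion ortho}): for non-equivalent words the bimodules are automatically orthogonal, while for two basis products over the \emph{same} representative word the inner-product formula of Lemma~\ref{lem: extension of words to vectors}.(\ref{item:inner products product formual equiv words}) (whose matching permutation is the identity on a single word) gives $\langle\xi,\tilde\xi\rangle_\varphi=0$ unless $\xi=\tilde\xi$. One must take care that equivalent words are not double-counted: since adjacent vertices commute in $M$, an admissible swap leaves the product vector literally unchanged, so restricting to class representatives — justified by the uniqueness in Lemma~\ref{lem: word relative decomposition 2} — exactly avoids redundancy.

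Next I would establish totality. By Lemma~\ref{lem: reduced words span}, vectors $\hat x$ with $x=x_1\cdots x_m$ a product of centred elements along a $\cG$-reduced word $w$ are total in $L^2(M,\varphi)$. Applying Lemma~\ref{lem: word relative decomposition}, $w$ is equivalent to $w^{(1)}\cdot w^{(2)}\cdot w^{(3)}$ with $w^{(1)}$ a word in $V_1$, $w^{(3)}$ a word in $V_2$, and $w^{(2)}$ relatively reduced; transporting the swaps to the operators (again using that adjacent algebras commute) factors $x=x^{(1)}x^{(2)}x^{(3)}$ with $x^{(1)}\in M_{V_1}$ and $x^{(3)}\in M_{V_2}$. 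Because $M_{V_2}$ is globally invariant under $\sigma^\varphi$ (it carries a $\varphi$-preserving expectation), the right action absorbs $x^{(3)}$ via $\hat y\cdot \sigma_{i/2}(x^{(3)})=(y\,x^{(3)})^{\widehat{}}$ for $\varphi$-analytic $x^{(3)}$, so that $\hat x= x^{(1)}\cdot \widehat{x^{(2)}}\cdot\sigma_{i/2}(x^{(3)})$ lies in the bimodule generated by $\widehat{x^{(2)}}$. Expanding each centred factor of $\widehat{x^{(2)}}$ in the chosen orthonormal bases then places $\hat x$ in the closed span of the $H_\xi$, giving totality. Finally, grouping the $H_\xi$ by the associated set $U$ and counting the basis assignments to each representative word $w_1\cdots w_\ell\in \cW_\cG(V_1,V_2,U)$ — there being $\prod_{j=1}^\ell(\dim L^2(M_{w_j},\varphi_{w_j})-1)$ of them — reproduces exactly the multiplicity $k_\cG(V_1,V_2,U)$, yielding \eqref{eq: bimodule multiplicity}.

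The main obstacle I expect is the bookkeeping that makes the count exact rather than merely an upper or lower bound: one must simultaneously guarantee that the $H_\xi$ neither overlap (orthogonality, including the subtle point that equivalent words yield the \emph{identical} vector and so contribute a single copy) nor leave gaps (totality), and that the unique relatively reduced middle $w^{(2)}$ produced by Lemma~\ref{lem: word relative decomposition} is matched consistently across equivalent words — which is precisely what Lemma~\ref{lem: word relative decomposition 2} secures. The remaining technical care is the modular-automorphism twist needed to move the $V_2$-part into the right action, but this is routine given $\sigma^\varphi$-invariance of $M_{V_2}$ and density of $\varphi$-analytic elements.
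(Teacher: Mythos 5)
Your proposal is correct and follows essentially the same route as the paper's proof: fix orthonormal bases of $L^2(M_v,\varphi_v)\ominus\bC\hat{1}$, form the cyclic bimodules $H_\xi$ over relatively $\cG$-reduced words, invoke Lemma~\ref{lem: bimodule basis lemma} for both the identification with $L^2(M_{V_1},\varphi_{V_1})\otimes_{M_U}L^2(M_{V_2},\varphi_{V_2})$ and the mutual orthogonality, and then use Lemma~\ref{lem: reduced words span} together with Lemma~\ref{lem: word relative decomposition} to get totality and the multiplicity count. The two points you single out --- counting one representative per equivalence class of relatively reduced words (since adjacent algebras commute, equivalent words yield literally the same vector and hence the same $H_\xi$) and the analytic-element/modular twist $\hat{y}\cdot\sigma_{i/2}(x^{(3)})=(y\,x^{(3)})^{\widehat{\ }}$ needed to absorb the $V_2$-part into the right action --- are genuine refinements of details that the paper's proof passes over tacitly, and they strengthen rather than change the argument.
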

\begin{proof}
For each $v \in \mathcal{V}$, fix an orthonormal basis $\mathcal{B}_v$ for $L^2(M_v,\varphi_{v}) \ominus \bC$.  By Lemma~\ref{lem: bimodule basis lemma}, the $M_{V_1}$-$M_{V_2}$-bimodules
    \[
        \{H_\xi: \xi = \xi_1 \dots \xi_\ell,\ w = w_1 \dots w_\ell \in \mathcal{W}_{\cG}(V_1,V_2,U),\ \xi_j \in \mathcal{B}_{w_j} \text{ for } j = 1, \dots, \ell \}
 \]
are mutually orthogonal and satisfy $H_\xi \cong  L^2(M_{V_1},\varphi_{V_1}) \otimes_{M_U} L^2(M_{V_2},\varphi_{V_2})$ where $U = \{ v \in V_1 \cap V_2: v \sim w_j\ j=1,\ldots, \ell\}$.  For each $U \subseteq V_1 \cap V_2$, the number of copies of $_{M_{V_1}} L^2(M_{V_1},\varphi_{V_1}) \otimes_{M_U} L^2(M_{V_2},\varphi_{V_2})_{M_{V_2}}$ is given by \eqref{eq: bimodule multiplicity}, since $\dim(L^{2}(M_{v},\varphi_{v})\ominus \bC\hat{1})=\dim(L^{2}(M_{v},\varphi_{v}))-1$.

The proof of the direct sum decomposition will be complete once we verify that the bimodules $H_\xi$ span a dense subset of $L^2(M,\varphi)$. From Lemma~\ref{lem: reduced words span}, we know that $L^2(M,\varphi)$ is densely spanned by $\xi_1 \dots \xi_\ell$ for $\xi_j \in \mathcal{B}_{w_j}$ for reduced words $w_1 \dots w_\ell$. 
By Lemma~\ref{lem: word relative decomposition}, an
arbitrary reduced word $w$ is equivalent to a word of the form $v\cdot w'\cdot u$ where $v$ is a reduced word in $V_1$, $u$ is a reduced word in $V_2$, and $w'$ is reduced relative to $(V_{1},V_{2})$. This shows that the span of the subspaces $\cH_{\xi}$ contain all $\xi_1 \dots \xi_\ell$ for $x_j \in \mathcal{B}_{w_j}$ for reduced words $w_1 \dots w_\ell$, and thus the bimodules $\cH_{\xi}$ densely span $L^{2}(M,\varphi)$.
\end{proof}

For future applications to relative amenability, we determine the fusion rules for these bimodules in the following proposition. This also gives the rest of Theorem~\ref{thm: bimodules intro}.

\begin{prop} \label{prop: fusion}
For $V_1, V_2 \subseteq \mathcal{V}$ and $U \subseteq V_1 \cap V_2$, denote
    \[
        \mathscr{H}_U(V_1,V_2) := _{M_{V_1}} L^2(M_{V_1},\varphi_{V_1}) \underset{M_{U}}{\otimes} L^2(M_{V_2},\varphi_{V_2})_{M_{V_2}},
    \]
and denote by $\cG_2$ the subgraph of $\cG$ induced by $V_2$. Then we have the following fusion rules: for $U_1 \subseteq V_1 \cap V_2$ and $U_2 \subseteq V_2 \cap V_3$,
\[
\mathscr{H}_{U_1}(V_1,V_2) \otimes_{M_{V_2}} \mathscr{H}_{U_2}(V_2,V_3)
\cong
\bigoplus_{W \subseteq U_1 \cap U_2} \mathscr{H}_W(V_1,V_3)^{\oplus k_{\cG_2}(U_1,U_2,W)}.
\]
\end{prop}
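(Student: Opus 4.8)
The plan is to reduce the entire computation to two ingredients already in hand: the identity property of the standard form under the relative tensor product (recorded just before Lemma~\ref{lem: extension of words to vectors}), and Theorem~\ref{thm: bimodules} applied to the induced subgraph $\cG_2$. Unfolding the definition of $\mathscr{H}_U$, the left-hand side is
\[
\left( L^2(M_{V_1}) \underset{M_{U_1}}{\otimes} L^2(M_{V_2}) \right) \underset{M_{V_2}}{\otimes} \left( L^2(M_{V_2}) \underset{M_{U_2}}{\otimes} L^2(M_{V_3}) \right).
\]
First I would invoke associativity of Connes fusion to rebracket this as
\[
L^2(M_{V_1}) \underset{M_{U_1}}{\otimes} \left( L^2(M_{V_2}) \underset{M_{V_2}}{\otimes} L^2(M_{V_2}) \right) \underset{M_{U_2}}{\otimes} L^2(M_{V_3}),
\]
and then apply $L^2(M_{V_2}) \otimes_{M_{V_2}} L^2(M_{V_2}) \cong L^2(M_{V_2})$ to collapse the middle. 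This leaves the triple fusion
\[
L^2(M_{V_1}) \underset{M_{U_1}}{\otimes} L^2(M_{V_2}) \underset{M_{U_2}}{\otimes} L^2(M_{V_3}),
\]
in which the central $L^2(M_{V_2})$ is regarded as an $M_{U_1}$-$M_{U_2}$-bimodule via left and right multiplication.

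The key step is to decompose this central bimodule. Since $U_1, U_2 \subseteq V_2$ and $M_{V_2}$ is itself the graph product over the induced subgraph $\cG_2$, Theorem~\ref{thm: bimodules} applied with $\cG_2$ in place of $\cG$ (and $U_1, U_2$ in place of $V_1, V_2$) yields
\[
{}_{M_{U_1}} L^2(M_{V_2},\varphi_{V_2})_{M_{U_2}} \cong \bigoplus_{W \subseteq U_1 \cap U_2} \left( {}_{M_{U_1}} L^2(M_{U_1}) \underset{M_W}{\otimes} L^2(M_{U_2})_{M_{U_2}} \right)^{\oplus k_{\cG_2}(U_1,U_2,W)}.
\]
Substituting this into the triple fusion and using that the relative tensor product distributes over direct sums, each summand becomes
\[
L^2(M_{V_1}) \underset{M_{U_1}}{\otimes} L^2(M_{U_1}) \underset{M_W}{\otimes} L^2(M_{U_2}) \underset{M_{U_2}}{\otimes} L^2(M_{V_3}).
\]
Collapsing the two outer fusions by the identity property, namely $L^2(M_{V_1}) \otimes_{M_{U_1}} L^2(M_{U_1}) \cong L^2(M_{V_1})$ and $L^2(M_{U_2}) \otimes_{M_{U_2}} L^2(M_{V_3}) \cong L^2(M_{V_3})$, turns this into $L^2(M_{V_1}) \otimes_{M_W} L^2(M_{V_3}) = \mathscr{H}_W(V_1,V_3)$, and reassembling gives the claimed direct sum. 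One should remark that the index $W$ ranges over subsets of $U_1 \cap U_2 \subseteq V_1 \cap V_2 \cap V_3 \subseteq V_1 \cap V_3$, so that $\mathscr{H}_W(V_1,V_3)$ is well defined and $M_W$ sits inside both $M_{V_1}$ and $M_{V_3}$.

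I expect the main obstacle to be the bookkeeping of the bimodule structures through the associativity rebracketing in the statial (non-tracial) setting: at each stage one must ensure that the relative tensor product is formed over the correct subalgebra equipped with its unique state-preserving normal conditional expectation, and that the surviving left and right actions are the intended ones. This is precisely where I would appeal carefully to the general framework for Connes fusion of bimodules over $\sigma$-finite von Neumann algebras from \cite{OOTHaag}, rather than the tracial formalism, to justify both associativity and the identity property. Once these are in place the argument is purely formal, since all of the combinatorial content is imported through Theorem~\ref{thm: bimodules} for $\cG_2$, and no further graph-theoretic analysis is needed.
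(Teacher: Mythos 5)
Your proposal is correct and follows essentially the same route as the paper: collapse the middle factor via $L^2(M_{V_2})\otimes_{M_{V_2}}L^2(M_{V_2})\cong L^2(M_{V_2})$, decompose the resulting central bimodule ${}_{M_{U_1}}L^2(M_{V_2})_{M_{U_2}}$ by applying Theorem~\ref{thm: bimodules} to the induced subgraph $\cG_2$, then fuse $L^2(M_{V_1})\otimes_{M_{U_1}}$ on the left and $\otimes_{M_{U_2}}L^2(M_{V_3})$ on the right and absorb the units. Your extra remarks on associativity in the statial setting and on $W\subseteq V_1\cap V_3$ are sensible bookkeeping but do not change the argument.
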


\begin{proof}
First, observe that
    \begin{align*}
        \mathscr{H}_{U_1}(V_1,V_2) & \underset{M_{V_2}}{\otimes} \mathscr{H}_{U_2}(V_2,V_3) \\
            &= 
        (_{M_{V_1}} L^2(M_{V_1},\varphi_{V_1}) \underset{M_{U_1}}{\otimes} L^2(M_{V_2},\varphi_{V_2}))_{M_{V_2}})
            \underset{M_{V_2}}{\otimes}
        (_{M_{V_2}} L^2(M_{V_2},\varphi_{V_2})) \underset{M_{U_2}}{\otimes} L^2(M_{V_3},\varphi_{V_3}))_{M_{V_3}}) \\
            &\cong
        (_{M_{V_1}} L^2(M_{V_1},\varphi_{V_{1}}) \underset{M_{U_1}}{\otimes} L^2(M_{V_2},\varphi_{V_2})) \underset{M_{U_2}}{\otimes} L^2(M_{V_3},\varphi_{V_3})_{M_{V_3}}).
    \end{align*}
Then
    \[
        _{M_{U_1}} L^2(M_{V_2},\varphi_{V_2})_{M_{U_2}} \cong \bigoplus_{W \subseteq U_1 \cap U_2} (_{M_{U_1}} L^2(M_{U_1},\varphi_{U_1}) \underset{M_W}{\otimes} L^2(M_{U_2},\varphi_{U_2})_{M_{U_2}})^{\oplus k_{\cG_2}(U_1,U_2,W)}
    \]
Applying  $L^2(M_{V_1}) \otimes_{M_{U_1}}$ on the left and $\otimes_{M_{U_2}} L^2(M_{V_2})$ on the right, we get
\begin{align*}
\mathscr{H}_{U_1}(V_1,V_2) \otimes_{M_{V_2}} \mathscr{H}_{U_2}(V_2,V_3) &\cong \bigoplus_{W \subseteq U_1 \cap U_2} (_{M_{V_1}} L^2(M_{V_1},\varphi) \underset{M_W}{\otimes} L^2(M_{V_2},\varphi)_{M_{V_2}})^{\oplus k_{\cG_2}(U_1,U_2,W)} \\
&= \bigoplus_{W \subseteq U_1 \cap U_2} \mathscr{H}_W(V_1,V_2)^{\oplus k_{\cG_2}(U_1,U_2,W)}.\qedhere
\end{align*}
\end{proof}

As a sample application, we characterize weak coarseness of subalgebras corresponding to an induced subgraphs.  This characterization of coarseness is stated in terms of amenability of certain subalgebras, which we provide a complete characterization of in Proposition~\ref{prop: amenability characterization}.

\begin{theorem}\label{thm: weakly coarse}
Let $\mathcal{G}=(\mathcal{V},\mathcal{E})$ be a graph,  let $\{(M_v,\varphi_v):v \in \cV\}$ be a family of stacial von Neumann algebras, and let  $(M,\varphi)=\gp_{v\in \mathcal{V}} (M_v,\varphi_v)$. Assume $\dim(M_v)\geq 2$ for all $v\in V$. For $V_0 \subseteq \mathcal{V}$, $L^2(M,\varphi) \ominus L^2(M_{V_0},\varphi_{V_0})$ is weakly coarse as an $M_{V_0}$-$M_{V_0}$-bimodule if and only if $M_{\gstar{v}\cap V_0}$ is amenable for all $v\not\in V_0$.
\end{theorem}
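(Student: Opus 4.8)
The plan is to combine the explicit bimodule decomposition of Theorem~\ref{thm: bimodules} (in the case $V_1=V_2=V_0$) with the elementary permanence properties of weak containment of correspondences. Setting $V_1=V_2=V_0$ in \eqref{eq: bimodule multiplicity} and peeling off the single copy of the trivial bimodule $\mathscr{H}_{V_0}(V_0,V_0)\cong L^2(M_{V_0},\varphi_{V_0})$ arising from the empty word, one obtains an orthogonal decomposition
\[
_{M_{V_0}}\big(L^2(M,\varphi)\ominus L^2(M_{V_0},\varphi_{V_0})\big)_{M_{V_0}} \cong \bigoplus_{U\subseteq V_0} \mathscr{H}_U(V_0,V_0)^{\oplus m_U},
\]
where $m_U>0$ precisely when $\cW_{\cG}(V_0,V_0,U)$ contains a \emph{nonempty} word; here the hypothesis $\dim(M_v)\ge 2$ for all $v$ guarantees that each factor $\dim(L^2(M_{w_j},\varphi_{w_j}))-1$ is strictly positive, so that positivity of $m_U$ is equivalent to $\cW_{\cG}(V_0,V_0,U)$ containing such a word. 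Since a correspondence is weakly contained in the coarse correspondence $L^2(M_{V_0})\bar\otimes L^2(M_{V_0})$ if and only if each of its cyclic sub-correspondences is, and the coarse correspondence absorbs direct sums, the whole bimodule is weakly coarse if and only if every summand $\mathscr{H}_U(V_0,V_0)$ with $m_U>0$ is weakly coarse.

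The crux is then the following clean criterion, which I would isolate as a lemma: for $U\subseteq V_0$, the correspondence $\mathscr{H}_U(V_0,V_0)=L^2(M_{V_0})\otimes_{M_U}L^2(M_{V_0})$ is weakly coarse as an $M_{V_0}$-$M_{V_0}$-bimodule if and only if $M_U$ is amenable. For the ``if'' direction I would use that amenability of $M_U$ means the trivial bimodule $L^2(M_U)$ is weakly contained in the coarse $M_U$-$M_U$-correspondence $L^2(M_U)\bar\otimes L^2(M_U)$, and then apply the functor $L^2(M_{V_0})\otimes_{M_U}(-)\otimes_{M_U}L^2(M_{V_0})$; since Connes fusion preserves weak containment and $L^2(M_{V_0})\otimes_{M_U}L^2(M_U)\cong L^2(M_{V_0})$ as $M_{V_0}$-$M_U$-bimodules, this sends the coarse $M_U$-correspondence to the coarse $M_{V_0}$-correspondence and the trivial $M_U$-correspondence to $\mathscr{H}_U(V_0,V_0)$. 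For the ``only if'' direction I would restrict the $M_{V_0}$-$M_{V_0}$ actions to $M_U$-$M_U$: the trivial bimodule $L^2(M_U)$ sits inside $\mathscr{H}_U(V_0,V_0)$ (generated by $\hat 1\otimes_{M_U}\hat 1$), while the coarse $M_{V_0}$-correspondence restricts to a multiple of the coarse $M_U$-correspondence (as $L^2(M_{V_0})$ is a multiple of $L^2(M_U)$ on either side); since weak containment survives restriction, this forces $L^2(M_U)\prec_{M_U}L^2(M_U)\bar\otimes L^2(M_U)$, i.e.\ $M_U$ amenable. Throughout I would use that the inclusions $M_U\subseteq M_{V_0}$ carry normal, state-preserving conditional expectations (from \cite[Remark 2.14]{CaFi17}), so that all these fusions are defined in the statial setting of Section~\ref{sec: fusion hah!}.

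It remains to translate ``every appearing $M_U$ is amenable'' into the stated condition, a purely combinatorial matter handled via the permanence of amenability under normal conditional expectations onto induced-subgraph subalgebras. For one direction, given $v\notin V_0$ the length-one word $w=v$ is $\cG$-reduced relative to $(V_0,V_0)$ and produces $U=\{v'\in V_0: v'\sim v\}=\gstar{v}\cap V_0$; hence this $U$ appears, and weak coarseness of the left-hand side forces $M_{\gstar{v}\cap V_0}$ to be amenable. For the converse, given any nonempty $w=w_1\cdots w_\ell\in\cW_{\cG}(V_0,V_0,U)$ I would first observe that $w_1\notin V_0$ (otherwise $w_1\cdot w$ would fail to be reduced, contradicting reducedness relative to $V_0$ on the left), whence $U\subseteq\{v'\in V_0: v'\sim w_1\}=\gstar{w_1}\cap V_0$; since $w_1\notin V_0$, the hypothesis makes $M_{\gstar{w_1}\cap V_0}$ amenable, and as $M_U\subseteq M_{\gstar{w_1}\cap V_0}$ admits a normal conditional expectation, $M_U$ is amenable as well. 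Combining the crux lemma with these two observations yields the equivalence.

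I expect the main obstacle to be the careful verification of the crux lemma in the genuinely non-tracial, non-separable $\sigma$-finite setting: one must ensure that the statements ``$N$ amenable $\iff L^2(N)\prec_N L^2(N)\bar\otimes L^2(N)$'', ``Connes fusion preserves weak containment'', and ``restriction preserves weak containment'' are all available for correspondences over $\sigma$-finite (possibly type III) von Neumann algebras, with the fusion defined via conditional expectations as in Section~\ref{sec: fusion hah!}, rather than only in the tracial or separable case. Once these standard but technical inputs are in place, the remainder is bookkeeping with the decomposition and the elementary graph-theoretic observations above.
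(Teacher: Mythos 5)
Your proposal is correct and follows essentially the same route as the paper: the identical decomposition from Theorem~\ref{thm: bimodules} with $V_1=V_2=V_0$, the identical sufficiency argument (amenability of $M_U$ via the conditional expectation from $M_{\gstar{w_1}\cap V_0}$, then Connes fusion with $L^2(M_{V_0})$ on both sides, which preserves weak containment), and the identical necessity argument by restricting to $M_U$-$M_U$ with $U=\gstar{v}\cap V_0$ and locating the trivial $M_U$-bimodule inside $\mathscr{H}_U(V_0,V_0)$ --- the paper merely realizes that trivial bimodule as the cyclic subbimodule generated by $\hat{x}$ for a state-zero unit vector $x\in M_v$ (via Lemma~\ref{lem: bimodule basis lemma}) rather than by $\hat{1}\otimes_{M_U}\hat{1}$. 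The technical inputs you flag as potential obstacles in the $\sigma$-finite, possibly type III setting are exactly the ones the paper invokes, namely \cite[Corollary A.2]{BMOCoAmenable} for the characterization of amenability and \cite[Proposition 2.2.1]{PopaCorr} for preservation of weak containment under fusion.
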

\begin{proof}
First, suppose that $M_{\gstar{v}\cap V_0}$ is amenable for all $v \not \in V_0$.  By Theorem~\ref{thm: bimodules}, $_{M_{V_0}} L^2(M,\varphi)_{M_{V_0}}$ is a direct sum of bimodules of the form $L^2(M_{V_0},\varphi_{V_0}) \otimes_{M_U} L^2(M_{V_0},\varphi_{V_0})$ where $U = \{v \in V_0: v \sim w_j\ j=1,\ldots, \ell\}$ for some word $w_1\cdots w_\ell$ that is $\mathcal{G}$-reduced relative to $(V_0,V_0)$.  To obtain the orthogonal complement of $L^2(M_{V_0},\varphi_{V_0})$, one sums over the \emph{nonempty} words of this form with the appropriate multiplicity.  Note that $U \subseteq \gstar{w_1} \cap V_0$, and $w_1\not\in V_0$ since $w_1\cdots w_e\ll$ is nonempty and $\cG$-reduced relative to $(V_0,V_0)$.  Thus $M_{\gstar{w_1} \cap V_0}$ is amenable by assumption, and since there is a faithful normal conditional expectation from this algebra on $M_U$, we also have that $M_U$ is amenable.  Thus, 
    \[
        _{M_U} L^2(M_U,\varphi_U)_{M_U} \prec {}_{M_U} L^2(M_U,\varphi_U) \otimes L^2(M_U,\varphi_U)_{M_U}.
    \]
by \cite[Corollary A.2]{BMOCoAmenable} (see also \cite{Connes} for the separable predual case).
Now we apply $_{M_{V_0}} L^2(M_{V_0},\varphi_{V_0}) \otimes_{M_U}$ on the left and apply $\otimes_{M_U} L^2(M_{V_0},\varphi_{V_0})_{M_{V_0}}$ on the right to obtain
    \[
        _{M_{V_0}} L^2(M_{V_0},\varphi_{V_0}) \otimes_{M_U} L^2(M_{V_0},\varphi_{V_0})_{M_{V_0}} \prec {}_{M_{V_0}} L^2(M_{V_0},\varphi_U) \otimes L^2(M_{V_0},\varphi_{V_0})_{M_{V_0}}.
    \]
where we have used the fact that weak containment is preserved under Connes fusion \cite[Proposition 2.2.1]{PopaCorr}.  Taking the direct sum over all such nonempty words $w_1\cdots w_\ell$ yields that $L^2(M,\varphi) \ominus L^2(M_{V_0}\varphi_{V_0})$ is weakly coarse over $M_{V_0}$.

Conversely, suppose there exists some vertex $v \not \in V_0$ such that $M_{\gstar{v} \cap V_0}$ is non-amenable. For ease of notation, denote $V_1 := \gstar{v} \cap V_0$.  Fix $x \in M_v$ with $\varphi_v(x) = 0$ and $\varphi_v(x^*x)=1$.  Let $H_{\hat{x}}$ be the $M_{V_0}$-$M_{V_0}$-subbimodule of $L^2(M,\varphi)$ generated by $\hat{x}$, which we note is in $L^2(M,\varphi) \ominus L^2(M_{V_0},\varphi_{V_0})$ since $v\not\in V_0$.  Applying Lemma~\ref{lem: bimodule basis lemma}.(\ref{item: iso of bimodule to fusion}) to $V_2:=V_1$ and $w=v$ (so that $U=V_1$), we have that $M_{V_1}$-$M_{V_1}$-bimodule generated by $\hat{x}$ is isomorphic to $L^2(M_{V_1},\varphi_{V_1})\otimes_{M_{V_1}}L^2(M_{V_1},\varphi_{V_1}) \cong L^2(M_{V_1},\varphi_{V_1})$. In particular, since $M_{V_1}$ is not amenable, this $M_{V_1}$-$M_{V_1}$-bimodule is not weakly coarse.  Since it is an $M_{V_1}$-$M_{V_1}$-subbimodule of $H_{\hat{x}}$, it follows that $_{M_{V_1}} (H_{\hat{x}})_{M_{V_1}}$ is not weakly coarse, and in turn $_{M_{V_0}} (H_{\hat{x}})_{M_{V_0}}$ is not weakly coarse.
\end{proof}

\begin{remark}
The previous theorem also can recover, with a different approach, the characterization of fullness of graph products of amenable von Neumann algebras in Theorem~\ref{thm: char diffuse factor full intro}. \end{remark}

\section{Relative amenability via bimodules}\label{sec: rel amen}

A useful implication of relative amenability is the following.  By \cite[Section 2.2]{BMOCoAmenable}, we  can described the standard form of $\ip{M,e_{B}}$ via the isomorphism
    \[
        L^{2}(\ip{M,e_{B}})\cong L^{2}(M)\otimes_{B}L^{2}(M),
    \]
as $M$-$M$ bimodules. Consequently, \cite[Corollary A.2]{BMOCoAmenable} tells us that $A$ being amenable relative to $B$ inside $M$ implies that $L^{2}(M)$ is weakly contained in $L^{2}(\ip{M,e_{B}})$ as $A$-$A$ bimodules. Note that---due to the conditional expectation being required to be normal on $M$---the converse is not a priori true.
However, in our setting $M$ will be a graph product and $A,B$ will be subalgebras corresponding to induced subgraphs. In this case, the detailed analysis of the previous section will lead us to a complete classification of when $L^{2}(M)$  is weakly contained in $L^{2}(\ip{M,e_{B}})$. From this classification, we will be able to directly argue that if $L^{2}(M)$  is not weakly contained in $L^{2}(\ip{M,e_{B}})$, then $A$ must be amenable relative to $B$ inside of $M$.

As the fusion rules provided in Proposition~\ref{prop: fusion} decompose relative tensor products as direct sums, we highlight the fact that, in the factorial case, bimodules weakly contained in direct sums are necessarily weakly contained in one of the summands. Indeed, suppose that $M$ is factor, and for a faithful normal state $\varphi$ on $M$ let $J_\varphi$ be the associated modular conjugation on $L^2(M,\varphi)$. Then the induced map $\pi\colon M\otimes_{\max{}}M^{op}\to B(L^{2}(M,\psi))$ satisfying $\pi(a\otimes b^{op})=aJ_\varphi b^{*}J_\varphi$ has trivial commutant ($\pi(M\otimes_{\max{}} M^{op})' = M'\cap (J_\varphi M J_\varphi)'= M\cap M'$) and is thus irreducible \cite[Proposition I.9.20]{TakesakiI}. Hence the state on $M\otimes_{\max{}}M^{op}$ given by $x\mapsto \ang{\pi(x)\hat{1},\hat{1}}$ is an extreme point of the state space \cite[Theorem I.9.22]{TakesakiI}, and so a minor modification of the proof of \cite[Theorem 1.5]{FellSpaces}  gives the following.

\begin{lem}\label{lem: inducing to a direct summand}
Let $M$ be a factor and $\varphi$ a faithful normal state on $M$. Suppose $\cH_1,\ldots, \cH_n$ are $M$-$M$ bimodules with 
    \[
        L^{2}(M,\varphi)\prec \cH_1\oplus \cdots \oplus \cH_n
    \]
 as $M$-$M$ bimodules. Then there is an $1\leq i\leq n$ so that $L^{2}(M,\varphi)\prec \cH_i$ as $M$-$M$ bimodules.
\end{lem}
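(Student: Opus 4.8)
The plan is to work entirely at the level of representations of the unital C*-algebra $A := M \otimes_{\max} M^{\op}$, using the standard dictionary (see \cite{PopaCorr}) between $M$-$M$ bimodules and representations of $A$, under which weak containment of bimodules corresponds to weak containment of the associated representations. Under this dictionary the representation attached to $L^2(M,\varphi)$ is precisely $\pi$, so the hypothesis becomes $\pi \prec \rho$, where $\rho := \bigoplus_{i=1}^n \pi_{\cH_i}$ acts on $\bigoplus_{i=1}^n \cH_i$. The point recorded just before the statement is that $\pi$ is the GNS representation of the \emph{pure} state $\omega(x) = \langle \pi(x)\hat 1, \hat 1\rangle$; thus it suffices to produce an index $i_0$ with $\omega \prec \pi_{\cH_{i_0}}$, i.e. $\pi_\omega = L^2(M,\varphi) \prec \cH_{i_0}$.

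First I would invoke the state-space form of Fell's weak containment theorem \cite[Theorem 1.5]{FellSpaces}: since $\pi_\omega \prec \rho$, the state $\omega$ lies in the weak-* closure of the set of states of the form $\sum_{k} \langle \rho(\,\cdot\,)\eta_k, \eta_k\rangle$ with finitely many $\eta_k \in \bigoplus_i \cH_i$ and $\sum_k \|\eta_k\|^2 = 1$. Writing $\eta_k = (\eta_k^1,\dots,\eta_k^n)$ according to the direct sum, each such state decomposes as $\sum_{i=1}^n \psi_i$, where $\psi_i := \sum_k \langle \pi_{\cH_i}(\,\cdot\,)\eta_k^i, \eta_k^i\rangle$ is a positive functional weakly contained in $\pi_{\cH_i}$ (being a vector functional of a finite amplification of $\pi_{\cH_i}$) and $\sum_{i=1}^n \psi_i(1) = 1$. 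Hence there is a net $\omega^{(\alpha)} = \sum_{i=1}^n \psi_i^{(\alpha)} \to \omega$ weak-*, with each $\psi_i^{(\alpha)} \geq 0$, $\psi_i^{(\alpha)} \prec \pi_{\cH_i}$, and $\sum_i \psi_i^{(\alpha)}(1) = 1$.

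Next I would extract a limit summand by summand. As $A$ is unital, $\psi_i^{(\alpha)}(1) = \|\psi_i^{(\alpha)}\| \in [0,1]$, so the $\psi_i^{(\alpha)}$ lie in the weak-* compact unit ball of $A^*$; passing to a subnet (only finitely many $i$ are involved) I may assume $\psi_i^{(\alpha)} \to \psi_i$ weak-* for each $i$. Evaluating at $1 \in A$ gives $\omega = \sum_{i=1}^n \psi_i$ with each $\psi_i \geq 0$ and $\sum_i \psi_i(1) = 1$. The structural input here is that the cone of positive functionals weakly contained in a fixed representation is weak-* closed: this follows from the weak-* compactness of the state set $P_{\pi_{\cH_i}}$ provided by \cite[Theorem 1.5]{FellSpaces}, since that cone is $\{t\psi_0 : t \geq 0,\ \psi_0 \in P_{\pi_{\cH_i}}\}$ and a weak-* limit $t_\alpha \psi_\alpha \to \psi$ has $t_\alpha = t_\alpha\psi_\alpha(1) \to \psi(1)$, so either $\psi = 0$ or $\psi/\psi(1)$ is again a limit of states in $P_{\pi_{\cH_i}}$. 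Thus $\psi_i \prec \pi_{\cH_i}$. Now purity enters: pick $i_0$ with $t_{i_0} := \psi_{i_0}(1) > 0$. Writing $\omega = t_{i_0}\bigl(\psi_{i_0}/t_{i_0}\bigr) + (1 - t_{i_0})\sigma$ as a convex combination of states (with $\sigma$ the normalized sum of the remaining $\psi_i$ when $t_{i_0} < 1$), extremality of $\omega$ forces $\omega = \psi_{i_0}/t_{i_0}$. Since GNS representations are unaffected by positive scaling, $\pi_\omega = \pi_{\psi_{i_0}} \prec \pi_{\cH_{i_0}}$, that is, $L^2(M,\varphi) \prec \cH_{i_0}$.

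I expect the only genuinely delicate points to be the two appeals to Fell's machinery — the state-space characterization of weak containment and the resulting weak-* closedness of the cone of weakly contained positive functionals — which together constitute the ``minor modification of the proof of \cite[Theorem 1.5]{FellSpaces}'' alluded to above. These must be cited or reproduced with some care, because $A = M \otimes_{\max} M^{\op}$ is a general unital C*-algebra and no separability of the predual is assumed. Once they are in place, the weak-* compactness extraction and the purity argument are routine.
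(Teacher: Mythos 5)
Your proof is correct and follows essentially the same route as the paper: the paper's entire argument is the paragraph preceding the lemma (factoriality makes $\pi\colon M\otimes_{\max}M^{op}\to B(L^2(M,\varphi))$ irreducible, hence the vector state $\omega=\langle\pi(\cdot)\hat 1,\hat 1\rangle$ is pure) followed by an appeal to a ``minor modification'' of Fell's weak containment theorem, which is exactly the Fell-plus-compactness-plus-extremality argument you carry out in detail. Your write-up simply makes explicit the modification the paper leaves to the reader, so there is nothing to flag.
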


It will also be helpful to prove the following general lemma, which will ultimately reduce our work of checking when one subalgebra corresponding to an induced subgraph is amenable relative to another, to the case  of smaller subgraphs. 
\begin{lem}\label{lem: relative semidisrcete for subgraph}
Let $\cG=(\cV,\cE)$ be a graph, let $\{(M_v,\varphi_v)\colon v\in \cV\}$ be a family of statial von Neumann algebras, let $(M,\varphi) = \gp_{v \in \cG} (M_v,\varphi_v)$, and let $V_1, V_2 \subseteq \cV$. Suppose that $_{M_{V_{1}}} L^2(M,\varphi)_{M_{V_1}}$ is weakly contained in $_{M_{V_{1}}} L^2(M,\varphi) \otimes_{M_{V_2}} L^2(M,\varphi)_{M_{V_1}}$.
    \begin{enumerate}[(i)]
    \item $M_{V_0}$ is amenable for all $V_0\subseteq V_1\setminus V_2$. \label{item: rel amen vacuous case}
    \item If $M_{V_0}$ is factor for $V_0\subset V_1$, then there exists $U\subseteq V_0\cap V_2$ (possibly empty) so that $M_{V_0}$ is amenable relative to $M_{U}$.\label{item: rel amen factor case}
\end{enumerate}
\end{lem}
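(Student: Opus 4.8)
The plan is to restrict everything to $M_{V_0}$ and then feed the hypothesis into the bimodule decomposition of Theorem~\ref{thm: bimodules} together with the fusion rules of Proposition~\ref{prop: fusion}. Since $V_0\subseteq V_1$ and $M_{V_0}\subseteq M_{V_1}$ admits a faithful normal conditional expectation, restricting the bimodule actions from $M_{V_1}$ to $M_{V_0}$ preserves weak containment, so the hypothesis yields
\[
{}_{M_{V_0}} L^2(M,\varphi)_{M_{V_0}} \prec {}_{M_{V_0}} L^2(M,\varphi)\otimes_{M_{V_2}} L^2(M,\varphi)_{M_{V_0}}.
\]
Moreover, $E_{M_{V_0}}$ embeds $L^2(M_{V_0},\varphi_{V_0})$ isometrically and bimodularly as an $M_{V_0}$-$M_{V_0}$-subbimodule of $L^2(M,\varphi)$, so ${}_{M_{V_0}}L^2(M_{V_0})_{M_{V_0}}\prec {}_{M_{V_0}}L^2(M)_{M_{V_0}}$, and by transitivity of weak containment we obtain
\[
{}_{M_{V_0}} L^2(M_{V_0},\varphi_{V_0})_{M_{V_0}} \prec {}_{M_{V_0}} L^2(M,\varphi)\otimes_{M_{V_2}} L^2(M,\varphi)_{M_{V_0}}.
\]

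The next step is to decompose the right-hand side explicitly. Applying Theorem~\ref{thm: bimodules} to the pairs $(V_0,V_2)$ and $(V_2,V_0)$ gives
\[
{}_{M_{V_0}} L^2(M)_{M_{V_2}}\cong \bigoplus_{U_1\subseteq V_0\cap V_2}\mathscr{H}_{U_1}(V_0,V_2)^{\oplus k_{\cG}(V_0,V_2,U_1)},
\]
and the symmetric statement for ${}_{M_{V_2}}L^2(M)_{M_{V_0}}$. Since Connes fusion distributes over direct sums, fusing these over $M_{V_2}$ and invoking Proposition~\ref{prop: fusion} (with both the first and third vertex sets taken to be $V_0$) collapses each summand into $\bigoplus_{W\subseteq U_1\cap U_2}\mathscr{H}_W(V_0,V_0)^{\oplus k_{\cG_2}(U_1,U_2,W)}$. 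Because $U_1,U_2\subseteq V_0\cap V_2$, every index $W$ that appears satisfies $W\subseteq V_0\cap V_2$, so that
\[
{}_{M_{V_0}} L^2(M)\otimes_{M_{V_2}} L^2(M)_{M_{V_0}}\cong \bigoplus_{W\subseteq V_0\cap V_2}\mathscr{H}_W(V_0,V_0)^{\oplus n_W}
\]
for suitable multiplicities $n_W$; finiteness of $\cV$ guarantees this is a finite direct sum. Combining with the previous paragraph gives the key containment ${}_{M_{V_0}} L^2(M_{V_0})_{M_{V_0}}\prec \bigoplus_{W\subseteq V_0\cap V_2}\mathscr{H}_W(V_0,V_0)^{\oplus n_W}$.

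For (\ref{item: rel amen vacuous case}), when $V_0\subseteq V_1\setminus V_2$ we have $V_0\cap V_2=\varnothing$, so the only index is $W=\varnothing$, and since $M_\varnothing=\bC$ the bimodule $\mathscr{H}_\varnothing(V_0,V_0)$ is the coarse bimodule $L^2(M_{V_0})\otimes_{\bC}L^2(M_{V_0})$. As weak containment in a finite multiple of a bimodule is equivalent to weak containment in the bimodule itself, ${}_{M_{V_0}}L^2(M_{V_0})_{M_{V_0}}$ is weakly contained in the coarse bimodule, which by Connes' characterization (equivalently, the co-amenability criterion of \cite{BMOCoAmenable} with $B=\bC$) is precisely amenability of $M_{V_0}$. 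Note that no factoriality is needed here, since there is a single summand type.

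For (\ref{item: rel amen factor case}), assume $M_{V_0}$ is a factor. Then Lemma~\ref{lem: inducing to a direct summand} applies to the finite direct sum above and extracts a single $U:=W\subseteq V_0\cap V_2$ with ${}_{M_{V_0}}L^2(M_{V_0})_{M_{V_0}}\prec \mathscr{H}_U(V_0,V_0)$. Identifying $\mathscr{H}_U(V_0,V_0)\cong L^2(\ip{M_{V_0},e_{M_U}})$, the co-amenability theorem of \cite{BMOCoAmenable} produces a (possibly non-normal) conditional expectation $\Phi\colon \ip{M_{V_0},e_{M_U}}\to M_{V_0}$; since $\Phi|_{M_{V_0}}=\id_{M_{V_0}}$ is automatically normal, $M_{V_0}$ is amenable relative to $M_U$ inside $M_{V_0}$. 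The one genuinely delicate point — and the reason (\ref{item: rel amen factor case}) requires factoriality while (\ref{item: rel amen vacuous case}) does not — is the passage from weak containment in the direct sum to weak containment in a single summand, which is exactly where Lemma~\ref{lem: inducing to a direct summand} (hence the factor hypothesis) is indispensable; the normality subtlety usually attached to relative amenability disappears here because the ambient algebra and the amenable algebra coincide, forcing $\Phi|_{M_{V_0}}$ to be the identity.
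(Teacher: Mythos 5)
Your proof is correct and follows essentially the same route as the paper's: restrict the weak containment to $M_{V_0}$, decompose ${}_{M_{V_0}}L^2(M)\otimes_{M_{V_2}}L^2(M)_{M_{V_0}}$ via Theorem~\ref{thm: bimodules} and Proposition~\ref{prop: fusion} into finitely many types $\mathscr{H}_W(V_0,V_0)$ with $W\subseteq V_0\cap V_2$, then for (i) identify the sole ($W=\varnothing$) summand with the coarse bimodule and invoke \cite[Corollary A.2]{BMOCoAmenable}, and for (ii) use factoriality with Lemma~\ref{lem: inducing to a direct summand} to pass to a single summand $\mathscr{H}_U(V_0,V_0)\cong L^2(\ip{M_{V_0},e_{M_U}})$ and apply the same corollary. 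The only wording slip is ``finite multiple'': the multiplicities $n_W$ (the $k$'s of Theorem~\ref{thm: bimodules}) are typically infinite, but weak containment in an arbitrary multiple is still equivalent to weak containment in a single copy, so nothing breaks, and your explicit reduction to ${}_{M_{V_0}}L^2(M_{V_0})_{M_{V_0}}$ before applying Lemma~\ref{lem: inducing to a direct summand} is in fact slightly more careful than the paper's own wording.
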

\begin{proof}
We first make a preliminary observation. For $V_0\subset \cV$ and $A\subset V_0\cap V_2$, we adopt the notation from Proposition~\ref{prop: fusion} and denote
    \[
        \mathscr{H}_A(V_0,V_2):= _{M_{V_0}} L^2(M_{V_0},\varphi_{V_0}) \underset{M_A}{\otimes} L^2(M_{V_2},\varphi_{V_2})_{M_{V_2}}.
    \]
By Theorem \ref{thm: bimodules}, we have
    \[
        _{M_{V_0}} L^2(M,\varphi)_{M_{V_2}} \cong \bigoplus_{A \subseteq V_0\cap V_2 } \mathscr{H}_A(V_0,V_2)^{\oplus k_{\cG}(V_0,V_2,A)} \subseteq \bigoplus_{A \subseteq V_0\cap V_2} \mathscr{H}_A(V_0,V_2)^{\oplus \infty}.
    \]
Therefore, using Proposition~\ref{prop: fusion} we have
    \[
        _{M_{V_0}} L^2(M,\varphi) \underset{M_{V_2}}{\otimes} L^2(M,\varphi)_{M_{V_0}} \subseteq \bigoplus_{A,B\subseteq V_0\cap V_2} \left( \mathscr{H}_A(V_0,V_2) \underset{M_{V_2}}{\otimes} \mathscr{H}_B(V_2, V_0) \right)^{\oplus \infty} \subseteq \bigoplus_{U\subseteq V_0\cap V_2} \mathscr{H}_U(V_0,V_0)^{\oplus \infty}.
    \]
By assumption, $_{M_{V_{1}}} L^2(M,\varphi)_{M_{V_1}}$ is weakly contained in $_{M_{V_{1}}} L^2(M,\varphi) \otimes_{M_{V_2}} L^2(M,\varphi)_{M_{V_1}}$. If $V_0\subset V_1$, then by restriction we have that $_{M_{V_0}} L^2(M,\varphi)_{M_{V_0}}$ is weakly contained in $_{M_{V_0}} L^2(M,\varphi) \otimes_{M_{V_2}} L^2(M,\varphi)_{M_{V_0}}$, and so the above shows that 
    \begin{align}\label{eqn:weak containment computation}
        _{M_{V_0}} L^2(M,\varphi)_{M_{V_0}} \prec \bigoplus_{U\subseteq V_0\cap V_2} \mathscr{H}_U(V_0,V_0)^{\oplus \infty}.
    \end{align}
    
Now, if $V_0\subset V_1\setminus V_0$, then the only term in the above direct sum  corresponds to $U=\varnothing$, which has $M_U=\bC$. Thus the above gives
    \[
         _{M_{V_0}} L^2(M,\varphi)_{M_{V_0}} \prec \ _{M_{V_0}} L^2(M_{V_0},\varphi_{V_0}) \otimes L^2(M_{V_0},\varphi_{V_0})_{M_{V_0}} \cong\ _{M_{V_0}} L^2(M_{V_0},\varphi_{V_0}) \otimes \overline{L^2(M_{V_0},\varphi_{V_0})}_{M_{V_0}}.
    \]
Note that the bimodule in the last expression is equivalent to the standard form of $B(L^2(M_{V_0},\varphi_{V_0}))$ with respect to its trace. Hence $M_{V_0}$ is amenable by \cite[Corollary A.2]{BMOCoAmenable} (see also \cite{Connes} in the case of separable preduals), which proves (\ref{item: rel amen vacuous case}).

To prove (\ref{item: rel amen factor case}), suppose $M_{V_0}$ is a factor for $V_0\subset V_1$. Since $\cG$ is a finite graph, the direct sum over $U\subset V_0\cap V_2$ in (\ref{eqn:weak containment computation}) only has finitely many terms,  and hence Lemma~\ref{lem: inducing to a direct summand} implies
    \[
        _{M_{V_0}} L^2(M,\varphi)_{M_{V_0}} \prec\ _{M_{V_0}} L^2(M_{V_0},\varphi_{V_0}) \underset{M_U}{\otimes} L^2(M_{V_0},\varphi_{V_0})_{M_{V_0}},
    \]
for some $U\subset V_0\cap V_2$. By \cite[Section 2.2]{BMOCoAmenable}, the latter bimodule is isomorphic to the standard form for $\ip{M_{V_0}, e_{M_U}}$. Thus \cite[Corollary A.2]{BMOCoAmenable} yields is a conditional expectation $\Phi\colon \ip{M_{V_{0}},e_{M_{U}}}\to M_{V_{0}}$ so that  $M_{V_0}$ is amenable relative to $M_{U}$.
\end{proof}

\subsection{Proofs of Theorems~\ref{thm: complete char rel amen tracial intro} and \ref{thm: complete char rel amen stacial intro}}

Let us first reduce Theorem~\ref{thm: complete char rel amen tracial intro} to Theorem~\ref{thm: complete char rel amen stacial intro}. 
Comparing the two theorems, this amounts to showing that if $(M_i,\tau_i)$ is a tracial von Neumann algebra admitting a trace zero unitary for $i=1,2$, then the following are equivalent:
    \begin{enumerate}[(I)]
        \item $\dim(M_1)=\dim(M_2)=2$;
        \item $M_1*M_2$ is amenable;
        \item $M_1*M_2$ is amenable relative to $M_1$.
    \end{enumerate}
The equivalence of the first two items is well known (see, for example, \cite[Theorem 2]{ChingFreeProd}), and (II) implies (III) follows from the definition. So now suppose (III) holds. Applying Proposition~\ref{prop: relative diffuseness} to the graph $\cG=(\{1,2\}, \varnothing)$ with $V_1=\{1,2\}$ and $V_2=\{1\}$, we see that $M_{V_1}=M_1*M_2$ is diffuse relative to $M_{V_2}=M_1$ inside $M_1*M_2$. That is, $M_1*M_2$ does not intertwine into $M_1$ inside of $M_1*M_2$, and thus \cite[Corollary 2.12]{CartanAFP} implies (II).\\

We now prove Theorem~\ref{thm: complete char rel amen stacial intro}. First assume that Theorem~\ref{thm: complete char rel amen stacial intro}.(\ref{item: amenable vertex wise intro}) and (\ref{item: rel amen free product intro})  hold.  Let $P_1$, \dots, $P_n$ be the pairs of vertices $\{v,w\}$ where $v \in V_1 \setminus V_2$, $w \in V_1$, and $v$ and $w$ are not adjacent.  Denote $Q_1 := V_1 \setminus (V_2 \cup P_1 \cup \dots \cup P_n)$ and $Q_2 := V_1 \cap V_2 \setminus (P_1 \cup \dots \cup P_n)$.  By (\ref{item: relative amen adjacency constraint intro}), all the vertices in each $P_j$ are connected to all other vertices in $Q_1 \cup Q_2$.  Moreover, each $v \in Q_1$ is connected to all vertices in $V_1$ by definition of $Q_1$.  Thus,
    \[
        M_{V_1} = \left( \overline{\bigotimes}_{j=1}^n M_{P_j} \right) \bar\otimes M_{Q_1} \bar{\otimes} M_{Q_2}.
    \]
and
    \[
        M_{V_1 \cap V_2} = \left( \overline{\bigotimes}_{j=1}^n M_{P_j \cap V_2} \right) \bar{\otimes} \,\bC\, \bar{\otimes} M_{Q_2}.
    \]
By assumption (\ref{item: case of two vertices relative amen intro}), $M_{P_j}$ is amenable relative $M_{P_j \cap V_2}$ in $M_{P_j}$ for each $j=1,\ldots, n$. By assumption (\ref{item: amenable vertex wise intro}) and \cite[Theorem 6]{Connes}, $M_{Q_1}$ is amenable. 
Thus Lemma~\ref{lem: tensor rel amen} implies that $M_{V_1}$ is amenable relative to $M_{V_1 \cap V_2}$ (inside $M_{V_1}$). By Lemma~\ref{lem: commutating squares rel amen reduction}, this in turn implies that $M_{V_{1}}$ is amenable relative to $M_{V_{2}}$ in $M$.

Conversely, suppose that $M_{V_1}$ is amenable relative to $M_{V_2}$ inside $M$. Recall from the discussion at the beginning of Section~\ref{sec: rel amen} that this implies $L^2(M,\varphi)$ is weakly contained in $L^2(M,\varphi) \otimes_{M_{V_2}} L^2(M,\varphi)$ as $M_1$-$M_1$-bimodules. Thus for each $v\in V_1\setminus V_2$ we can apply Lemma~\ref{lem: relative semidisrcete for subgraph} to $V_0=\{v\}$ to obtain that $M_v$ is amenable. This gives Theorem~\ref{thm: complete char rel amen stacial intro}.(\ref{item: amenable vertex wise intro}). To prove Theorem~\ref{thm: complete char rel amen stacial intro}.(\ref{item: rel amen free product intro}), let $v \in V_1 \setminus V_2$ and $w \in V_1$ with $w \neq v$ and assume $v$ and $w$ are not adjacent. We will show that (\ref{item: case of two vertices relative amen intro}) and (\ref{item: relative amen adjacency constraint intro}) must occur. 

For (\ref{item: case of two vertices relative amen intro}), first note that if $\dim(M_{v})=\dim(M_{w})=2$, then by \cite[Theorem 1.1]{DykemaFreeproductsHyper} we have that $M_{\{v,w\}}=M_{v}*M_{w}$ is amenable. In particular, $M_{\{v,w\}}$ is also amenable relative to $M_{w}$, proving (\ref{item: case of two vertices relative amen intro}) in this case. If $\max(\dim(M_{v},M_{w}))\geq 3$, then
\cite[Theorem 4.1 and Remark 4.2]{UedaTypeIIIfreeproduct} implies that $M_{\{v,w\}}=M_{v}*M_{w}$ is a factor, and thus
Lemma~\ref{lem: relative semidisrcete for subgraph} applied to $V_0=\{v,w\}$ yields that $M_{\{v,w\}}$ is amenable relative to $M_{U}$ for some $U\subseteq V_0\cap V_{2}$. Noting that $w\not\in V_2$ forces $U=\varnothing$, we see that in this case $M_{\{v,w\}}$ is amenable. If $w\in V_2$, then either $U=\{w\}$ or $U=\varnothing$, but in both cases one has that $M_{\{v,w\}}$ is amenable relative to $M_w$. We have thus established (\ref{item: case of two vertices relative amen intro}).

For (\ref{item: relative amen adjacency constraint intro}), consider another vertex $u \in V_1 \setminus \{v,w\}$, and suppose towards a contradiction that one of $v$ or $w$ is not adjacent to $u$. Note that this implies the subgraph $\cG_0$ induced by $V_0:=\{v,w,u\}$ is join-irreducible, and hence $M_{V_0}$ is a factor by Theorem~\ref{thm:factoriality}. Consequently, Lemma~\ref{lem: relative semidisrcete for subgraph} implies $M_{V_0}$ is amenable relative to $M_U$ for some $U\subseteq V_0\cap V_2$. Since $U \subset \{w,u\} \subset V_0$, it follows that $M_{V_0}$ is amenable relative to $M_{\{w,u\}}$. We will show this is a contradiction by way of Lemma~\ref{lem: free product non-amenability} using the observation that
    \[
        M_{V_0} \cong M_{\{v,u\}} *_{M_u} M_{\{w,u\}},
    \]
where the amalgamated free product is taken with respect to the $\varphi$-preserving conditional expectations. Let $u_0$ and $u_2$ be state zero unitaries in $M_v^{\varphi_v}$ and $M_w^{\varphi_w}$, respectively, so that $E_{M_u}[u_0]=\varphi_v[u_0]=0$ and similarly $E_{M_u}[u_2]=0$. Also let $x$ be a state zero unitary in $M_u^{\varphi_u}$. If $v$ is not adjacent to $u$, then $u_1:=xu_0 x^*$ satisfies 
    \[
        E_{M_u}[u_1] = x E_{M_u}[u_0] x^*=0,
    \]
and by free independence
    \[
        E_{M_u}[u_0^* u_1] = \varphi_v(u_0^*) x \varphi_v(u_0) x^*=0.
    \]
Consequently, Lemma~\ref{lem: free product non-amenability} gives the contradiction that $M_{V_0}$ is not amenable relative to $M_{\{w,u\}}$. If instead $w$ is not adjacent to $u$, then we instead define $u_1:=x u_2 x^*$ and argue as above to get $E_{M_u}[u_1]=E_{M_u}[u_2^* u_1]=0$, which once again gives a contradiction via Lemma~\ref{lem: free product non-amenability}. Thus we we must have that both $v$ and $w$ are adjacent to $u$, establishing (\ref{item: relative amen adjacency constraint intro}).

\subsection{Amenability by way of relative amenability}

In this section we characterize when a graph product $(M,\varphi) = \gp_{v \in \mathcal{G}} (M_v,\varphi_v)$ is amenable by specializing to the case where $V_1 = \cV$ and $V_2 = \varnothing$. The characterization can be read off from Theorem~\ref{thm: complete char rel amen stacial intro}, but in fact, we claim that this characterization holds even \emph{without} the assumption that $M_v^{\varphi_v}$ contains a state zero unitary.

\begin{prop} \label{prop: amenability characterization}
Let $\cG=(\cV,\cE)$ be a graph, let $\{(M_v,\varphi_v)\colon v\in \cV\}$ be a family of statial von Neumann algebras, and let $(M,\varphi) = \gp_{v \in \cG} (M_v,\varphi_v)$. Assume $\dim M_v \geq 2$.  Then $M$ is amenable if and only if the following conditions hold:
\begin{enumerate}[(1)]
    \item For each $v \in \cV$, $M_v$ is amenable.
    \item If $v$ and $w$ are not adjacent in $\mathcal{G}$, then $\dim(M_v) = \dim(M_w) = 2$ and $v$ and $w$ are adjacent to all the other vertices.
\end{enumerate}
\end{prop}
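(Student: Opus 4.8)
The plan is to prove the two implications separately, reducing everything to a single statial free-product dichotomy that plays the role of Ching's theorem \cite{ChingFreeProd} and replaces the state-zero hypothesis.

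For the direction that (1) and (2) imply amenability, I would first extract the combinatorial content of (2): if $v \not\sim w$, then applying (2) to the pair $(v,w)$ shows $v$ is adjacent to every vertex except $w$, so $w$ is the \emph{only} non-neighbour of $v$. Hence the non-edges form a matching, and every unmatched vertex is adjacent to all others. By Lemma~\ref{lem: join component characterization} this exhibits $\cG$ as a graph join of singletons and of two-element sets $\{v,w\}$ with $v \not\sim w$, so Proposition~\ref{prop: graph join decomp to tensor decomp} gives $M \cong \overline{\bigotimes}_j N_j$, where each $N_j$ is either some $M_v$ (amenable by (1)) or a free product $(\bC^2,\varphi_v) * (\bC^2,\varphi_w)$ with $\dim M_v = \dim M_w = 2$. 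Since a $2$-dimensional algebra is $\bC \oplus \bC$, each such $N_j$ is generated by two projections, hence of type I and amenable by the structure theory of a pair of projections; a finite tensor product of amenable algebras is amenable, so $M$ is amenable.

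For the converse, suppose $M$ is amenable. Condition (1) is immediate: each $M_v$ carries a normal $\varphi$-preserving conditional expectation $E_{M_v}\colon M \to M_v$ \cite{CaFi17}, so composing the injectivity projection $B(L^2(M)) \to M$ with $E_{M_v}$ gives a norm-one projection onto $M_v$, making $M_v$ injective, i.e.\ amenable. For (2), fix a non-edge $v \not\sim w$; then $M_{\{v,w\}} = M_v * M_w$ also carries a normal expectation and is therefore amenable. The key input is the statial analogue of Ching's theorem: \emph{for faithful normal states a reduced free product $(A,\varphi)*(B,\psi)$ with $\dim A, \dim B \geq 2$ is amenable if and only if $\dim A = \dim B = 2$}. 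Granting this, amenability of $M_v * M_w$ forces $\dim M_v = \dim M_w = 2$, giving the dimension clause. For the adjacency clause, suppose toward a contradiction that some $u \neq v,w$ has (say) $v \not\sim u$; then $M_v$ is free from $M_{\{w,u\}}$, so $M_{\{v,w,u\}} \cong M_v * M_{\{w,u\}}$, and $M_{\{w,u\}}$ has dimension $4$ (if $w \sim u$) or is the infinite-dimensional $\bC^2 * \bC^2$ (if $w \not\sim u$). In either case its dimension is $\geq 3$, so the dichotomy makes $M_{\{v,w,u\}}$ non-amenable, contradicting that it is a subalgebra of $M$ with normal expectation. Hence $v$, and symmetrically $w$, are adjacent to all other vertices.

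It remains to establish the free-product dichotomy \emph{without} a state-zero unitary, and this is where I expect the main difficulty. The amenable case $\bC^2 * \bC^2$ is handled as above, so one may assume $\dim A \geq 3$ and must show non-amenability. When $A$ and $B$ are finite-dimensional this is exactly Dykema's computation of free products of finite-dimensional algebras with respect to (not necessarily tracial) faithful states, which produces a non-injective (interpolated free group or free Araki--Woods) factor \cite{DykemaFreeproductsHyper}. The genuinely delicate point—and the reason the state-zero hypothesis was so convenient—is the case where $A$ is an infinite-dimensional amenable factor whose centralizer $A^{\varphi}$ is small or trivial, so that one cannot simply cut down to a finite-dimensional subalgebra of the centralizer admitting a state-preserving expectation; here I would instead invoke the general classification of free products of amenable von Neumann algebras with arbitrary faithful normal states to conclude non-amenability. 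Combining this dichotomy with the two reductions above completes the proof, and I note that the argument never uses Theorem~\ref{thm:factoriality} or the state-zero assumption, consistent with the claim that the characterization survives its removal.
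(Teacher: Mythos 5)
Your proposal is correct and is essentially the paper's own argument: the same join-decomposition/tensor-product reduction for sufficiency, and for necessity the same use of normal conditional expectations together with the free-product dichotomy, applied first to $M_v * M_w$ to get the dimension clause and then to a three-vertex induced subgraph written as a free product over the isolated vertex to get the adjacency clause. The only divergence is how the dichotomy is sourced: where you assemble it from two-projection structure theory, Dykema's finite-dimensional computations, and an unspecified classification in the infinite-dimensional statial case, the paper disposes of both halves (amenability of $\bC^2 * \bC^2$ and non-amenability once some dimension is at least $3$) with a single citation to \cite[Theorem 2]{ChingFreeProd}.
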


\begin{proof}
First, suppose that (1) and (2) hold.  Let $\mathcal{G} = \mathcal{G}_1 + \dots + \mathcal{G}_n$ be the graph join decomposition of $\mathcal{G}$.  We claim that each $\mathcal{G}_j$ is either a single vertex or a pair of non-adjacent vertices. Indeed, if $v$ is a vertex in $\cG_j$ that is adjacent to all other vertices in $\cG_j$, then it is adjacent to all vertices in $\cG$ and hence $\cG_j=(\{v\},\varnothing)$. Otherwise, there exists another vertex $w$ in $\cG_j$ that is not adjacent to $v$. But then (2) implies $\cG_j=(\{v,w\},\varnothing)$.  Writing $(N_j,\psi_j) = \gp_{v \in \mathcal{G}_j} (M_v,\varphi_v)$, we have
    \[
        (M,\varphi) \cong (N_1,\psi_1)\bar\otimes \cdots \bar\otimes (N_n,\psi_n).
    \]
If $\cG_j$ has one vertex, then $(N_j,\psi_j)$ is amenable by (1). If $\cG_j$ has two vertices, then $(N_j,\psi_j) = (M_v,\varphi_v) * (M_w, \varphi_w)$ where $\dim(M_v) = \dim(M_w) = 2$ by (2), and hence is amenable by \cite[Theorem 2]{ChingFreeProd}.  Thus, $M$ is amenable as a tensor product of amenable von Neumann algebras.

Conversely, suppose that $M$ is amenable.  Recall that for any $U \subseteq \cV$, there is a faithful normal conditional expectation from $M$ onto $M_U$, so that the amenability of $M$ implies the amenability of $M_U$.  In particular, (1) holds since $M_{\{v\}}=M_v$ is amenable for each $v\in V$.  Next, consider two non-adjacent vertices $v$ and $w$.  Then $M_{\{v,w\}} = M_v *M_w$ is amenable, and therefore by \cite[Theorem 2]{ChingFreeProd} one must have $\dim(M_v) = \dim(M_w) = 2$.  Suppose towards a contradiction that there is some vertex $u\in \cV\setminus\{v,w\}$ that is, without loss generality, not adjacent to $w$.  Then $M_{\{u,v,w\}}$ is the free product of $M_u \vee M_v$ and $M_w$ with respect to the appropriate states.  Since $\dim(M_u \vee M_v) \geq 3$ and $\dim(M_w) \geq 2$, $M_{\{u,v,w\}}$ is not amenable by \cite[Theorem 2]{ChingFreeProd}, a contradiction.  Therefore, (2) holds.
\end{proof}

\appendix

\section{Unitaries with state zero}\label{sec: state zero}

For many of our results, it will be convenient to assume that the statial von Neumann algebras attached to the vertices have a unitary in the centralizer algebra with state zero. We note that a related assumption has appeared in \cite[Theorem 2 and Lemma 3]{Bar95} which provides sufficient conditions for a free product of statial von Neumann algebras to be a (possibly type $\mathrm{III}$) factor. In this section, we give a complete characterization of when this occurs and then explore this characterization in a few examples. This characterization is likely folklore, but as we are unable to find a citation in the literature we feel that it is useful to include it for completeness. 
We start with the tracial case, for which we will need the following two lemmas.  

\begin{lem}\label{lem:min acheived}
 For a tracial von Neumann algebra $(M,\tau)$, there exists a $u\in \cU(M)$ with $\tau(u)=\inf|\tau(\cU(A))|$.   
\end{lem}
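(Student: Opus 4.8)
The plan is to prove the stronger statement that the set $\tau(\cU(M)) \subseteq \bC$ is \emph{compact}; since $z \mapsto |z|$ is continuous, its infimum over this compact set is then attained at some value $\tau(u_0)$, and rotating $u_0$ by a scalar of modulus one produces a unitary $u$ with $\tau(u) = |\tau(u_0)| = \inf|\tau(\cU(M))|$, which is the assertion (reading $A$ as $M$). The tempting shortcut is to take a minimizing net $u_i \in \cU(M)$ with $\tau(u_i) \to c := \inf|\tau(\cU(M))|$ and extract a weak-$*$ convergent subnet $u_i \to x$ in the weak-$*$ compact unit ball; but the unitary group is \emph{not} weak-$*$ closed (high-frequency unitaries on a diffuse part converge weak-$*$ to non-unitaries of strictly smaller trace), so the limit $x$ need not be unitary and this naive argument collapses. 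Overcoming exactly this failure is the crux of the proof, and I would resolve it by passing to the center.

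First I would reduce to the center using the center-valued trace $T\colon M \to Z(M)$, which satisfies $\tau = (\tau|_{Z(M)}) \circ T$, so that $\tau(\cU(M)) = (\tau|_{Z(M)})\big(T(\cU(M))\big)$. Writing $Z(M) = L^\infty(\Omega,\mu)$ so that $\tau|_{Z(M)} = \int_\Omega \cdot\, d\mu$, the type decomposition of the finite algebra $M$ describes the fibers of $T(\cU(M))$ exactly: for a unitary $u$, the value $T(u)(\omega)$ ranges over the closed unit disk $\overline{\bD}$ when $\omega$ lies in a summand of type $\mathrm{I}_n$ with $n \ge 2$ or of type $\mathrm{II}_1$, and over the circle $S^1$ when $\omega$ lies in the abelian (type $\mathrm{I}_1$) part. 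Conversely, a standard measurable selection argument shows every measurable $g$ with $g(\omega)$ in the appropriate fiber is realized as $T(u)$ for a genuine unitary $u$. Hence $\tau(\cU(M))$ equals the set of all $\int_\Omega g\, d\mu$ as $g$ ranges over these fiberwise-admissible functions, and it remains to prove this image is compact.

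To that end I would split $\Omega$ into the non-abelian/diffuse region (full-disk fibers) and the abelian region $\Omega_1$ (circle fibers), and split $\Omega_1$ further into its atoms $\{a_k\}$ with weights $w_k = \mu(\{a_k\})$ and its non-atomic remainder. Integrating independently over these pieces, the full-disk region and the non-atomic abelian part each contribute a full closed disk (for the latter, split a set into two halves of equal measure and use phases $\pm 1$ to realize the interior, and a constant phase to realize the boundary), while the atoms contribute $\{\sum_k w_k \zeta_k : \zeta_k \in S^1\}$. Therefore
\[
\tau(\cU(M)) = \Big\{\sum_k w_k \zeta_k : \zeta_k \in S^1\Big\} + \overline{\bD}(0,R),
\]
where $R$ is the total mass of the non-atomic region. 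The disk is compact, and the first summand is the image of the compact torus $\prod_k S^1$ under $(\zeta_k) \mapsto \sum_k w_k \zeta_k$, a map that is continuous because $\sum_k w_k \le \tau(1) < \infty$ forces uniform tail convergence; hence the first summand is compact as well. A Minkowski sum of two compact sets is compact, so $\tau(\cU(M))$ is compact and $\inf|\tau(\cU(M))|$ is attained.

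The main obstacle is exactly the atomic abelian part: this is the only place where one must invoke compactness of a genuine parameter space ($\prod_k S^1$) rather than of the unitary group itself, and it is precisely where the weak-$*$ limit argument fails. The remaining ingredients—the fiberwise description of $T(\cU(M))$ from the type decomposition and the measurable selection realizing admissible $g$ by honest unitaries—are standard structure theory of finite von Neumann algebras, so I would state and cite them rather than reprove them.
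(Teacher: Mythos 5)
Your proof is correct, but it takes a genuinely different route from the paper's. The paper splits $M = M_1 \oplus M_2$ into its atomic and diffuse parts: for the atomic part it observes that $\cU(M_1)$ is SOT-compact and that the normal trace is SOT-continuous on bounded sets, so $\tau(\cU(M_1))$ is compact \emph{without ever computing it}; for the diffuse part it gets exactly the closed disk of radius $\tau(1_{M_2})$ from a trace-scaled copy of $L^\infty([0,1])$; the Minkowski sum of two compact sets is then compact. You instead push the type decomposition through the center-valued trace and compute $\tau(\cU(M))$ exactly, as $\{\sum_k w_k \zeta_k : \zeta_k \in S^1\} + \{z : |z|\le R\}$, where the $w_k$ are the traces of the minimal central projections on which $M$ is one-dimensional and $R$ is the remaining mass, handling the only nontrivial compactness (the infinite torus sum) by a uniform-tail continuity argument. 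Your route is heavier but buys more: the exact formula gives $\inf|\tau(\cU(M))| = \max(0,\, 2w_{\max}-1)$, which immediately yields the appendix's subsequent Lemma \ref{lem: direct sum trace zero lemma} and Corollary \ref{cor:complete characterization trace zero} (a state/trace zero unitary exists iff every minimal projection has trace at most $1/2$). The paper's route is shorter and needs no realization of prescribed trace values on the atomic part at all, only compactness of its unitary group.

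One caveat on your middle step: the appeal to ``a standard measurable selection argument'' realizing \emph{every} fiberwise-admissible measurable section as $T(u)$ is both more than you need and more than is routine. The paper makes no separability assumptions, so $Z(M)$ need not be $L^\infty$ of a standard measure space and direct-integral-style selection is delicate there. But your argument only ever uses constant admissible values on each type summand --- a unitary of prescribed constant normalized trace in a type $\mathrm{I}_n$ summand $M_n(A)$ with $n\ge 2$ or in the type $\mathrm{II}_1$ summand (e.g.\ $e^{i\phi}\bigl(e^{i\theta}p + e^{-i\theta}(1-p)\bigr)$ with $p$ a projection of central trace $1/2$), together with the $\pm 1$-phase trick on the nonatomic abelian part --- and these are produced by explicit functional-calculus formulas. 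So the gap is cosmetic: state and use only the constant-section version and the proof closes without any measure-theoretic selection.
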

\begin{proof}
Write $M=M_{1}\oplus M_{2}$ with $M_{1}$ atomic and $M_{2}$ diffuse. Set $K_{i}=\tau(\cU(M_{i}))$ for $i=1,2$. Since $M_{1}$ is atomic and finite, we have that $\cU(M_{1})$ is SOT-compact, so $K_{1}=\tau(\cU(M_{1}))$ is compact. Since $M_{2}$ is diffuse, there is an  embedding of $L^{\infty}([0,1])$ into $M_{2}$ which pulls back $\tau|_{M_{2}}$ to $\tau(1_{M_{2}})$ times integration against Lebesgue measure. This implies that $K_{2}=\tau(\cU(M_{2}))=\{z\in \bC:|z|\leq \tau(1_{M_{2}})\}$, so $K_{2}$ is also compact. Thus
\[\tau(\cU(M))=\{z+w:z\in K_{1},w\in K_{2}\}\]
is the image of the compact space $K_{1}\times K_{2}$ under a continuous map, and so $\tau(\cU(M))$ is compact. The lemma thus follows from continuity of the absolute value map.
\end{proof}

\begin{lem}\label{lem: direct sum trace zero lemma}
Suppose we have tracial von Neumann algebras $(A_{i},\tau_i)_{i=1}^{n}$ and we equip $A = A_1 \oplus\dots \oplus A_n$ with the trace
\[\tau((a_{i})_{i=1}^{n})=\sum_{i=1}^{n}\alpha_{i}\tau_{i}(a_{i}),\]
where $\alpha_1 \geq \alpha_2 \geq \cdots \geq \alpha_n\geq 0$ and $\sum_{i=1}^n \alpha_i = 1$.
Denote $s:=\inf|\tau_{1}(\cU(A_1))|$. Then $|\tau(\cU(A))| = [(\alpha_1(1+s) -1)\vee 0, 1]$.
\end{lem}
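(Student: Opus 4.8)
The plan is to reduce the computation to a statement about scalars. Writing a unitary of $A$ as $u = (u_1,\dots,u_n)$ with $u_i \in \cU(A_i)$, we have $\tau(u) = \sum_{i=1}^n \alpha_i z_i$ where $z_i = \tau_i(u_i)$, so
\[
    |\tau(\cU(A))| = \Big\{ \Big| \sum_{i=1}^n \alpha_i z_i \Big| : z_i \in K_i \Big\}, \qquad K_i := \tau_i(\cU(A_i)).
\]
First I would record the structure of each $K_i$. Since $\lambda u_i$ is unitary whenever $|\lambda| = 1$, each $K_i$ is invariant under rotation about $0$; since the unitary group of a von Neumann algebra is norm-connected (every unitary is $e^{ih}$ with $h = h^*$) and $\tau_i$ is continuous, $K_i$ is connected, so $|K_i|$ is a subinterval of $[0,1]$. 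It contains $1$ (as $\tau_i(1_{A_i}) = 1$) and has infimum $s_i := \inf|K_i|$ (with $s_1 = s$), which is attained by Lemma~\ref{lem:min acheived}. Hence $K_i = \{ z : s_i \le |z| \le 1\}$ is a full annulus; in particular every unimodular scalar, every positive real in $[s_i,1]$, and $-1$ all lie in $K_i$.

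Next I would establish the lower bound. Isolating the heaviest factor and using $|z_1| \ge s$ and $|z_i| \le 1$,
\[
    \Big| \sum_{i=1}^n \alpha_i z_i \Big| \ge \alpha_1 |z_1| - \sum_{i=2}^n \alpha_i |z_i| \ge \alpha_1 s - \sum_{i=2}^n \alpha_i = \alpha_1(1+s) - 1,
\]
using $\sum_{i\ge 2}\alpha_i = 1 - \alpha_1$; as the left side is also nonnegative, $|\tau(u)| \ge (\alpha_1(1+s)-1)\vee 0 =: m$. For the reverse inclusion I would use connectedness: $\cU(A) = \prod_i \cU(A_i)$ is connected, so $|\tau(\cU(A))|$ is a connected (hence interval) subset of $[m,1]$ containing $1$ (at $u = 1$); it therefore suffices to exhibit a unitary attaining the value $m$.

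To attain $m$, I would split into cases. If $m = \alpha_1(1+s) - 1 > 0$, take $u_1$ with $\tau_1(u_1) = s$ (Lemma~\ref{lem:min acheived}) and $u_i = -1_{A_i}$ for $i \ge 2$, so that $\tau(u) = \alpha_1 s - (1-\alpha_1) = m$. If $m = 0$, i.e.\ $\alpha_1 s \le 1-\alpha_1$, the goal is to produce $z_i \in K_i$ with $\sum_i \alpha_i z_i = 0$. The key observation, and the reason only $\alpha_1$ and $s$ appear, is that for $i \ge 2$ one has $\alpha_i \le \tfrac12$ (since $2\alpha_i \le \alpha_1 + \alpha_i \le 1$), so $\alpha_i(1+s_i) \le 2\alpha_i \le 1$; thus no annulus other than the first can obstruct cancellation. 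Concretely, if $\alpha_1 \le \tfrac12$ then the lengths $\alpha_1,\dots,\alpha_n$ satisfy the polygon inequality (the largest, $\alpha_1$, is at most the sum $1-\alpha_1$ of the rest), so there exist unimodular scalars $z_i$ with $\sum_i \alpha_i z_i = 0$, realized by $u_i = z_i 1_{A_i}$. If instead $\alpha_1 > \tfrac12$, set $z_i = -1$ for $i \ge 2$ and $z_1 = (1-\alpha_1)/\alpha_1$, a positive real lying in $[s,1)$ because $\alpha_1 s \le 1-\alpha_1 < \alpha_1$, hence in $K_1$; then $\sum_i \alpha_i z_i = (1-\alpha_1) - (1-\alpha_1) = 0$. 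In either case $m=0$ is attained, and combined with connectedness this gives $|\tau(\cU(A))| = [m,1]$.

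The main obstacle is the $m = 0$ case: one must verify that $0$ is genuinely attained and not merely an infimum. This is exactly where the ordering hypothesis $\alpha_1 \ge \dots \ge \alpha_n$ enters, through the bound $\alpha_i \le \tfrac12$ for $i \ge 2$, which forces the first annulus to be the only one that can prevent the sum from vanishing and guarantees the remaining weights always suffice to cancel $\alpha_1 s$. A secondary point requiring care is the reduction to the full-annulus description of each $K_i$: it relies on rotation-invariance together with connectedness of the unitary group, which is precisely what permits the free choice of moduli and phases of the $z_i$ used in the constructions above.
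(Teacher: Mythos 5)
Your proof is correct, but it reaches the hard containment $[(\alpha_1(1+s)-1)\vee 0,\,1] \subseteq |\tau(\cU(A))|$ by a genuinely different route than the paper. The two arguments share the triangle-inequality lower bound, the use of connectedness of the unitary group (so that $|\tau(\cU(A))|$ is an interval containing $1$ and it suffices to attain the left endpoint), and essentially the same endpoint construction $u = (u_1, -1, \dots, -1)$ when $\alpha_1 > 1/2$. The divergence is in the cancellation case $\alpha_1 \le 1/2$: the paper proceeds by induction on $n$, equipping $\bigoplus_{i \ge 2} A_i$ with the renormalized trace $\tau' = \frac{1}{1-\alpha_1}\tau(0 \oplus \cdot)$, applying the inductive hypothesis to locate a unitary $v$ with $\tau'(v) = -\alpha_1/(1-\alpha_1)$, and concluding $\tau(1 \oplus v) = 0$; you instead avoid induction entirely by first upgrading Lemma~\ref{lem:min acheived} to the statement that each $K_i = \tau_i(\cU(A_i))$ is the full annulus $\{z : s_i \le |z| \le 1\}$ (rotation invariance, connectedness, and attainment of the infimum), which turns the whole lemma into a problem about scalars, and then invoking the classical polygon inequality to cancel the weights. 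Your annulus reduction is a nice observation that makes the argument more geometric and transparent, and it silently handles degenerate cases ($\alpha_i = 0$, or $s = 0$ where the annulus becomes the full disk and $z_1 = 0$ is legitimate). The trade-off is that the polygon inequality, while standard, is precisely the special case of this lemma in which every $A_i = \bC$ (each $K_i$ the unit circle), so a fully self-contained write-up would still require a small induction or intermediate-value argument at exactly that point; the paper's inductive proof is self-contained as written.
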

\begin{proof}
Let $u\in \cU(A)$ and denote $u=u_1+u_2$ where $u_1\in A_1 $ and $ u_2\in \bigoplus_{i\geq 2} A_i$. Then 
\begin{align*}
    |\tau(u)|\geq &|\tau(u_1)|-|\tau(u_2)| \\
    \geq& \alpha_1s-(\alpha_2+\cdots+ \alpha_k)= \alpha_1s-(1-\alpha_1).
\end{align*}
Hence we have $|\tau(u)|\geq (\alpha_1(s+1)-1) \vee 0$ and $|\tau(\cU(A))| \subset [(\alpha_1(s+1)-1) \vee 0,1]$.

We prove the reverse inclusion by induction on $n$. Since $\cU(A)=\exp(i A_{s.a.})$ is SOT-connected, we have that $|\tau(\mathcal{U}(A))|$ is connected. The case $n=1$ thus follows by connectedness of $|\tau(\mathcal{U}(A))|$ and Lemma \ref{lem:min acheived}.  
We now assume the result true for $n-1$ with $n\geq 2$.  
We split into cases, where in the first case we assume  $\alpha_{1}\geq 1/2$ and thus $\frac{1-\alpha_{1}}{\alpha_{1}}\leq 1$. 
Let $u_{1}\in \cU(A_1)$ with $\tau_{1}(u_{1})= s\vee\left(\frac{1-\alpha_1}{\alpha_1}\right) $, which exists since $|\tau_1(\cU(A_1))|$ is connected and contains $1$. Let $u=(u_1,-1,\ldots, -1)\in \cU(A) $ so that
    \[
        \tau(u)= \alpha_1 \left(s\vee\left(\frac{1-\alpha_1}{\alpha_1}\right) \right)- (1-\alpha_1)= (\alpha_1(1+s)-1)\vee 0.
    \]
Using connectedness again, the claim follows.
In the second case we assume $\alpha_{1}<1/2$, which we note implies $(\alpha_1(1+s)-1)\vee 0 =0$.  Equip $\bigoplus_{i\geq 2}A_{i}$ with the trace $\tau'(a)=\frac{1}{1-\alpha_{1}}\tau(0\oplus a)$ and denote $s':=\inf|\tau_{2}(\mathcal{U}(A_{2}))|$. Note that our inductive hypothesis implies 
    \[
        \left|\tau\left(\cU(\bigoplus_{i\geq 2} A_i)\right)\right| = \left[\left(\frac{\alpha_{2}}{1-\alpha_{1}}(1+s')-1\right)\vee 0, 1 \right].
    \]
Observe that
    \[
        \frac{\alpha_{2}}{1-\alpha_{1}}(1+s')-1\leq \frac{2\alpha_{2}}{1-\alpha_{1}}-1\leq \frac{2\alpha_{1}}{1-\alpha_{1}}-1<  \frac{1}{1-\alpha_{1}}-1=\frac{\alpha_{1}}{1-\alpha_{1}}<1,
    \]
with the last inequality following as $\alpha_{1}<1/2$. Thus we can find $v\in\cU(\bigoplus_{i\geq 2}A_{i})$ with $\tau'(v)=\frac{-\alpha_{1}}{1-\alpha_{1}}$. Then $\tau(1\oplus v)=0$. 

\end{proof}

We now obtain a complete characterization of when a statial von Neumann algebra has a state zero unitary in its centralizer. 

\begin{cor}\label{cor:complete characterization trace zero}
Let $(M,\varphi)$ be a statial von Neumann algebra.
\begin{enumerate}[(i)]
\item Suppose $\varphi$ is a trace and that there exists a non-zero minimal projection $p\in M$ with $\varphi(p)>1/2$. Then $p$ is central and $\varphi(u)\ne 0$ for every $u\in \mathcal{U}(M)$.

\label{label: trace too big gives center}
\item If $\varphi$ is a trace, then there is a $u\in \mathcal{U}(M)$ with $\varphi(u)=0$ if and only if  $\varphi(p)\leq 1/2$ for every minimal projection $p\in M$.\label{item: complete char trace zero case}

\item There exists $u\in \mathcal{U}(M^{\varphi})$ with $\varphi(u)=0$ if and only if $\varphi(p)\leq 1/2$ for every minimal projection $p\in M^{\varphi}$. 
\label{item: complete char state zero case}
\end{enumerate}
\end{cor}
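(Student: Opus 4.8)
The plan is to deduce statement (iii) directly from statement (ii) by passing to the centralizer. The key observation is that the centralizer $M^{\varphi}$ is itself a von Neumann algebra, and the restriction $\varphi|_{M^{\varphi}}$ is a faithful normal trace on it. Indeed, faithfulness and normality are inherited from $\varphi$, and the trace property holds because for any $x,y\in M^{\varphi}$ the defining property of the centralizer (taking $z=y\in M$ in $\varphi(xz)=\varphi(zx)$) gives $\varphi(xy)=\varphi(yx)$. Moreover $\varphi(1)=1$, so $(M^{\varphi},\varphi|_{M^{\varphi}})$ is a tracial von Neumann algebra of exactly the kind to which part (ii) applies.

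Once this is in place, I would apply part (ii) to the tracial von Neumann algebra $(N,\tau):=(M^{\varphi},\varphi|_{M^{\varphi}})$. A unitary $u\in\cU(M^{\varphi})$ with $\varphi(u)=0$ is precisely a unitary $u\in\cU(N)$ with $\tau(u)=0$, and the minimal projections of $M^{\varphi}$ are exactly the minimal projections of $N$. Thus part (ii) yields that such a $u$ exists if and only if $\tau(p)=\varphi(p)\leq 1/2$ for every minimal projection $p\in N=M^{\varphi}$, which is exactly the assertion of (iii).

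The only real content is the reduction itself; no new estimates are needed beyond those already established for the tracial case in Lemmas~\ref{lem:min acheived} and \ref{lem: direct sum trace zero lemma} and part (ii). The mild subtlety, and the step I would be most careful to record, is the verification that $\varphi|_{M^{\varphi}}$ is genuinely a faithful normal \emph{trace} and that $M^{\varphi}$ is a von Neumann algebra, so that part (ii) transfers verbatim. These facts are standard consequences of Tomita--Takesaki theory, but they are precisely what singles out the centralizer as the correct object carrying the tracial result into the general statial setting.
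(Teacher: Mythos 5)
Your reduction of (iii) to (ii) is correct, and it is exactly the route the paper takes: the paper's entire proof of (iii) is the one-line observation that $\varphi|_{M^{\varphi}}$ is a trace, so that (ii) applies to $(M^{\varphi},\varphi|_{M^{\varphi}})$. Your elaboration --- that $M^{\varphi}$ is a von Neumann algebra, that the restriction is a faithful normal trace, and that the unitaries and minimal projections of $M^{\varphi}$ are the same objects under either description --- is all fine, and in fact slightly more careful than what the paper records.

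The gap is that this proves only part (iii) of a three-part statement. Parts (i) and (ii) are not pre-existing lemmas you may quote; they are claims of the very corollary being proved, and they carry essentially all of the mathematical content. Concretely, (i) requires an argument that a minimal projection $p$ with $\varphi(p)>1/2$ is automatically central: the paper passes to the central support $z$ of $p$, uses that $Mz\cong M_k(\bC)$ for some $k$, and uses traciality to get $\varphi(z)=k\varphi(p)$, which forces $k=1$; it then feeds $\alpha_1=\varphi(p)>1/2$, $s=1$ into Lemma~\ref{lem: direct sum trace zero lemma} to conclude $|\varphi(u)|\geq \alpha_1(1+s)-1>0$ for every $u\in\cU(M)$. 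The reverse implication of (ii) requires the decomposition $M=M_0\oplus\bigoplus_{i\in I}M_i$ into a summand with diffuse center and finite factors, the selection of a summand $j_0$ of maximal weight, and another application of Lemma~\ref{lem: direct sum trace zero lemma} to deduce $\alpha_{j_0}(1+s_{j_0})>1$, which forces $M_{j_0}\cong\bC 1$ and $\alpha_{j_0}>1/2$, producing the required minimal projection of trace greater than $1/2$. None of this appears in your proposal, so as a proof of the corollary it is incomplete --- although the portion you did write coincides with the paper's argument for (iii).
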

\begin{proof}
(\ref{label: trace too big gives center}):  Suppose that $M$ has a minimal nonzero projection $p$ with $\varphi(p)>1/2$. Let $z$ be the central support of $p$ in $M$. By \cite[Proposition 6.4.3 and Corollary 6.5.3]{KadisonRingroseII}, we have that $Mz$ is isomorphic to $M_{k}(\bC)$ for some $k$. Since $p$ is a minimal projection and $\varphi$ is a trace, it follows that $\varphi(z)=k\varphi(p)$.  Since $\varphi(p)>1/2$, this forces $k=1$. Thus $p=z$ is central.

Since $\bC p$ is a central summand of $M$, using the notation of Lemma~\ref{lem: direct sum trace zero lemma} we have $\alpha_1>1/2$ and $s=1$. Thus this lemma implies $|\varphi(u)|\geq \alpha_1(1+s) - 1 >0$ for all $u\in \cU(M)$.\\ 

\noindent(\ref{item: complete char trace zero case}): The forward implication follows from (\ref{label: trace too big gives center}). For the reverse implication, suppose that $M$ does not have a unitary of trace zero. By decomposing the center into diffuse and atomic parts, we may write 
\[M=M_{0}\oplus \bigoplus_{i\in I}M_{i},\]
where:
\begin{itemize}
    \item $I$ is a countable set (potentially empty),
    \item each $M_{i}$ is a  nonzero finite factor,
    \item $M_{0}$ is either $0$ or has diffuse center.
\end{itemize}
For $j\in I\sqcup\{0\}$ let $\alpha_j:=\varphi(1_{M_j})$ and $\tau_j:= \frac{1}{\alpha_j} \varphi|_{M_j}$ (note that $\alpha_{0}=0$ if $M_{0}=\{0\}$). Since $\sum_{i}\alpha_{i}\leq 1$ we have that either $I$ is finite or $\alpha_{i}\to 0$ as $i\to\infty$ (i.e. as $i$ escapes all finite subsets of $I$). Thus there is an $j_{0}\in I\sqcup \{0\}$ with $\alpha_{j_{0}}=\max\{\alpha_{i}:i\in I\sqcup\{0\}\}$.
Denote $s_{j}:=\inf|\tau_{j}(\mathcal{U}(M_{j})) |$ for each $j\in I\sqcup \{0\}$. By Lemma~\ref{lem: direct sum trace zero lemma}, our hypothesis implies that 
    \[
        \alpha_{j_{0}}(1+s_{j_{0}})>1.
    \]
This inequality implies that $s_{j_{0}}\ne 0$. On other hand, if $M_{0}\ne 0$, then $s_{0}=0$ and for $i\in I$ if $M_{i}$ is a factor of dimension at least $2$, then $s_{i}=0$ as well. So necessarily $M_{j_{0}}\cong \bC 1$ and $s_{j_{0}}=1$. 
But then the above inequality implies $\alpha_{j_{0}}>1/2$ and this proves that $1_{M_{j_0}}\in M$ is a non-zero minimal projection with $\varphi(1_{M_{j_0}})  = \alpha_{j_0} > 1/2$.\\

\noindent(\ref{item: complete char state zero case}): This follows from (\ref{item: complete char trace zero case}), since $\varphi\big|_{M^{\varphi}}$ is a trace. 
\end{proof}

We now list  a few examples of algebras with state zero unitaries in the centralizer.
Let us first consider the matrix algebra case. Suppose that $\varphi$ is a state on $M_{n}(\bC)$. Then we can write 
\[\varphi(x)=\tr(xa),\]
for some $a\in M_{n}(\bC)_{+}$, where $\tr$ is the normalized trace on $M_{n}(\bC)$. Let 
\[a=\sum_{j=1}^{k}\lambda_{j}1_{\{\lambda_{j}\}}(a)\]
be the spectral decomposition of $a$ with $\lambda_{i}\ne \lambda_{j}$ for all $i\ne j$. Set $p_{j}=1_{\lambda_{j}}(a)$. Then  $p_{j}$ is central in $M^{\varphi}$ and $M^{\varphi}p_{j}\cong M_{n\tr(p_{j})}(\bC)$. Suppose $e\in M^{\varphi}$ is a minimal projection. Then we can find a unique $j$ so that $ep_{j}=e$. In this case,
\[\varphi(ep_{j})=\frac{\lambda_{j}}{n \tr(p_{j})}.\]
Hence $M^{\varphi}$ has a state zero unitary if and only if 
    \[
        \lambda\leq \frac{1}{2}\dim(\ker(a-\lambda))
    \]
for every eigenvalue $\lambda$ of $a$.

For general finite dimensional $M$, we may find central projections $z_{1},\cdots,z_{k}$ in $M$ with $\sum_{j=1}^{k}z_{j}=1$ and $Mz_{j}\cong M_{n_{j}}(\bC)$. Let $\tr_{j}$ be the normalized trace on $Mz_{j}$. Then we can find $a_{j}\in (Mz_{j})_{+}$ with $\sum_{j}\varphi(z_{j})\tr_{j}(a_{j})=1$ and
    \[
        \varphi(x)=\sum_{j=1}^{n}\varphi(z_{j})\tr_{j}(xz_{j}a_{j}).
    \]
In this case,
\[M^{\varphi}=\sum_{j=1}^{n}(Mz_{j})^{\tr_{j}(\cdot a_{j})}.\]
If $e\in M^{\varphi}$ is a minimal projection, choose $j$  so that $ez_{j}=e$. Then by the above, there is a $\lambda_{j}$ in the spectrum of $a_{j}$ with
\[\varphi(e)=\varphi(z_{j})\frac{\lambda_{j}}{n\tr_{j}(1_{\lambda_{j}}(a_{j}))}.\]
Hence $M^{\varphi}$ has a state zero unitary if and only if for every $j$ we have
    \[
        \lambda \varphi(z_{j})\leq \frac{1}{2}\dim(\ker(a_{j}-\lambda))
    \]
for every eigenvalue $\lambda$ of $a_{j}$ viewed as an operator on $\bC^{n_{j}}$.

Another example are group von Neumann algebras, equipped with their trace $\tau\colon L(G)\to\bC$ given by $\tau(\lambda_{g})=\delta_{g=e}.$ In this case, any non-trivial group element satisfies the hypotheses. 
Another example would be if $M^{\varphi}$ is diffuse (e.g. $\varphi$ is a trace and $M$ is diffuse). In this case, there is a state-preserving embedding of $L^{\infty}([0,1])$ into $M^{\varphi}$ and so there is a state zero unitary.

\section{Ocneanu Ultrapowers}\label{sec: Ocneaun ultrapowers}

For a cofinal ultrafilter $\omega$ on a directed set $I$ and a von Neumann algebra $M$, denote
    \[
        I_{\omega}(M):=\{(x_{i})_{i\in I}\in \ell^{\infty}(I,M):\lim_{i\to\omega}x_{i}=0 \textnormal{ in the strong-$*$ topology}\},
    \]
    \[
        \cM^{\omega}(M):=\{(x_{i})_{i\in I}\in \ell^{\infty}(I,M):(x_{i})_{i}I_{\omega}(M)+I_{\omega}(M)(x_{i})_{i}\subseteq I_{\omega}(M)\}.
    \]
By \cite{OcneauActions, AndoHaagerup} the quotient $C^{*}$-algebra
    \[
        M^{\omega}:=\cM^{\omega}(M)/I_{\omega}(M)
    \]
is a von Neumann algebra, which we call the \emph{Ocneau ultrapower of $M$}. For $(x_{i})_{i}\in \cM^{\omega}(M)$ we use $(x_{i})_{i\to\omega}$ for its image in $M^{\omega}$. Suppose that $P$ is a subalgebra of $M$ and that there is a faithful normal conditional expectation $E_{P}\colon M\to P$. In this case, $P^{\omega}$ is a naturally a von Neumann subalgebra of $M^{\omega}$ and there is a natural conditional expectation given $E_{P^{\omega}}$ given by
\[E_{P^{\omega}}((x_{i})_{i\to\omega})=(E_{P}(x_{i}))_{i\to\omega}\]
(see \cite[Section 2]{HIBicent} for details).
Applying this with $P=\bC$, we see that if $\varphi$ is a faithful normal state on $M$, then the ultraproduct state $\varphi^{\omega}$ given by
\[\varphi^{\omega}((x_{i})_{i\to\omega})=\lim_{i\to\omega}\varphi(x_{i})\]
remains faithful. This relates to fullness, since \cite[Theorem 5.2]{AndoHaagerup} and \cite[Corollary 3.7]{HMVUltra} show that if $M$ is a $\sigma$-finite von Neumann algebra, then $M'\cap M^{\omega}=\bC$ if and only if $M$ is full.
The following is a statial version of \cite[Lemma 6.1]{CartanAFP}.

\begin{lem}\label{lem:freeproductcenter}
Let $(B,\varphi)$ be a statial von Neumann algebra and let $B\subset M_i$ be an inclusion with expectation $E_i\colon M\to B$ for $i=1,2$. Denote $\varphi_i:=\varphi\circ E_i$ for $i=1,2$ and consider the amalgamated free product $(M,E_B):=(M_1,E_1)*_B (M_2,E_2)$. If there exist unitary elements $u_1\in (M_1)^{\varphi_1}$ and $u_2,u_3\in (M_2)^{\varphi_2}$ such that
\[
E_B[u_1] = E_B[u_2] = E_B[u_3] = E_B[u_2^*u_3] = 0,
\]
then $M'\cap M^\omega \subseteq B^\omega$ for any cofinal ultrafilter $\omega$ on a directed set $I$.
\end{lem}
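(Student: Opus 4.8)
The plan is to adapt the proof of the tracial statement \cite[Lemma 6.1]{CartanAFP} to the Ocneanu ultrapower $M^\omega$, using the centralizer hypotheses precisely to neutralize the modular operator. Write $\psi := \varphi\circ E_B$ for the state on $M$ and work in $L^2(M,\psi)$. Since the modular group of $\psi$ restricts on $M_i$ to that of $\varphi_i=\varphi\circ E_i$, each of $u_1\in (M_1)^{\varphi_1}$ and $u_2,u_3\in (M_2)^{\varphi_2}$ lies in the centralizer $M^\psi$, hence is $\psi$-analytic with $\sigma_{-i/2}(u_j)=u_j$. Consequently, by the formula $\hat y\cdot x=(y\sigma_{-i/2}(x))^{\widehat{}}$, right multiplication by $u_j$ on $L^2(M,\psi)$ is the \emph{untwisted} isometry $\hat y\mapsto (yu_j)^{\widehat{}}$, and conjugation $\Phi_j:=u_j(\cdot)u_j^*$ is $\psi$-preserving because $\psi(u_j^* z u_j)=\psi(z)$. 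Given $x=(x_i)_{i\to\omega}\in M'\cap M^\omega$ with $\sup_i\|x_i\|<\infty$, commutation with $u_j$ therefore becomes a genuine Hilbert-space estimate $\lim_{i\to\omega}\|u_jx_i-x_iu_j\|_\psi=0$, equivalently $\Phi_j\hat{x_i}\to\hat{x_i}$ along $\omega$. It suffices to prove $\lim_{i\to\omega}\|x_i-E_B(x_i)\|_\psi^{\#}=0$, since then $x=E_{B^\omega}(x)\in B^\omega$; the adjoint half of the $\ast$-strong norm is handled identically, as $u_j^*$ and $u_3^*u_2$ satisfy the same hypotheses (note $E_B[u_3^*u_2]=\overline{E_B[u_2^*u_3]}=0$).

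Next I would invoke the $B$-valued reduced-word (Fock-space) description of $L^2(M,\psi)$ for the amalgamated free product $M_1*_BM_2$, giving an orthogonal decomposition $L^2(M,\psi)=L^2(B)\oplus\bigoplus_{w}\mathcal H_w$ over reduced words $w$ alternating between $M_1^\circ:=L^2(M_1)\ominus L^2(B)$ and $M_2^\circ$, together with the amalgamated analogues of Lemma~\ref{lem: reduced words orthogonal} and Lemma~\ref{lem: Pimnser Popa computation} for inner products and conditional expectations. The substance of the argument is to understand how $\Phi_j$ acts on $\mathcal H_w$: left or right multiplication by the centered unitary $u_j$ changes word length by at most one, \emph{prepending} (resp.\ \emph{appending}) a letter when $w$ begins (resp.\ ends) in the opposite algebra, and \emph{merging} via $E_B$ when it meets a letter of $M_j^\circ$. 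Thus $\Phi_j$ maps $\mathcal H_w$ into words of length $|w|-2,\dots,|w|+2$, and one can read off its leading (length-preserving and top-length) behaviour explicitly.

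The estimate would then proceed by a downward induction on word length, after truncating $\hat{x_i}$ to words of length $\le N$ with small tail. At the top length, the length-$(N{+}2)$ part of $\Phi_1\hat{x_i}$ equals $u_1(\hat{x_i})^{(2,2)}_N u_1^*$, where the superscript records the first and last letters; since $\Phi_1\hat{x_i}\approx\hat{x_i}$ has no such part and conjugation is isometric, the type-$(2,2)$ top component vanishes along $\omega$, and symmetrically $\Phi_2$ kills the type-$(1,1)$ top component. Hence the surviving top-length words are of type $(1,2)$ or $(2,1)$, forcing $N$ to be even; in particular, taking $N=1$, the length-one parts $E_{M_1}(x_i)-E_B(x_i)$ and $E_{M_2}(x_i)-E_B(x_i)$ are killed. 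For the remaining even-length words one examines the length-preserving parts of $\Phi_2$ and $\Phi_3$, which exchange the $(1,2)$- and $(2,1)$-components; approximate $\Phi_2$- and $\Phi_3$-invariance then yields two relations among these components, and the reduced-word orthogonality relation governed by $E_B[u_2^*u_3]=0$ is exactly what makes those two relations independent, forcing the components to vanish (with $u_1$ and the induction controlling the remaining lengths). Peeling off length $N$ and iterating leaves only the $L^2(B)$ component, so $\hat{x_i}\to\widehat{E_B(x_i)}$ and $x\in B^\omega$.

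The main obstacle is this final combinatorial closing: quantifying how the three commutation relations, through the reduced-word inner-product formulas, leave no nonzero joint (approximately) invariant vector outside $B^\omega$, with the hypothesis $E_B[u_2^*u_3]=0$ (rather than merely $E_B[u_2]=E_B[u_3]=0$) providing the crucial non-degeneracy that rules out even-length survivors. This is the genuinely computational core, inherited from \cite[Lemma 6.1]{CartanAFP}. By contrast, the statial subtleties — analyticity, the lack of a trace, and tracking the modular operator in the reduced-word pairings — are comparatively routine once the centralizer hypothesis is used to strip the $\sigma_{-i/2}$ twist from the three distinguished unitaries, rendering all relevant multiplications isometric so that the tracial estimates transfer with only cosmetic changes.
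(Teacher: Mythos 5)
Your treatment of the statial issues is correct and matches the paper: since $u_1,u_2,u_3$ lie in the centralizer of $\psi=\varphi\circ E_B$, right multiplication by them is an untwisted isometry of $L^2(M,\psi)$, conjugation by them preserves $\psi$, and faithfulness of $\psi$ lets you test membership in $B^\omega$ by $\lim_{i\to\omega}\|x_i-E_B(x_i)\|_2=\lim_{i\to\omega}\|x_i^*-E_B(x_i^*)\|_2=0$. The gap is everything after that: the step you defer as ``the genuinely computational core'' \emph{is} the lemma, and the mechanism you sketch for it does not work. Concretely, the claim that the ``length-preserving parts'' of $\Phi_2$ and $\Phi_3$ exchange the $(1,2)$- and $(2,1)$-components is false: if $\xi=a_1\otimes\cdots\otimes a_N$ with $a_1\in M_1^\circ:=\ker E_1\cap M_1$ and $a_N\in M_2^\circ$, then $u_2\xi u_2^*$ has top component $u_2\otimes a_1\otimes\cdots\otimes a_{N-1}\otimes(a_Nu_2^*-E_B(a_Nu_2^*))$, which has length $N+1$ and begins and ends in $M_2^\circ$; the symmetric computation applies to type-$(2,1)$ words. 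So conjugation by a centered unitary of $M_2$ never preserves the length of mixed-type words, there are no ``two relations among the $(1,2)$- and $(2,1)$-components'' to play off one another, and it is never made precise how $E_B[u_2^*u_3]=0$ forces anything to vanish. In addition, the truncation-plus-downward-induction scheme accumulates truncation and commutation errors over $O(N)$ length levels, where $N$ depends on $i$ and is unbounded along $\omega$, with no quantitative control offered. Finally, appealing to the core of \cite[Lemma 6.1]{CartanAFP} cannot close the gap, because that proof is not a word-length induction at all.

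What the paper (following \cite[Lemma 6.1]{CartanAFP}) actually does is a one-shot geometric argument with no induction. Let $\cH_j$ be the closed span of reduced words whose \emph{first} letter lies in $M_j^\circ$, and $P_j$ the corresponding projection, so $L^2(M,\psi)=L^2(B,\varphi)\oplus\cH_1\oplus\cH_2$. The hypotheses give the containments $u_1\cH_2u_1^*\subseteq\cH_1$ and $u_2\cH_1u_2^*,\ u_3\cH_1u_3^*\subseteq\cH_2$, together with the orthogonality $u_2\cH_1u_2^*\perp\bigl(\cH_1+u_3\cH_1u_3^*\bigr)$; this last relation is exactly where $E_B[u_2^*u_3]=0$ enters, since the inner product of words beginning with $u_2$ and with $u_3$ factors through $E_B(u_3^*u_2)=0$. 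Using that conjugation by these unitaries is isometric and intertwines projections as $P_{u\cK u^*}=uP_{\cK}(u^*\cdot u)u^*$, one obtains for every $\xi$ the two inequalities $\|P_2(u_1\xi u_1^*)\|_2\le\|P_1(\xi)\|_2$ and $\|P_1(u_2\xi u_2^*)\|_2^2+\|P_1(u_3\xi u_3^*)\|_2^2\le\|P_2(\xi)\|_2^2$. Applying them to $\xi=\hat{x_i}$ for $(x_i)_{i\to\omega}\in M'\cap M^\omega$ and using asymptotic commutation yields $\lim_{i\to\omega}\|P_2(x_i)\|_2\le\lim_{i\to\omega}\|P_1(x_i)\|_2\le\tfrac{1}{\sqrt{2}}\lim_{i\to\omega}\|P_2(x_i)\|_2$, forcing both limits to vanish, hence $x_i-E_B(x_i)\to 0$ (and likewise for adjoints), i.e.\ $x\in B^\omega$. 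Your statial preliminaries carry this argument over essentially verbatim; replace the induction with it and the proof closes.
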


\begin{proof}
For $i=1,2$ let $L^{2}_{0}(M_{i}):=L^{2}(M_{i},\varphi_i)\ominus L^{2}(B,\varphi)$, and observe that this is the closure of $\{x\in M_{i}:E_{i}(x)=0\}$ in $L^2(M_i,\varphi_i)$. By definition,
    \[
        L^{2}(M,\varphi\circ E_B)=L^{2}(B,\varphi) \oplus \bigoplus_{d=1}^{\infty}\left(\bigoplus_{i_{1}\ne i_{2},\cdots,i_{d-1}\ne i_{d}}L^{2}_{0}(M_{i_{1}})\otimes_{B}\cdots \otimes_{B}L^{2}_{0}(M_{i_{d}})\right).\]
Let $P_{i}$ be the orthogonal projection onto
    \[
        \cH_{i}:=\bigoplus_{d=1}^{\infty}\left(\bigoplus_{i=i_{1}\ne i_{2},\cdots,i_{d-1}\ne i_{d}}L^{2}_{0}(M_{i_{1}})\otimes_{B}\cdots \otimes_{B}L^{2}_{0}(M_{i_{d}})\right).
    \]
Note that $u_1,u_2,u_3\in M^{\varphi\circ E_B}$ since the modular automorphism group of $\varphi\circ E_B$ restricts to that of $\varphi_i$ on $M_i$ for each $i=1,2$. Thus,
    \[
        \|xu_{i}^{*}\|_{2}=\|x\|_{2}
    \]
for all $x\in M$. So right multiplication by $u_{i}^{*}$ extends to a bounded operator on $L^{2}(M,\varphi\circ E_B)$, and we will continue to write $\xi u_{i}^{*}$ for the image of $\xi\in L^{2}(M,\varphi\circ E_B)$ under this operator.
As in \cite[Lemma 6.1]{CartanAFP}, we have
    \[
        u_{1}\cH_{2}u_{1}^{*}\subseteq \cH_{1},\quad  u_{2}\cH_{1}u_{2}^{*}\subseteq \cH_{2},\quad u_{3}\cH_{1}u_{3}^{*}\subseteq \cH_{2},
    \]
and 
    \[ 
        u_{2}\cH_{1}u_{2}^{*}\perp (\cH_{1}+u_{3}\cH_{1}u_{3}^{*}).
    \]
Let $P_{i}$ be the orthogonal projection onto $\cH_{i}, i=1,2$. Note that if $\cK\subseteq L^{2}(M)$ is a closed linear subspace, and $P_{\cK}$ is the orthogonal projection onto $\cK$, then
    \[
        P_{u_{i}\cK u_{i}^{*}}(\cdot)=u_{i}P_{\cK}(u_{i}^{*}\cdot u_{i})u_{i}^{*}.
    \] 
Hence we can argue as in \cite[Lemma 6.1]{CartanAFP} to see that 
    \[
        \|P_{2}(u_{1}\xi u_{1}^{*})\|_{2}\leq \|P_{1}(\xi)\|_{2}\qquad \textnormal{ and }\qquad \|P_{1}(u_{2}\xi u_{2}^{*})\|_{2}^{2}+\|P_{1}(u_{3}\xi u_{3}^{*})\|_{2}^{2}\leq \|P_{2}(\xi)\|_{2}^{2},
    \]
for all $\xi\in L^{2}(M,\varphi\circ E_B)$. Now let $(x_{i})_{i\to\omega}\in M'\cap M^{\omega}$.  Since $\varphi\circ E_{B}$ is faithul, the strong$^{*}$-topology on the unit ball of $M$ coincides with convergence with respect to $\|x\|_{2}+\|x^{*}\|_{2}$ (see \cite[Proposition III.5.3]{TakesakiI}).
We now argue exactly as in \cite[Lemma 6.1]{CartanAFP} to obtain the estimates
    \[
        \lim_{i\to\omega}\|P_{2}(x_{i})\|_{2}\leq \lim_{i\to\omega}\|P_{1}(x_{i})\|_{2}\leq \frac{1}{\sqrt{2}}\lim_{i\to\omega}\|P_{2}(x_{i})\|_{2},
    \]
so that $\lim_{i\to\omega}\|P_{j}(x_{i})\|_{2}=0$ for $j=1,2$. Since $L^{2}(M,\varphi\circ E_B)=L^{2}(B,\varphi)\oplus \cH_{1}\oplus \cH_{2}$, we obtain that $\lim_{i\to\omega}\|x_{i}-E_{B}(x_{i})\|_{2}=0$. By the same argument $\lim_{i\to\omega}\|x_{i}^{*}-E_{B}(x_{i}^{*})\|_{2}=0$, and hence $(x_i)_{i\to\omega} \in B^\omega$. 
\end{proof}

In order for intersections to commute with ultrapowers, it is sufficient to have commuting square inclusions of algebras, as we now show. This is a folklore result, but we give the proof for completeness.

\begin{lem}\label{lem: commuting squares and intersections in ups}
Suppose that $(M,\varphi)$ is a statial von Neumann algebra and that  $M_{1},M_{2}$ are von Neumann subalgebras with $\varphi$-preserving normal conditional expectations $E_{i}\colon M\to M_{i}$. Suppose further that $E_1\circ E_2 = E_2\circ E_1$ so that
\[
\begin{tikzpicture}
    \node at (0,0) {$M_1$};
    \draw[<-] (0.4,0) -- (1.6,0);
    \node at (2,0) {$M$};
    \draw[->] (2,-0.3) -- (2, -0.7);
    \node at (2, -1) {$M_2$};
    \draw[<-] (0.8,-1) -- (1.6, -1);
    \node at (0,-1) {$M_1\cap M_2$};
    \draw[->] (0,-0.3) -- (0, -0.7);
\end{tikzpicture}
\]
forms a commuting square. Then $(M_{1}\cap M_{2})^{\omega}=M_{1}^{\omega}\cap M_{2}^{\omega}$.
\end{lem}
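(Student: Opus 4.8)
The plan is to prove the two inclusions separately, the substance being to lift the commuting-square relation to the ultrapowers. First I would observe that the hypothesis $E_1\circ E_2 = E_2\circ E_1$ forces $E := E_1\circ E_2$ to be the $\varphi$-preserving normal conditional expectation of $M$ onto $M_1\cap M_2$. Indeed, $E$ is a composition of normal unital completely positive maps, and it is idempotent since $E^2 = E_1E_2E_1E_2 = E_1^2E_2^2 = E$ by commutativity and idempotency of $E_1,E_2$; being a unital norm-one idempotent, it is a conditional expectation by Tomiyama's theorem. For $y\in M_2$ one has $E_1(y) = E_1(E_2(y)) = E_2(E_1(y)) \in M_1\cap M_2$, so the range of $E = E_1E_2$ lies in $M_1\cap M_2$, while $E$ visibly fixes $M_1\cap M_2$ pointwise; by uniqueness of $\varphi$-preserving conditional expectations, $E = E_{M_1\cap M_2}$. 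The same computation shows that $E_2|_{M_1}$ and $E_1|_{M_2}$ are the $\varphi$-preserving conditional expectations onto $M_1\cap M_2$, so every subalgebra appearing carries a $\varphi$-preserving expectation and the Ocneanu ultrapowers $(M_1\cap M_2)^\omega$, $M_1^\omega$, $M_2^\omega$ all embed compatibly into $M^\omega$.

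With this in hand, the inclusion $(M_1\cap M_2)^\omega\subseteq M_1^\omega\cap M_2^\omega$ is immediate, since a class in $(M_1\cap M_2)^\omega$ is represented by a bounded sequence drawn from $M_1\cap M_2\subseteq M_i$ and hence lies in $M_i^\omega$ for $i = 1,2$.

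For the reverse inclusion I would use the sequencewise formula $E_{P^\omega}((x_i)_{i\to\omega}) = (E_P(x_i))_{i\to\omega}$ recalled at the start of this appendix. Applying it first with $P = M_2$ and then with $P = M_1$, and invoking the identity $E_1\circ E_2 = E_{M_1\cap M_2}$ from the first step, I obtain for every $(x_i)_{i\to\omega}\in M^\omega$ that
\[
E_{M_1^\omega}\bigl(E_{M_2^\omega}((x_i)_{i\to\omega})\bigr) = (E_1(E_2(x_i)))_{i\to\omega} = (E_{M_1\cap M_2}(x_i))_{i\to\omega} = E_{(M_1\cap M_2)^\omega}((x_i)_{i\to\omega}),
\]
that is, $E_{M_1^\omega}\circ E_{M_2^\omega} = E_{(M_1\cap M_2)^\omega}$ on $M^\omega$. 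Now if $x\in M_1^\omega\cap M_2^\omega$, then $E_{M_2^\omega}(x) = x$ and $E_{M_1^\omega}(x) = x$, so $E_{(M_1\cap M_2)^\omega}(x) = E_{M_1^\omega}(E_{M_2^\omega}(x)) = x$; since $(M_1\cap M_2)^\omega$ is precisely the range of this conditional expectation, $x\in(M_1\cap M_2)^\omega$, completing the argument.

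The only genuinely delicate point is the well-definedness underlying the displayed identity: one must know that each $E_{P^\omega}$ really is a normal conditional expectation on $M^\omega$ computed entrywise, i.e.\ that $(E_P(x_i))_i$ again lies in $\mathcal{M}^\omega(M)$ and is independent of the chosen representative modulo $I_\omega(M)$. This is exactly the content of the construction of the Ocneanu ultrapower and its expectations recalled above, so the remaining work is bookkeeping rather than a new obstacle; in particular the composability of $E_{M_1^\omega}$ with $E_{M_2^\omega}$ merely reflects $E_1\circ E_2 = E_2\circ E_1$ applied termwise.
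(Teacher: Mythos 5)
Your proof is correct and takes essentially the same route as the paper's: both arguments lift the commuting-square relation to the ultrapowers via the entrywise formula $E_{P^\omega}=(E_P)^\omega$ and conclude that any element fixed by both $E_{M_1^\omega}$ and $E_{M_2^\omega}$ lies in the range of $E_{(M_1\cap M_2)^\omega}$. The only difference is one of exposition: you spell out (via Tomiyama's theorem and the range computation) why $E_1\circ E_2=E_{M_1\cap M_2}$, a fact the paper's terse proof treats as immediate from the commuting-square hypothesis.
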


\begin{proof}
Since $E_{M_{i}^{\omega}}=(E_{i})^{\omega},i=1,2$, and similarly $E_{(M_{1}\cap M_{2})^{\omega}}=(E_{M_{1}\cap M_{2}})^{\omega}$, it is enough to show that $E_{M_{1}}|_{M_{2}}=E_{M_{1}\cap M_{2}}$. But this follows from the fact that $E_{M_{1}}\circ E_{M_{2}}=E_{M_{2}}\circ E_{M_{1}}$. 
\end{proof}

\section{Relative amenability}

The following result of Monod--Popa (\cite[Remark 3]{PoMonodRelativeAmen}) allows one to restrict to certain subalgebras when checking relative amenability. We reproduce the well-known proof here.

\begin{lem}\label{lem: commutating squares rel amen reduction}
Suppose that $(M,\varphi)$ is a statial von Neumann algebra and that  $M_{1},M_{2}$ are von Neumann subalgebras with $\varphi$-preserving normal conditional expectations $E_{i}\colon M\to M_{i}$. Suppose further that $E_1\circ E_2 = E_2\circ E_1$ so that
    \[
    \begin{tikzpicture}
    \node at (0,0) {$M_1$};
    \draw[<-] (0.4,0) -- (1.6,0);
    \node at (2,0) {$M$};
    \draw[->] (2,-0.3) -- (2, -0.7);
    \node at (2, -1) {$M_2$};
    \draw[<-] (0.8,-1) -- (1.6, -1);
    \node at (0,-1) {$M_1\cap M_2$};
    \draw[->] (0,-0.3) -- (0, -0.7);
\end{tikzpicture}
    \]
forms a commuting square.
If $M_{1}$ is amenable relative to $M_{1}\cap M_{2}$ (inside $M_{1}$), then $M_{1}$ is amenable relative to $M_{2}$ inside $M$.   
\end{lem}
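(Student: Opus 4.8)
The plan is to produce the desired conditional expectation $\Phi\colon \ip{M,e_{M_2}}\to M_1$ by splitting the passage into two elementary reductions: an \emph{induction} step that lifts relative amenability from inside $M_1$ to inside $M$ while keeping the small algebra $M_1\cap M_2$ fixed, followed by a \emph{monotonicity} step that enlarges $M_1\cap M_2$ to $M_2$. Throughout I work on $L^2(M,\varphi)$ with modular conjugation $J=J_\varphi$, and I first record what the commuting square hypothesis buys: since $E_1$ and $E_2$ are $\varphi$-preserving, normal, and commute, the composite $E_{M_1\cap M_2}:=E_1\circ E_2=E_2\circ E_1$ is a $\varphi$-preserving normal conditional expectation onto $M_1\cap M_2$, each of $M_1,M_2,M_1\cap M_2$ is globally invariant under the modular group by Takesaki's theorem, and the associated Jones projections satisfy $e_{M_1}e_{M_2}=e_{M_2}e_{M_1}=e_{M_1\cap M_2}$ on $L^2(M,\varphi)$. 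In particular all four basic constructions appearing below are defined.

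For the induction step I start from the hypothesized conditional expectation $\Psi\colon \ip{M_1,e_{M_1\cap M_2}}\to M_1$ with $\Psi|_{M_1}$ normal, where $\ip{M_1,e_{M_1\cap M_2}}$ acts on $L^2(M_1,\varphi|_{M_1})\subseteq L^2(M,\varphi)$. Because $M_1\cap M_2\subseteq M_1$ and $E_1$ is $\varphi$-preserving, the subspace $L^2(M_1)$ is invariant under the right $M_1$-action, so $e_{M_1}$ commutes with $Jb^*J$ for every $b\in M_1\cap M_2$; hence $e_{M_1}\in\ip{M,e_{M_1\cap M_2}}$ and, for $T\in\ip{M,e_{M_1\cap M_2}}$, the compression of $e_{M_1}Te_{M_1}$ to $L^2(M_1)$ again commutes with the right $(M_1\cap M_2)$-action and thus defines an element $\Theta(T)\in\ip{M_1,e_{M_1\cap M_2}}$. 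The map $\Theta$ is unital completely positive, and I set $\Phi:=\Psi\circ\Theta\colon\ip{M,e_{M_1\cap M_2}}\to M_1$. Since every $a\in M_1$ commutes with $e_{M_1}$, the map $\Theta$ is $M_1$-bimodular and restricts to the identity on $M_1$; as $\Psi$ is a conditional expectation onto $M_1$, the composite $\Phi$ is a unital $M_1$-bimodular conditional expectation onto $M_1$. Finally, for $x\in M$ the standard identity $e_{M_1}xe_{M_1}=E_1(x)e_{M_1}$ gives $\Phi(x)=\Psi(E_1(x))$, whence $\Phi|_M=(\Psi|_{M_1})\circ E_1$ is normal. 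This shows $M_1$ is amenable relative to $M_1\cap M_2$ inside $M$.

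The monotonicity step is then immediate. As $M_1\cap M_2\subseteq M_2$, we have $J(M_1\cap M_2)J\subseteq JM_2J$ and therefore $\ip{M,e_{M_2}}=(JM_2J)'\subseteq (J(M_1\cap M_2)J)'=\ip{M,e_{M_1\cap M_2}}$, while $M_1\subseteq M\subseteq\ip{M,e_{M_2}}$. Restricting $\Phi$ to the subalgebra $\ip{M,e_{M_2}}$ yields a conditional expectation onto $M_1$ whose restriction to $M$ is still $(\Psi|_{M_1})\circ E_1$, hence normal. This is exactly a witness that $M_1$ is amenable relative to $M_2$ inside $M$, completing the proof.

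I expect the only genuine work to lie in the induction step, specifically in verifying that $\Theta$ takes values in $\ip{M_1,e_{M_1\cap M_2}}$ and in computing $\Phi|_M$. This is where the commuting square and the $\varphi$-preservation of the $E_i$ are really used, since in the statial setting one must know that the relevant subalgebras are modular invariant and that the Jones projections interact correctly with $J_\varphi$ and with the conditional expectations. Once these modular compatibilities are in hand, both the bimodularity of $\Phi$ and its normality on $M$ follow formally.
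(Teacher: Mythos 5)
Your proof is correct and follows essentially the same route as the paper's: compression by $e_{M_1}$ (your $\Theta$, the paper's $\Upsilon$) carries $\ip{M,e_{M_1\cap M_2}}$ into $\ip{M_1,e_{M_1\cap M_2}}$, composing with the given expectation yields a conditional expectation onto $M_1$ that restricts to $E_1$ on $M$, and the inclusion $\ip{M,e_{M_2}}\subseteq\ip{M,e_{M_1\cap M_2}}$ finishes the argument. The only cosmetic difference is that the paper proves the compression maps \emph{onto} $\ip{M_1,e_{M_1\cap M_2}}$ via a general commutant-compression fact, whereas you verify only the containment, which is all that is needed.
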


\begin{proof}
Let $F\colon \ip{M_{1},e_{M_{1}\cap M_{2}}}\to M_{1}$ be a conditional expectation. We will first construct a conditional expectation $E\colon \langle M, e_{M_1\cap M_2} \rangle \to M_1$ that is normal on $M$, so that $M_1$ is amenable relative to $M_1\cap M_2$ inside $M$.

Identify $L^2(M_1,\varphi|_{M_1})$ as a subspace of $L^2(M,\varphi)$ so that the projection onto it is the Jones projection $e_{M_1}$ for the inclusion $(M_1 \subset M,E_1)$. Similarly, the Jones projection $e_{M_1\cap M_2}$ is given by the projection onto the identification of $L^2(M_1\cap M_2,\varphi|_{M_1\cap M_2})$ as a subspace of $L^2(M,\varphi)$. Recall that the basic construction for this inclusion satisfies
    \begin{align}\label{eqn:basic construction formula}
        \langle M, e_{M_1\cap M_2} \rangle = \left( J_\varphi (M_1\cap M_2) J_\varphi \right)' \cap B(L^2(M,\varphi)).
    \end{align}
Consequently, $e_{M_1} \in \langle M, e_{M_1\cap M_2} \rangle$ since $E_{M_1\cap M_2} = E_1\circ E_2$ is $\varphi$-preserving. Additionally, if we define $\Upsilon\colon B(L^2(M,\varphi)) \to B(L^2(M_1,\varphi|_{M_1}))$ by $\Upsilon(T):= e_{M_1} T e_{M_1}$, then $\Upsilon( \langle M, e_{M_1\cap M_2}\rangle) = \langle M_1, e_{M_1\cap M_2} \rangle$\footnote{Here we are abusing notation to let $e_{M_1\cap M_2}$ in the second instance also denote the Jones projection for the inclusion $M_1\cap M_2\subset M_1$. But under the identification $B(L^2(M_1,\varphi|_{M_1})) = e_{M_1} B(L^2(M,\varphi))$ one does have $e_{M_1\cap M_2} e_{M_1} = e_{M_1\cap M_2}$.}. 
Indeed, it is a general fact that if $Q\leq B(\cH)$ is a von Neumann algebra and $p\in Q'\cap B(\cH)$ is a projection, then $(Qp)'\cap B(p\cH)=p(Q'\cap B(\cH))p$. Applying this to $Q= J_\varphi (M_1 \cap M_2) J_\varphi$ and $p= e_{M_1}$ gives
    \begin{align*}
        \Upsilon(\langle M, e_{M_1\cap M_2} \rangle) &= \Upsilon( J_\varphi( M_1\cap M_2) J_\varphi)' \cap B(L^2(M_1,\varphi|_{M_1}))\\
        &= \left( J_{\varphi|_{M_1}} \Upsilon(M_1\cap M_2) J_{\varphi|_{M_1}} \right)' \cap B(L^2(M_1,\varphi|_{M_1}))\\
        &= \left( J_{\varphi|_{M_1}} (M_1\cap M_2) J_{\varphi|_{M_1}} \right)' \cap B(L^2(M_1,\varphi|_{M_1})) = \langle M_1, e_{M_1\cap M_2} \rangle.
    \end{align*}
Now, let $F\colon \ip{M_{1},e_{M_{1}\cap M_{2}}}\to M_{1}$ be a conditional expectation with $F|_{M_1}$ normal, which is guaranteed by $M_1$ being amenable relative to $M_1\cap M_2$ inside $M_1$. Define $E\colon \langle M, e_{M_1\cap M_2} \rangle \to M_1$ by $E:= F\circ \Upsilon$. For $x\in M$ we have $E(x)= F(\Upsilon(x))= \Upsilon(x)$, and if $x\in M_1$ then one further has $E(x)=x$. Thus $E$ is a conditional expectation onto $M_1$ with $E|_M$ normal.

To complete the proof of the lemma, identify $L^2(M_2,\varphi|_{M_2})$ as a subspace of $L^2(M,\varphi)$ and let $e_{M_2}$ be the associated Jones projection for the inclusion $(M_2\subset M, E_2)$. Then (\ref{eqn:basic construction formula}) implies $\langle M, e_{M_{2}}\rangle \leq \langle M, e_{M_1\cap M_2}\rangle$, and so considering the restriction of $E$ to this subalgebra gives that $M_1$ is amenable relative to $M_2$ inside of $M$.
\end{proof}

The next result provides a sufficient condition for preventing amalgamated free products from being amenable relative to either of the factors; see e.g. \cite{KuroshType, DKEP21} for similar arguments.

\begin{lem} \label{lem: free product non-amenability}
Let $(B,\varphi)$ be a statial von Neumann algebra and let $B\subset M_i$ be an inclusion with expectation $E_i\colon M\to B$ for $i=1,2$. Denote $\varphi_i:=\varphi\circ E_i$ for $i=1,2$ and consider the amalgamated free product $(M,E_B):=(M_1,E_1)*_B (M_2,E_2)$. If there exist unitary elements $u_0\in (M_1)^{\varphi_1}$, $u_1\in M_1$, and $u_2\in (M_2)^{\varphi_2}$ such that
    \[
        E_B[u_0] = E_B[u_1] = E_B[u_0^*u_1] = E_B[u_2] = 0,
    \]
then $M$ is not amenable to relative to either $M_1$ or $M_2$.
\end{lem}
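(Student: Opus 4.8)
The plan is to prove the contrapositive of the weak containment that relative amenability forces. As recalled at the start of Section~\ref{sec: rel amen}, if $M$ were amenable relative to $M_2$ inside $M$ then \cite[Corollary A.2]{BMOCoAmenable} would give $_ML^2(M)_M\prec {}_ML^2(M)\otimes_{M_2}L^2(M)_M$; so it suffices to show this weak containment fails (and symmetrically for $M_1$). First I would extract a net of unit vectors $\xi_n\in\mathcal H:=L^2(M)\otimes_{M_2}L^2(M)$ with $\langle a\cdot\xi_n\cdot b,\xi_n\rangle\to\langle a\hat1 b,\hat1\rangle_\varphi$ for all $a,b\in M$. Because $u_0\in(M_1)^{\varphi_1}$ and $u_2\in(M_2)^{\varphi_2}$ lie in the centralizer $M^\varphi$, a direct computation with these coefficients yields the asymptotic invariance $\|u_i\xi_n u_i^*-\xi_n\|_\varphi\to0$ for $i=0,2$. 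For $u_1\in M_1$, which need not lie in $M^\varphi$, I would instead use that $u_1$ may be taken $\varphi$-analytic, that the modular group preserves $M_1$ and commutes with $E_B$ (so $\sigma_{i/2}(u_1^*)\in M_1$ with $E_B[\sigma_{i/2}(u_1^*)]=0$), and the identity $\hat y\cdot x=(y\sigma_{-i/2}(x))^{\widehat{}}$, to obtain asymptotic invariance under the twisted conjugation $\xi\mapsto u_1\xi\,\sigma_{i/2}(u_1^*)$.

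The heart of the argument is a contraction estimate on $\mathcal H$ patterned on Lemma~\ref{lem:freeproductcenter}. Using the reduced-word description of the amalgamated free product, I would split $\mathcal H=\mathcal H_0\oplus\mathcal K_1\oplus\mathcal K_2$, where $\mathcal K_1$ (resp.\ $\mathcal K_2$) consists of the vectors whose associated reduced words begin at the left boundary with a letter from $M_1\ominus B$ (resp.\ $M_2\ominus B$), and $\mathcal H_0$ is the remaining ``trivial boundary'' part, on which left multiplication by any element of $M_1\ominus B$ or $M_2\ominus B$ lands in the orthogonal complement. The hypotheses $E_B[u_0]=E_B[u_1]=E_B[u_2]=0$ give inclusions $u_2\mathcal K_1 u_2^*\subseteq\mathcal K_2$ and $u_0\mathcal K_2u_0^*,\ u_1\mathcal K_2\,\sigma_{i/2}(u_1^*)\subseteq\mathcal K_1$, exactly as in \cite[Lemma 6.1]{CartanAFP}, while the extra vanishing $E_B[u_0^*u_1]=0$ forces the orthogonality $u_0\mathcal K_2u_0^*\perp u_1\mathcal K_2\,\sigma_{i/2}(u_1^*)$; this is what produces the crucial factor of two. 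Writing $P_0,P_1,P_2$ for the projections onto $\mathcal H_0,\mathcal K_1,\mathcal K_2$ and feeding the asymptotic invariance of $\xi_n$ under the (twisted) conjugations into the resulting norm estimates, I would obtain inequalities of the shape $\|P_2\xi_n\|_\varphi\lesssim\|P_1\xi_n\|_\varphi$ (from $u_2$) and $2\|P_1\xi_n\|_\varphi^2\lesssim\|P_2\xi_n\|_\varphi^2$ (from the pair $u_0,u_1$ and their boundary orthogonality). Together these force $\|P_1\xi_n\|_\varphi,\|P_2\xi_n\|_\varphi\to0$, i.e.\ $\xi_n$ asymptotically concentrates in $\mathcal H_0$.

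The contradiction then comes from the normalization. Since $\xi_n$ concentrates in $\mathcal H_0$ and left multiplication by $u_0\in M_1\ominus B$ (here $E_{M_2}[u_0]=E_B[u_0]=0$) carries $\mathcal H_0$ into its orthogonal complement, while right multiplication does not affect the left boundary, one gets $\langle u_0\xi_n u_0^*,\xi_n\rangle\to0$. On the other hand, the assumed weak containment forces $\langle u_0\xi_n u_0^*,\xi_n\rangle\to\langle u_0\hat1 u_0^*,\hat1\rangle_\varphi=\varphi(u_0u_0^*)=1$, a contradiction. Hence $M$ is not amenable relative to $M_2$. The argument that $M$ is not amenable relative to $M_1$ is symmetric: one works in $L^2(M)\otimes_{M_1}L^2(M)$, uses the same pair $u_0,u_1$ to generate the factor of two and $u_2$ for the reverse inclusion, and closes by comparing $\langle u_2\xi_n u_2^*,\xi_n\rangle\to\varphi(u_2u_2^*)=1$ with its value $0$ on the trivial-boundary part, using $E_{M_1}[u_2]=E_B[u_2]=0$.

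The step I expect to be the main obstacle is the precise construction of the decomposition $\mathcal H=\mathcal H_0\oplus\mathcal K_1\oplus\mathcal K_2$ and the verification of the inclusion and orthogonality relations inside the relative tensor product $L^2(M)\otimes_{M_2}L^2(M)$ (equivalently $L^2(\langle M,e_{M_2}\rangle)$) for a \emph{general} amalgamated free product, rather than in $L^2(M)$ as in \cite[Lemma 6.1]{CartanAFP}; this requires an analogue of the relatively reduced word bookkeeping of Section~\ref{sec: fusion hah!}. Intertwined with this is the non-tracial modular accounting for the non-centralizer unitary $u_1$, namely its analyticity and the twisted conjugation that replaces ordinary conjugation. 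Once the projections and the three structural relations are in place, the contraction and the final normalization contradiction are routine.
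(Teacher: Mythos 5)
Your route---refuting the weak containment $_{M}L^2(M)_M\prec{}_{M}L^2(M)\otimes_{M_i}L^2(M)_M$ by means of almost invariant vectors---is genuinely different from the paper's, and it has two real gaps. The first is your treatment of $u_1$: the assertion that ``$u_1$ may be taken $\varphi$-analytic'' is unjustified. The hypothesis hands you one unitary $u_1\in M_1$ with $E_B[u_1]=E_B[u_0^*u_1]=0$; analytic elements are dense, but an analytic approximant of a unitary is not unitary, and there is no evident way to produce an analytic \emph{unitary} satisfying these exact orthogonality conditions (Gaussian smearing $\sqrt{k/\pi}\int e^{-kt^2}\sigma_t^\psi(u_1)\,dt$, with $\psi=\varphi\circ E_B$, does preserve both conditions because $u_0\in M^\psi$ and $E_B$ commutes with $\sigma^\psi$, but it yields only a contraction; and even for analytic $u_1$ the twisted conjugation $\xi\mapsto u_1\xi\,\sigma_{i/2}(u_1^*)$ is not isometric, so your contraction estimates acquire uncontrolled constants). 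This is precisely the difficulty the paper's proof is engineered to avoid: there $u_1$ enters only through the operator inequality $u_0P_{K^\perp}u_0^*+u_1P_{K^\perp}u_1^*\leq P_K$, and one never needs $\psi$ to be $\Ad(u_1)$-invariant---only positivity of $\psi(u_1\Phi(P_{K^\perp})u_1^*)$ and, at the very end, unitarity of $u_1$ plus faithfulness of $\psi$ to conclude $\Phi(P_{K^\perp})=0$.

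The second gap is the step you defer as ``the main obstacle'': the three-piece decomposition $\mathcal{H}_0\oplus\mathcal{K}_1\oplus\mathcal{K}_2$ of $L^2(M)\otimes_{M_2}L^2(M)$ with your stated properties cannot exist. The tensor product is balanced over $M_2$, and since $u_2\in(M_2)^{\varphi_2}\subseteq M^\psi$ one has $u_2\cdot(\hat{1}\otimes_{M_2}\hat{1})=\hat{u_2}\otimes_{M_2}\hat{1}=\hat{1}\otimes_{M_2}\hat{u_2}=(\hat{1}\otimes_{M_2}\hat{1})\cdot u_2$; thus for the trivial vector (which any reasonable $\mathcal{H}_0$ must contain, as it corresponds to $e_{M_2}$) left and right multiplication by $u_2$ coincide, so you cannot simultaneously have ``left multiplication by $M_2\ominus B$ maps $\mathcal{H}_0$ into its orthocomplement'' and ``right multiplication does not affect the left boundary.'' Any correct bookkeeping in $L^2(M)\otimes_{M_2}L^2(M)$ is necessarily asymmetric in $M_1$ versus $M_2$, so the sketched core construction is structurally wrong, not merely incomplete. (A smaller, fixable point: extracting a single net $\xi_n$ from weak containment needs $M$ to be a factor, cf.\ Lemma~\ref{lem: inducing to a direct summand}, or a passage to $\mathcal{H}^{\oplus\infty}$.) For contrast, the paper never leaves $L^2(M,\psi)$: with $H_i^\circ=L^2(M_i,\varphi_i)\ominus L^2(B,\varphi)$ and $K=\bigoplus_{d}(H_1^\circ\otimes_BH_2^\circ)^{\otimes_Bd}\otimes_BH_1$, both $K$ and $K^\perp$ are invariant under the right $M_1$-action, hence $P_K,P_{K^\perp}\in\ip{M,e_{M_1}}$, and applying a hypothetical conditional expectation $\Phi\colon\ip{M,e_{M_1}}\to M$ to the two operator inequalities and then applying $\psi$ forces $\Phi(P_K)=\Phi(P_{K^\perp})=0$, contradicting $\Phi(1)=1$. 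If you wish to keep your weak-containment framing, note that by \cite{BMOCoAmenable} weak containment is in fact \emph{equivalent} to the existence of a (possibly non-normal) conditional expectation, so refuting it reduces exactly to the paper's argument rather than to an almost-invariant-vector scheme.
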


\begin{proof}
Define $\psi:=\varphi\circ E_B$. For each $j=1,2$, denote $H_j:=L^2(M_j,\varphi_j)$, which we identify as a subspace of $L^2(M,\psi)$. Also identify $L^2(B,\varphi)$ as a subspace of $L^2(M,\psi)$ and denote $H_j^\circ := H_j\ominus L^2(B,\varphi)$, $j=1,2$. Recall that
    \[
        L^2(M,\psi) = \bigoplus_{d \in \bN} \bigoplus_{i_1 \neq \dots \neq i_d} H_{i_1}^\circ \otimes_B \dots \otimes_B H_{i_d}^\circ.
    \]
Denote
    \[
        K := \bigoplus_{d \in \bN} (H_1^\circ \otimes_B H_2^\circ)^{\otimes_B d} \otimes_B H_1,
    \]
so that as right $M_1$-modules we have
    \[
        L^2(M,\psi )_{M_1} \cong K_{M_1} \oplus \left( H_2^\circ \otimes_B K\right)_{M_1}.
    \]
In particular, we can identify $K^\perp$ with $H_2^\circ \otimes_B K$. By assumption, $u_0, u_1\in H_1^\circ$ so that $u_0 K^\perp, u_1 K^\perp \leq K$, and since $u_0^* u_1\in H_1^\circ$ we further have that $u_0 K^\perp \perp u_1 K^\perp$. Thus if $P_K, P_{K^\perp}\in B(L^2(M,\psi))$ denote the projections onto $K$ and $K^\perp$, respectively, then
    \[
        u_0 P_{K^\perp} u_0^* + u_1 P_{K^\perp} u_1^* \leq P_K.
    \]
Additionally, $u_2\in H_2^\circ$ implies $u_2 K \leq K^\perp$ so that
    \begin{align}\label{eqn:paradox decomp inequality}
        u_2 P_K u_2^* \leq P_{K^\perp}.
    \end{align}
Now, suppose, towards a contradiction, that there exists a conditional expectation $\Phi\colon \langle M,e_{M_1} \rangle \to M$. Note that $P_K,P_{K^\perp} \in \langle M, e_{M_1}\rangle$ since $K$ and $K^\perp$ are invariant for $J_\psi M_1 J_\psi$, and so the above inequalities imply
    \[
        \psi( u_0 \Phi(P_{K^\perp}) u_0^*)  + \psi(u_1 \Phi(P_{K^\perp}) u_1^*) \leq \psi(\Phi(P_K))
    \]
and
    \[
        \psi(u_2 \Phi(P_K) u_2^*) \leq \psi(\Phi(P_{K^\perp})).
    \]
Recall that $u_0 \in (M_1)^{\varphi_1}$ and $u_2\in (M_2)^{\varphi_2}$ so that $u_0, u_2\in M^{\psi}$ and hence
    \begin{align*}
        \psi(\Phi(P_{K^\perp})) + \psi(u_1 \Phi(P_{K^\perp}) u_1^*) = \psi(u_0\Phi(P_{K^\perp})u_0^*) + \psi(u_1 \Phi(P_{K^\perp}) u_1^*)&\leq \psi(\Phi(P_K))\\
        &= \psi(u_2 \Phi(P_k) u_2^*)\leq \psi(\Phi(P_{K^\perp})).
    \end{align*}
Hence $\psi(u_1\Phi(P_{K^\perp})u_1^*)=0$, and therefore $\Phi(P_{K^\perp})=0$. Since, (\ref{eqn:paradox decomp inequality}) implies $u_2 \Phi(P_K) u_2^* \leq \Phi(P_{K^\perp})=0$, we also have $\Phi(P_K)=0$. But this leads to the contradiction
    \[
        \Phi(1)= \Phi(P_K) + \Phi(P_{K^\perp})=0.
    \]
Thus $M$ is not amenable relative to $M_1$.

To see that $M$ is not amenable relative to $M_2$, denote
    \[
        L:= \bigoplus_{d\in \bN} (H_2^\circ \otimes_B H_1^\circ)^{\otimes_B d}\otimes_B H_2
    \]
so that as right $M_2$-modules we have
    \[
        L^2(M,\psi)_{M_2} \cong L_{M_2} \oplus (H_1^\circ \otimes_B L)_{M_2}.
    \]
Then $u_0 L$ and $u_1 L$ are orthogonal subspaces in $L^\perp = H_1^\circ \otimes_B L$, and $u_2 L^\perp \leq L$. So if one assumes there exists a conditional expectation from $\langle M,e_{M_2}\rangle $ to $M$, then we can proceed as above to obtain a contradiction.
\end{proof}

In contrast to the previous lemma, the next result shows that tensoring relatively amenable inclusions yields a relatively amenable inclusion.

\begin{lem}\label{lem: tensor rel amen}
For $i=1,2$, let $N_i\leq M_i$ be an inclusion of von Neumann algebras admitting faithful normal conditional expectations $E_i\colon M_i\to N_i$. If $M_i$ is amenable relative to $N_i$ for each $i=1,2$, then $M_1\bar\otimes M_2$ is amenable relative to $N_1\bar\otimes N_2$.
\end{lem}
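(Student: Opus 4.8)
The plan is to recast relative amenability in terms of weak containment of bimodules, tensorize there, and translate back; this deliberately sidesteps the fact that the conditional expectations $\Phi_i\colon \ip{M_i,e_{N_i}}\to M_i$ witnessing the hypotheses need not be normal, so they cannot be tensored directly in any obvious way. First I would fix faithful normal states $\varphi_i$ on $M_i$ of the form $\varphi_i=\psi_i\circ E_i$ for faithful normal states $\psi_i$ on $N_i$, so that each $E_i$ is $\varphi_i$-preserving and, setting $M:=M_1\bar\otimes M_2$, $N:=N_1\bar\otimes N_2$, and $\varphi:=\varphi_1\otimes\varphi_2$, the map $E:=E_1\otimes E_2\colon M\to N$ is the $\varphi$-preserving faithful normal conditional expectation. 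The crucial point is that the subalgebra in question is the \emph{full} algebra: a conditional expectation $\ip{M_i,e_{N_i}}\to M_i$ restricts to the identity on $M_i$, which is automatically normal, so the normality caveat of \cite[Corollary A.2]{BMOCoAmenable} is vacuous in this case and the equivalence runs in both directions. Thus ``$M_i$ is amenable relative to $N_i$'' is equivalent to the weak containment ${}_{M_i}L^2(M_i,\varphi_i)_{M_i}\prec {}_{M_i}L^2(\ip{M_i,e_{N_i}})_{M_i}$ of $M_i$-$M_i$-bimodules, where $L^2(\ip{M_i,e_{N_i}})\cong L^2(M_i,\varphi_i)\otimes_{N_i}L^2(M_i,\varphi_i)$ by \cite[Section 2.2]{BMOCoAmenable}.

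Next I would record the tensorial compatibility of the basic construction. Since $E=E_1\otimes E_2$ is $\varphi$-preserving, its $L^2$-implementation is $e_{N_1}\otimes e_{N_2}$ on $L^2(M_1,\varphi_1)\otimes L^2(M_2,\varphi_2)=L^2(M,\varphi)$, so the Jones projection factorizes as $e_{N}=e_{N_1}\otimes e_{N_2}$. Using $J_\varphi=J_{\varphi_1}\otimes J_{\varphi_2}$ together with $J_\varphi N J_\varphi=(J_{\varphi_1}N_1J_{\varphi_1})\bar\otimes(J_{\varphi_2}N_2J_{\varphi_2})$, the commutation theorem for von Neumann tensor products yields
\[
\ip{M,\,e_{N}}=\ip{M_1,e_{N_1}}\bar\otimes\ip{M_2,e_{N_2}},
\]
and consequently, after passing to standard forms and reindexing the middle two legs,
\begin{align*}
L^2(\ip{M,e_{N}}) &\cong \big(L^2(M_1)\underset{N_1}{\otimes}L^2(M_1)\big)\otimes\big(L^2(M_2)\underset{N_2}{\otimes}L^2(M_2)\big)\\
&\cong L^2(M)\underset{N}{\otimes}L^2(M).
\end{align*}
The last isomorphism is a Fubini-type identity for relative tensor products, compatible with the bimodule actions precisely because $E=E_1\otimes E_2$.

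Finally I would tensorize the weak containment. Weak containment of the standard bimodule ${}_{M_i}L^2(M_i,\varphi_i)_{M_i}$ in $\cK_i:={}_{M_i}L^2(\ip{M_i,e_{N_i}})_{M_i}$ is detected by the coefficient state $x\otimes y^{\mathrm{op}}\mapsto \ip{x\cdot\widehat{1}\cdot y,\widehat{1}}$ lying in the weak$^*$-closure of the coefficient states of unit vectors in $\cK_i$; equivalently, by a net of unit vectors in $\cK_i$ that is asymptotically $M_i$-central with coefficient states converging to $\varphi_i$. Taking external tensor products (via iterated limits) of two such nets produces a net of unit vectors in $\cK_1\otimes\cK_2$ that is asymptotically $M$-central with coefficient states converging to $\varphi_1\otimes\varphi_2=\varphi$, whence
\[
{}_{M}L^2(M,\varphi)_{M}\;\prec\;{}_{M}\big(\cK_1\otimes\cK_2\big)_{M}.
\]
Combining this with the identifications of the previous paragraph (and invoking stability of weak containment under Connes fusion, \cite[Proposition 2.2.1]{PopaCorr}, where convenient) gives ${}_{M}L^2(M,\varphi)_{M}\prec {}_{M}L^2(\ip{M,e_{N}})_{M}$, and applying \cite[Corollary A.2]{BMOCoAmenable} once more — now in the direction producing a conditional expectation $\ip{M,e_{N}}\to M$, which is automatically normal on $M$ — shows that $M=M_1\bar\otimes M_2$ is amenable relative to $N=N_1\bar\otimes N_2$. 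The main obstacle is the tensorization step: establishing rigorously that external tensor products preserve weak containment of the standard bimodule, together with the accompanying Fubini identification of relative tensor products. This is exactly where the non-normality of the $\Phi_i$ is quietly bypassed, since we manipulate vectors and coefficient states rather than trying to tensor the (possibly non-normal) expectations themselves.
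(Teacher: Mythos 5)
Your reductions at both ends are correct: since any conditional expectation $\ip{M_i,e_{N_i}}\to M_i$ restricts to the identity on $M_i$, the normality requirement in the definition of relative amenability is automatic when the relatively amenable algebra is the whole algebra, so \cite[Corollary A.2]{BMOCoAmenable} does give the two-sided equivalence with weak containment that you use (the paper exploits exactly this in Lemma~\ref{lem: relative semidisrcete for subgraph}); and the identification $\ip{M_1\bar\otimes M_2,e_{N_1\bar\otimes N_2}}\cong\ip{M_1,e_{N_1}}\bar\otimes\ip{M_2,e_{N_2}}$ via Tomita's commutation theorem is also the first step of the paper's own proof. The genuine gap is the step you yourself flag as the main obstacle: external tensorization of weak containment. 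Your iterated-limit construction only controls coefficient functionals at elements of the algebraic tensor product $M_1\odot M_2$ and its opposite, and this $*$-subalgebra is merely $\sigma$-weakly dense, not norm dense, in $M_1\bar\otimes M_2$. Pointwise convergence of uniformly bounded (even normal) functionals on a $\sigma$-weakly dense unital $*$-subalgebra does not pass to the whole von Neumann algebra: on $L^\infty[0,1]$ the normal states $f\mapsto n\int_0^{1/n}f$ converge pointwise on $C[0,1]$ but not on $L^\infty[0,1]$. So there is no soft upgrade from approximation on $M_1\odot M_2$ to approximation on $M_1\bar\otimes M_2$, and this is precisely where the non-normality you claim to bypass resurfaces: assembling your two approximating nets into a single approximation over $M_1\bar\otimes M_2$ amounts to forming a tensor product of two non-normal completely positive maps (equivalently, of the non-normal expectations $\Phi_i$, or of non-normal $M_i$-central states) over a von Neumann tensor product, which is exactly the difficulty the lemma is about.

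To see that this missing step carries the entire weight of the statement, specialize to $N_1=N_2=\bC$: your tensorization claim then says that amenability of $M_1$ and $M_2$ (weak containment of the standard bimodule in the coarse bimodule) implies amenability of $M_1\bar\otimes M_2$, i.e.\ that injectivity is preserved by von Neumann tensor products. That theorem is not proved by a formal manipulation of almost-central nets; its standard proof is the very argument the paper gives here. Namely, the paper proves a Claim: a (possibly non-normal) conditional expectation $\Phi\colon S\to B$ extends entrywise, with respect to a fixed family of matrix units of $B(\cH)$, to a conditional expectation $\widetilde{\Phi}\colon S\bar\otimes B(\cH)\to B\bar\otimes B(\cH)$, where the strong convergence of the entrywise sums is checked directly from complete positivity and uniform norm bounds; moreover, by Tomita's commutation theorem, $\widetilde{\Phi}$ maps $S\bar\otimes T$ into $B\bar\otimes T$ for every von Neumann subalgebra $T\leq B(\cH)$. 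Applying this twice (to $\Phi_1$ with $T=N_2$, and to $\Phi_2$ with $T=\ip{M_1,e_{N_1}}$) and composing yields the desired expectation $\ip{M_1,e_{N_1}}\bar\otimes\ip{M_2,e_{N_2}}\to M_1\bar\otimes M_2$, which via the Tomita identification witnesses the conclusion. If you wish to keep the bimodule framing, you must either prove external tensorization of weak containment by an argument of this kind (in effect reproving the lemma) or find a reference for it; it is not a standard citable fact. Two smaller inaccuracies in the same step: without factoriality, weak containment only provides finite sums of coefficient functionals, so speaking of single almost-central unit vectors requires passing to $\cK_i^{\oplus\infty}$; and in the non-tracial case asymptotic centrality with coefficient states converging to $\varphi_i$ is not the right condition, since the coefficient functional of $\hat{1}$ is the two-variable functional $(x,y)\mapsto\ip{x\hat{1}y,\hat{1}}$, which involves $\sigma^{\varphi_i}_{-i/2}$ in the right variable.
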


\begin{proof}
By Tomita's commutation theorem \cite[Theorem IV.5.9]{TakesakiI}, we have a canonical isomorphism
    \[
        \ip{M_{1}\bar{\otimes}M_{2},e_{N_{1}\bar{\otimes}N_{2}}}\cong \ip{M_{1},e_{N_{1}}}\bar{\otimes}\ip{M_{2},e_{N_{2}}}
    \]
satisfying
    \[
        (x_{1}\otimes x_{2})e_{N_{1}\bar{\otimes}N_{2}}(x_{2}\otimes y_{2})\mapsto (x_{1}e_{N_{1}}y_{1})\otimes(x_{2}e_{N_{2}}y_{2}).
    \]
By assumption, there are conditional expectations $\Phi_{i}\colon \ip{M_{i},e_{N_{i}}}\to M_{i}$ for $i=1,2$. Since these expectations are not normal, $\Phi_1\otimes 1$ and $1\otimes \Phi_2$ need not extend from the algebraic tensor product $\ip{M_1, e_{N_1}}\odot \ip{M_2,e_{N_2}}$ to $\ip{M_1\bar\otimes M_2, e_{N_1\bar\otimes N_2}}$, and hence $(\Phi_1\otimes 1)\circ (1\otimes \Phi_2)$ need not be a conditional expectation onto $M_1\bar\otimes M_2$. However, one can argue abstractly using the following claim.\\

\noindent\textbf{Claim:} Let $B\leq S$ be von Neumann algebras and let $\cH$ be any Hilbert space. If $\Phi\colon S\to B$ is a conditional expectation, then there is a conditional expectation $\widetilde{\Phi}\colon S\bar{\otimes}B(\cH)\to B\bar{\otimes}B(\cH)$. Moreover, if $T\leq B(\cH)$ is a von Neumann algebra, then $\widetilde{\Phi}|_{S\overline{\otimes}T}$ is a conditional expectation onto $B\overline{\otimes}T$.

To prove the claim, view $S\subseteq B(\cK)$ and  let $(e_{i})_{i\in I}$ be an orthonormal basis for for $\cH$. Let $\omega_{i,j}\in B(\cH)$ denote the rank one operator $\omega_{i,j}(\xi):=\ip{\xi,e_{j}}e_{i}$. For $A\in S\bar{\otimes}B(\cH)$ with $A=\sum_{i,j}A_{ij}\otimes \omega_{i,j}$ (with the sum converging in the strong operator topoology). We wish to define $\widetilde{\Phi}(A)$ as
    \[
        \sum_{i,j\in I} \Phi(A_{ij})\otimes \omega_{i,j},
    \]
in which case $\widetilde{\Phi}$ being a conditional expectation follows directly from $\Phi$ being a conditional expectation. To see that the above sum converges in the strong operator topology, for a  finite subset $F\subset I$ denote $p_F:=\sum_{i\in F} 1\otimes \omega_{i,i}$ and observe that
    \[
        \left\|\sum_{i,j\in F}\Phi(A_{ij})\otimes \omega_{i,j} \right\|\leq\left\|\sum_{i,j\in F} A_{ij}\otimes \omega_{i,j}\right\| = \left\|p_F\left( \sum_{i,j\in I} A_{ij}\otimes \omega_{i,j}\right) p_F\right\| \leq \left\|\sum_{i,j\in I} A_{ij}\otimes \omega_{i,j}\right\|,
    \]
where the first inequality follows from $\Phi$ being completely bounded with $\|\Phi\|_{cb}=1$. Thus the sum defining $\tilde{\Phi}(A)$ converges in the strong operator topoology since the net $\left(\sum_{i,j\in F}\Phi(A_{ij})\otimes \omega_{i,j}\right)_{F\Subset I}$ converges pointwise on the dense subspace $\text{span}\{ \xi\otimes e_i\colon \xi\in \cK,\ i\in I\}$ and is uniformly bounded in norm.

For the second part of the claim, let $y\in T'$. Then since $1\bar{\otimes}B(\cH)$ is in the multiplicative domain of $\widetilde{\Phi}$, we have that $1\otimes y\in \widetilde{\Phi}(S\bar{\otimes}T )'$. Tomita's commutation theorem thus shows that: 
    \[
        \widetilde{\Phi}(S\bar{\otimes}T )\subseteq (1\otimes T')' \cap (B\bar{\otimes}B(\cH))=B\bar{\otimes}T.
    \]
This proves the claim.\\

Applying the claim first to $\Phi_1\colon \ip{M_1,e_{N_1}}\to M_1$ and $T=N_2$ yields a conditional expectation 
    \[
        \tilde{\Phi}_1\colon \ip{M_1,e_{N_1}}\bar\otimes N_2 \to M_1\bar\otimes N_2.
    \]
Then applying the claim next to $\Phi_2\colon \ip{M_2,e_{N_2}}\to M_2$ and $T=\ip{M_1, e_{N_1}}$ yields a conditional expectation
    \[
        \tilde{\Phi}_2 \colon \ip{M_1,e_{N_1}} \bar\otimes \ip{M_2, e_{N_2}} \to \ip{M_1,e_{N_1}} \bar\otimes M_2.
    \]
Hence $\tilde{\Phi}_2 \circ \tilde{\Phi}_1$ is a conditional expectation that witnesses the relative amenability of $M_1\bar\otimes M_2$ to $N_1\bar\otimes N_2$.
\end{proof}

\bibliographystyle{amsalpha}
\bibliography{graphproducts}

\end{document}